\newtheorem*{thma}{Theorem A}
\newtheorem*{thmb}{Theorem B}
\newtheorem{thm}{Theorem}[section]
\newtheorem{pro}[thm]{Proposition}
\newtheorem{lem}[thm]{Lemma}
\newtheorem{cor}[thm]{Corollary}
\newtheorem{exa}[thm]{Example}
\newtheorem{con}[thm]{Conjecture}
\newcommand{\im}{\operatorname{im}}
\newcommand{\rk}{\operatorname{rank}}
\newcommand{\cond}{\operatorname{Cond}}
\begin{document}

\title{Graph braid groups and right-angled Artin groups}
\subjclass[2010]{Primary 20F36, 20F65, 57M15}
\keywords{braid group, right-angled Artin group, discrete Morse theory, planar, graph}
\author{Jee Hyoun Kim}
\author{Ki Hyoung Ko}
\author{Hyo Won Park}
\address{Department of Mathematics, Korea Advanced Institute
of Science and Technology, Daejeon, 305-701, Korea}
\email{\{kimjeehyoun,knot,H.W.Park\}@kaist.ac.kr}
\thanks{This work was supported by the Korea Science and Engineering
Foundation(KOSEF) grant funded by the Korea government(MOST) (No.
R01-2006-000-10152-0)}
\begin{abstract}
We give a necessary and sufficient condition for a graph to have a
right-angled Artin group as its braid group for braid index $\ge 5$.
In order to have the necessity part, graphs are organized into small
classes so that one of homological or cohomological characteristics
of right-angled Artin groups can be applied. Finally we show that a
given graph is planar iff the first homology of its 2-braid group is
torsion-free and leave the corresponding statement for $n$-braid
groups as a conjecture along with few other conjectures about graphs
whose braid groups of index $\le 4$ are right-angled Artin groups.
\end{abstract}

\maketitle

\section{Introduction}
As motivated by robotics, graph braid groups were first introduced
by Ghrist \cite{Ghr} in 1998. A motion of $n$ robots is a
one-parameter family of $n$-tuples in a graph $\Gamma$ that are
pairwise distinct, that is, a path in the configuration space of
$\Gamma$ which is the complement of diagonals in the $n$-fold
product $\Gamma^n$. The graph $n$-braid group over $\Gamma$ is,
roughly speaking, the group of these motions under concatenation,
and is more precisely the fundamental group of the (unordered)
configuration space.

On the other hand, a \emph{right-angled Artin group} is a group that
has a finite presentation each of whose relators is a commutator of
generators. It was sometimes called a {\em graph group} since its
presentation can be defined via a graph whose vertices are
generators and each of whose edges gives a relator that is the
commutator of two ends. The survey article \cite{Ch} by Charney
contains a good overview.

Graph braid groups that can be computed directly by hand usually
have presentations whose relators are all in the form of a
commutator that is not however necessarily a commutator of
generators. In fact, all of graph braid groups were presumed to be
right-angled Artin groups until it was known by Abrams and Ghrist in
\cite{AR} that the pure 2-braid groups of the complete graph $K_5$
and the complete bipartite graph $K_{3,3}$ are surface groups so
they are not right-angled Artin groups. Since then, it was a
reasonable conjecture that every planar-graph braid group is a
right-angled Artin group. Connelly and Doig in \cite{CD} proved that
every linear-tree braid group is a right-angled Artin group where a
tree is linear if it does not contain $T_0$ in Figure~\ref{fig1}(a).
Meanwhile, Crisp and Wiest in \cite{CW2} proved that every graph
braid group embeds in a right-angled Artin group. Conversely,
Sabalka in \cite{Sa} proved that every right-angled Artin group can
be realized as a subgroup of a graph braid group. Farley and Sabalka
in \cite{FS1} described an efficient method to obtain a presentation
for any graph braid group using the discrete Morse theory.

Recently, Farley and Sabalka in \cite{FS2} characterized trees whose
braid groups are right-angled Artin groups. They proved that for
braid indices 2 or 3, tree braid groups are always right-angled Artin
groups, and for braid index $\ge 4$, a braid group over a given tree
is a right-angled Artin group if and only if the tree does not
contain $T_0$. In this article, this result is generalized to
arbitrary graphs for braid index $\ge 5$. In fact, we will prove the
following two Theorems:

\begin{thma}
If a graph $\Gamma$ contains neither $T_0$ nor $S_0$ in
Figure~\ref{fig1}, then the graph $n$-braid group $B_n\Gamma$ is a
right-angled Artin group for braid index $n\ge 5$.
\end{thma}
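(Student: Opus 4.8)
The plan is to extract a presentation of $B_n\Gamma$ from the discrete Morse theory of Farley--Sabalka \cite{FS1} and to show that, under the hypothesis, every relator can be converted by Tietze moves into a commutator of generators, so that the presentation becomes literally that of a right-angled Artin group. To set this up I would first subdivide $\Gamma$ enough that Abrams' discretization theorem applies: for the given $n$ there is a subdivision whose unordered discretized configuration space $UD_n\Gamma$ is a non-positively curved cube complex homotopy equivalent to the topological configuration space, so $B_n\Gamma=\pi_1(UD_n\Gamma)$. The Farley--Sabalka Morse function, built from a spanning tree and a root, collapses $UD_n\Gamma$ onto its critical subcomplex; the critical $1$-cells are the generators, the critical $2$-cells give the defining relators, and each relator is computed by the explicit \emph{Morse boundary} as a word in the generators. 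Since $\pi_1$ only depends on the $2$-skeleton, once all these relator words are commutators of generators we are finished --- no asphericity argument is needed --- and in fact the same cell computation should show that $UD_n\Gamma$ has the homotopy type of the Salvetti complex of the resulting right-angled Artin group.

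Second, I would classify the connected graphs containing neither $T_0$ nor $S_0$. The absence of $T_0$ forces the tree part of $\Gamma$ to be linear in the sense of Connelly--Doig \cite{CD}: no essential vertex has two independent ``long'' branches, so all branching is confined near vertices of high degree or near a cycle. The absence of $S_0$ then pins down how cycles may sit inside $\Gamma$ --- in particular it should forbid two independent cycles, and a cycle carrying a long branch --- so that the classification reduces to a short list: linear trees, and a few \emph{essentially one-cycle} families, each a single embedded cycle with tightly controlled pendant subtrees. For linear trees, Farley--Sabalka's theorem \cite{FS2} already gives that $B_n\Gamma$ is a right-angled Artin group for $n\ge4$, hence for $n\ge5$; so all the remaining work concerns the cyclic families.

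Third, for each cyclic shape I would compute the critical cells explicitly. Carrying a single particle once around the cycle yields one \emph{cyclic} generator $\gamma$; the local tree moves supply the other generators, and the crux is to determine which pairs commute. Two moves supported on disjoint parts of $\Gamma$, with enough idle particles parked elsewhere, commute and contribute a $2$-cube; moves competing for the same vertices or edges do not, and one records the commuting pairs as the edges of a graph $\Delta$. One then checks that the Morse boundary of every critical $2$-cell, after idle-particle moves are cancelled by Tietze transformations, is exactly a commutator $[\gamma_i,\gamma_j]$ of two of these generators; granting this, $B_n\Gamma=\langle\,\text{generators}\mid\text{these commutators}\,\rangle=A(\Delta)$, the right-angled Artin group on $\Delta$.

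I expect the main obstacle to be the cyclic generator $\gamma$ and its interaction with the rest of $\Gamma$. Unlike a tree move, dragging a particle all the way around a cycle forces the other $n-1$ particles into positions that may collide with the supports of the other generators, and controlling the Morse boundary of the $2$-cells (and of the higher cells, if one wants the Salvetti homotopy type) that straddle the cycle is where the real computation lies. This is also the step that must consume the hypothesis $n\ge5$: one needs simultaneously enough idle particles to clear the cycle and to witness the required commutations, and with only $n\le4$ particles the bookkeeping genuinely fails --- consistent with the paper leaving that range open. A secondary, more bureaucratic obstacle is making the subdivision-and-pruning reductions of Step~2 rigorous, so that the finite case analysis is genuinely exhaustive.
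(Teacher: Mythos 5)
Your overall strategy --- discretize, take the Farley--Sabalka Morse presentation, and check that every relator read off a critical $2$-cell is a commutator of critical $1$-cells --- is exactly the paper's, and your first and third steps are sound in outline. The gap is in your second step, the classification. Excluding $S_0$ does \emph{not} forbid two independent cycles, nor a cycle carrying a long branch: what it forbids is a single circuit passing through two distinct vertices of valency $\ge 3$. A wedge of two (or twenty) circles, a circle with one long pendant path, and more generally a linear tree carrying a bouquet of circles at each of several essential vertices all contain neither $T_0$ nor $S_0$. The correct normal form is what the paper calls a \emph{linear star-bouquet}: a linear tree together with bouquets of circles, each circle meeting the rest of the graph in a single vertex. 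Your reduction to ``linear trees and a few essentially one-cycle families'' therefore misses most of the class, and the finite case analysis you propose would not be exhaustive. The paper avoids case-by-case work entirely: with one choice of maximal tree and ordering it lists the two types of critical $1$-cells ($d_p^i\cup A^i(\vec a)$ and $A_k^i(\vec a)$) and the four types of critical $2$-cells of an arbitrary linear star-bouquet, and verifies in one computation that each rewritten boundary word is already a commutator of critical $1$-cells --- no Tietze moves or commutation graph $\Delta$ are even needed for this direction.

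A second, smaller misconception: you expect the hypothesis $n\ge 5$ to be consumed in controlling the cyclic generator, with the bookkeeping ``genuinely failing'' for $n\le 4$. In fact the sufficiency direction holds for every braid index --- the paper's \S2.3 proves Theorem~A ``for any braid index'' --- and the restriction $n\ge 5$ in the statement is only there so that Theorems A and B together form an if-and-only-if; it is the necessity direction that is delicate for small $n$. So you should not plan on finding, nor needing, a place where five particles are essential.
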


\begin{thmb}
If a graph $\Gamma$ contains $T_0$ or $S_0$ in Figure~\ref{fig1},
then the graph $n$-braid group $B_n\Gamma$ is not a right-angled
Artin group for braid index $n\ge 5$.
\end{thmb}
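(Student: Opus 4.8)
The plan is to exploit the rigidity of the (co)homology of right-angled Artin groups. Recall that if $A_L$ is the right-angled Artin group on a flag complex $L$, then $H_*(A_L;\mathbb{Z})$ is free abelian with $k$-th Betti number the number of $(k-1)$-simplices of $L$, and the cohomology ring $H^*(A_L;\mathbb{Z})$ is the exterior face ring of $L$: it is generated in degree one, and a product $u_1\cdots u_k$ of degree-one generators is nonzero precisely when the corresponding vertices span a simplex of $L$ --- which by flagness happens if and only if $u_iu_j\ne 0$ for every pair. From this I would extract two things a right-angled Artin group can never do: \emph{(H)} have torsion in integral homology, and \emph{(C)} possess degree-one classes $u,v,w$ with $uv,vw,uw$ all nonzero but $uvw=0$ (an ``empty triangle''), or, more generally, have a cohomology ring that fails to be an exterior face ring. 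The strategy is to show that $B_n\Gamma$ exhibits (H) or (C) whenever $\Gamma$ contains $T_0$ or $S_0$.

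First I would normalize the hypothesis: by the standard subdivision and smoothing reductions for discretized configuration spaces, ``$\Gamma$ contains $T_0$ or $S_0$'' may be read as ``$\Gamma$ contains a subdivision of $T_0$ or of $S_0$'', and I may assume every edge is subdivided enough that Abrams' complex $UD^n\Gamma$ is a $K(B_n\Gamma,1)$. I would then partition all such $\Gamma$ into a short list of classes according to the local shape of $\Gamma$ near a fixed minimal copy $\Gamma_0$ of the forbidden subgraph --- how much extra branching, and how many essential cycles, the rest of $\Gamma$ contributes --- this being the organization announced in the abstract. On each class I run Farley--Sabalka discrete Morse theory on $UD^n\Gamma$ to get an explicit Morse complex, hence a presentation and computable (co)homology.

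For the classes built around $T_0$ (a tree), $B_n\Gamma$ has torsion-free homology, so (H) is unavailable and I instead produce classes $u,v,w$ as in (C) from explicit critical $1$-cells supported near the branch vertex of $T_0$: the three legs host pairwise noncommuting elementary motions whose triple product vanishes for dimension reasons. For the classes built around $S_0$, which carries a cycle, I expect $H_1(B_n\Gamma;\mathbb{Z})$ to acquire torsion --- the mechanism that for $n=2$ detects non-planarity --- or else a cup-product anomaly of type (C); either way a right-angled Artin group is excluded. In every case the obstructing classes live on a sub-configuration space determined by $\Gamma_0$, and restriction (or a retraction of Morse complexes) onto that piece shows the anomaly cannot be repaired by the rest of $\Gamma$.

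The hard part will be exactly this uniformity: verifying that adjoining the remainder of $\Gamma$ --- extra edges, extra branch points, extra cycles --- never kills the nonzero products $uv,vw,uw$ in the $T_0$ case nor the homological torsion in the $S_0$ case. Technically one must control how the Morse flow and the induced boundary and cup-product formulas interact with the added edges, so that the critical cells carrying $u,v,w$ survive and their products remain detected after restriction to the relevant subcomplex. This is also where $n\ge 5$ enters: it leaves enough free room on each edge for the reductions and Morse computations to run uniformly, and it is the range in which the obstructing classes are guaranteed to be present --- so that the finitely many classes of graphs can all be dispatched by (H) or (C).
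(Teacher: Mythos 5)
Your two obstructions --- torsion in integral homology, and failure of $H^*$ to be an exterior face ring of a flag complex --- are indeed the ones the paper uses, but the way you propose to deploy them has two concrete gaps. First, the torsion mechanism does not apply to planar graphs containing $S_0$: the group $B_nS_0$ itself is commutator-related, so $H_1(B_nS_0)$ is free abelian, and the same holds for $\Theta$ and (conjecturally, and in all cases treated here) for every planar graph. Torsion in $H_1$ only appears for non-planar graphs. For planar graphs containing $S_0$ the actual obstruction is that the cup product $H^1\wedge H^1\to H^2$ fails to be \emph{surjective} --- dually, the map $\Phi_G:H_2(G)\to H_1(G)\wedge H_1(G)$ fails to be injective ($\mathrm{rank}(\mathrm{im}\,\Phi_{B_nS_0})=(n-2)(n-3)/2$ versus $\mathrm{rank}\,H_2(B_nS_0)=(n-1)(n-2)(n-3)/6$). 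This is not your ``empty triangle'' in degree $3$; it is a degree-$2$ phenomenon, and detecting it and propagating it require the whole machinery of \S4.

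Second, and more seriously, the step ``restriction onto the minimal forbidden piece shows the anomaly cannot be repaired'' is exactly the step that fails in general, and circumventing it is the bulk of the paper. There is in general \emph{no} embedding of the minimal forbidden subgraph into $\Gamma$ inducing an injection on homology: the paper notes explicitly that no embedding $T_0\to T'$ injects $H_1(B_4T_0)$ into $H_1(B_nT')$ for the graph $T'$ of Figure~\ref{fig8}, that no embedding $T_k\to T_{k+1}$ injects on $H_1$, and that no embedding $S_0\to\Theta$ injects on $H_2$. This forces the finer stratification into classes $\mathcal G_0,\dots,\mathcal G_3$ with the larger test graphs $T_1,T_2,T_3',T_3'',T_3'''$ in place of $T_0$; a wholly separate counting argument for $T_3$ (which has no valency-one vertices, so cannot even be re-embedded into itself compatibly with a larger braid index --- one instead compares $\mathrm{rank}\,H_3$ computed from the Morse complex with the count of triangles forced by the putative RAAG structure); and, for graphs containing $\Theta$, the observation that $\Phi_{B_n\Theta}$ is the \emph{zero} map, so that one only needs the induced map on $H_2$ to be nonzero rather than injective. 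Your sketch identifies the hard part correctly but offers no mechanism for it, and the naive restriction you propose would not close it.
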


\begin{figure}[ht]
\centering\label{fig1}
\subfigure[$T_0$]
{\includegraphics[height=1.6cm]{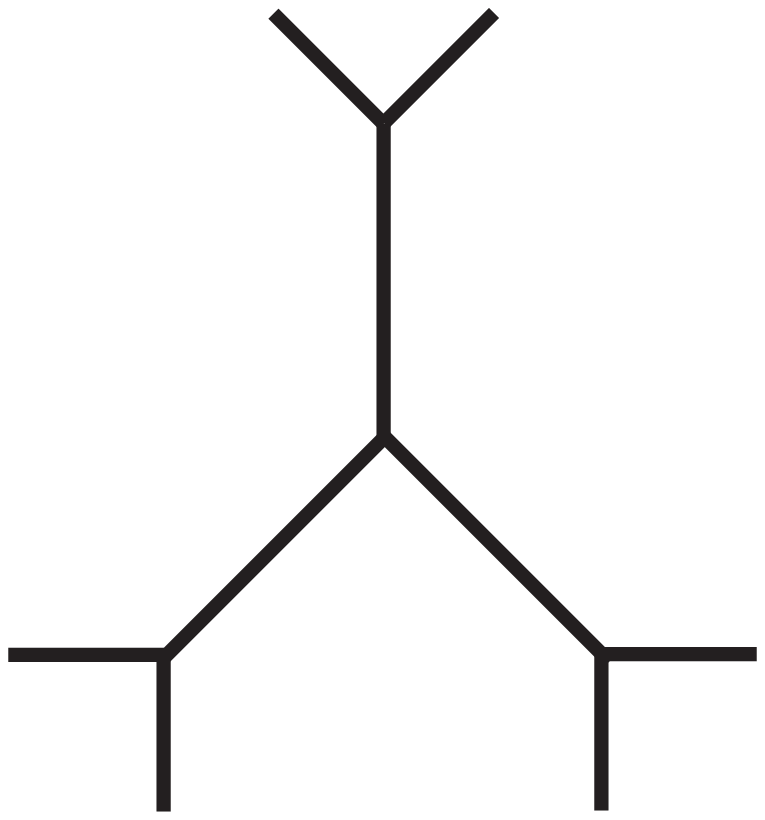}}\qquad
\subfigure[$S_0$]
{\includegraphics[height=1.6cm]{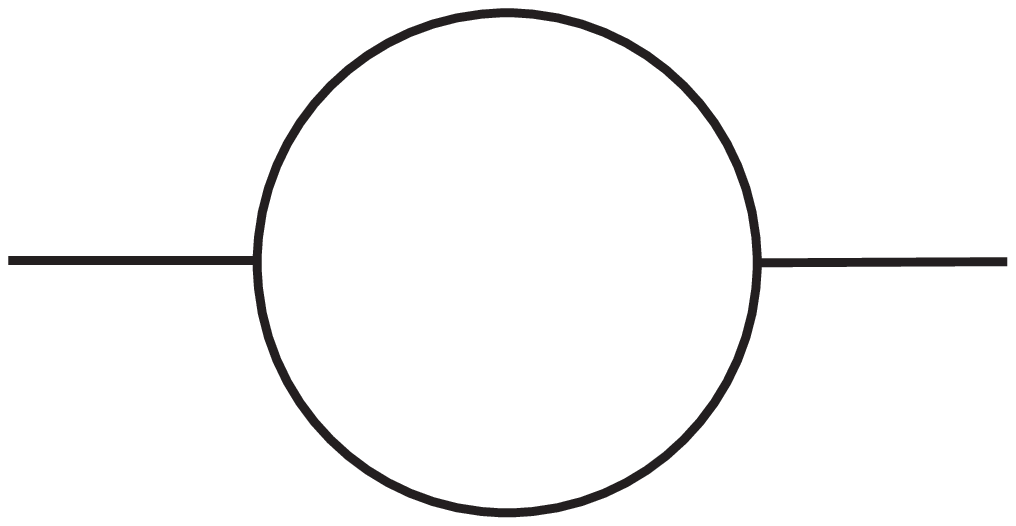}}
\caption{$T_0$ and $S_0$}
\end{figure}

This paper is organized as follows. In \S2, we explain how to
simplify configuration spaces using the discrete Morse theory. Basic
constructions and notations introduced will be used throughout the
article. As applications, we obtain a presentation of $B_n\Gamma$
for any graph $\Gamma$ containing neither $T_0$ nor $S_0$ and prove
Theorem~A.

For graphs $\Gamma$ and $\Gamma'$ and $n\ge n'$, suppose that there
is an embedding $i:\Gamma'\to\Gamma$ of graphs that induces a
surjection $i^*:H^*(B_n\Gamma;\mathbb Z_2)\to
H^*(B_{n'}\Gamma';\mathbb Z_2)$ of cohomology algebras whose kernel
is generated by homogeneous classes of degree 1 and 2, and moreover,
suppose that $B_{n'}\Gamma'$ is not a right-angled Artin group. Then
by combining results by Charney-Davis \cite{CD} and by
Farley-Sabalka \cite{FS2}, one can conclude that $B_n\Gamma$ is not
a right-angled Artin group either. This argument was introduced by
Farley and Sabalka in \cite{FS2} to show that an $n$-braid group
over a tree containing $T_0$ is not a right-angled Artin group for
$n\ge 4$. One of hypotheses of this argument does not hold if there
is no embedding $i:\Gamma'\to\Gamma$ inducing an injection
$i_*:H_1(B_{n'}\Gamma')\to H_1(B_n\Gamma)$. Furthermore, if
$\Gamma'$ has no vertices of valency 1, there is no room to
accommodate extra punctures when we construct an embedding
$i:\Gamma'\to\Gamma$ to obtain some result for $B_n\Gamma$ for all
braid index $n\ge 5$. In \S3, we circumvent these difficulties by dividing
the class of graphs containing $T_0$ but not $S_0$ into smaller
classes, for each of which the above argument is applicable. So
Theorem~B is proved for a graph $\Gamma$ that contains $T_0$ but not
$S_0$.

Given a finite presentation of a group $G$, let $F$ be the free
group over generators and $R$ be the normal subgroup generated by
relators. Suppose $R\subset [F,F]$. Then the inclusion induces a
homomorphism $\Phi_G : R/[F,R]\to [F,F]/[F,[F,F]]$ which can be
regarded as the dual of the cup product $H^1(G)\wedge H^1(G)\to
H^2(G)$ according to Matei-Suciu \cite{MS}. In \S4, we derive and
use a number of necessary conditions \ for $G$ to be a right-angled
Artin group. For example, if $G$ is a right-angled Artin group,
$\Phi_G$ is injective, which we employ for showing that $B_nS_0$ and
$B_n\Theta$ are not a right-angled Artin group where $\Theta$ is the
graph of the figurative shape $\Theta$. If $G$ is a right-angled
Artin group, the cohomology ring $H^*(G)$ is completely determined
by $\Phi_G$ and $H_1(G)$, and this fact also provides a useful
obstruction for $G$ to be a right-angled Artin group. A more
sophisticated application of this idea is to consider homomorphism
$h_*:R'/[F',R']\to R/[F,R]$ and $\bar h:[F',F']/[F',[F',F']]\to
[F,F]/[F,[F,F]]$ induced from an embedding $\Gamma'\to \Gamma$. Here
we assume that $R\subset [F,F]$ and $R'\subset [F',F']$ for
$B_n\Gamma=F/R$ and $B_n\Gamma'=F'/R'$. If we can directly show from
their configuration spaces that $\Phi_{B_n\Gamma'}$ is not injective
and $h_*:H_2(B_n\Gamma')\to H_2(B_n\Gamma)$ is injective, then the
commutativity $\Phi_{B_n\Gamma}h_*=\bar h\Phi_{B_n\Gamma'}$ implies
that $\Phi_{B_n\Gamma}$ is not injective and so $B_n\Gamma$ is not a
right-angled Artin group. Using this argument, we prove Theorem~B
for a planar graph $\Gamma$ containing $S_0$. For non-planar graphs,
we prove Theorem~B by showing that the first homology of its braid
group has a 2-torsion.

We conclude the paper with four conjectures.
Two of them are about extending our main results in this paper to graph
braid groups of braid index $\le 4$. The other two conjectures are
characterizations of planar graphs via graph braid groups.
One is that graph braid groups of planar graphs are commutator-related and the other is that the first homology groups of their graph braid groups are torsion-free. As mentioned above, a given
graph must be planar if the first homology of its graph braid group
is torsion-free. Finally we prove the converse when the braid index is 2.

\section{Discrete configuration spaces and their Morse complexes}
\label{s:two}

\subsection{Discrete configuration spaces}\label{ss21:configuration}
Let $\Gamma$ be a connected finite graph. We may regard $\Gamma$ as
a metric space by treating each edge as the unit interval $[0,1]$.
The {\em topological configuration space} of $\Gamma$ is defined by
$$C_n\Gamma=\{(x_1,\ldots,x_n)\in
        \Gamma^n \mid x_i\ne x_j\mbox{ if } i\neq j \}.$$
The \emph{unordered topological configuration space} of $\Gamma$ is
defined by $UC_n\Gamma=C_n\Gamma/S_n$ where the symmetric group
$S_n$ acts on $C_n\Gamma$ by permuting coordinates, or is
alternatively defined by
   $$UC_n\Gamma=\{\{x_1,\ldots,x_n\}\subset
        \Gamma \mid x_i\ne x_j\mbox{ if } i\neq j \}.$$
The topological configuration spaces are open complexes and it is
hard to make them finite complexes. We study the following alternative.

We now regard a graph $\Gamma$ as a 1-dimensional CW complex. Then
the $n$-fold Cartesian product $\Gamma^n$ of $\Gamma$ has a CW
complex structure and a cell in $\Gamma^n$ has the form
$(\sigma_1,\ldots,\sigma_n)$ where each $\sigma_i$ is either a
vertex or an edge of $\Gamma$. Let $\partial\sigma$ denote the set
of end vertices if $\sigma$ is an edge, or $\sigma$ itself if
$\sigma$ is a vertex. The \emph{ordered discrete configuration
space} of $\Gamma$ is defined by
         $$D_n\Gamma=\{(\sigma_1,\ldots,\sigma_n)\in
        \Gamma^n \mid \partial \sigma_i\cap\partial \sigma_j =
        \emptyset\mbox{ if } i\neq j \}.$$
The \emph{unordered discrete configuration space} of $\Gamma$ is
defined by $UD_n\Gamma=D_n\Gamma/S_n$ , or is alternatively defined
by
   $$UD_n\Gamma=\{\{\sigma_1,\ldots,\sigma_n\}\subset
        \Gamma \mid \partial \sigma_i\cap\partial \sigma_j =
        \emptyset\mbox{ if } i\neq j \}.$$

Abrams showed in \cite{Ab} that for any $n>1$ and any graph $\Gamma$
with at least $n$ vertices, $D_n\Gamma(UD_n\Gamma$, respectively) is
a deformation retract of the topological ordered (unordered,
respectively) configuration space if a graph is sufficiently
subdivided so that

\begin{itemize}
\item[(1)] Each path between two vertices of valency $\neq 2$ passes
through at least $n+1$ edges;
\item[(2)] Each loop from a vertex to itself which cannot be shrunk
to a point in $\Gamma$ passes through at least n + 1 edges.
\end{itemize}

Abrams conjectured the version that there are at least $n-1$ edges
in the condition (1) instead of  $n+1$ edges. He also observed that
the statement is false if there are fewer than $n-1$ edges. For the
sake of completeness, we will prove the conjecture in Theorem~\ref{thm:homotopy equiv}.

From now on, we assume that $\Gamma$ is always sufficiently
subdivided so that the discrete configuration spaces are deformation retracts of their topological counterparts.
The \emph{pure graph braid group}
$P_n\Gamma$ and the \emph{graph braid group} $B_n\Gamma$ of $\Gamma$
are the fundamental groups of the ordered and the unordered discrete
configuration spaces of $\Gamma$, that is,
    $$P_n\Gamma=\pi_1(D_n\Gamma)\quad\mbox{and}
    \quad B_n\Gamma=\pi_1(UD_n\Gamma)$$

Abrams showed in \cite{Ab} that discrete configuration spaces
$D_n\Gamma$ and $UD_n\Gamma$ are cubical complexes of non-positive
curvature and so locally CAT(0) spaces. In particular, $D_n\Gamma$
and $UD_n\Gamma$ are Eilenberg-MacLane spaces, and $P_n\Gamma$ and
$B_n\Gamma$ are torsion-free, and the word problems and the
conjugacy problems in $P_n\Gamma$ and $B_n\Gamma$ are solvable.

\subsection{Presentation of graph braid groups}\label{ss22:morse}
In order to compute a presentation of graph braid groups, Farley and
Sabalka in \cite{FS1} considered a complex that is as simple as
possible but is still homotopy equivalent to $UD_n\Gamma$. By using
the discrete Morse theory developed by Forman in~\cite{For}, they
proved that $UD_n\Gamma$ can be systemically collapsed to a complex,
called a \emph{Morse complex}, which is simple enough to compute
$\pi_1$. Mostly following \cite{FS1}, we quickly review how to
collapse $UD_n\Gamma$ to its Morse complex.

\paragraph{\bf Step I: Give an order of vertices of a graph $\Gamma$}
First choose a maximal tree $T$ in $\Gamma$. We also assume that $T$
is always sufficiently subdivided in the sense of
\S\ref{ss21:configuration}. Edges in $\Gamma-T$ are called
\emph{deleted edges}. Pick a vertex of valency 1 in $T$ to be a
basepoint and assign 0 to this vertex. Let $R$ be a regular
neighborhood of $T$ in a plane. Then the boundary $\partial R$ is a
simple closed curve in the plane. Starting from the base vertex 0, we
number unnumbered vertices of $T$ as we travel along $\partial R$
clockwise. Figure~\ref{fig2}(a) illustrates this procedure for the
graph $S_0$ and for $n=4$. There is one deleted edge $d$ to form a
maximal tree. All vertices in $\Gamma$ are numbered and so are
referred by the nonnegative integers.

Each edge $e$ in $T$ is oriented so that the initial vertex
$\iota(e)$ is larger than the terminal vertex $\tau(e)$. The edge
$e$ is denote by $\tau(e)\mbox{-}\iota(e)$. A (open, cubical) cell
$c$ in the unordered configuration space $UD_n\Gamma$ can be written
as $\{c_1,\ldots,c_n\}$ where each $c_j$ is either a vertex or an
edge in $\Gamma$. The cell $c$ is an $i$-cell if the number of edges
among $c_j$'s is $i$. For example, $\{6\mbox{-}10, d, 0\mbox{-}1,
5\}$ in Figure~\ref{fig2} represents a 3-cell in $UD_4S_0$. Let
$K_i$ denote the set of all $i$-cells of $UD_n\Gamma$.

\begin{figure}[ht]
\psfrag{*}{\small0}
\psfrag{e}{\small$d$}
\psfrag{1}{\small1}
\psfrag{2}{\small2}
\psfrag{3}{\small3}
\psfrag{4}{\small4}
\psfrag{5}{\small5}
\psfrag{6}{\small6}
\psfrag{7}{\small7}
\psfrag{8}{\small8}
\psfrag{9}{\small9}
\psfrag{10}{\small10}
\psfrag{11}{\small11}
\psfrag{12}{\small12}
\centering \subfigure[Numbering a maximal tree $T$ of $S_0$ for $n=4$]
{\includegraphics[scale=.45]{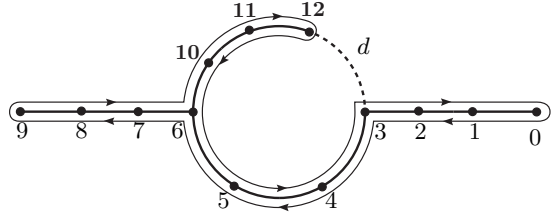}}
\subfigure[Orientations of edges in a maximal tree $T$ of $S_0$]
{\includegraphics[scale=.45]{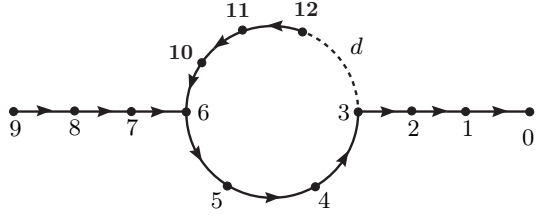}}
\caption{Maximal tree of $S_0$}\label{fig2}
\end{figure}

\paragraph{\bf Step II: Define a function $W:K_i\to K_{i+1}\cup\{\mbox{\rm void}\}$ for $i\ge -1$}
For a cell $c=\{c_1,\ldots,c_{n-1},v\} \in UD_n\Gamma$, a vertex $v$
is called \emph{blocked} in $c$ if $\tau(e)$ appears in $c$ (as a
vertex or as an end vertex of an edge) for the edge $e$ with
$\iota(e)=v$. Let $K_{-1}=\emptyset$. Define $W$ by induction on
$i$. Let $c=\{c_1,c_2,\ldots,c_n\}$ be an $i$-cell. If
$c\notin\im(W)$ and there are unblocked vertices in $c$ and, say,
$c_1$ is the smallest unblocked vertex then
$W(c)=\{v\mbox{-}c_1,c_2,\ldots,c_n\}$ where $v\mbox{-}c_1$ is the
edge in $T$ such that $v < c_1$. Otherwise, $W(c)=\mbox{void}$.
Then it is not hard to see that $W$ is well-defined, and each cell
in $W(K_i)-\{\mbox{void}\}$ has the unique preimage under $W$, and
there is no cell in $K_i$ that is both an image and a preimage of
other cells under $W$.

\paragraph{\bf Step III: Collapse $UD_n\Gamma$ to a Morse complex}
For each pair $(c, W(c))\in K_i\times(W(K_i)-\{\mbox{void}\})$, we
homotopically collapse the closure $\overline{W(c)}$  onto
$\overline{W(c)}-(W(c)\cup c)$ to obtain a Morse complex of
$UD_n\Gamma$.

For a pair $(c, W(c))\in K_i\times(W(K_i)-\{\mbox{void}\})$, cells
$c$ and $W(c)$ are said to be \emph{redundant} and
\emph{collapsible}, respectively. Redundant or collapsible cells
disappear in a Morse complex. A cell that survives in a Morse
complex is said to be \emph{critical}. Every cell in $\Gamma$ is one
of these three kinds that can be also characterized as follows: An
edge $e$ in a cell $c=\{c_1,\ldots,c_{n-1},e\}$ is \emph{
order-respecting} if  $e$ is not a deleted edge and there is no
vertex $v$ in $c$ such that $v$ is adjacent to $\tau(e)$ and
$\tau(e)<v<\iota(e)$. A cell is critical if it contains  neither
order-respecting edges nor unblocked vertices. A cell is collapsible
if it contains at least one order-respecting edge and each unblocked
vertex is larger than the initial vertex of some order-respecting
edge. A cell is redundant if it contains at least one unblocked
vertex that is smaller than the initial vertices of all
order-respecting edges. Notice that there is exactly one critical
0-cell $\{0,1,\ldots,n-1\}$.

For example, consider $UD_4S_0$ in Figure~\ref{fig2}. The cells
$\{0, 1, 2, 3\}$ and $\{6\mbox{-}10, 7, 8,  d\}$ are critical, and
the cells $\{1\mbox{-}2, 5, 6, d\}$ and $\{1\mbox{-}2, 5,
6\mbox{-}10, 7\}$ are collapsible, and the cells $\{8, 6\mbox{-}10,
11, d\}$ and $\{1, 2\mbox{-}3, 5, 6\mbox{-}10\}$ are redundant.

\begin{exa}
Morse complex of $UD_2S_0$.
\end{exa}

\begin{figure}[ht]
\psfrag{0}{\small0}
\psfrag{1}{\small1}
\psfrag{2}{\small2}
\psfrag{3}{\small3}
\psfrag{4}{\small4}
\psfrag{e}{\small$d$}
\psfrag{{0,1}}{\small$\{0,1\}$}
\psfrag{{0,2}}{\small$\{0,2\}$}
\psfrag{{0,3}}{\small$\{0,3\}$}
\psfrag{{0,4}}{\small$\{0,4\}$}
\psfrag{{1,2}}{\small$\{1,2\}$}
\psfrag{{1,3}}{\small$\{1,3\}$}
\psfrag{{1,4}}{\small$\{1,4\}$}
\psfrag{{2,3}}{\small$\{2,3\}$}
\psfrag{{2,4}}{\small$\{2,4\}$}
\psfrag{{3,4}}{\small$\{3,4\}$}
\psfrag{{0-1,3-4}}{\small$\{0\mbox{-}1,3\mbox{-}4\}$}
\psfrag{{0-1,2-3}}{\small$\{0\mbox{-}1,d\}$}
\psfrag{{1-2,3-4}}{\small$\{1\mbox{-}2,3\mbox{-}4\}$}
\psfrag{{e,4}}{\small$\{d,4\}$}
\psfrag{{e,0}}{\small$\{d,0\}$}
\psfrag{{1-3,2}}{\small$\{1\mbox{-}3,2\}$}
\centering
\subfigure[Maximal tree of $S_0$ for $n=2$]
{\includegraphics[height=1.8cm]{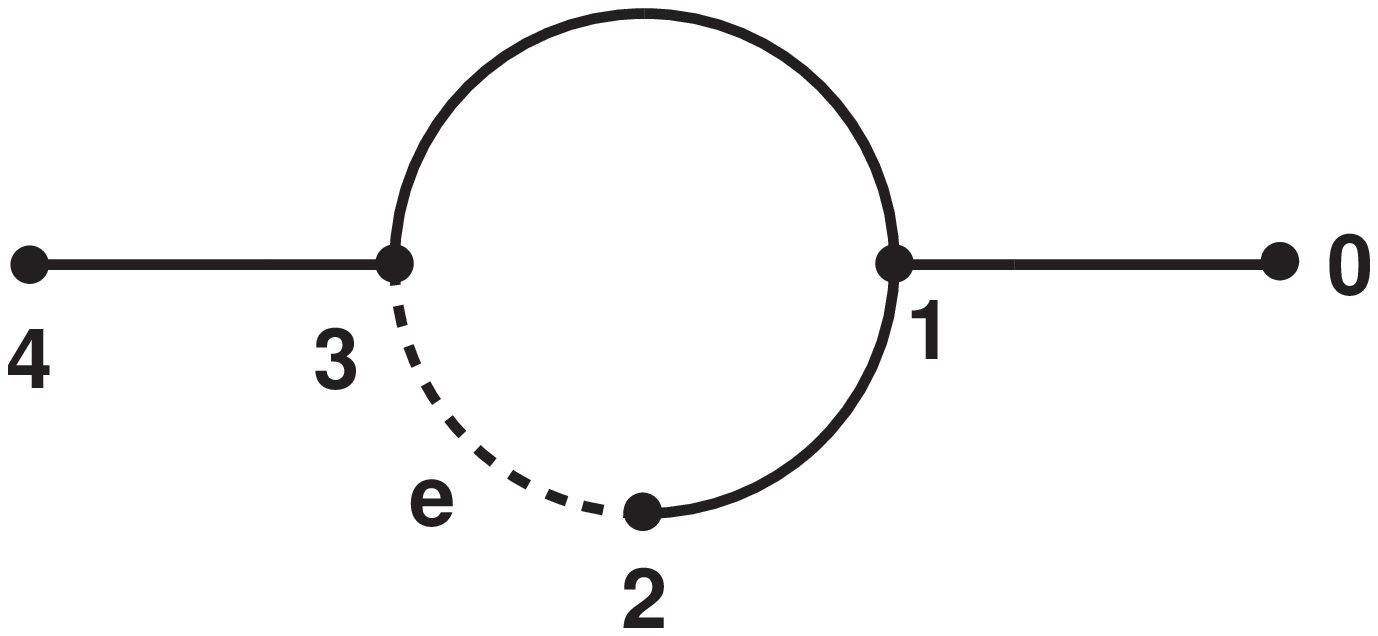}}\\
\subfigure[$UD_2S_0$ and gradient field for collapsing]
{\includegraphics[height=3.6cm]{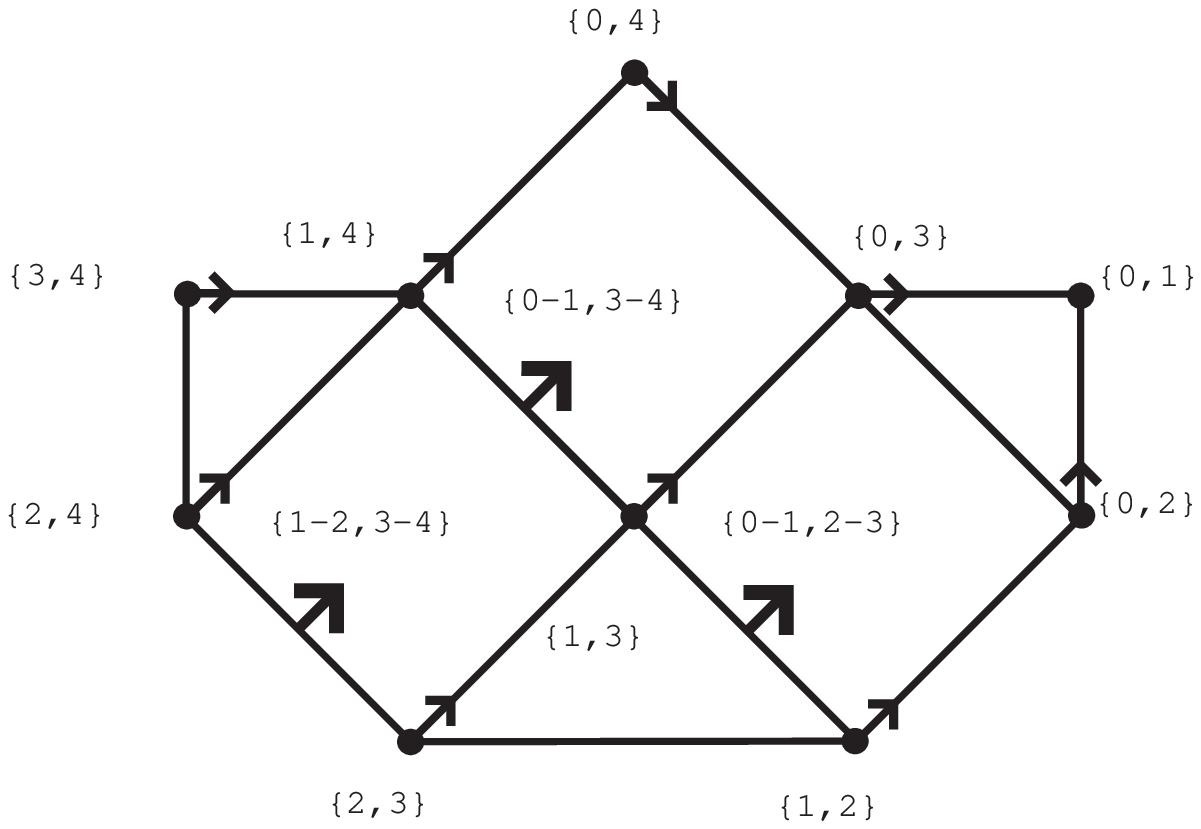}}\quad
\subfigure[Morse complex]
{\includegraphics[height=3.6cm]{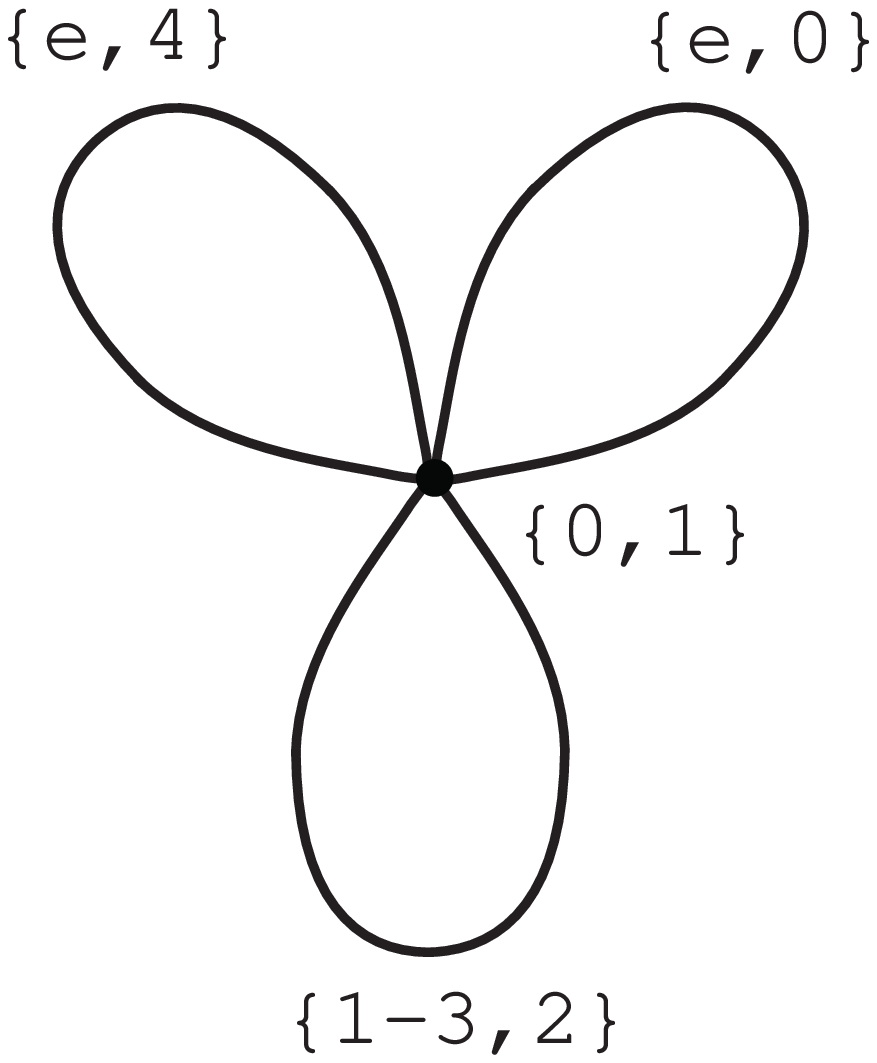}}
\caption{$S_0$}\label{fig3}
\end{figure}

The complex $UD_2S_0$ is a 2-dimensional complex as in
Figure~\ref{fig3}(b). The order of vertices of a maximal tree of
$S_0$ as in Figure~\ref{fig3}(a) determines $W$  as follows: For
0-cells, $W(\{0,2\})=\{0, 1\mbox{-}2\}$, $W(\{0,3\})=\{0,
1\mbox{-}3\}$, $W(\{0,4\})=\{0, 3\mbox{-}4\}$,
$W(\{1,2\})=\{0\mbox{-}1,2\}$, $W(\{1,3\})=\{0\mbox{-}1,3\}$,
$W(\{1,4\})=\{0\mbox{-}1,4\}$, $W(\{2,3\})=\{1\mbox{-}2,3\}$,
$W(\{2,4\})=\{1\mbox{-}2,4\}$, $W(\{3,4\})=\{1\mbox{-}3,4\}$, and
for 1-cells, $W(\{3\mbox{-}4,2\})=\{1\mbox{-}2, 3\mbox{-}4\}$,
$W(\{d,1\})=\{0\mbox{-}1, d\}$, $W(\{3\mbox{-}4,1\})=\{0\mbox{-}1,
3\mbox{-}4\}$.

After collapsing, a Morse complex of $UD_2S_0$ is a 1-dimensional
complex which has one 0-cell $\{0,1\}$ and three 1-cells $\{0,
d\},\{4, d\},\{1\mbox{-}3,2\}$ as depicted in Figure~\ref{fig3}(c).

The braid group $B_n\Gamma$ is now given by the fundamental group of
a Morse complex of $UD_n\Gamma$. Thus $B_n\Gamma$ has a presentation
whose generators are critical 1-cells and whose relators are
boundary words of critical 2-cells in terms of critical 1-cells. For
example, $B_2S_0$ is the free group generated by the three critical
1-cells above.

In order to rewrite a word in 1-cells of $UD_n\Gamma$ into a new
word in critical 1-cells, we use the rewriting homomorphism $\tilde
r$ from the free group on $K_1$ to the free group on the set of
critical 1-cells defined as follows: First define a homomorphism $r$
from the free group on $K_1$ to itself by $r(c)= 1$ if $c$ is
collapsible, $r(c)= c$ if $c$ is critical, and
$$r(c)=\{v\mbox{-}v_1,v_2,\ldots,v_{n-1},\iota(e)\}
\{v,v_2,\ldots,v_{n-1},e\}
\{v\mbox{-}v_1,v_2,\ldots,v_{n-1},\tau(e)\}^{-1}$$ if
$c=\{v_1,v_2,\ldots,v_{n-1},e\}$ is redundant such that $v_1$ is the
smallest unblocked vertex and $e$ is the edge in $c$. Forman's
discrete Morse theory in \cite{For} guarantees that there is a
nonnegative integer $k$ such that $r^k(c)=r^{k+1}(c)$ for all $c\in
K_1$. An essential idea is that a sequence of consecutive collapsing
never circles back like a gradient flow of a height function. Once
this fact is established, the finiteness of $K_1$ implies the
existence of such a $k$. Let $\tilde r=r^k$, then for any $c\in
K_1$, $\tilde r(c)$ is a word in critical 1-cells that is the image
of $c$ under the quotient map defined by collapsing $UD_n\Gamma$ unto its
Morse complex. We note that $k=0$ iff $c$ is critical, $k=1$ iff $c$ is collapsible, and $k\ge 2$ iff $c$ is redundant. In fact, it is impossible that $r(c)$ is a product of three critical cells and so $k>1$ if $c$ is redundant.

Using the rewriting map $\tilde r$, we can rewrite the boundary word
of a critical 2-cell in terms of critical 1-cells. Thus it is
possible to compute a presentation of $B_n\Gamma$ using a Morse
complex of $UD_n\Gamma$. However, the computation of $\tilde r$ is
usually tedious and the following lemma somewhat shortens it. In particular,
the lemma will be useful in the proof of Theorem~\ref{thm:homotopy equiv}.

\begin{lem}\label{lem:rewriting}
Let $c$ be a $1$-cell in $UD_n\Gamma$ and $e$ be an edge in $\Gamma$
such that $\iota(e)$ is an unblocked vertex in $c$. If $\iota(e)-\tau(e)=1$, then $\tilde r(c)=\tilde r(V_e(c))$ where $V_e(c)$ denotes the $1$-cell obtained from $c$ by replacing $\iota(e)$ by $\tau(e)$.
\end{lem}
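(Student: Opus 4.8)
The plan is to track the effect of the rewriting map $\tilde r$ on the two $1$-cells $c$ and $V_e(c)$ step by step through the iterated application of $r$, using the fact that both cells contain the same collection of "spectator" sub-cells $c_2,\ldots,c_{n-1}$ together with a single edge, and differ only in whether the isolated vertex is $\iota(e)$ or $\tau(e)=\iota(e)-1$. The key observation to isolate first is that since $\iota(e)$ is unblocked in $c$, the edge $e$ has $\tau(e)=\iota(e)-1$ and $\tau(e)\notin\partial\sigma$ for every other $\sigma$ in $c$; consequently $V_e(c)$ is a genuine cell of $UD_n\Gamma$, $\iota(e)$ is still unblocked in $V_e(c)$ (replacing a vertex by a strictly smaller one cannot create a blocking obstruction for the larger vertices present), and, crucially, $W(V_e(c)) = c' := \{e, c_2,\ldots,c_{n-1}\}$ is exactly the cell obtained by promoting the smallest unblocked vertex — provided $\iota(e)$ is the smallest unblocked vertex in $V_e(c)$; one must check this, or rather argue around the case where it is not.

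The main case split I would organize the proof around is according to the status of $c$ under $W$. If $\iota(e)$ happens to be the smallest unblocked vertex of $c$, then $W(c)=\{e,c_2,\ldots,c_{n-1}\}$ as well (since $\iota(e)-\tau(e)=1$ forces the edge attached by $W$ to $\iota(e)$ to be precisely $e$), so $c$ is collapsible with $r(c)=1$; meanwhile $V_e(c)$, having $\iota(e)$ unblocked, is also collapsible or redundant, and one checks $r(V_e(c))$ also collapses to $1$ — here one uses the explicit redundant-cell formula for $r$ and the fact stated in the text that $k>1$ forces genuine cancellation. If $\iota(e)$ is not the smallest unblocked vertex of $c$, let $v_1<\iota(e)$ be the smallest unblocked vertex; then $v_1$ is also the smallest unblocked vertex of $V_e(c)$ (replacing $\iota(e)$ by $\iota(e)-1$ does not affect anything below $\iota(e)$, and we must confirm $\iota(e)-1$ does not newly become unblocked and smaller — but $\iota(e)-1=\tau(e)$, which is blocked in $V_e(c)$ precisely because... wait, I'd need to handle whether $\tau(e)$ is unblocked; since the edge $e$ itself is present in neither $c$ nor $V_e(c)$, and $V_e(c)$ contains the vertex $\tau(e)$, whether $\tau(e)$ is unblocked depends on the graph, so this subcase needs care). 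In the generic redundant subcase, both $c$ and $V_e(c)$ are redundant with respect to the same splitting vertex $v_1$ and the same edge, and the three-term formula for $r$ produces corresponding words that differ only in the position $\iota(e)$ versus $\tau(e)$ in the third (and possibly first) factor; one then applies the inductive hypothesis that $\tilde r$ agrees on cells related by lowering an unblocked vertex across a distance-one edge, after verifying the relevant vertices are still unblocked in the smaller-dimensional-index cells that appear.

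The cleanest way to run the induction is not on the cell dimension but on the pair (number of vertices strictly below $\iota(e)$ that are unblocked, length of the reduction sequence $k$ for $c$): each application of $r$ either kills $c$ outright, or lowers a vertex, or peels off critical/collapsible pieces, and in every branch the corresponding operation applied to $V_e(c)$ mirrors it, so $r^j(c)$ and $r^j(V_e(c))$ remain "equal up to a single $\iota(e)\leftrightarrow\tau(e)$ substitution at an unblocked position" until both stabilize, at which point the substitution must have been resolved (because a critical cell has no unblocked vertices, so $\iota(e)$ cannot survive as an unblocked isolated vertex in the final word) and the two rewritten words literally coincide. The hard part will be the bookkeeping in the case where $\iota(e)$ is the smallest unblocked vertex but $c$ is redundant rather than collapsible — i.e. disentangling exactly when $r(c)$ is a single edge-promotion versus the three-term redundant expansion, and checking that the edge appearing in the three-term formula for $V_e(c)$ is handled consistently; a secondary subtlety is confirming that $\tau(e)=\iota(e)-1$ being present as a vertex in $V_e(c)$ does not spoil the "same smallest unblocked vertex" claim, which I expect to follow from order-respecting/blocking definitions but needs to be written out.
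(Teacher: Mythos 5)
Your overall strategy is the same as the paper's: induct on the number $k$ of iterations of $r$ needed to stabilize $c$, split on whether $\iota(e)$ is the smallest unblocked vertex of $c$, and in the generic case compare the three factors of the redundant-cell formula for $r(c)$ with the corresponding factors for $r(V_e(c))$, closing the loop with the inductive hypothesis. One of the points you leave dangling is in fact a non-issue: whether $\tau(e)$ is blocked in $V_e(c)$ does not matter. Since $\iota(e)$ is unblocked in $c$, the vertex $\tau(e)$ does not appear in $c$ at all, so it is distinct from the smallest unblocked vertex $v_1$ of $c$; hence $v_1<\tau(e)<\iota(e)$, and $v_1$ remains the smallest unblocked vertex of $V_e(c)$ regardless of the status of $\tau(e)$ (any vertex newly unblocked by removing $\iota(e)$ is larger than $\iota(e)$). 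That is all the mirroring argument needs.

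The genuine gap is the case you yourself call ``the hard part'': when $\iota(e)$ \emph{is} the smallest unblocked vertex of $c$. Your treatment of it is wrong as written. If $W(c)=\{e,c_2,\dots,c_{n-1}\}$ is defined and non-void, then $c$ is \emph{redundant}, not collapsible --- it is $W(c)$ that is collapsible --- so $r(c)$ is the three-term product $c_{e'}^{\iota}\,V_e(c)\,(c_{e'}^{\tau})^{-1}$ rather than $1$, and to conclude $\tilde r(c)=\tilde r(V_e(c))$ one must show that the two outer factors, obtained from $W(c)$ by replacing its other edge $e'$ by $\iota(e')$ and by $\tau(e')$, have equal images under $\tilde r$. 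This is not an instance of the statement being proved: $\iota(e')-\tau(e')$ need not equal $1$ and $\iota(e')$ need not be unblocked in those cells, so the induction does not close here. The paper disposes of exactly this case by quoting the result of Farley and Sabalka in \cite{FS1} that the lemma holds when $\iota(e)$ is the smallest unblocked vertex; without importing or reproving that result, your argument is incomplete, and your ``the substitution must have been resolved because critical cells have no unblocked vertices'' heuristic does not substitute for it. A smaller slip of the same kind: $W(V_e(c))$ cannot equal $\{e,c_2,\dots,c_{n-1}\}$, since $W$ replaces the smallest unblocked vertex by the tree edge \emph{descending} from it, whereas $e$ ascends from the vertex $\tau(e)$ of $V_e(c)$.
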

\begin{proof}
We first remark that this result was proved by Farley and Sabalka in \cite{FS1} if $\iota(e)$ is the smallest unblocked vertex in $c$.
We proceed by induction on the minimal integer $k$ such that $r^k(c)=r^{k+1}(c)$. Since $c$ must be either collapsible or redundant, $k\ge 1$. If $k=1$, the $c$ is collapsible and $\tilde r(c)=\tilde r(V_e(c))=0$.

Assume that $k\ge2$. Let $e'$ be the edge in $c$ and $e_s$ be the edge in $\Gamma$ such that $\iota(e_s)$ is the smallest unblocked vertex in $c$. If $\iota(e)=\iota(e_s)$, then we are done by the result of Farley and Sabalka. Now assume $\iota(e)\neq\iota(e_s)$. Then we may write $r(c)=c_{e'}^{\iota}c_{e_s}^{\tau}(c_{e'}^{\tau})^{-1}$ where $c_{e'}^{\iota}$ and $c_{e'}^{\tau}$ are $1$-cells obtained from the collapsible 2-cell $W(c)$ by replacing an edge $e'$ by $\iota(e')$ and $\tau(e')$, respectively. Since $v$ is unblocked in $c_{e'}^{\iota}$, $c_{e_s}^{\tau}$ and  $c_{e'}^{\tau}$, all three $1$-cells $c_{e'}^{\iota}$, $c_{e_s}^{\tau}$ and  $c_{e'}^{\tau}$ are either collapsible or redundant. If $c_{e'}^{\iota}$ is collapsible, then $e_s$ is an order-respecting edge proceeded by no unblocked vertices, that are smaller than $\iota(e_s)$, in both $c_{e'}^{\iota}$ and $V_e(c_{e'}^{\iota})$ by our assumption. So $V_e(c_{e'}^{\iota})$ is collapsible and $\tilde r(c_{e'}^{\iota})=\tilde r(V_e(c_{e'}^{\iota}))$. On the other hand, if $c_{e'}^{\iota}$ is redundant, then $\tilde r(c_{e'}^{\iota})=\tilde r(V_e(c_{e'}^{\iota}))$ by induction because $c_{e'}^{\iota}$ requires one less iterations of $r$ to stabilize than $c$. Similarly we can show $\tilde r(c_{e_s}^{\tau})=\tilde r(V_e(c_{e_s}^{\tau}))$ and $\tilde r(c_{e'}^{\tau})=\tilde r(V_e(c_{e'}^{\tau}))$. Since $\iota(e_s)$ is still the smallest unblocked vertex in $V_e(c)$, $r(V_e(c))=V_e(c_{e'}^{\iota})V_e(c_{e_s}^{\tau})V_e(c_{e'}^{\tau})^{-1}$. Thus $\tilde r(c)=\tilde r(r(c))= \tilde r(c_{e'}^{\iota}c_{e_s}^{\tau}(c_{e'}^{\tau})^{-1})= \tilde r(V_e(c_{e'}^{\iota})V_e(c_{e_s}^{\tau})V_e(c_{e'}^{\tau})^{-1})=\tilde r(r(V_e(c)))=\tilde r(V_e(c))$.
\end{proof}

\begin{exa}\label{ex:S_0}
$B_4S_0$ is a right-angled Artin group.
\end{exa}

\begin{proof}
By Abrams' conjecture proved in Theorem~\ref{thm:homotopy equiv}, $B_4S_0$ is the fundamental group of $UD_4S_0$ with a subdivision, a maximal tree, and an order as given in Figure~\ref{fig2}.
Then $UD_4S_0$ has 10 critical 1-cells $\{d,0,1,2\}$,
$\{d,0,1,4\}$, $\{d,0,4,5\}$, $\{d,4,5,6\}$,
$\{6\mbox{-}10,0,1,7\}$, $\{6\mbox{-}10,0,7,11\}$,
$\{6\mbox{-}10,0,7,8\}$, $\{6\mbox{-}10,7,11,12\}$,
$\{6\mbox{-}10,7,8,11\}$, $\{6\mbox{-}10,7,8,9\}$, and $4$ critical
2-cells $\{d,6\mbox{-}10,0,7\}$, $\{d,6\mbox{-}10,4,7\}$,
$\{d,6\mbox{-}10,7,11\}$, $\{d,6\mbox{-}10,7,8\}$.

The boundary word of the critical 2-cell $\{d,6\mbox{-}10,0,7\}$ is
the product
$$\{6\mbox{-}10,0,7,12\}\{d,0,6,7\}
\{6\mbox{-}10,0,3,7\}^{-1}\{d,0,7,10\}^{-1}.$$ By Lemma~\ref{lem:rewriting},
$\tilde r(\{d,0,6,7\})=\tilde r(\{d,0,5,7\})=\tilde r(\{d,0,4,7\})=\tilde r(\{d,0,4,6\})=\tilde r(\{d,0,4,5\})=\{d,0,4,5\}$. Similarly, we have $\tilde
r(\{6\mbox{-}10,0,7,12\})=\{6\mbox{-}10,0,7,11\}$, $\tilde
r(\{6\mbox{-}10,0,3,7\})=\{6\mbox{-}10,0,1,7\}$, and $\tilde
r(\{d,0,7,10\})=\{d,0,4,5\}$. Consequently, the boundary word is
rewritten in terms of critical 1-cells as follows: $$\tilde
r(\partial(\{d,6\mbox{-}10,0,7\}))
=\{6\mbox{-}10,0,7,11\}\{d,0,4,5\}\{6\mbox{-}10,0,1,7\}^{-1}\{d,0,4,5\}^{-1}.$$
Apply the rewriting map, we can obtain the boundary words of other
critical 2-cells in terms of critical 1-cells as follows:
\begin{align*}
\tilde r(\partial(\{d,6\mbox{-}10,4,7\}))&
=\{6\mbox{-}10,0,7,11\}\{d,4,5,6\}\{6\mbox{-}10,0,1,7\}^{-1}\{d,4,5,6\}^{-1},\\
\tilde r(\partial(\{d,6\mbox{-}10,7,11\}))&
=\{6\mbox{-}10,7,11,12\}\{d,4,5,6\}\{6\mbox{-}10,0,7,11\}^{-1}\{d,4,5,6\}^{-1},\\
\tilde r(\partial(\{d,6\mbox{-}10,7,8\}))&
=\{6\mbox{-}10,7,8,11\}\{d,4,5,6\}\{6\mbox{-}10,0,7,8\}^{-1}\{d,4,5,6\}^{-1}.
\end{align*}
We eliminate the generators $\{6\mbox{-}10,0,7,11\}$,
$\{6\mbox{-}10,7,11,12\}$ and $\{6\mbox{-}10,7,8,11\}$ by applying a
Tietze transformation. Then there are seven generators
\begin{gather*}
\{d,0,1,2\},\{d,0,1,4\},\{d,0,4,5\},\{d,4,5,6\},\\\{6\mbox{-}10,0,1,7\},
\{6\mbox{-}10,0,7,8\},\{6\mbox{-}10,7,8,9\}.
\end{gather*}
and one relation
$[\{d,0,4,5\}^{-1}\{d,4,5,6\},\{6\mbox{-}10,0,1,7\}]$. And we can
introduce a new generator $g$ by adding the relation
$g=\{d,0,4,5\}^{-1}\{d,4,5,6\}$ and then eliminate the generator
$\{d,4,5,6\}$ by applying a Tietze transformation. Then we obtain
the presentation of $B_4S_0$ with seven generators $g$,
$\{d,0,1,2\}$, $\{d,0,1,4\}$, $\{d,0,4,5\}$,
$\{6\mbox{-}10,0,1,7\}$, $\{6\mbox{-}10,0,7,8\}$,
$\{6\mbox{-}10,7,8,9\}$ and one relator $[g,\{6\mbox{-}10,0,1,7\}]$.
Thus $B_4(S_0)$ is a right-angled Artin group. Note that $B_n(S_0)$
is not a right-angled Artin group for $n\ge5$, as we will see in
\S\ref{s:four}.
\end{proof}

We now explain how to obtain a geometric graph braid corresponding to a critical 1-cell. Given a graph $\Gamma$ ordered by a planar embedding of its maximal tree, the 1-skeleton of the Morse complex is a bouquet consisting of critical 1-cells. So the subcomplex consisting of all 0-cells and all collapsible 1-cells in $UD_n\Gamma$ forms a maximal tree $S$ of the 1-skeleton of $UD_n\Gamma$. Let $c=\{e,v_1,\cdots,v_{n-1}\}$ is a critical 1-cell in $UD_n\Gamma$ and $0_n=\{0,1,\cdots,n-1\}$ be the base point of $UD_n\Gamma$. Then there are unique edge paths $\alpha$ from $0_n$ and $\{\iota(e),v_1,\cdots,v_{n-1}\}$ and $\beta$ from $\{\tau(e),v_1,\cdots,v_{n-1}\}$ and $0_n$ in the tree $S$. The edge loop $\alpha e\beta$ is a typical generator of $B_n\Gamma$. The edge loop uniquely corresponds to a geometric $n$-braid on $\Gamma$ given by the motion of $n$ vertices in $0_n$ along $\alpha e\beta$.

For example, the critical 1-cell $\{6\mbox{-}10,7,8,11\}$ of $UD_4S_0$ in Example~\ref{ex:S_0} determines the edge loop given by
\begin{align*}
&\{0,1,2,3\}\to\{0,1,2,4\}\to\{0,1,2,5\}\to\cdots\to\{0,1,2,11\}\to\{0,1,3,11\}\to\cdots\\
&\to\{0,1,10,11\}\to\cdots\to\{0,8,10,11\}\to\{1,8,10,11\}\to\cdots\to\{7,8,10,11\}\\
&\to\{6,7,8,11\}\to\{5,7,8,11\}\to\cdots\to\{0,7,8,11\}\to\{0,6,8,11\}\to\cdots\\
&\to\{0,1,8,11\}\to\{0,1,7,11\}\to\cdots\to\{0,1,2,11\}\to\cdots\to\{0,1,2,3\}
\end{align*}
where the 1-cell $\{6\mbox{-}10,7,8,11\}$ gives the edge path $\{7,8,10,11\}\to\{6,7,8,11\}$. This loop corresponds to the geometric 4-braid in Figure~\ref{fig4}(a).

Similarly the critical 1-cell $\{d,0,1,2\}(=\{0,1,2,12\}\to\{0,1,2,3\})$ of $UD_4S_0$, determines the edge loop given by
$$\{0,1,2,3\}\to\{0,1,2,4\}\to\cdots\to\{0,1,2,12\}\to\{0,1,2,3\}$$
and corresponds to the geometric 4-braid in Figure~\ref{fig4} (b).

\begin{figure}[ht]
\subfigure[$\{6\mbox{-}10,7,8,11\}$]
{\includegraphics[height=2.8cm]{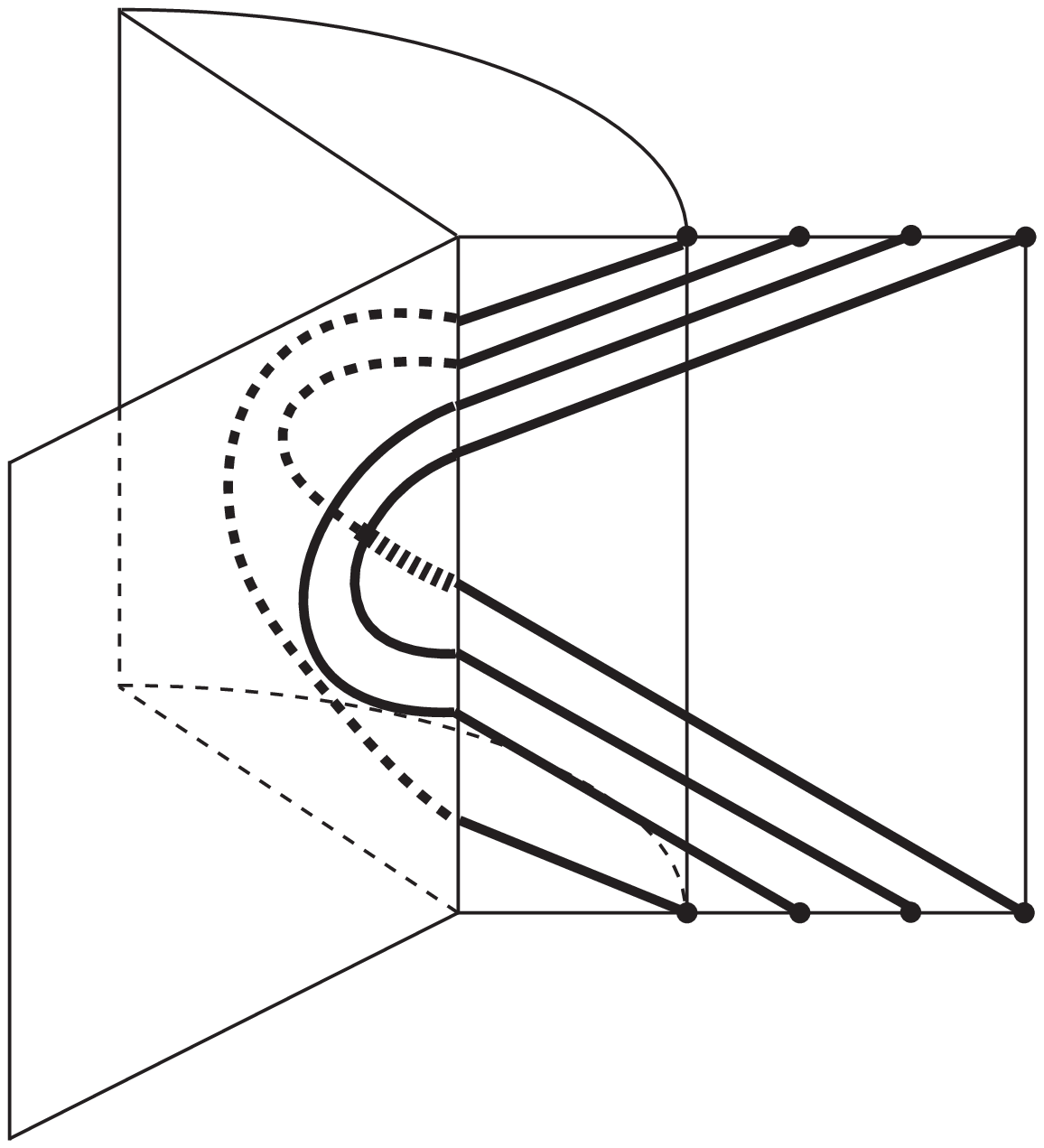}}\qquad\qquad
\subfigure[$\{d,0,1,2\}$]
{\includegraphics[height=2.5cm]{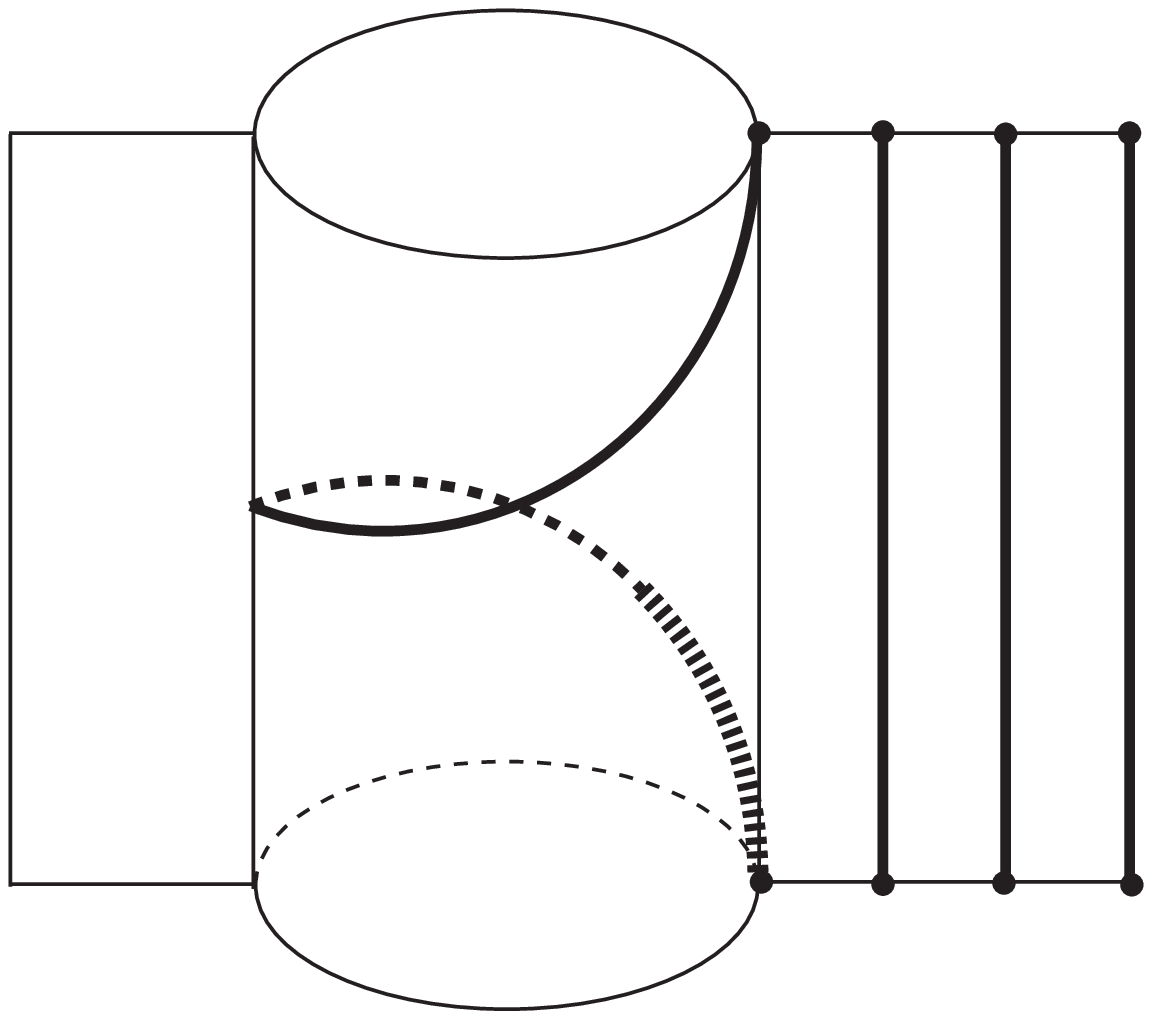}}
\caption{Geomtric graph braids on $S_0$}
\label{fig4}
\end{figure}

Since geometric braids permute the $n$ vertices in $0_n$, there is a natural homomorphism $\pi$ from $B_n\Gamma$ into the symmetric group $S_n$.  If the vertex $i$ in $0_n$ is regarded as a puncture labeled by $n-i$ for $0\le i\le n-1$,
then $\pi(\{6\mbox{-}10,7,8,11\})=(4,3,2)$ and $\pi(\{d,0,1,2\})$ is the identity.
The homomorphism $\pi$ obviously depends on the choice of labels but it is well-defined up to conjugation. Thus for a graph $\Gamma$ ordered by a planar embedding of its maximal tree, there is a well-defined homomorphism $$\pi_*:H_1(B_n\Gamma)\to H_1(S_n)=\mathbb Z_2$$ which will be useful in \S\ref{ss44:nonplanar}.

When we work with an arbitrary graph of an arbitrary braid index
$n$, it is convenient to represent cells of $UD_n(\Gamma)$ by using
the following notations adapted from \cite{FS1}. Let $A$ be a vertex
of valency $\mu\ge 3$ in a maximal tree of $\Gamma$. Starting from
the branch clockwise next to the branch joining to the base vertex, we
number branches incident to $A$ clockwise. Let $\vec a$ be a vector
$(a_1,\ldots,a_{\mu-1})$ of nonnegative integers and let $|\vec
a|=\sum_{i=1}^{\mu-1} a_i$. Then for $1\le k\le \mu-1$, $A_k(\vec
a)$ denotes the set consisting of one edge $e$ with $\tau(e)=A$ that
lies on the $k$-th branch together with $a_i$ blocked vertices that
lies on the $i$-th branch if $i\neq k$ and $a_k-1$ blocked vertices
on the $k$-th branch. For $1\le s\le n$, $0_s$ denotes the set
$\{0,1,\ldots,s-1\}$ of $s$ consecutive vertices from the base vertex.
Let $\dot A(\vec a)$ denote the set of vertices consisting of $A$
together with $a_i$ blocked vertices that lies on the $i$-th branch
and let $A(\vec a)=\dot A(\vec a)-\{A\}$. Then $A(\vec a)$ can be
obtained from $A_k(\vec a)$ by replacing an edge $e$ with
$\iota(e)$. Every critical $i$-cell is represented by the following
union:
$$ A^1_{k_1}(\vec a^1)\cup
\ldots\cup A^p_{k_p}(\vec a^p)\cup
\{d_1,\ldots,d_q\}\cup\{v_1,\ldots,v_r\}\cup 0_s,$$ where
$A^1,\ldots,A^p$ are vertices of valency $\ge 3$, and
$d_1,\ldots,d_q$ are deleted edges, and $v_1,\ldots,v_r$ are blocked
vertices blocked by deleted edges. Furthermore, since $s$ is
uniquely determined by $s=n-(|\vec a^1|+\cdots+|\vec a^p|+q+r)$, we
will omit $0_s$ in the notation.

We now prove the conjecture by Abrams for braid index $n\ge 3$. Abrams himself prove the conjecture for $n=2$ in \cite{Ab}.
\begin{thm}\label{thm:homotopy equiv}
Suppose that $\Gamma$ is a graph with at least $n(\ge 3)$ vertices such that each path
joining two vertices of valency $\ne 2$ contains at least $n-1$ edges, and each
simple loop contains at least $n+1$ edges.
Then
$UD_n\Gamma$($D_n\Gamma$, respetively) is a deformation retract of $UC_n\Gamma$($C_n\Gamma$, respetively).
\end{thm}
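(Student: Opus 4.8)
The plan is to adapt Abrams' original discrete-to-topological deformation retraction argument, pushing his bound from $n+1$ down to $n-1$ on tree edges (keeping $n+1$ on cycles) by a more careful local analysis. First I would recall the structure of Abrams' proof: one builds a deformation retraction $UC_n\Gamma \to UD_n\Gamma$ cube by cube, or equivalently one shows that every configuration of $n$ points can be pushed, canonically and continuously, into a discrete configuration by sliding points along edges toward vertices. The subdivision hypotheses are exactly what guarantee that, when two points share an edge or a short loop, there is always ``room'' to separate them by pushing toward the endpoints without colliding with a third point. The content of the improvement is that on a path between essential vertices, $n-1$ edges already suffice to hold $n$ points in the combinatorial sense needed once one is slightly cleverer about which point moves first, while on a genuine loop one genuinely needs $n+1$.

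Second, I would set up the retraction explicitly. Subdivide $\Gamma$ to $\Gamma'$ so that the hypotheses hold, and note $UC_n\Gamma = UC_n\Gamma'$ and likewise for a suitable enlargement it suffices to retract $UC_n\Gamma'$ onto $UD_n\Gamma'$. The map is defined by a finite composition of elementary homotopies: for each point lying in the interior of an edge, flow it monotonically toward the nearer (or lower, using the vertex order from Step~I) endpoint, processing points in order of increasing vertex-index, and halt a point as soon as it would occupy or collide at a vertex already ``claimed.'' One checks that at each stage the set of points is still a topological configuration (no collisions), that the process terminates with a discrete configuration, that it is the identity on $UD_n\Gamma'$, and that it varies continuously — hence is a strong deformation retraction. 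The discrete Morse-theoretic rewriting machinery, in particular Lemma~\ref{lem:rewriting}, is the tool I would use to verify the combinatorial bookkeeping: it tells us precisely when a ``block'' occurs and lets us track the configuration through the collapses, and the $n-1$ versus $n+1$ distinction emerges from counting how many consecutive blocked vertices can pile up along a path versus around a cycle.

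Third, the heart of the argument is the collision-avoidance estimate. The claim reduces to: if $k\le n$ points lie on a path $P$ of $\ell$ edges between two vertices of valency $\ne 2$ with no other points in the interior of $P$, and $\ell \ge n-1 \ge k-1$, then the flow separates them into distinct vertices of $P$ without any point needing to leave $P$; and the analogous statement on a simple cycle requires $\ell \ge n+1$ because the two ``ends'' of the path wrap around and one loses two units of slack. I would prove this by an explicit induction on $k$: push the highest-indexed point to the top vertex of $P$, observe this frees an edge, and recurse on the remaining $k-1$ points on the remaining $\ell - 1 \ge k-2$ edges; the base case $k=1$ is trivial. On a cycle the same induction runs but the first and last vertices are identified, so one needs one extra edge at the start and one at the end, giving $\ell \ge (n-1)+2 = n+1$. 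The main obstacle I expect is the \emph{continuity and well-definedness of the global retraction} when several short paths and loops interact simultaneously at a vertex of high valency — one must check that the order-of-processing convention makes the composite homotopy independent of incidental choices and genuinely continuous across the walls where a point transitions from ``moving'' to ``halted,'' and that Abrams' example showing failure below $n-1$ edges is not violated. This is exactly the place where the sharp counting, and the observation (already noted in the excerpt) that $r(c)$ is never a product of three critical cells for redundant $c$, has to be invoked to rule out the pathological flows.
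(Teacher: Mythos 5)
Your proposal takes a genuinely different route from the paper, but as written it has a gap at exactly the point where the theorem is hard. The paper does \emph{not} construct a deformation retraction of $UC_n\Gamma$ onto $UD_n\Gamma$ directly. Instead it passes to a finer subdivision $\Gamma'$ satisfying Abrams' original $(n+1)$-edge condition, compares the discrete Morse presentations of $\pi_1(UD_n\Gamma)$ and $\pi_1(UD_n\Gamma')$, and shows they are Tietze-equivalent: the only new critical cells in $\Gamma'$ come from ``restrictive pairs'' and pair off into a bijection $h:K_2'-K_2\to K_1'-K_1$ with each extra relator containing its extra generator exactly once. The resulting isomorphism on $\pi_1$ is then realized by a cellular inclusion $\overline{UD}_n\hookrightarrow UD_n\Gamma'$, and --- crucially --- upgraded to a homotopy equivalence using the fact that both complexes are nonpositively curved cube complexes and hence aspherical. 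Abrams' theorem is only invoked for the finer graph $\Gamma'$. The asphericity trick is what lets the paper avoid ever building an explicit retraction at the critical bound $n-1$.

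Your plan, by contrast, is to run Abrams' geometric flow ``more carefully.'' The gap is that the step you yourself flag as the main obstacle --- continuity and well-definedness of the composite homotopy across the walls where a point switches from moving to halted, and independence of the processing order when several minimally subdivided paths meet at a vertex of high valency --- is precisely the content of the theorem at the bound $n-1$, and you give no argument for it. Your induction in the third paragraph only tracks where the points \emph{end up}; it does not produce a homotopy through configurations, and it does not explain why the assignment varies continuously as a configuration slides a point across an essential vertex (where the target $0$-cell of the flow jumps). Note also that at $n-1$ edges the discrete complex is genuinely missing cells relative to the $(n+1)$-subdivided one --- this is the source of the extra generators and relators the paper cancels by a \emph{global} Tietze transformation --- so a local, cube-by-cube retraction has to account for a cancellation that is not visible locally. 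Finally, the tools you propose for the ``combinatorial bookkeeping,'' namely Lemma~\ref{lem:rewriting} and the remark that $r(c)$ is never a product of three critical cells, are statements about rewriting edge-loops in the Morse complex of an already sufficiently subdivided graph; they say nothing about collision avoidance or continuity of a topological flow on $UC_n\Gamma$, so they cannot close this gap.
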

\begin{proof}
We first choose a maximal tree $T$ of $\Gamma$ so that one of end vertices of every deleted edge has valency $\ge 3$ in $\Gamma$ and the base vertex 0 has valency 1 in $T$ and the path between 0 and the nearest vertex of valency $\ge 3$ in $T$ contains at least $n-1$ edges. Then we give an order on vertices of $T$ by choosing a planar embedding of $T$. Let $\Gamma'$ be obtained
by subdividing $\Gamma$ so that each path joining two vertices of valency $\ge 3$ contains at least $n+1$ edges. When we subdivide, we do not touch any deleted edge.
We note that the assumption $n\ge3$ is needed here.
A maximal tree $T'$ of $\Gamma'$ and its planar embedding are obtained by subdividing $T$ so that an order on $T'$ is assigned accordingly. Since $\Gamma'$ is a subdivision of $\Gamma$, vertices of valency $\ge 3$ in $\Gamma$ and in $\Gamma'$ coincide and deleted edges in $\Gamma$ and in $\Gamma'$ also coincide. We use the same notation to denote two coinciding objects.
Let $K_i$ (and $K'_i$, respectively) be the set of all critical $i$-cells in $UD_n\Gamma$ (and $UD_n\Gamma'$). Each cell in $K_i$ containing non-order-respecting edges given by $A_k(\vec a)$'s and deleted edges $d$'s can be identified with the $i$-cell in $K'_i$ containing the non-order-respecting edges and the deleted edges that are denoted by the same notations. Thus $K_i$ can be naturally regarded as a subset of $K'_i$ for each $i$. We remark that a non-order-respecting edge in $K'_i$ determined by $A_k(\vec a)$ may be a subdivided part of the correspond non-order-respecting edge in $K_i$.

Due to our requirement on the base vertex, $K_0=K'_0$ consists of a single critical 0-cell. Let $\tilde r$ ($\tilde r'$, respectively) be the rewriting map of 1-cells in terms of critical 1-cells in $UD_n\Gamma$ (and $UD_n\Gamma'$).
The fundamental group $\pi_1(UD_n\Gamma)$ ($\pi_1(UD_n\Gamma')$, respectively) is generated over $K_1$ ($K'_1$, respectively) and related by $\tilde r(\partial c)$ for each $c\in K_2$ ($K'_2$, respectively). Our first goal is to obtain an isomorphism between $\pi_1(UD_n\Gamma)$ and $\pi_1(UD_n\Gamma')$ which can be established by applying Tietze transformations if we show the following two facts:
\begin{itemize}
\item[(1)] For each $c\in K_2$, $\tilde r(\partial c)=\tilde r'(\partial c)$;
\item[(2)] There is a bijection $h: K'_2-K_2\to K'_1-K_1$ such that for each $c\in K'_2-K_2$, the word $\tilde r'(\partial c)$ contains $h(c)$ or $h(c)^{-1}$ exactly once.
\end{itemize}

For a vertex $v$ in a tree $T$ ordered by its planar embedding, the location of $v$ is completely determined by the triple $(Y_T(v),\beta_T(v),\delta_T(v))$ where $Y_T(v)$ is the nearest smaller vertex of valency $\ge 3$ or the base vertex 0 if no such vertex exists, and $v$ lies on the $\beta_T(v)$-th branch of $Y_T(v)$ clockwise from the branch to the base vertex, and $\delta_T(v)$ is the number of edges in the shortest path from $v$ to $Y_T(v)$. If $v$ itself is of  valency $\ge 3$ or the base vertex, then $\beta_T(v)=\delta_T(v)=0$. A 1-cell $b'$ in $UD_n\Gamma'$ with a non-order-respecting edge or a deleted edge is said to be {\em realizable in $\Gamma$} if there is a 1-cell $b$ in $UD_n\Gamma$ with the same non-order-respecting edge or the same deleted edge such that for each $v'\in b'$, $(Y_{T'}(v'),\beta_{T'}(v'),\delta_{T'}(v'))=(Y_T(v),\beta_T(v),\delta_T(v))$ for some $v\in b$.

For $c\in K_2$, we try to compute $\tilde r'(\partial c)$. Whenever we encounter a redundant 1-cell $b'$ that is not realizable in $\Gamma$ on the course of this computation, $b'$ must contain exactly one vertex $v'$ such that $\delta_{T'}(v')>\delta_T(v)$ for any vertex $v$ of $T$ satisfying $Y_T(v)=Y_{T'}(v')$ and $\beta_T(v)=\beta_{T'}(v')$. We move $v'$ forward by applying $V_e$ in Lemma~\ref{lem:rewriting} as many times as needed until $b'$ becomes realizable in $\Gamma$. It is now clear that $\tilde r$ and $\tilde r'$ yield the same output. This prove the part (1).

To show the part (2), we first characterize critical cells in $K'_2-K_2$ and $K'_1-K_1$.
A pair of vertices $A, B$ of valency $\ge 3$ in $\Gamma$  are said to be {\em restrictive} if the shortest path joining $A$ and $B$ in $\Gamma$ contains exactly $n-1$ edges, and every vertex on the interior of the path has valency 2 in $\Gamma$, and there is a deleted edge $d$ on the path such that its one end vertex is $B$. In order to have $K'_2-K_2\ne\emptyset$ and $K'_1-K_1\ne\emptyset$, it is necessary that there is at least one restrictive pair in $\Gamma$.  Let $\mu$ be the valency of $A$ in $T$ and assume that $d$ is on the $k_0$-th branch at $A$ for some $1\le k_0 \le\mu$. Let $\vec\delta_i$ be the $\mu$-dimensional unit vector whose $i$-th component equals 1.

There are three kinds of critical 2-cells in $K'_2-K_2$ that can be described by using the notation of a restrictive pair(See Figure~\ref{fig5}):
\begin{enumerate}
\item[(i)] $d\cup A_k((n-2)\vec\delta_{k_0}+\vec\delta_k)$ for $k_0<k\le\mu$;
\item[(ii)] $d\cup d'\cup A((n-2)\vec\delta_{k_0})$ for a deleted edge $d'$ with an end vertex $A$;
\item[(iii)] $d\cup A_{k_0}((n-2)\vec\delta_{k_0}+\vec\delta_k)$ for some $1\le k< k_0$.
\end{enumerate}

\begin{figure}
\psfrag{A}{\small$A$}
\psfrag{B}{\small$B$}
\psfrag{d}{\small$d$}
\psfrag{n1}{\small$n-1$ vertices}
\psfrag{n2}{\small$n-2$ vertices}
\psfrag{d1}{\small$d'$}
\psfrag{0}{\small$0$}
\centering
\subfigure[Restrictive pair]
{\includegraphics[scale=.29]{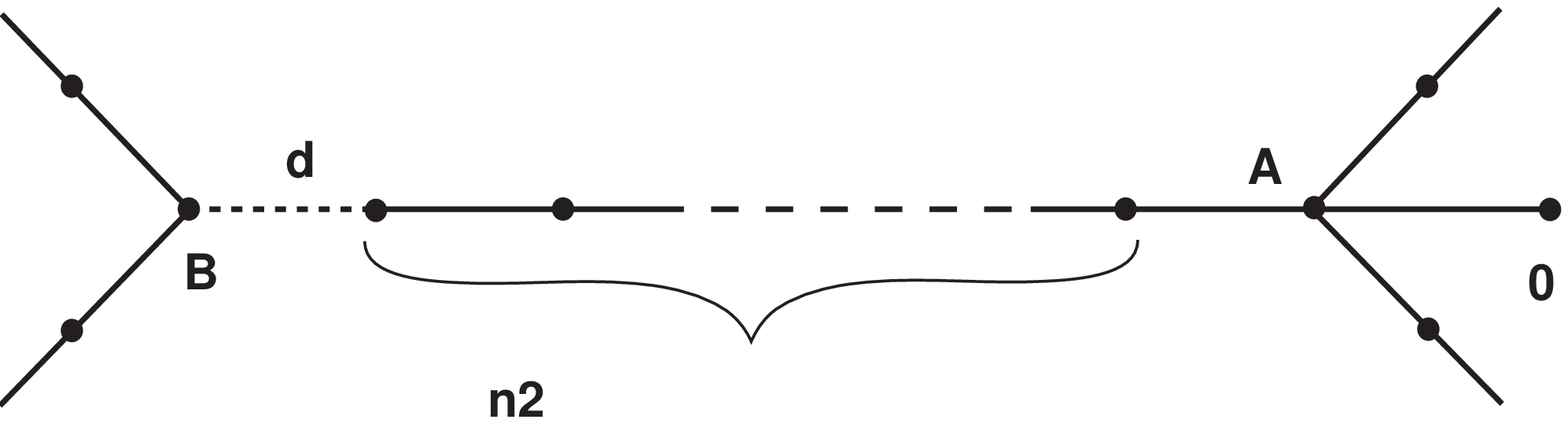}}\qquad
\subfigure[$A_k((n-1)\vec\delta_{k_0}+\vec\delta_k)$]
{\includegraphics[scale=.29]{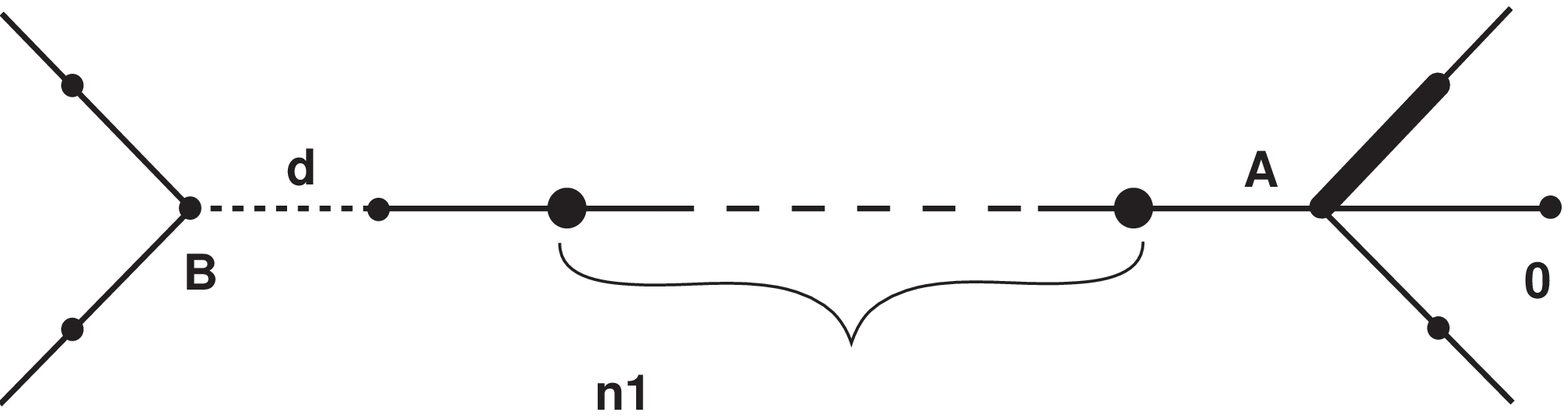}}\\
\subfigure[$A_{k_0}((n-1)\vec\delta_{k_0}+\vec\delta_k)$]
{\includegraphics[scale=.29]{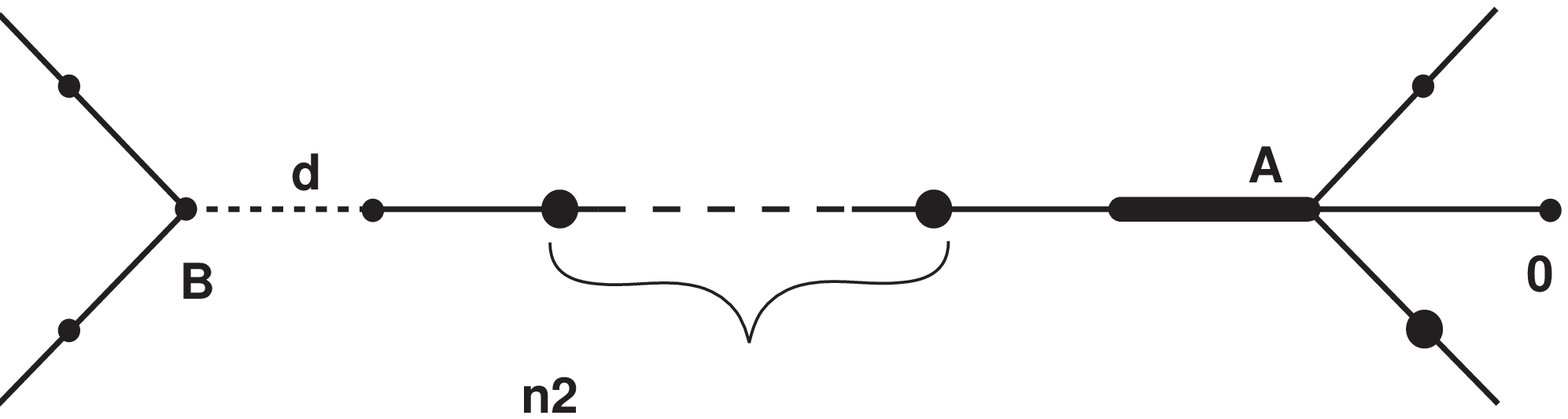}}\qquad
\subfigure[$d'\cup A((n-1)\vec\delta_{k_0})$]
{\includegraphics[scale=.29]{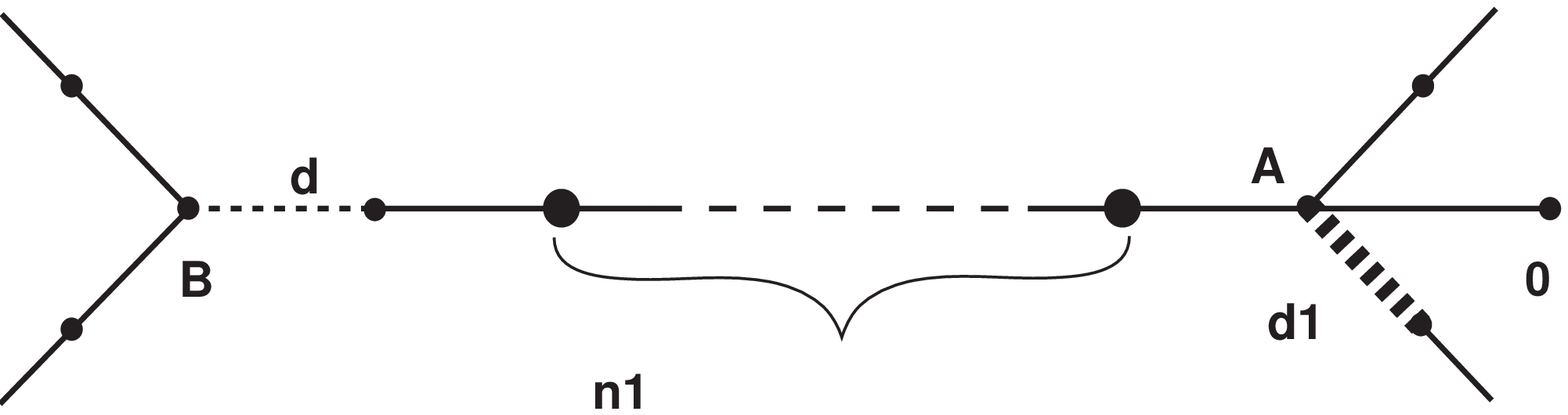}}
\caption{Critical 1-cells in $K'_1-K_1$}\label{fig5}
\end{figure}

There are also corresponding three kinds of critical 1-cells in $K'_1-K_1$:
\begin{enumerate}
\item[(i)] $A_k((n-1)\vec\delta_{k_0}+\vec\delta_k)$ for $k_0<k\le\mu$;
\item[(ii)] $d'\cup A((n-1)\vec\delta_{k_0})$ for a deleted edge $d'$ with an end vertex $A$;
\item[(iii)] $A_{k_0}((n-1)\vec\delta_{k_0}+\vec\delta_k)$ for some $1\le k< k_0$.
\end{enumerate}
We define a bijection $h: K'_2-K_2\to K'_1-K_1$ by sending a critical 2-cell in $K'_2-K_2$ to the corresponding critical 1-cell in $K'_1-K_1$. It is not hard to see that the word $\tilde r'(\partial c)$ contains $h(c)$ or $h(c)^{-1}$ exactly once. For example, $\partial(d\cup A_k((n-2)\vec\delta_{k_0}+\vec\delta_k))$ contains $\{v_d\}\cup A_k((n-2)\vec\delta_{k_0}+\vec\delta_k)$ or its inverse exactly once where $v_d$ is the end vertex of $d$ that is not $B$. But
$$\tilde r'(\{v_d\}\cup A_k((n-2)\vec\delta_{k_0}+\vec\delta_k))=\tilde r'(A_k((n-1)\vec\delta_{k_0}+\vec\delta_k))$$
by Lemma~\ref{lem:rewriting}.

In order to realize this isomorphism of fundamental groups by a continuous map, we need to consider an intermediate cubical complex $\overline{UD}_n$ obtained by subdividing $UD_n\Gamma$ as follows: Whenever an edge $e$ in $\Gamma$ is subdivided into $e_1,\ldots,e_\ell$ in $\Gamma'$, we replace each cell $\{e,x_1\ldots x_i\}$ containing $e$ by $(2\ell-1)$ cells $\{e_1,x_1\ldots x_i\}$, $\{v_1,x_1\ldots x_i\}$, $\{e_2,x_1\ldots x_i\},\ldots, \{v_{\ell-1},x_1\ldots x_i\}$, $\{e_\ell,x_1\ldots x_i\}$ where $v_j=\tau(e_j)=\iota(e_{j+1})$. Then there is an obvious cubical embedding $\bar i:\overline{UD}_n\hookrightarrow UD_n\Gamma'$. We remark that $UD_n\Gamma'$ have more cells such as one containing more than one vertices or edges produced by a subdivision of an edge. The cubical complex $\overline{UD}_n$ still has the obvious 0-cell as the basepoint.
A critical cell $c$ in $UD_n\Gamma$ may become a union $\sigma$ of cells $\overline{UD}_n$. Except the critical cell corresponding to $c$ in $UD_n\Gamma'$, all other cells in $\bar i(\sigma)$ inevitably contain an order-respecting edge preceded by an unblocked vertex and so they are redundant in $UD_n\Gamma'$.
Thus the isomorphism $:\pi_1(UD_n\Gamma)\to\pi_1(UD_n\Gamma')$ established above is induced from the composite of the identity map $:UD_n\Gamma\to\overline{UD}_n$ and the inclusion $\bar i:\overline{UD}_n\to UD_n\Gamma'$.

Since both $\overline{UD}_n$ and $UD_n\Gamma'$ are $K(\pi,1)$-spaces, $\bar i$ is a homotopy equivalence.
As Abrams showed in \cite{Ab}, $UD_n\Gamma'$ is a deformation retract of $UC_n\Gamma$.
Thus the composition of two inclusions $\bar i:\overline{UD}_n\hookrightarrow UD_n\Gamma'$ and $UD_n\Gamma'\hookrightarrow UC_n\Gamma$ is a homotopy equivalence. Since involved cubical complexes are all finite, this composition has the homotopy extension property and so $\overline{UD}_n$ is a deformation retract of $UC_n\Gamma$.
Since $\overline{UD}_n$ and $UD_n\Gamma$ share the same underlying space, we complete a proof for the unordered case.

Recall the covering $D_n\Gamma\to UC_n\Gamma=D_n\Gamma/S_n$ for the symmetric group $S_n$. Let $\bar p:\overline{D}_n\to\overline{UD}_n$ be the covering with the deck transformation group $S_n$. It is clear that the cover $\overline{D}_n$ can be obtained by subdividing $D_n\Gamma$ in a similar way to subdividing $UD_n\Gamma$ to obtain $\overline{UD}_n$ so that there is a cubical embedding $\tilde i: \overline{D}_n\to D_n\Gamma'$. Then we naturally have $\bar i\bar p=p'\tilde i$ where $p':D_n\Gamma'\to UD_n\Gamma'$ is the $S_n$-covering. Then $\tilde i_*:\pi_1(\overline{D}_n)\to\pi_1(D_n\Gamma')$ is injective since $\bar i_*$ is an isomorphism. Considering indices of subgroups, we have
$$[\pi_1(UD_n\Gamma):\im p'_*][\pi_1(D_n\Gamma'):\im\tilde i_*]=[\pi_1(UD_n\Gamma'):\im\bar i_*][\pi_1(\overline{UD}_n):\im\bar p_*]$$ and so $[\pi_1(D_n\Gamma'):\im\tilde i_*]=1$. Thus $\tilde i_*$ is an isomorphism. Since both $\overline{D}_n$ and
$D_n\Gamma'$ are $K(\pi,1)$-spaces, $\tilde i$ is a homotopy equivalence. By a similar argument to the unordered case, we obtain the ordered case.
\end{proof}

It is clear from the above proof that the existence of restrictive pairs is not only necessary but also sufficient to guarantee $K'_2-K_2\ne\emptyset$ and $K'_1-K_1\ne\emptyset$ perhaps except the case $n=2$. It is also clear that if each path in $\Gamma$ joining two vertices of valency $\ge 3$ contains at least $n$ edges, then $K'_2-K_2=K'_1-K_1=\emptyset$.

\subsection{Proof of Theorem A}\label{ss23:thma}
We say that a graph $\Gamma$ \emph{contains} another graph $\Gamma'$
if a subdivision of $\Gamma'$ is a subgraph of a subdivision of
$\Gamma$.

\begin{thma}
If a graph $\Gamma$ does not contain $T_0$ and $S_0$ in
Figure~\ref{fig1}, then $B_n(\Gamma)$ is a right-angled Artin group
for any braid index.
\end{thma}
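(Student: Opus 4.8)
The plan is to show that if $\Gamma$ contains neither $T_0$ nor $S_0$, then $\Gamma$ is essentially a ``linear'' graph in which every vertex of valency $\geq 3$ interacts with the rest of the graph in a very restricted way, so that the Morse-theoretic presentation of $B_n\Gamma$ from \S\ref{ss22:morse} has only commutator relators among critical $1$-cells, and moreover each such commutator is a commutator of generators (after the Tietze moves illustrated in Example~\ref{ex:S_0}). First I would establish a structural lemma: a connected graph contains neither $T_0$ nor $S_0$ if and only if it has at most one vertex of valency $\geq 3$, or it is a cycle, or more generally its vertices of valency $\geq 3$ are arranged ``linearly'' with no branching that would create a $T_0$ and no extra cycle through a branch vertex that would create an $S_0$. (The precise list of allowed local pictures at a branch vertex is the combinatorial heart of this step.) This reduces Theorem~A to a bounded list of graph types.

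Next, for each allowed type I would write down the critical cells using the $A_k(\vec a)$, $d_j$, $0_s$ notation introduced just before Theorem~\ref{thm:homotopy equiv}. The key point is that, because of the absence of $T_0$ and $S_0$, any two critical $1$-cells that have a critical $2$-cell between them must involve ``disjoint'' pieces of the graph (one supported near one branch vertex or deleted edge, the other near another), so the boundary word of every critical $2$-cell, after applying the rewriting map $\tilde r$ and Lemma~\ref{lem:rewriting}, is a commutator $[x,y]$ of two of the generators. I would argue this by the same bookkeeping as in Example~\ref{ex:S_0}: each critical $2$-cell of the form $d\cup d'\cup(\cdots)$, $d\cup A_k(\cdots)$, or $A_k(\cdots)\cup A'_{k'}(\cdots)$ contributes a relator whose four boundary $1$-cells rewrite, via Lemma~\ref{lem:rewriting}, to $u\,v\,u^{-1}\,v^{-1}$ for critical $1$-cells $u,v$; and one checks that distinct $2$-cells never force ``chained'' relators like $[ab,c]$ that cannot be straightened. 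Then a Tietze transformation replacing any product that appears as a commutator-entry by a fresh generator (exactly as $g=\{d,0,4,5\}^{-1}\{d,4,5,6\}$ in Example~\ref{ex:S_0}) turns the presentation into a right-angled Artin presentation.

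Finally I would handle the low braid indices and small graphs separately: for $n\leq 3$ one may invoke that tree braid groups (and, with a short direct check, the cycle and the allowed one-branch-vertex graphs) are right-angled Artin by \cite{FS2}; for $n\geq 4$ the argument above applies. I expect the main obstacle to be the first, purely graph-theoretic, step together with the claim that no critical $2$-cell produces a ``non-straightenable'' relator: one must rule out configurations in which a single critical $1$-cell appears in the boundary of several critical $2$-cells in a way that entangles the relators, and this is precisely where the hypothesis ``no $T_0$ and no $S_0$'' is used — a $T_0$ would allow two branch directions at one vertex to be simultaneously active (giving a relator of commutator type but with a product on one side), and an $S_0$ would allow a deleted edge to loop back through a branch vertex, again destroying the clean commutator structure. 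Once the structural lemma pins down the local models, the rewriting computations are routine extensions of Example~\ref{ex:S_0}, so the bulk of the work is organizational rather than computational.
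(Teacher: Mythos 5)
Your proposal follows essentially the same route as the paper: the structural observation that a graph with no $T_0$ and no $S_0$ is a ``linear star-bouquet'' (linear maximal tree, circuits forming bouquets at single branch vertices), followed by listing the critical $1$- and $2$-cells in the $A_k^i(\vec a)$, $d_p^i$ notation and rewriting each critical $2$-cell's boundary word via $\tilde r$ into a commutator. The only difference is that the paper's explicit computation shows each relator is already a commutator of two critical $1$-cells (generators), so the final Tietze step you anticipate (and the separate treatment of low braid index) turns out to be unnecessary.
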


\begin{proof}
Suppose that a graph $\Gamma$ contains neither $T_0$ nor $S_0$.
Since $\Gamma$ does not contain $T_0$, a maximal tree of $\Gamma$
must be a linear tree, that is, the tree contains a simple path
containing all vertices of valency $\ge 3$. Moreover since $\Gamma$
does not contain $S_0$, every circuit cannot contain more than one
vertex of valency $\ge 3$ and so circuits form bouquets. Thus
$\Gamma$ must look like a graph in Figure~\ref{fig6} that we call a
{\em linear star-bouquet}.

\begin{figure}[ht]
\centering
\includegraphics[height=1.8cm]{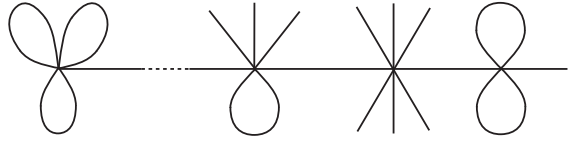}
\caption{Graph $\Gamma$ with no $T_0$ nor $S_0$}\label{fig6}
\end{figure}

If we choose a maximal tree of $\Gamma$ and give an order as in
Figure~\ref{fig7}, the complex $UD_n\Gamma$ has two kinds of
critical 1-cells: $$d_p^i\cup A^i(\vec a),\quad A_k^i(\vec a)$$ and
four kinds of critical 2-cells: For $i<j$
$$ A_k^i(\vec a)\cup A_\ell^j(\vec b),\quad d_p^i\cup d_q^j\cup A^i(\vec a)\cup A^j(\vec b),\quad A_k^i(\vec a)\cup d_q^j\cup A^j(\vec b),\quad d_p^i\cup A_\ell^j(\vec b)\cup A^i(\vec a).$$
Here $A^i$ is the $i$-th vertex of valency $\ge 3$ in the maximal
tree and $d_j^i$ is the $j$-th of deleted edges ends at the vertex
$A^i$. See Figure~\ref{fig7}.

\begin{figure}[ht]
\psfrag{0}{\small0}
\psfrag{A1}{\small$A^1$}
\psfrag{A2}{\small$A^2$}
\psfrag{A3}{\small$A^3$}
\psfrag{Am}{\small$A^m$}
\psfrag{e11}{\small$d^1_1$}
\psfrag{e12}{\small$d^1_2$}
\psfrag{e31}{\small$d^3_1$}
\psfrag{em1}{\small$d^m_1$}
\psfrag{em2}{\small$d^m_2$}
\psfrag{em3}{\small$d^m_3$}
\centering
\includegraphics[height=2cm]{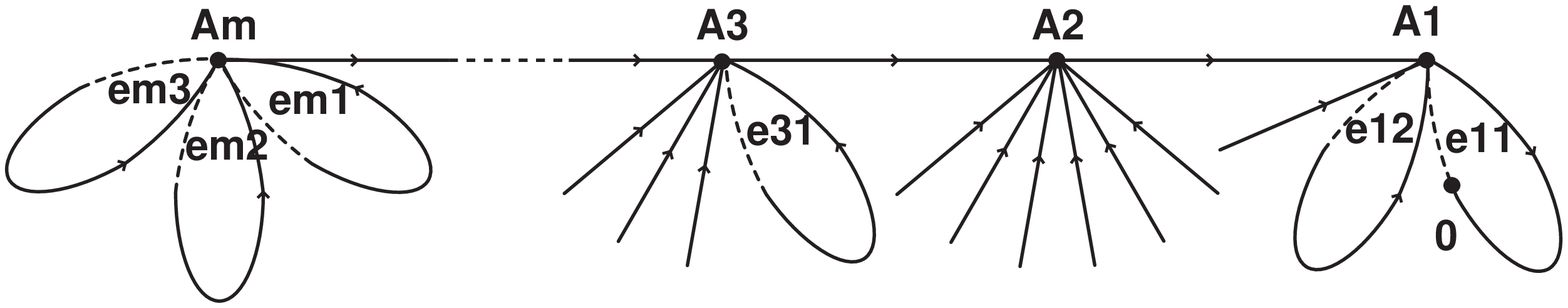}
\caption{Maximal tree of $\Gamma$}\label{fig7}
\end{figure}

Relations of $B_n\Gamma$ are obtained by rewriting boundary words of
critical 2-cells as words in critical 1-cells. Let $\mu$ be the
valency of $A^i$ in the maximal tree. The boundary word of the
critical 2-cell $A_k^i(\vec a)\cup A_\ell^j(\vec b)$ is given by the
product of 1-cells $(A_k^i(\vec a)\cup A^j(\vec b))((\dot A^i(\vec
a)-A^i(\vec\delta_k))\cup A_\ell^j(\vec b))(A_k^i(\vec a)\cup (\dot
A^j(\vec b)-A^j(\vec\delta_\ell)))^{-1}(A^i(\vec a)\cup A_\ell^j(\vec
b))^{-1}$ where $\vec\delta_k$ denotes the $k$-th coordinate unit
vector. Since
$$\tilde r(A^i(\vec a)\cup A_\ell^j(\vec b))=\tilde r((\dot A^i(\vec a)-A^i(\vec\delta_k))\cup A_\ell^j(\vec b))=
A_\ell^j(\vec b),$$
$$\mbox{and }\tilde r(A_k^i(\vec a)\cup
A^j(\vec b))=\tilde r(A_k^i(\vec a)\cup
(\dot A^j(\vec b)-A^j(\vec\delta_\ell)))=A_k^i(\vec a+
|\vec b|\vec\delta_{\mu-1}),$$ the relation becomes
$$[A_k^i(\vec a+|\vec b|\vec\delta_{\mu-1}),
A_\ell^j(\vec b)]$$ which is a commutator of critical 1-cells.
Similarly, we can obtain relations from the boundary words of the
remaining three types of critical 2-cells. Then they are all
commutators of critical 1-cells as follows:
\begin{align*}&[d_p^i\cup
A^i(\vec a+(|\vec b|+1)\vec\delta_{\mu-1}), d_q^j\cup A^j(\vec b)],\\
&[A_k^i(\vec a+(|\vec b|+1)\vec\delta_{\mu-1}), d_q^j\cup A^j(\vec b)],\quad [d_p^i\cup A^i(\vec a+|\vec b|\vec\delta_{\mu-1}), A_\ell^j(\vec b)].
\end{align*}
Consequently, $B_n\Gamma$ is a right-angled Artin group.
\end{proof}

\section{Graphs containing $T_0$ but not $S_0$}\label{s:three}
Let $K$ be a finite simplicial complex, then \emph{the exterior face
algebra $\Lambda[K]$ of $K$ over a given field} is defined by
$$\Lambda[K]=\Lambda[v_1,\ldots, v_m]/I.$$
where $\Lambda[v_1,\ldots, v_m]$ is the exterior algebra generated
by vertices of $K$ over a field and $I$ is the ideal generated by
the products $v_{i_1}\cdots v_{i_\ell}$ such that
$\{v_{i_1},\ldots,v_{i_\ell}\}$ does not form a simplex in $K$.
The field $\mathbb Z_2$ will be used in this article. In this case, the algebra $\Lambda[K]$ can be regarded as a Stanley-Reisner ring and it was
shown by Bruns and Gubeladze in \cite{BG} that $\Lambda[K]$ completely determines
a finite simplicial complex $K$. That is, if
$\Lambda[K]$ and $\Lambda[K']$ are isomorphic for finite simplicial
complexes $K$ and $K'$, then there is a simplicial homeomorphism
between $K$ and $K'$.

A simplicial
complex $K$ is called a \emph{flag complex} if every complete
subgraph in 1-skeleton spans a simplex in $K$. The following
proposition provides our guiding principle in this section.

\begin{pro}\label{pro:flag}
{\rm(1)(Charney-Davis \cite{CW1})}
 Let $G$ be a right-angled Artin group. The Eilenberg-MacLane
space $K(G,1)$ is a full subcomplex of a high-fold torus. In
particular, the cohomology algebra $H^*(G;\mathbb Z_2)$ is an
exterior face
algebra of a flag complex.\\
{\rm(2)(Farley-Sabalka \cite{FS2})} Let $K$ and $K'$ be finite
simplicial complexes. If $\phi : \Lambda(K) \to \Lambda(K')$ is a
degree-preserving surjection, $K$ is a flag complex and $\ker(\phi)$
is generated by homogeneous elements of degree 1 and 2, then $K'$ is
also a flag complex.
\end{pro}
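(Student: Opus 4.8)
Both statements are due to Charney--Davis \cite{CW1} and Farley--Sabalka \cite{FS2}, and the plan below just reconstructs their proofs; I would treat the two parts independently. For part (1), the plan is to exhibit an explicit finite $K(G,1)$ and compute its cohomology from the cellular cochain complex. Let $G=A_\Lambda$ be the right-angled Artin group on a graph $\Lambda$ with vertex set $\{v_1,\dots,v_m\}$, let $T=(S^1)^m$ be the $m$-torus with its product cube-complex structure, so that $H^*(T;\mathbb Z_2)=\Lambda[v_1,\dots,v_m]$ with $v_i$ the degree-$1$ class dual to the $i$-th circle, and let $X\subseteq T$ be the union of the coordinate subtori $T_S=\prod_{i\in S}S^1$ over all cliques $S$ of $\Lambda$. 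One checks that $X$ is precisely the subcomplex of $T$ whose cells are indexed by the cliques of $\Lambda$, hence a full subcomplex of $T$. A van Kampen argument on the $2$-skeleton gives $\pi_1(X)=\langle v_1,\dots,v_m\mid [v_i,v_j]\ (\{v_i,v_j\}\in\Lambda)\rangle=A_\Lambda$, and $X$ is non-positively curved, so its universal cover is a CAT(0) cube complex, hence contractible; thus $X$ is a $K(G,1)$. Finally, the cellular cochain complex of $T$ has zero differential with basis $\{v_S=\prod_{i\in S}v_i\}$, and restriction to the subcomplex $X$ sends $v_S$ to itself when $S$ is a clique and to $0$ otherwise; therefore $H^*(G;\mathbb Z_2)=H^*(X;\mathbb Z_2)=\Lambda[v_1,\dots,v_m]/(v_S:S\text{ not a clique})$, which is exactly the exterior face algebra of the flag (clique) complex of $\Lambda$.

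For part (2), the point of departure is the elementary equivalence: a finite simplicial complex $L$ is flag if and only if the ideal $I_L$ defining $\Lambda[L]$ inside the exterior algebra on the vertices of $L$ is generated in degree $2$. This holds because $I_L$ is minimally generated by the squarefree monomials of the minimal non-faces of $L$, and $L$ fails to be flag exactly when it has a minimal non-face with at least three vertices, i.e.\ an unfilled clique. So it suffices to show that the defining ideal of $\Lambda[K']$ is generated in degree $2$. Write $\Lambda[K]=\Lambda[V]/I_K$ with $V=\Lambda[K]_1$ the span of the vertices of $K$; since $K$ is flag, $I_K$ is generated in degree $2$. Then $\Lambda[K']=\Lambda[K]/\ker\phi=\Lambda[V]/J$, where $J\subseteq\Lambda[V]$ is the ideal generated by the degree-$2$ generators of $I_K$ together with lifts of the degree-$1$ and degree-$2$ generators of $\ker\phi$; thus $J$ is generated in degrees $1$ and $2$. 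Let $U=J\cap V$ be its degree-$1$ part and split $V=U\oplus W$. Using $\Lambda[V]=\Lambda[U]\otimes\Lambda[W]$ one sees that $\Lambda[V]/(U)\cong\Lambda[W]=\Lambda[V/U]$, so quotienting an exterior algebra by the ideal generated by a degree-$1$ subspace simply passes to the exterior algebra on the quotient space. Under this identification $J$ becomes an ideal $\bar J\subseteq\Lambda[V/U]$ generated only in degree $2$ (its degree-$1$ generators become $0$), and $\Lambda[K']=\Lambda[V/U]/\bar J$. Since $\bar J$ has no degree-$1$ part, $V/U\cong\Lambda[K']_1$ is the span of the vertices of $K'$, so this presentation coincides with the canonical one; hence $I_{K'}=\bar J$ is generated in degree $2$, and by the equivalence above $K'$ is flag.

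The step I expect to require the most care is the final identification in part (2): one must check that the successive changes of generating set for $J$, and the isomorphism $V/U\cong\Lambda[K']_1$, are genuinely compatible with the canonical exterior-face presentations of $\Lambda[K]$ and $\Lambda[K']$, and in particular that it is precisely the flagness of $K$ --- equivalently, $I_K$ being purely quadratic --- that keeps $J$ generated in degrees $1$ and $2$, so that the reduction can run. Everything else reduces to routine linear algebra over $\mathbb Z_2$ and (for part (1)) the standard facts about Salvetti complexes recorded by Charney and Davis.
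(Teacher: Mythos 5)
The paper does not prove this proposition at all --- it is quoted verbatim from Charney--Davis \cite{CW1} and Farley--Sabalka \cite{FS2} --- so there is no in-paper argument to compare against; your reconstruction is correct and follows the standard proofs from those sources (the Salvetti subcomplex of the torus with Gromov's link condition for (1), and reduction of the presentation ideal to quadratic generators for (2)). The one point you rightly flag in (2) does go through: since the minimal generating degrees of a homogeneous ideal in $\Lambda[V]$ are well-defined, ``$I_{K'}$ generated in degree $2$ by arbitrary elements'' forces the monomial Stanley--Reisner ideal $I_{K'}$ to be quadratically generated, which is exactly flagness of $K'$.
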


Let $\Gamma$ and $\Gamma'$ be graphs such that $\Gamma$ contains
$\Gamma'$. If homology groups $H_k(B_n\Gamma)$ and
$H_k(B_{n'}\Gamma')$ are torsion free for $k\ge 0$ and $n'\le n$ and
there exists an embedding $i:\Gamma'\to \Gamma$ which induces an
injection $i_*:H_k(B_{n'}\Gamma')\to H_k(B_n\Gamma)$ sending
generators to generators, then the induced map
$i^*:H^*(B_n\Gamma;\mathbb Z_2)\to H^*(B_{n'}\Gamma';\mathbb Z_2)$
is surjective by the universal coefficient theorem. And if
$H^*(B_{n'}\Gamma';\mathbb Z_2)$ is an exterior face algebra of a
non-flag complex and the induced map $i^*$ is degree-preserving and
$\ker(i^*)$ is generated by homogeneous elements of degree 1 and 2,
then $H^*(B_n\Gamma;\mathbb Z_2)$ is not an exterior face algebra of
a flag complex by Proposition~\ref{pro:flag}(2). So $B_n\Gamma$ is
not a right-angled Artin group by Proposition~\ref{pro:flag}(1). In
the case that $\Gamma'=T_0$ and $\Gamma$ is a tree $T$ containing
$T_0$, Farley and Sabalka employed this argument in \cite{FS2} to
prove that $B_nT$ is not a right-angled Artin group for $n\ge 4$. We
try to do the same for the case that $\Gamma'=T_0$ and $\Gamma$ is a
graph containing $T_0$. There are several difficulties as we now
explain.

If a graph contains $S_0$ as well as $T_0$ like the graph $T'$ given
in Figure~\ref{fig8} then one of hypotheses of the above argument
fails to hold. In fact, one can show that there
is no embedding $i:T_0\to T'$ which induces an injection
$i_*:H_1(B_4T_0)\to H_1(B_nT')$ for $n\ge 4$. For a graph $\Gamma$
containing $S_0$, we introduce some other argument in \S\ref{s:four}
to prove that $B_n\Gamma$ is not a right-angled Artin group.

Let $\mathcal G$ be the set consisting of graphs that contain $T_0$
but do not contain $S_0$. Let $T_1$, $T_2$ and $T_3$ be graphs
depicted in Figure~\ref{fig8}. Then they are graphs in $\mathcal G$.
One can also show that for each $k=0,1,2$, there is
no embedding $T_k\to T_{k+1}$ which induces an injection
$H_1(B_4T_k)\to H_1(B_nT_{k+1})$ for $n\ge4$. In order to apply the
above argument, we further need to divide $\mathcal G$ into four
subclasses: For each $k=0,1,2$, $\mathcal G_k$ consists of graphs in
$\mathcal G$ that contain $T_k$ but do not contain $T_{k+1}$ and
$\mathcal G_3$ consists of graphs containing $T_3$ but not
containing $S_0$.

\begin{figure}[ht]
\psfrag{S1}{\small$T'$}
\psfrag{T1}{\small$T_1$}
\psfrag{T2}{\small$T_2$}
\psfrag{T3}{\small$T_3$}
\psfrag{T4}{\small$T_3'$}
\psfrag{T5}{\small$T_3''$}
\psfrag{T6}{\small$T_3'''$}
\centering
{\includegraphics[height=4.5cm]{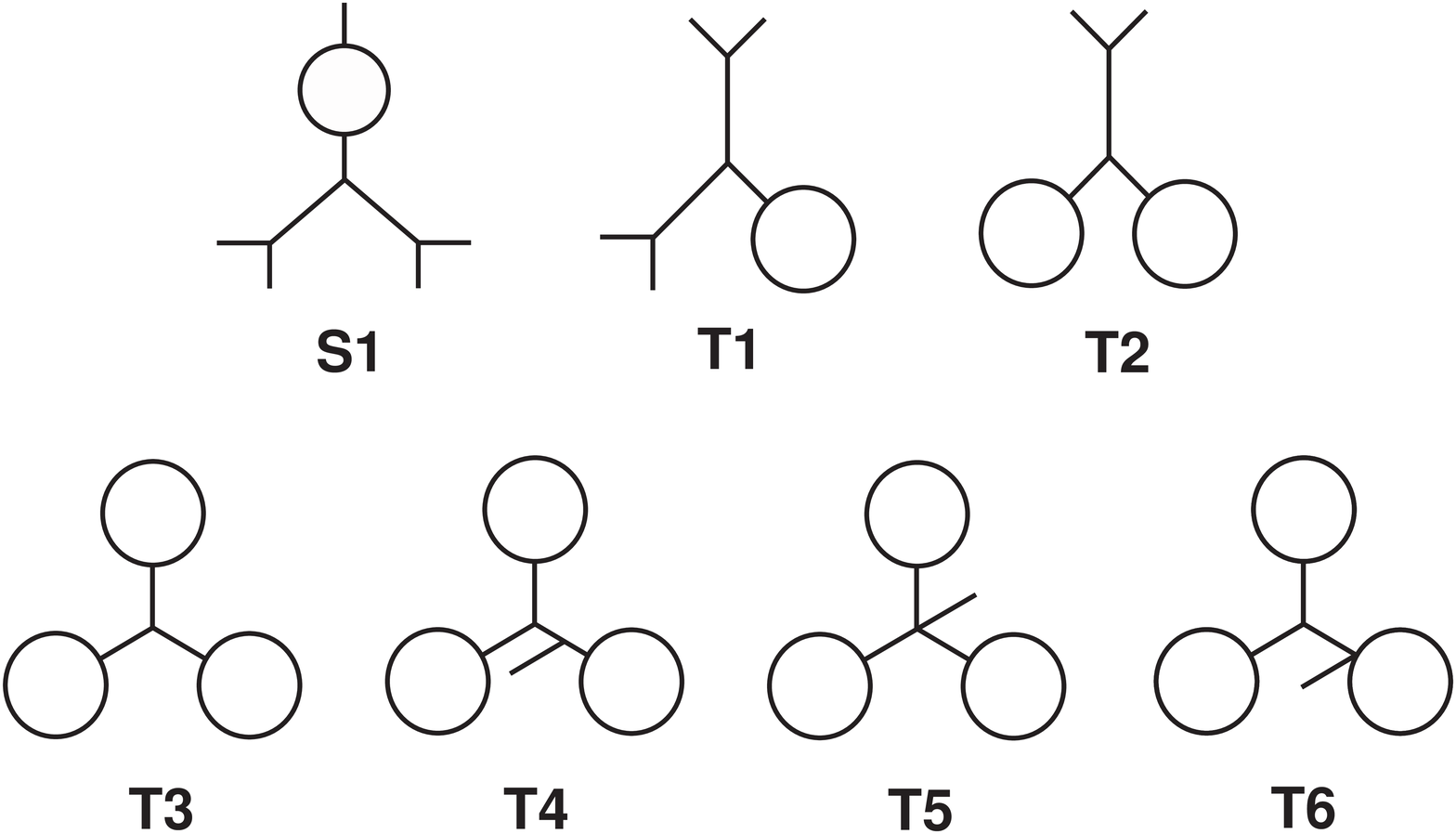}}
\caption{}\label{fig8}
\end{figure}

However, we still have one technical problem. The graph $T_3$ itself
is in $\mathcal G_3$. In order to show $B_nT_3$ is not a
right-Angled Artin group for $n\ge 2$, we need to find a map
$h:T_3\to T_3$ that induces an injection $h_*:H_*(UD_2T_3)\to
H_*(UD_nT_3)$ for $n>2$. The injectivity of $h_*$ on homologies
forces $h$ to be surjective due to the absence of vertices of
valency 1 in $T_3$. Then it is impossible for a surjection $h$ to
induce a cubical map $UD_2T_3\to UD_nT_3$ for $n>2$ since there is
no room to place extra $n-2$ vertices. In \S\ref{ss34:T_3}, we will
show $B_nT_3$ is not a right-angled Artin group for $n\ge2$ by using
some other method. And the subclass $\mathcal G_3$ is modified so
that it consists of graphs that contain $T_3'$, $T_3''$, or $T_3'''$
depicted in Figure~\ref{fig8} but do not contain $S_0$. Then
$\mathcal G$=$\mathcal G_0\cup\mathcal G_1\cup\mathcal
G_2\cup\mathcal G_3\cup\{T_3\}$. The goal of this section is to
prove the following:

\begin{thm}
Let $\Gamma$ be a graph in $\mathcal G$, that is, it contains $T_0$
and does not contain $S_0$. Then $B_n\Gamma$ is not a right-angled
Artin group for $n\ge 4$.
\end{thm}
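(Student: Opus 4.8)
The plan is to verify the statement class by class along the decomposition
$\mathcal G=\mathcal G_0\cup\mathcal G_1\cup\mathcal G_2\cup\mathcal G_3\cup\{T_3\}$
set up above, in each case reducing to an explicit ``model'' graph and invoking the
Charney--Davis/Farley--Sabalka obstruction (Proposition~\ref{pro:flag}) together with
the observation that graph braid groups are torsion-free (so the universal coefficient
theorem turns a homology injection sending generators to generators into a
degree-preserving cohomology surjection with kernel generated in low degrees). The
backbone of every case is therefore the same: produce an embedding $i:\Gamma_0\to\Gamma$
of the relevant model $\Gamma_0\in\{T_0,T_1,T_2,T_3',T_3'',T_3'''\}$, check that the
induced map $i_*:H_*(B_{n_0}\Gamma_0)\to H_*(B_n\Gamma)$ is injective on a generating
set for a suitable $n_0\le n$, and then quote that $H^*(B_{n_0}\Gamma_0;\mathbb Z_2)$ is
the exterior face algebra of a non-flag complex (this is the content one extracts from
the Farley--Sabalka tree computation for $T_0$, and must be established by hand for the
other models).

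For $\Gamma\in\mathcal G_k$ with $k=0,1,2$: by definition $\Gamma$ contains $T_k$ but
not $T_{k+1}$; each $T_k$ has a vertex of valency $1$ (or at least enough ``slack''), so
there is room to park the remaining $n-n_0$ punctures far out on a branch, giving a cubical
embedding $UD_{n_0}T_k\hookrightarrow UD_n\Gamma$. One checks, using the presentation
machinery of \S\ref{s:two} and Lemma~\ref{lem:rewriting}, that the critical cells of
$UD_{n_0}T_k$ map to independent critical cells of $UD_n\Gamma$, so $i_*$ is injective on
homology and sends generators to generators. Then $\ker(i^*)$ is generated in degrees
$1$ and $2$ (the extra generators of $H^1(B_n\Gamma)$ coming from deleted edges and from
the branches not used by the image, together with the degree-$2$ relations forced by the
flag condition on $\Gamma$), and Proposition~\ref{pro:flag}(2) shows $H^*(B_n\Gamma;\mathbb Z_2)$
is an exterior face algebra of a non-flag complex, so $B_n\Gamma$ is not a right-angled
Artin group by Proposition~\ref{pro:flag}(1).

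For $\Gamma\in\mathcal G_3$, i.e. $\Gamma$ contains one of $T_3',T_3'',T_3'''$ but not
$S_0$: these three graphs do have a valency-$1$ vertex (that is precisely why $\mathcal G_3$
was redefined to use them rather than $T_3$), so the same embedding-and-rewriting scheme
applies, with the non-flagness of $H^*(B_{n_0}T_3^{(\cdot)};\mathbb Z_2)$ being a finite
check; these computations, and the special case of $T_3$ itself, are carried out in
\S\ref{ss34:T_3}. Finally, for $\Gamma=T_3$ we cannot embed a smaller braid index, so we
argue separately (in \S\ref{ss34:T_3}) by a direct inspection of $B_nT_3$: either by
exhibiting a non-flag exterior face algebra structure on $H^*(B_nT_3;\mathbb Z_2)$
directly from a Morse presentation, or by the $\Phi_G$-injectivity obstruction of \S4.
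The main obstacle is the bookkeeping in these case analyses---namely, verifying in each
class that the chosen embedding genuinely induces a \emph{generator-to-generator}
injection on integral homology and that the kernel of the cohomology surjection is
generated in degrees $1$ and $2$, since a failure of either hypothesis (as happens for
$S_0$, or for naive embeddings between consecutive $T_k$'s) breaks the argument; handling
$T_3$ without the luxury of a lower braid index is the genuinely new ingredient.
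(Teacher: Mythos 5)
Your overall architecture matches the paper's: the same decomposition $\mathcal G=\mathcal G_0\cup\cdots\cup\mathcal G_3\cup\{T_3\}$, model graphs $T_0,T_1,T_2,T_3',T_3'',T_3'''$ with braid indices $4,4,3,2,2,2$, and the Charney--Davis/Farley--Sabalka obstruction of Proposition~\ref{pro:flag}. But two of your steps have genuine gaps. First, you justify the passage from a homology injection to a $\mathbb Z_2$-cohomology surjection by ``the observation that graph braid groups are torsion-free.'' That is a conflation: the groups $P_n\Gamma$, $B_n\Gamma$ are torsion-free for \emph{every} graph (they act on CAT(0) cube complexes), yet their homology need not be --- the paper proves $H_1(B_nK_5)$ and $H_1(B_nK_{3,3})$ have $2$-torsion. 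What is actually needed is that $H_k(B_n\Gamma)$ is free abelian on the critical $k$-cells, and this is a substantive theorem (Theorem~\ref{thm:homology}) whose proof requires showing that \emph{all} Morse boundary maps $\widetilde\partial$ vanish for graphs not containing $S_0$, via Lemmas~\ref{lem:redundant1}--\ref{lem:redundant} and a carefully chosen maximal tree and ordering. Without this, ``critical cells map to independent critical cells'' has no content, and the universal coefficient step collapses. Likewise your claim that $\ker(i^*)$ is generated in degrees $1$ and $2$ is asserted, not argued; the paper's Lemma~\ref{lem:ker} proves it by showing every generator $[c^*]$ outside the image is divisible by a degree-one class $[a^*]\in\ker(\tilde i^*)$, using the structure of $\widetilde V$ on faces of $c$.

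Second, your two proposed routes for $T_3$ both fail. The homomorphism $\Phi_{B_nT_3}$ is in fact \emph{injective} (the paper notes this explicitly in \S\ref{s:four}), so the $\Phi_G$-injectivity obstruction of Proposition~\ref{pro:injrightangled} gives nothing here; and the direct exterior-face-algebra computation of $H^*(B_nT_3;\mathbb Z_2)$ is only feasible for $n=2$ and cannot be bootstrapped to larger $n$ precisely because $T_3$ has no valency-one vertex to absorb extra punctures. The paper's actual argument (Theorem~\ref{thm:T_3}) is a third thing: assume $B_nT_3$ is a right-angled Artin group, use the flag-complex structure and the duality of Proposition~\ref{pro:dual} to express $\rk H_3(B_nT_3)$ as a count of triples of generators with pairwise nonvanishing cup products read off from $\Phi_{B_nT_3}$, compute $\rk H_3$ independently from the Morse complex (as a count of critical $3$-cells), and derive a contradiction because the two counts disagree. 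That counting argument is the missing ingredient for the $T_3$ case. (A small misattribution as well: the non-flagness computations for $T_3',T_3'',T_3'''$ live in the cohomology-algebra lemma of \S\ref{ss32:cohomology}, not in \S\ref{ss34:T_3}.)
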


Except for $T_3$, this theorem will be proved by the argument
mentioned above. In \S\ref{ss31:homology}, we prove that for
$\Gamma\in\mathcal G$, the homology groups of the Morse complex of
$UD_n\Gamma$ are free abelian groups generated by critical
$k$-cells. In \S\ref{ss32:cohomology}, we prove that the cohomology
algebras of $UD_4T_1$, $UD_3T_2$, $UD_2T_3'$, $UD_2T_3''$, and
$UD_2T_3'''$ are exterior face algebras of non-flag complexes. In
\S\ref{ss33:embedding}, we prove that for $\Gamma\in\mathcal G_k$
for some $k=0,1,2,3$, there is an embedding $i:P_k\to \Gamma$ which
induce a surjection $i^*$ on cohomology algebras such that $i^*$ is
degree-preserving and $\ker(i^*)$ is generated by homogeneous
elements of degree 1 and 2, where $P_k=T_k$ for $k=0,1,2$ and $P_3$
is one of $T_3'$, $T_3''$, and $T_3'''$.

\subsection{Homologies of $UD_n\Gamma$}\label{ss31:homology}
Let $\Gamma$ be a graph in $\mathcal G$. Since $\Gamma$ does not
contain $S_0$, every circuit cannot contain more than one vertex of
valency $\ge 3$ and so circuits form bouquets. Moreover since
$\Gamma$ contains $T_0$ but not $S_0$, the maximal tree of $\Gamma$
is non-linear tree. Thus $\Gamma$ looks like a graph in
Figure~\ref{fig9}(a).

\begin{figure}[ht]
\centering
\psfrag{0}{\small0}
\subfigure[Graph containing $T_0$ but not $S_0$]
{\includegraphics[height=4cm]{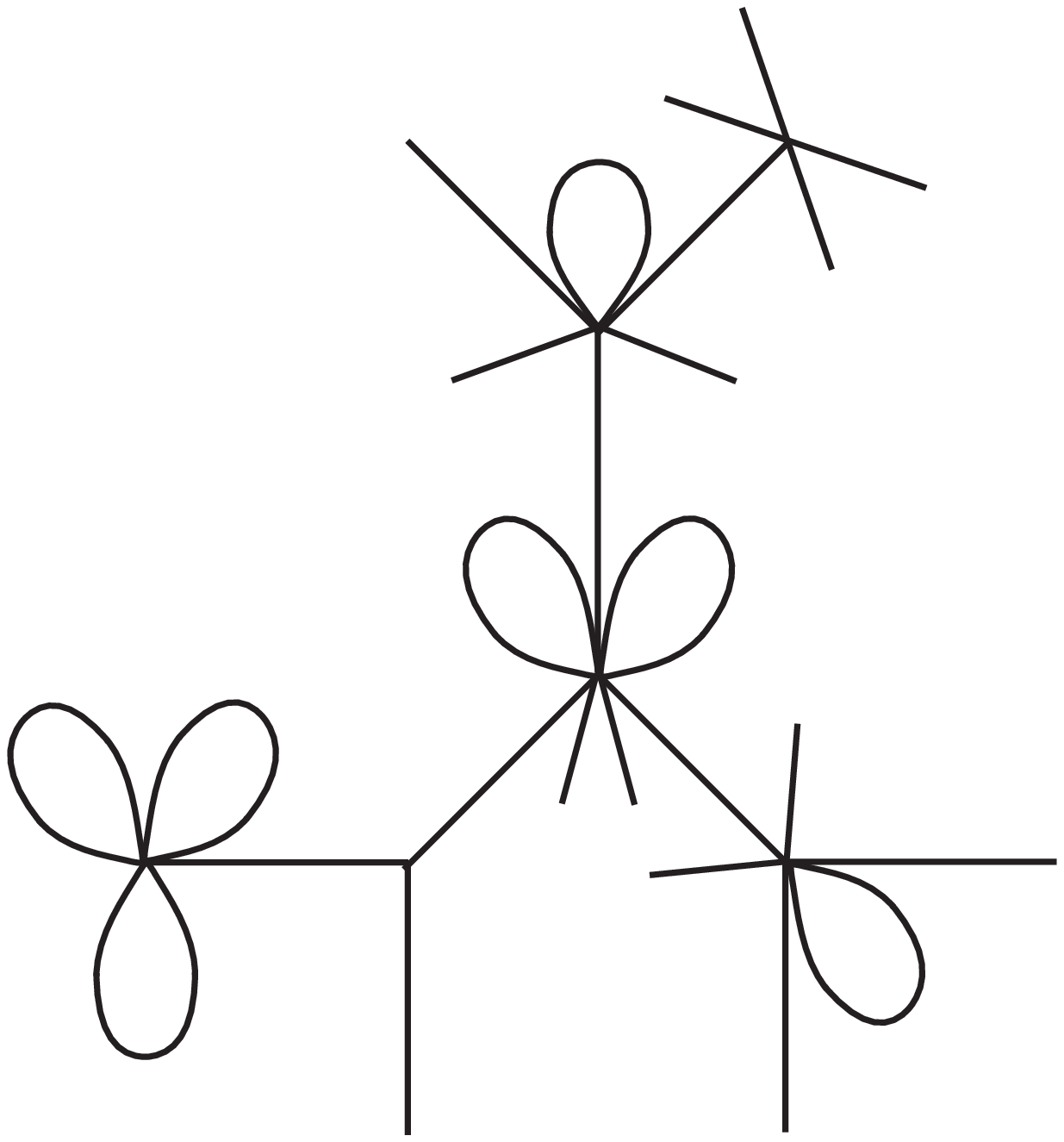}}\qquad
\subfigure[Maximal tree and order]
{\includegraphics[height=4cm]{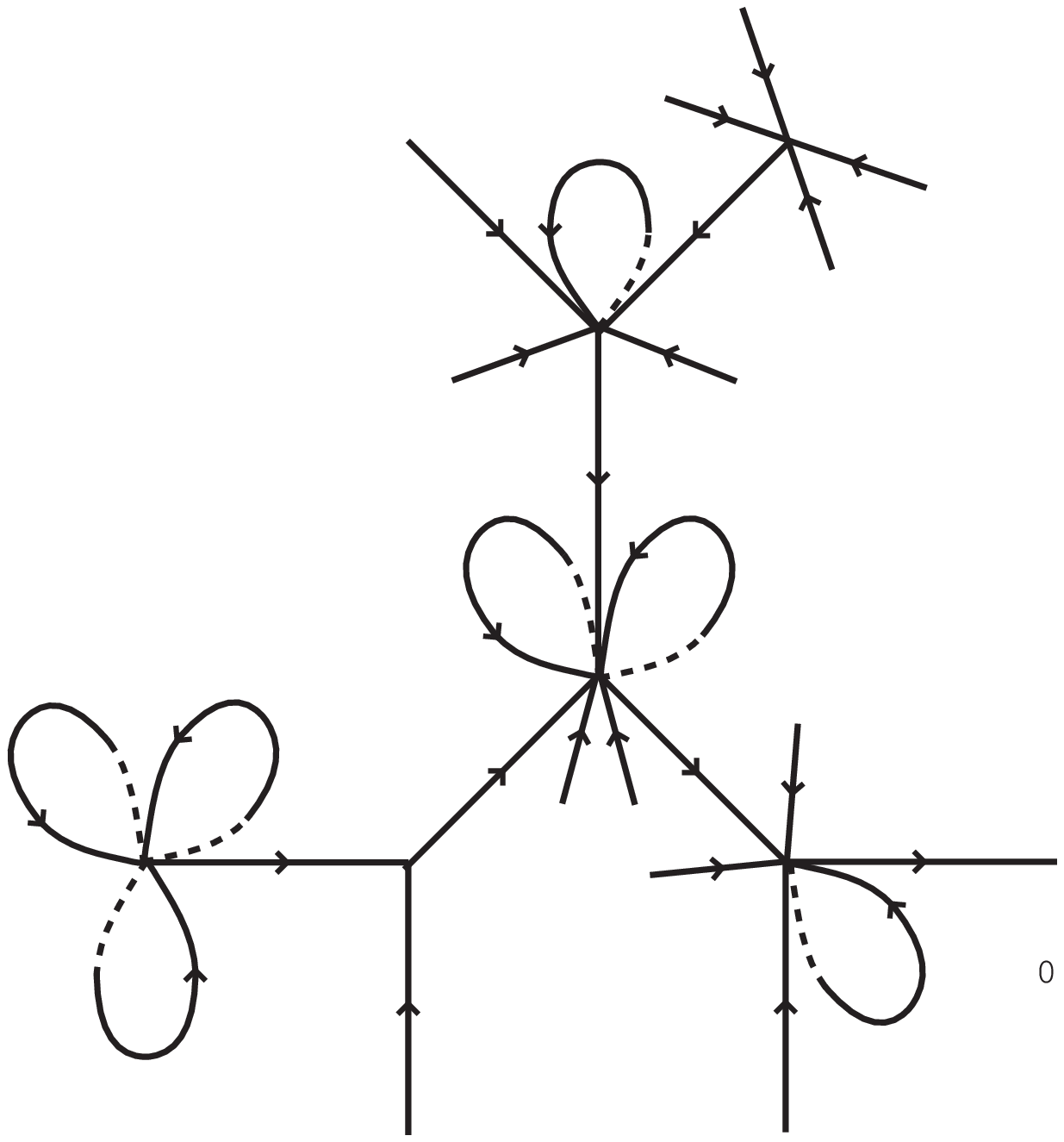}}
\caption{Graph containing $T_0$ but not $S_0$}\label{fig9}
\end{figure}

Given a graph $\Gamma$, we want to compute the homology of the braid
group $B_n\Gamma$ which is the same as the homology of $UD_n\Gamma$.
So we may use its Morse complex instead. Let
$(C_i(UD_n\Gamma),\partial)$ be the (cubical) cellular chain complex
of $UD_n\Gamma$. A choice of a maximal tree and an order on it
determine a Morse complex, as described in \S\ref{ss22:morse}. Let
$M_i(UD_n\Gamma)$ be the free abelian group generated by critical
$i$-cells.  Let $c=\{e_1,e_2,\ldots,e_i,v_{i+1},\ldots,v_n\}$ be an
$i$-cell of $UD_n\Gamma$ where $e_1,\ldots,e_i$ are edges with
$\iota(e_1)<\iota(e_2)<\cdots<\iota(e_i)$ and $v_{i+1},\ldots,v_n$
are vertices of $\Gamma$.
$$\partial(c)=\sum_{k=1}^i(-1)^k(\partial_k^\iota(c)-\partial_k^\tau(c))$$
where
$$\partial_k^\iota(c)=\{e_1, \ldots, e_{k-1}, e_{k+1}, \ldots,
e_i,v_{i+1},\ldots,v_n,\iota(e_k)\},$$
$$\partial_k^\tau(c)=\{e_1, \ldots, e_{k-1}, e_{k+1}, \ldots, e_i,
v_{i+1},\ldots,v_n, \tau(e_k)\}.$$

Consider an abelianized version of the rewriting map $\tilde r$ in
\S\ref{ss22:morse}. Let $R:C_i(UD_n\Gamma)\allowbreak\to
C_i(UD_n\Gamma)$ be a homomorphism defined by $R(c)=0$ if $c$ is a
collapsible $i$-cell, by $R(c)=c$ if $c$ is critical, and by
$R(c)=\pm
\partial W(c)+c$ if $c$ is redundant where the sign is chosen so
that the coefficient of $c$ in $\partial W(c)$ is $-1$. By
\cite{For}, there is a nonnegative integer $m$ such that
$R^m=R^{m+1}$ and let $\widetilde R=R^m$. Then $\widetilde R(c)$ is
in $M_i(UD_n\Gamma)$ and we have a homomorphism $\widetilde R :
C_i(UD_n\Gamma)\to M_i(UD_n\Gamma)$. This definition is little
different from the similar map defined by Forman in~\cite{For}. But
it easy to see that two maps are essentially the same. Define a map
$\widetilde\partial:M_i(UD_n\Gamma)\to M_{i-1}(UD_n\Gamma)$ by
$\widetilde\partial(c) = \widetilde R\partial(c)$. Then
$(M_i(UD_n\Gamma),\widetilde\partial)$ forms a chain complex.
However, the inclusion $M_*(UD_n\Gamma)\hookrightarrow
C_*(UD_n\Gamma)$ is not a chain map. Instead, consider a
homomorphism $\varepsilon:M_i(UD_n\Gamma)\to C_i(UD_n\Gamma)$
defined as follows: For a (critical) $i$-cell $c$, $\varepsilon(c)$
is obtained from $c$ by minimally adding collapsible $i$-cells until
it becomes closed in the sense that for each redundant $(i-1)$-cell
$c'$ in the boundary of every $i$-cell summand in $\varepsilon(c)$,
$W(c')$ already appears in $\varepsilon(c)$. Then $\varepsilon$ is a
chain map that is a chain homotopy inverse of $\widetilde R$. Thus
$(M_i(UD_n\Gamma),\widetilde\partial)$ and
$(C_i(UD_n\Gamma),\partial)$ have the same chain homotopy type and
so their homology groups are isomorphic.

It should be more efficient to compute the homology groups of
$UD_n\Gamma$ by using its Morse complex if we appropriately choose a
maximal tree and an order on $\Gamma$ in $\mathcal G$. The following
lemma that generalizes the corresponding Farley's result in
\cite{Far}, is useful in the sense that it significantly reduces the
amount of computation to evaluate the rewriting map $\widetilde R$.

\begin{lem}\label{lem:redundant1}
Let $\Gamma$ be a graph. If a cell $c$ in $UD_n\Gamma$ contains an
order-respecting edge $e$ satisfying:
\begin{itemize}
\item[(1)] Every vertex $v$ in $c$ with $\tau(e)<v<\iota(e)$ is blocked;
\item[(2)] For any edge $e'$ in $c$ with $\tau(e)<\iota(e')<\iota(e)$, $\tau(e)<\tau(e')<\iota(e)$,
\end{itemize}
then $\widetilde R(c)=0$.
\end{lem}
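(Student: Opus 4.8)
The goal is to show that a cell $c$ satisfying (1) and (2) is \emph{collapsible} (not critical, not redundant), so that $\widetilde R(c)=0$ by definition of the rewriting map. Recall from \S\ref{ss22:morse} that a cell is collapsible precisely when it contains at least one order-respecting edge and every unblocked vertex in it is larger than the initial vertex of some order-respecting edge in it. The hypotheses give us a distinguished order-respecting edge $e$; the plan is to use $e$ itself as the witness. So the single thing to prove is: every unblocked vertex $w$ in $c$ satisfies $w>\iota(e')$ for some order-respecting edge $e'$ in $c$.

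\textbf{Main step.} Fix an unblocked vertex $w$ in $c$. If $w>\iota(e)$ we are done with $e'=e$, so assume $w<\iota(e)$ (equality is impossible since $\iota(e)$ is an endpoint of an edge, not a vertex of $c$). By condition (1), $w$ is blocked if $\tau(e)<w<\iota(e)$, contradicting that $w$ is unblocked; hence in fact $w<\tau(e)$ (again $w=\tau(e)$ is impossible, $\tau(e)$ being an endpoint of $e$). Now I want to ``walk down'' toward the base vertex producing a smaller order-respecting edge. The idea: $w$ being unblocked means the edge $f$ of the maximal tree $T$ with $\iota(f)=w$ has $\tau(f)$ not occupied in $c$; but $\tau(f)<w<\tau(e)$, so this alone does not immediately produce an edge of $c$. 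Instead I reconsider: the hypotheses are designed so that one can apply Lemma~\ref{lem:redundant1} (or rather establish it) by an inductive descent. The cleaner route is to argue directly about collapsibility: suppose for contradiction that $c$ is \emph{not} collapsible. Since $c$ contains the order-respecting edge $e$, it is not critical, hence it is redundant, meaning it has an unblocked vertex $w$ that is smaller than the initial vertex of \emph{every} order-respecting edge of $c$; in particular $w<\iota(e)$ and, by (1) and the fact that $w$ is unblocked, $w<\tau(e)$.

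\textbf{Closing the argument.} With such a $w$, I would examine how $W$ acts: $W(c)=\{w\mbox{-}w_1,\ldots\}$ adds the tree-edge from $w$ downward (where $w_1<w$ is the $T$-neighbor of $w$ below it). I then track what the rewriting map $r$ does to $c$ using the redundant-cell formula, and observe that conditions (1)--(2) guarantee that $e$ remains order-respecting and ``clear'' (no unblocked vertex of $c$ lies strictly between $\tau(e)$ and $\iota(e)$, and no edge of $c$ straddles $\tau(e)$ in the forbidden way) in each of the three cells appearing in $r(c)$. An induction on the number $k$ of iterations of $r$ needed to stabilize—exactly as in the proof of Lemma~\ref{lem:rewriting}—then reduces the claim for $c$ to the same claim for each summand of $r(c)$, which stabilizes faster; the base case is when $c$ is collapsible, where $\widetilde R(c)=0$ trivially. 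Condition (2) is precisely what is needed so that the vertex $\tau(e)$ introduced when an edge $e'$ with $\iota(e')<\iota(e)$ is split by $r$ does not land strictly between $\tau(e)$ and $\iota(e)$ in a way that would destroy the order-respecting property or block-structure of $e$.

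\textbf{Expected obstacle.} The delicate point is the bookkeeping in the inductive step: one must verify that \emph{each} of the (up to three) cells produced by one application of $r$ to a redundant cell still satisfies hypotheses (1) and (2) with respect to the \emph{same} edge $e$ (suitably reinterpreted if $e$ itself is the edge being rewritten—but here $e$ is order-respecting, hence never the edge of a redundant cell that $r$ splits, so $e$ survives unchanged). Handling the case where the smallest unblocked vertex of an intermediate cell coincides with an endpoint created by a previous split, and confirming that condition (2)'s inequality $\tau(e)<\tau(e')<\iota(e)$ is inherited, is where the real work lies; everything else is a direct appeal to the characterizations of critical/collapsible/redundant cells and to Lemma~\ref{lem:rewriting}.
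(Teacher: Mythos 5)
Your final strategy---induction on the least $k$ with $R^k(c)=R^{k+1}(c)$, checking that each summand produced by one rewriting step is either collapsible or again satisfies (1) and (2)---is indeed the paper's strategy, but the proposal has a genuine gap, and it sits exactly at the point you flag as the ``expected obstacle.'' Before that, note also that your opening plan cannot work: a cell satisfying (1) and (2) need not be collapsible, since nothing prevents an unblocked vertex $w<\tau(e)$ (the hypotheses only constrain the interval between $\tau(e)$ and $\iota(e)$), and for such a redundant $c$ the conclusion $\widetilde R(c)=0$ is a statement about the outcome of the rewriting, not about the type of $c$. You do eventually abandon this, but it costs you most of the argument.

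The concrete error is the parenthetical claim that $e$, being order-respecting, is ``never the edge of a redundant cell that $r$ splits, so $e$ survives unchanged.'' This is false. Writing $v_s$ for the smallest unblocked vertex of the redundant cell $c$ and $e_s$ for the tree edge with $\iota(e_s)=v_s$, the summands of $R(c)=\pm\partial W(c)+c$ are the faces of the $(i+1)$-cell $W(c)=(c-\{v_s\})\cup\{e_s\}$ other than $c$ itself, i.e.\ the $2i+1$ cells obtained by replacing one edge of $W(c)$ by one of its endpoints; among these are the two faces in which $e$ itself is replaced by $\iota(e)$ or $\tau(e)$, and those faces contain no trace of $e$. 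For them the inductive hypothesis phrased for $e$ is unavailable, and the missing idea is to switch to $e_s$: every summand lacking $e$ still contains $e_s$, and $e_s$ is order-respecting there precisely because $v_s$ was chosen smallest (a vertex $u$ of $c$ adjacent to $\tau(e_s)$ with $\tau(e_s)<u<\iota(e_s)$ would itself be a smaller unblocked vertex). One then splits into the cases where a summand contains only $e_s$, contains $e_s$ together with a newly created unblocked vertex between $\tau(e_s)$ and $\iota(e_s)$ (in which case it still contains $e$ and conditions (1)--(2) persist because nothing between $\tau(e_s)$ and $\iota(e_s)$ was touched), or lacks $e_s$ altogether; this case analysis is essentially the whole content of the paper's proof and is exactly the bookkeeping you defer. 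A smaller point: the lemma is invoked later for $i$-cells of all dimensions via the chain-level map $\widetilde R$ of \S\ref{ss31:homology}, so the induction must be run on all $2i+1$ faces of $W(c)$, not on the three $1$-cells appearing in the formula for $r$.
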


\begin{proof}
The cell $c$ can be either collapsible or redundant. If $c$ is
collapsible, then $R(c)=0$ and we are done. So we assume that $c$ is
redundant. Then $c$ has the smallest unblocked vertex $v_s$. Let
$e_s$ be the edge in $\Gamma$ such that $\iota(e_s)=v_s$. We use
induction on the minimal integer $k$ such that $R^k(c)=R^{k+1}(c)$.
Since $c$ is redundant, $k\ge2$. Suppose that $k=2$. Then each cell
$c'$ occurring in $R(c)=\pm\partial W(c) +c$ contains at least one
of $e$ and $e_s$. The condition (1) and (2) guarantee that $e$
remains order-respecting. If $c'$ does not contain $e$, $c'$
contains $e_s$ and $e_s$ must be order-respecting in this case by
the choice of $v_s$. Since $k=1$, $c'$ can no longer be redundant.
Thus $c'$ is collapsible and so $R^2(c)=0$.

Suppose that $k \ge 3$. Each cell $c'$ occurring in $R(c)$ also
contains at least one of $e$ and $e_s$. The condition (1) and (2)
guarantee that $e$ remains order-respecting and satisfies (1) and
(2). If $c'$ contains $e_s$ and every vertex $v$ in $c'$ with
$\tau(e_s)<v<\iota(e_s)$ is blocked, then $c'$ is collapsible. If
$c'$ contains $e_s$ and an unblocked vertex $v$ in $c'$ with
$\tau(e_s)<v<\iota(e_s)$, then the unblocked vertex is newly formed
and $c'$ contains $e$ that is order-respecting and still satisfies
the conditions (1) and (2) because no edges lying between
$\tau(e_s)$ and $\iota(e_s)$ are touched. Obviously, $c'$ stabilizes
at least one-application faster than $c$ does under iteration of
$R$. By induction, $\widetilde R(c')=0$. If $c'$ does not contain
$e_s$ then $c'$ contains the order-respecting edge $e$ that
satisfies the conditions (1) and (2) because no edges lying between
$\tau(e_s)$ and $\iota(e_s)$ are untouched. This also implies
$\widetilde R(c')=0$ by induction. Since $\widetilde R(c')=0$ for
each cell $c'$ occurring in $R(c)$, we have $\widetilde R(c)=0$.
\end{proof}

For a graph $\Gamma$, let $c=\{c_1,\ldots,c_{n-1},v_s\}$ be an
$i$-cell in $UD_n\Gamma$. Define a function $V:K_i\to K_i$ by
$V(c)=\{c_1,\ldots,c_{n-1},\tau(e_s)\}$ if $c$ is redundant, and
$v_s$ is the smallest unblocked vertex in $c$, and $e_s$ is the edge
of $\Gamma$ such that $\iota(e_s)=v_s$ and by $V(c)=c$ otherwise.
The function $V$ should stabilize to a function $\widetilde V:K_i\to
K_i$ under iteration, that is, $\widetilde V=V^k$ for some
non-negative integer $k$ such that $V^k=V^{k+1}$.

\begin{lem}\label{lem:v}
Let $\Gamma$ be a graph not containing $S_0$. Suppose that we choose
a maximal tree of $\Gamma$ and give an order as in
Figure~\ref{fig9}(b). Let $c$ be a cell in $UD_n\Gamma$ given by
$c=\{c_{1},\ldots,c_{n-1},e\}$ where $e$ is an edge and each $c_i$
are either a vertex or an edge. Then $\widetilde R\widetilde
V(c^\iota)=\widetilde R\widetilde V(c^\tau)$ where
$c^\iota=\{c_{1},\ldots,c_{n-1},\iota(e)\}$ and
$c^\tau=\{c_{1},\ldots,c_{n-1},\tau(e)\}$.
\end{lem}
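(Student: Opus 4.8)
The plan is to reduce the claimed identity $\widetilde R\widetilde V(c^\iota)=\widetilde R\widetilde V(c^\tau)$ to repeated applications of Lemma~\ref{lem:rewriting} and Lemma~\ref{lem:redundant1}, after first disposing of the trivial case. If the edge $e$ is order-respecting in $c$, then $e$ (together with the blocked vertices of $c$) determines a collapsible $(i+1)$-cell whose boundary, after applying $R$, relates $c^\iota$ and $c^\tau$; more precisely $c^\tau$ is (up to sign and up to a redundant term whose $\widetilde R$-image is controlled) equal to $c^\iota$ modulo $\partial$ of a collapsible cell, and a careful bookkeeping shows $\widetilde R(c^\iota)=\widetilde R(c^\tau)$ directly; one then checks $\widetilde V$ does not spoil this because $\widetilde V$ only moves the distinguished free vertex forward along $T$. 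So the substance is the case where $e$ is a deleted edge or a non-order-respecting edge of $T$, i.e.\ $e$ survives into a critical cell.

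The key step is to use the hypothesis that $\Gamma$ does not contain $S_0$, so that $\Gamma$ is a linear star-bouquet away from a single non-linear branching (Figure~\ref{fig9}), and in particular every circuit is a bouquet sharing only one vertex of valency $\ge 3$. Under the order of Figure~\ref{fig9}(b), this means: when we slide the vertex sitting at $\iota(e)$ backwards toward $\tau(e)$, every intermediate vertex we pass through is either blocked or has valency $2$, so Lemma~\ref{lem:rewriting} applies at each single step (the condition $\iota(e')-\tau(e')=1$ is exactly the ``move one step along a subdivided edge'' situation, which is available after sufficient subdivision). Iterating, $\widetilde r$ and hence its abelianization $\widetilde R$ is insensitive to replacing $\iota(e)$ by any smaller vertex reachable this way. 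The only obstruction to walking all the way from $\iota(e)$ to $\tau(e)$ is meeting a vertex of valency $\ge 3$ or a deleted-edge endpoint in between; here the no-$S_0$ condition guarantees the branch structure is simple enough that the extra terms produced are redundant cells to which Lemma~\ref{lem:redundant1} applies, forcing their $\widetilde R$-image to vanish.

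Concretely, I would argue by induction on the minimal $k$ with $V^k=V^{k+1}$, paralleling the induction in Lemma~\ref{lem:rewriting}. In the base case $c$ is already $\widetilde V$-stable, so $\widetilde V(c^\iota)$ and $\widetilde V(c^\tau)$ are obtained by moving only the last coordinate; then either $e$ is collapsible-type (handled as above) or $e$ is critical-type and $c^\iota, c^\tau$ have all their free vertices blocked, in which case $\widetilde R$ of each is computed via the standard formula for $r$ on a redundant cell and the two outputs agree because the differ only in the position of a vertex that Lemma~\ref{lem:rewriting} shows is irrelevant. For the inductive step, apply $V$ once: $V(c^\iota)$ and $V(c^\tau)$ differ in the same controlled way but stabilize one step faster, so the inductive hypothesis gives $\widetilde R\widetilde V(V(c^\iota))=\widetilde R\widetilde V(V(c^\tau))$; since $\widetilde V=\widetilde V\circ V$ on these cells and $\widetilde R\widetilde V$ is what we want, we are done after checking that the single $V$-move commutes appropriately with the $\iota/\tau$ replacement on the last coordinate (this is where one uses that the smallest unblocked vertex of $c^\iota$ and of $c^\tau$ are the same unless $\iota(e)$ or $\tau(e)$ is itself that vertex, a case dealt with by Lemma~\ref{lem:rewriting} directly).

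\textbf{Main obstacle.} The delicate point is bookkeeping the ``extra'' terms: when sliding the free vertex or applying $V$, the cell $c$ may pass a vertex of valency $\ge 3$ or a deleted-edge endpoint, producing several summands rather than a clean single substitution. The crux is to verify that every such summand satisfies the hypotheses (1)--(2) of Lemma~\ref{lem:redundant1} (so its $\widetilde R$-image is $0$) — and it is precisely the absence of $S_0$, i.e.\ the fact that no circuit carries two vertices of valency $\ge 3$, that prevents an ``order-respecting edge straddling an unblocked vertex'' configuration from surviving. Making this case analysis exhaustive, across the deleted-edge case and the non-order-respecting $T$-edge case, is the technical heart of the argument.
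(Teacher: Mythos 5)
Your proposal correctly identifies the two ingredients that do the work (pushing vertices one step down the tree at a time, and killing bad terms with Lemma~\ref{lem:redundant1}), but the way you assemble them has a real gap. The tool you lean on, Lemma~\ref{lem:rewriting}, is a statement about the rewriting homomorphism $\tilde r$ on \emph{1-cells}, valued in the free group on critical 1-cells; the cells $c^\iota,c^\tau$ here are $i$-cells for arbitrary $i$ (the lemma is consumed by Theorem~\ref{thm:homology}, which needs it for faces of critical cells in every dimension), and the map in question is the chain-level $\widetilde R$. There is no ``abelianization of $\tilde r$'' equal to $\widetilde R$ in degree $\ge 2$. The statement you actually need --- that $\widetilde R$ is unchanged when an unblocked vertex is moved one step toward the base --- is essentially Lemma~\ref{lem:redundant} ($\widetilde R=\widetilde R\widetilde V$), whose proof in the paper \emph{uses} the present lemma; so your route is circular unless you re-prove that fact independently (the paper only proves a 2-cell version, Lemma~\ref{lem:naturalmove}, much later).

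The second problem is the case split. The operative dichotomy is not ``order-respecting vs.\ not'' but: (a) $e$ is a tree edge, or a deleted edge with no edge of the cell lying between $\iota(e)$ and $\tau(e)$ in the order --- here one proves the purely combinatorial identity $\widetilde V(c^\iota)=\widetilde V(c^\tau)$ (the two cells stabilize to the \emph{same} cell under iteration of $V$), so no property of $\widetilde R$ is needed at all, which is exactly the simplification bought by the fact that $\widetilde V$ is already composed into the statement; and (b) $e$ is a deleted edge with an edge $e'$ of the cell between its endpoints --- here, by the choice of tree and order in Figure~\ref{fig9}(b), $e'$ is order-respecting and satisfies the hypotheses of Lemma~\ref{lem:redundant1}, so \emph{both} sides are $0$. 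Your inductive step (``$V(c^\iota)$ and $V(c^\tau)$ differ in the same controlled way'') breaks precisely in the deleted-edge case: $V$ moves the vertex at $\iota(e)$ along the \emph{tree}, not across $e$, so after one application the two cells are no longer related by replacing $\iota$ by $\tau$ of a single edge. (Also, a graph without $S_0$ need not be a linear star-bouquet away from one branching --- that is the structure of graphs avoiding both $T_0$ and $S_0$; here the tree may branch arbitrarily, and only the circuits are constrained to form bouquets.)
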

\begin{proof}
If $e$ is not a deleted edge or if $e$ is a deleted edge and there
are no edges between $\iota(e)$ and $\tau(e)$, then $\iota(e)$
eventually becomes the smallest unblocked vertex and moves via the
original location of $\tau(e)$ under iteration of $V$ and so
$\widetilde V(c^\iota)=\widetilde V(c^\tau)$.

If $e$ is a deleted edge and there is an edge $e'$ between
$\iota(e)$ and $\tau(e)$, then due to our choice of a maximal tree
and an order, $e'$ is an order-respecting edge satisfying the
hypothesis of Lemma~\ref{lem:redundant1} and so $\widetilde
R\widetilde V(c^\iota)=\widetilde R\widetilde V(c^\tau)=0$
\end{proof}

Farley proved in \cite{Far} that
$\widetilde\partial:M_i(UD_n\Gamma)\to M_{i-1}(UD_n\Gamma)$ is zero
for any tree $\Gamma$. We now try to extend this result to graphs
containing $T_0$ but not $S_0$.

\begin{lem}\label{lem:redundant}
Let $\Gamma$ be a graph not containing $S_0$. Suppose that we choose
a maximal tree of $\Gamma$ and give an order as in
Figure~\ref{fig9}(b). Then $\widetilde R(c)=\widetilde RV(c)$ for a
redundant cell $c$ in $UD_n\Gamma$. Consequently, $\widetilde R=
\widetilde R\widetilde V$.
\end{lem}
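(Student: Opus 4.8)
The plan is to reduce the operator identity to a pointwise one and then prove the pointwise statement by a direct computation of $R$ followed by an induction. For the reduction: if $c$ is critical or collapsible then $V(c)=c$, so the claimed identity $\widetilde R(c)=\widetilde R\,V(c)$ holds trivially for such $c$; granting it also for redundant cells we get $\widetilde R=\widetilde R\,V$ as homomorphisms, and applying this identity to $V(c),V^2(c),\dots$ gives $\widetilde R=\widetilde R\,V=\widetilde R\,V^2=\cdots=\widetilde R\,V^{k}=\widetilde R\widetilde V$ for any $k$ with $V^k=V^{k+1}$. So everything comes down to showing $\widetilde R(c)=\widetilde R\,V(c)$ for a redundant cell $c$.

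Fix such a $c=\{c_1,\dots,c_{n-1},v_s\}$ with $v_s$ its smallest unblocked vertex, let $e_s$ be the maximal-tree edge with $\iota(e_s)=v_s$, so $W(c)=\{c_1,\dots,c_{n-1},e_s\}$ and $V(c)=\{c_1,\dots,c_{n-1},\tau(e_s)\}$. Since $\widetilde R\circ R=\widetilde R$, it is enough to understand $R(c)=\pm\,\partial W(c)+c$, which I would expand as a signed sum over the codimension-one faces of $W(c)$: the face replacing $e_s$ by $\iota(e_s)$ is $c$ and cancels by the sign convention; the face replacing $e_s$ by $\tau(e_s)$ is exactly $V(c)$, and chasing the signs in $\partial$ against the normalization ``coefficient of $c$ equals $-1$'' shows $V(c)$ appears with coefficient $+1$; every other face replaces one of the remaining edges of $c$ by an endpoint and hence still contains $e_s$. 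Thus $R(c)=V(c)+\sum_j(\pm)\,c''_j$ with each $c''_j$ a cell containing the edge $e_s$, and $\widetilde R(c)=\widetilde R\,R(c)=\widetilde R\,V(c)+\sum_j(\pm)\,\widetilde R(c''_j)$, so it remains to prove $\widetilde R(c''_j)=0$ for every $j$.

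Here the choice of maximal tree and order in Figure~\ref{fig9}(b) and the hypothesis that $\Gamma$ contains no $S_0$ are used. If $c''_j$ is collapsible, $R(c''_j)=0$ and we are done; otherwise $c''_j$ is redundant and I would show that $e_s$ is an order-respecting edge of $c''_j$ satisfying conditions~(1) and~(2) of Lemma~\ref{lem:redundant1}, so that $\widetilde R(c''_j)=0$. For order-respectingness: along any linear segment of the ordered maximal tree the vertex numbers increase away from the base, so the only way a neighbor of $\tau(e_s)$ can lie strictly between $\tau(e_s)$ and $v_s$ is for $\tau(e_s)$ to have valency $\ge 3$ in the tree, a configuration constrained by the minimality of $v_s$ among the unblocked vertices of $c$ and by the fact that (since $\Gamma$ has no $S_0$) circuits form bouquets and the local picture at a branch vertex is the one in Figure~\ref{fig9}(b). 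Conditions~(1)--(2) follow because $c''_j$ is obtained from $c$ by altering only the vertex $v_s$ (turned into $e_s$) and a single other edge $f$ of $c$ (turned into an endpoint of $f$): since $v_s$ was minimal among unblocked vertices, every vertex of $c$ in the interval $(\tau(e_s),v_s)$ is blocked and stays blocked, and no edge lying in that interval is touched, so no straddling edge is created.

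The hard part is exactly this last bookkeeping --- controlling what the single replacement $W(c)\to c''_j$ can do to the interval $(\tau(e_s),v_s)$, so that $e_s$ provably remains order-respecting and $(1)$--$(2)$-compliant --- and in particular the sub-case where $\tau(e_s)$ has valency $\ge 3$, where one must exploit both the minimality of $v_s$ and the absence of $S_0$. I expect the cleanest way to dispose of the finitely many configurations not literally covered by Lemma~\ref{lem:redundant1} is to run the whole argument as an induction on the number of $R$-iterations needed to stabilize, exactly as in the proofs of Lemma~\ref{lem:rewriting} and Lemma~\ref{lem:redundant1}: each $c''_j$ stabilizes strictly faster than $c$, and Lemma~\ref{lem:v} can be invoked to identify the two ends of a deleted edge whenever a deleted edge interferes.
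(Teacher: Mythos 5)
Your skeleton (expand $R(c)=\sum_e(c_e^\iota-c_e^\tau)+V(c)$, induct on the number of $R$-iterations needed to stabilize, and kill the extra faces using Lemma~\ref{lem:redundant1}) is the paper's skeleton, and your reduction of $\widetilde R=\widetilde R\widetilde V$ to the single identity $\widetilde R(c)=\widetilde R V(c)$ is fine. But the central claim of your argument --- that \emph{every} face $c''_j$ other than $V(c)$ satisfies $\widetilde R(c''_j)=0$ --- is false, and this is exactly where the paper does something different. The problem is the nested configuration $\tau(e_s)<\tau(e)<\iota(e)<\iota(e_s)$, which occurs e.g.\ when $\tau(e_s)$ has valency $\ge 3$ and the edge $e$ of $c$ sits on a branch of $\tau(e_s)$ earlier (clockwise) than the branch carrying $v_s$. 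Your bookkeeping asserts ``no edge lying in that interval is touched,'' but the face operation touches precisely one edge $f=e$ of $c$, converting it to an endpoint; if $e$ lies inside $(\tau(e_s),\iota(e_s))$ the new vertex $\iota(e)$ or $\tau(e)$ can be \emph{unblocked} there, so condition~(1) of Lemma~\ref{lem:redundant1} fails for $e_s$ in $c_e^\iota$ or $c_e^\tau$ (and $e_s$ may not even be order-respecting). In that case the individual faces genuinely have nonzero image under $\widetilde R$.

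The paper's proof survives because it never needs each face to vanish: it splits the six relative positions of $e$ and $e_s$, rules out the two crossing ones using the no-$S_0$ choice of tree and order, kills three of the remaining cases by Lemma~\ref{lem:redundant1} applied to each face separately, and in the nested case proves instead that the \emph{pair} cancels, i.e.\ $\widetilde R(c_e^\iota)=\widetilde R\widetilde V(c_e^\iota)=\widetilde R\widetilde V(c_e^\tau)=\widetilde R(c_e^\tau)$, using the inductive hypothesis (both faces stabilize faster than $c$) together with Lemma~\ref{lem:v}. Your closing paragraph gestures at invoking Lemma~\ref{lem:v} ``to identify the two ends of a deleted edge whenever a deleted edge interferes,'' but this misidentifies the difficulty: the nested edge $e$ need not be a deleted edge (it can be an ordinary non-order-respecting tree edge on an intermediate branch), and the mechanism needed is not vanishing of each summand but equality of the two members of the pair $c_e^\iota,c_e^\tau$, which is what Lemma~\ref{lem:v} (stated for arbitrary edges) supplies. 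As written, your proof would not close in this case.
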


\begin{proof}
Suppose that $v_s$ is the smallest unblocked vertex in $c$ and $e_s$
is an edge of $\Gamma$ such that $\iota(e_s)=v_s$. Then
$R(c)=\pm\partial W(c)+c=\sum_e (c_e^\iota-c_e^\tau)+V(c)$, where
$c_e^\iota$ and $c_e^\tau$ are cells obtained from $W(c)$ by
replacing an edge $e\ne e_s$ in $W(c)$ by $\iota(e)$ and $\tau(e)$,
respectively. It suffices to show that $\widetilde
R(c_e^\iota-c_e^\tau)=0$ for each edge $e\ne e_s$ in $c$. We use
induction on the minimal integer $k$ such that $R^k(c)=R^{k+1}(c)$.
Since $c$ is redundant, $k\ge2$. Suppose that $k=2$. Then both
$c^\iota_e$ and $c^\tau_e$ must be collapsible and so $\widetilde
R(c_e^\iota)=\widetilde R(c_e^\tau)=0$.

Suppose that $k\ge 3$. Among six possible relative positions of
edges $e$ and $e_s$, $\tau(e_s)<\tau(e)<\iota(e_s)<\iota(e)$ and
$\tau(e)<\tau(e_s)<\iota(e)<\iota(e_s)$ are impossible due to our
choice of the maximal tree and the order of $\Gamma$ as in
Figure~\ref{fig9}(b). If $\tau(e_s)<\iota(e_s)<\tau(e)<\iota(e)$,
$\tau(e)<\tau(e_s)<\iota(e_s)<\iota(e)$, or
$\tau(e)<\iota(e)<\tau(e_s)<\iota(e_s)$, then both $c_e^\iota$ and
$c_e^\tau$ have the order-respecting edge $e_s$ satisfying the
hypothesis of Lemma~\ref{lem:redundant1} and so $\widetilde
R(c_e^\iota)=\widetilde R(c_e^\tau)=0$.

In the remaining case $\tau(e_s)<\tau(e)<\iota(e)<\iota(e_s)$, both
$c^\iota_e$ and $c^\tau_e$ are redundant and stabilize faster than
$c$ does. By induction, $\widetilde R(c^\iota_e)=\widetilde R
V(c^\iota_e)$ and $\widetilde R(c^\tau_e)=\widetilde R V(c^\tau_e)$.
If $V(c^\iota_e)$ has a unblocked vertex $v$ such that
$\tau(e_s)<v<\iota(e_s)$, then $V(c^\iota_e)$ is redundant and so
$\widetilde RV(c^\iota_e)=\widetilde R V^2(c^\iota_e)$ by induction.
Thus $\widetilde R V^2(c^\iota_e)=\widetilde R(c^\iota_e)$. Let $m$
be the smallest integer such that every vertex $v$ in
$V^m(c^\iota_e)$ with $\tau(e_s)<v<\iota(e_s)$ is blocked. Then
$\widetilde R(c^\iota_e)=\cdots=\widetilde
RV(V^{m-1}(c^\iota_e))=\widetilde R V^m(c^\iota_e)=\widetilde R
\widetilde V(c^\iota_e)$ by induction. Similarly, $\widetilde
R(c^\tau_e)=\widetilde R \widetilde V(c^\tau_e)$. Thus $\widetilde
R(c^\iota_e)= \widetilde R\widetilde V(c^\iota_e)=\widetilde R
\widetilde V (c^\tau_e)= \widetilde R(c^\tau_e)$ by
Lemma~\ref{lem:v}.
\end{proof}

\begin{thm}\label{thm:homology}
Let $\Gamma$ be a graph not containing $S_0$. Then there is a Morse
complex $(M_*(UD_n\Gamma),\widetilde\partial)$ whose boundary maps
are all zero and so the $i$-th homology $H_i(M_*(UD_n\Gamma))$ is
the free abelian group over critical $i$-cells.
\end{thm}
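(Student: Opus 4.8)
The plan is to show that $\widetilde\partial = \widetilde R \partial$ vanishes on every critical $i$-cell, using the structural lemmas already established for graphs $\Gamma$ not containing $S_0$ with the maximal tree and order as in Figure~\ref{fig9}(b). First I would fix such a choice of maximal tree and order, so that Lemmas~\ref{lem:redundant1}, \ref{lem:v}, and \ref{lem:redundant} are all available. The key observation is Lemma~\ref{lem:redundant}, which gives $\widetilde R = \widetilde R \widetilde V$: it suffices to evaluate $\widetilde R$ after first pushing all unblocked vertices backward via $\widetilde V$.

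Let $c = \{e_1,\ldots,e_i,v_{i+1},\ldots,v_n\}$ be a critical $i$-cell, with $\iota(e_1) < \cdots < \iota(e_i)$. By definition $\widetilde\partial(c) = \widetilde R\,\partial(c) = \sum_{k=1}^i (-1)^k \bigl(\widetilde R(\partial_k^\iota(c)) - \widetilde R(\partial_k^\tau(c))\bigr)$, so it is enough to show $\widetilde R(\partial_k^\iota(c)) = \widetilde R(\partial_k^\tau(c))$ for each $k$. Now $\partial_k^\iota(c)$ and $\partial_k^\tau(c)$ differ exactly in that the edge $e_k$ has been replaced by its initial vertex $\iota(e_k)$ and its terminal vertex $\tau(e_k)$ respectively, while all other entries $\{e_1,\ldots,\widehat{e_k},\ldots,e_i,v_{i+1},\ldots,v_n\}$ are common. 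This is precisely the situation of Lemma~\ref{lem:v}, applied with the edge being $e_k$ and the remaining $n-1$ entries playing the role of $c_1,\ldots,c_{n-1}$: that lemma gives $\widetilde R\widetilde V(\partial_k^\iota(c)) = \widetilde R\widetilde V(\partial_k^\tau(c))$. Combining with $\widetilde R = \widetilde R\widetilde V$ from Lemma~\ref{lem:redundant}, we get $\widetilde R(\partial_k^\iota(c)) = \widetilde R(\partial_k^\tau(c))$, hence each summand of $\widetilde\partial(c)$ cancels and $\widetilde\partial(c) = 0$.

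Since $\widetilde\partial$ is identically zero on $M_*(UD_n\Gamma)$, the chain complex has trivial differentials, so $H_i(M_*(UD_n\Gamma)) = M_i(UD_n\Gamma)$, the free abelian group on critical $i$-cells. Because $(M_*(UD_n\Gamma),\widetilde\partial)$ is chain homotopy equivalent to the cubical chain complex $(C_*(UD_n\Gamma),\partial)$ via the maps $\widetilde R$ and $\varepsilon$ discussed before Lemma~\ref{lem:redundant1}, this also computes $H_i(UD_n\Gamma)$, and the generators are represented by the critical cells.

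The only real subtlety — and the step I would be most careful about — is the bookkeeping in applying Lemma~\ref{lem:v}: one must check that $\partial_k^\iota(c)$ and $\partial_k^\tau(c)$ genuinely have the form $\{c_1,\ldots,c_{n-1},\iota(e_k)\}$ and $\{c_1,\ldots,c_{n-1},\tau(e_k)\}$ with a \emph{common} multiset of $n-1$ other entries, which is immediate from the face-map formulas, and that the ambient choice of maximal tree and order on $\Gamma$ is exactly the one required by Lemmas~\ref{lem:v} and~\ref{lem:redundant}. Everything else is formal once those lemmas are in hand; there is no residual computation, only the reduction just described.
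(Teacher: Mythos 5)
Your proof is correct and follows essentially the same route as the paper: fix the maximal tree and order of Figure~\ref{fig9}(b), then combine $\widetilde R=\widetilde R\widetilde V$ from Lemma~\ref{lem:redundant} with the pairwise cancellation $\widetilde R\widetilde V(\partial_k^\iota(c))=\widetilde R\widetilde V(\partial_k^\tau(c))$ from Lemma~\ref{lem:v}. No gaps; this is the paper's argument.
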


\begin{proof}
Suppose that we choose a maximal tree of $\Gamma$ and give an order
as in Figure~\ref{fig9}(b). Let $c$ be a critical $i$-cell of
$UD_n\Gamma$. The boundary homomorphism $\widetilde\partial$ of the
Morse chain complex is given by $\widetilde\partial(c)=\widetilde
R(\sum_{k=1}^i(-1)^k(\partial_k^\iota(c)-\partial_k^\tau(c)))$. By
Lemma~\ref{lem:v} and Lemma~\ref{lem:redundant}, $\widetilde
R(\partial_k^\iota(c))=\widetilde R\widetilde
V(\partial_k^\iota(c))=\widetilde R\widetilde
V(\partial_k^\tau(c))=\widetilde R(\partial_k^\tau(c))$. Thus
$\widetilde R(\partial_k^\iota(c)-\partial_k^\tau(c))=0$ for each
$k$ and so $\widetilde
\partial(c)=0$.
\end{proof}

Let $\Gamma$ and $\Gamma'$ be graphs. We are interested in an
embedding $i:\Gamma'\to\Gamma$ of graphs that induces homomorphisms
$\tilde i_*:H_*(M_*(UD_{n'}\Gamma'))\to H_*(M_*(UD_n\Gamma))$ for
$n'\le n$. If $\Gamma'$ has a vertex of valency 1, then we can
choose maximal trees $T$ and $T'$ of $\Gamma$ and $\Gamma'$,
respectively and give an order on  $\Gamma$ and $\Gamma'$ as
described in \S\ref{ss22:morse} so that they satisfy the following
conditions:

\begin{itemize}
\item[(1)] The base vertex $0'$ of $T'$ have valency one in $\Gamma'$;
\item[(2)] The edge path joining $i(0')$ and the base vertex 0 in $T$ passes through exactly $n-n'$ edges, none of which are images of edges of
$\Gamma'$;
\item[(3)] The order is preserved under $i$, that is, $v_1<v_2$ iff
$i(v_1)<i(v_2)$ for all vertices $v_1,v_2$ in $\Gamma'$;
\item[(4)] The image of an edge $e$ on $\Gamma'$ is a deleted edge on $\Gamma$
if and only if $e$ is a deleted edge on $\Gamma'$.
\end{itemize}
By the conditions (1) and (2), the embedding $i$ naturally induces
an embedding $i:UD_{n'}\Gamma'\to UD_n\Gamma$ for $n'\le n$ defined
by$$i(\{\sigma_1,\sigma_2,\ldots,\sigma_{n'}\})=
\{i(\sigma_1),i(\sigma_2),\ldots,i(\sigma_{n'})\}\cup 0_{n-n'}.$$ So
$i$ preserves $i$-cells and in fact, it is a chain map of cubical
chain complexes of $UD_{n'}\Gamma'$ and $UD_n\Gamma$. Moreover, by
the conditions (1), (2), and (3), $i$ and $W$ commute. So $i$
preserves critical $i$-cells and it induces a chain map $\tilde i$
between Morse complexes, that is, $\widetilde\partial\tilde i=\tilde
i\widetilde\partial :M_i(UD_{n'}\Gamma')\to M_{i-1}(UD_n\Gamma)$.
Thus the diagram
\begin{displaymath}
\xymatrixcolsep{0pc}\xymatrixrowsep{1pc} \xymatrix{
 & M_i(UD_{n'} \Gamma') \ar@{->}[dl]_{\tilde i} \ar@{->}[rr]^{\widetilde\partial'}\ar@<.5ex>[dd]^(.3){\varepsilon}|\hole & & M_{i-1}(UD_{n'} \Gamma')\ar@{->}[dl]_{\tilde i}\ar@<.5ex>[dd]^{\varepsilon}\\
 M_i(UD_n\Gamma)\ar@{->}[rr]^(.6){\widetilde\partial}\ar@<.5ex>[dd]^{\varepsilon} & & M_{i-1}(UD_n\Gamma) \ar@<.5ex>[dd]^(.3){\varepsilon}\\
 & C_i(UD_{n'}\Gamma')\ar@<.5ex>[uu]^(.3){\widetilde R}|\hole\ar@{->}[dl]_i \ar@{->}'[r]^-{\partial'}[rr] & & C_{i-1}(UD_{n'}\Gamma')\ar@<.5ex>[uu]^{\widetilde R}\ar@{->}[dl]_i\\
 C_i(UD_n\Gamma) \ar@<.5ex>[uu]^{\widetilde R}\ar@{->}[rr]^{\partial} & & C_{i-1}(UD_n\Gamma)\ar@<.5ex>[uu]^(.3){\widetilde R}}
\end{displaymath}
commutes. Since the chain map $\varepsilon:M_*(UD_n\Gamma)\to
C_*(UD_n\Gamma)$ induces an isomorphism between homologies of two
chain complexes, the diagram

\begin{displaymath}
\begin{CD}
H_*(M_*(UD_{n'}\Gamma'))@>{\tilde i_*}>>H_*(M_*(UD_n\Gamma))\\
@A{\cong}A{\widetilde R_*}A @V{\cong}V{\varepsilon_*}V\\
H_*(C_*(UD_{n'}\Gamma'))@>{i_*}>>H_*(C_*(UD_n\Gamma))\\
\end{CD}
\end{displaymath}
commutes. Although the homomorphism $\tilde i_*$ may be intractable
in general, it is manageable if we appropriately choose maximal
trees and orders for a given embedding $i:\Gamma'\to\Gamma$. Since
$i_*=\varepsilon_*\tilde i_*\widetilde R_*$, many properties such as
injectivity are shared by $i_*$ and $\tilde i_*$, and moreover they
do not depend on the choice of maximal trees and orders.

\subsection{Cohomology algebra of graphs without $S_0$}\label{ss32:cohomology}
In~\cite{FS2}, Farley and Sabalka employed an indirect construction
to compute the cohomology algebra of the braid group of a tree with
coefficients in $\mathbb Z_2$. They defined an equivalence relation
on the unordered configuration space of a tree so that its quotient
space is a subcomplex of a high-fold torus and the epimorphism of
cohomology algebras induced from the quotient map carries the
structure of the cohomology algebra of the subcomplex of the
high-fold torus over to that of the cohomology algebra of the braid
group. In fact, they proved from this construction that the
cohomology algebra of $B_4T_0$ is an exterior face algebra of a
non-flag complex.

Let $\Gamma$ be a graph that contains $T_0$ and does not contain
$S_0$. In this section, we take a more direct route to compute the
cohomology algebra of $B_n\Gamma$. The two approaches start quiet
differently but they end up producing the essentially same
formulation. So our approach explains a motivation of the
construction by Farley and Sabalka.

Let $(C^i(UD_n\Gamma;\mathbb Z_2),\delta)$ be the cubical cellular
cochain complex with coefficients in $\mathbb Z_2$ of $UD_n\Gamma$,
and $(M^i(UD_n\Gamma;\mathbb Z_2),\tilde\delta)$ be the Morse
cochain complex with coefficients in $\mathbb Z_2$ which is dual to
the Morse chain complex $(M_i(UD_n\Gamma),\tilde \partial)$. Let
$\widetilde R^*$ and $\varepsilon^*$ denote the dual of the chain
maps $\widetilde R$ and $\varepsilon$ discussed in
\S\ref{ss31:homology}. Consider the following commutative diagram.
\begin{displaymath}
\xymatrix{
M^i(UD_n\Gamma;\mathbb Z_2)\ar@{->}[r]^{\tilde \delta}\ar@<.5ex>[d]^{\widetilde R^*} & M^{i+1}(UD_n\Gamma;\mathbb Z_2)\ar@<.5ex>[d]^{\widetilde R^*}\\
C^i(UD_n\Gamma;\mathbb
Z_2)\ar@{->}[r]^{\delta}\ar@<.5ex>[u]^{\varepsilon^*} &
C^{i+1}(UD_n\Gamma;\mathbb Z_2)\ar@<.5ex>[u]^{\varepsilon^*} }
\end{displaymath}

$C_i(UD_n\Gamma;\mathbb Z_2)$ is generated by $i$-cells in
$UD_n\Gamma$ and so $C^i(UD_n\Gamma;\mathbb Z_2)$ is generated by
their dual $c^*$ where $c^*$ is evaluated to 1 on $c$ and to 0 on
other $i$-cells. Similarly, $M^i(UD_n\Gamma;\mathbb Z_2)$ is
generated by $c^*$ dual to critical $i$-cells $c$. Note that the
domain of dual cells should be taken into account although they are
written in the same notation.

If we choose a maximal tree of $\Gamma$ and give an order as in
Figure~\ref{fig9}(b), the boundary map $\widetilde\partial_i$ of the
Morse complex $(M_i(UD_n\Gamma),\tilde
\partial)$ is zero for all $i$ by Theorem~\ref{thm:homology} and so
is the coboundary map $\tilde\delta_i$ of $M^*(UD_n\Gamma;\mathbb
Z_2)$.  Then $c^*$ is a cocycle and so $\widetilde R^*(c^*)$ is a
cocycle of $C^*(UD_n\Gamma;\mathbb Z_2)$. Since homology groups
$H_*(M_*(UD_n\Gamma))$ are free abelian groups generated by critical
cells, the cohomology algebra $H^*(M_*(UD_n\Gamma);\mathbb Z_2)$ is
generated by cohomology classes $[c^*]$. Since
$(C^*(UD_n\Gamma),\delta)$ and $(M^*(UD_n\Gamma),\tilde\delta)$ are
chain homotopy equivalent via $\widetilde R^*$, cohomology classes
$[\widetilde R^*(c^*)]$ for critical cells $c$ generate the
cohomology algebra $H^*(C_*(UD_n\Gamma);\mathbb Z_2)$. Therefore the
algebra structure $H^*(B_n\Gamma;\mathbb Z_2)$ is determined by the
cup product $[\widetilde R^*(c^*)]\cup[\widetilde R^*(c'^*)]$ in
$H^*(UD_n\Gamma;\mathbb Z_2)$ for critical cells $c$ and $c'$. One
may try to compute cup products within the Morse cochain complex,
then all the complications are concentrated at the coboundary map
$\tilde\delta$. In this section, we rather want to disperse the
complication so that we can quickly decide whether a cup product
vanishes or not.

For an $i$-cell $a$ in $UD_n\Gamma$, $\widetilde
R^*(c^*)(a)=c^*(\widetilde R(a))$, which equals to 1 if $c$ occurs
in $\widetilde R(a)$, or 0 otherwise. By Lemma~\ref{lem:redundant},
$\widetilde R \widetilde V (a)=\widetilde R(a)$. Since $\widetilde
V(a)$ is either critical or collapsible, $\widetilde R(a)$ consists
of one critical cell or is 0. So $\widetilde
R^*(c^*)=\sum_{\widetilde R(a)=c}a^*$. $\widetilde R(a)=\widetilde
R\widetilde V(a)=c$ if and only if $\widetilde V(a)=c$. Therefore,
$\widetilde R^*(c^*)=\sum_{\widetilde V(a)=c}a^*$. Note that
$i$-cells $a$ satisfying $\widetilde V(a)=c$ can be characterized as
follows: Suppose $c=\{e_1,\ldots,e_i,v_{i+1},\ldots,v_n\}$. Then $a$
contains the same edges as $c$ and each connected component of
$\Gamma-\bar e_1\cup\cdots\cup\bar e_i$ contains the same number of
vertices from $c$ as from $a$ where $\bar e$ denotes the closure of
the open cell $e$. This precisely gives the equivalence relation
considered by Farley and Sabalka in \cite{FS2}. Let
$c=\{e_1,\ldots,e_i,v_{i+1},\ldots,v_n\}$ be a critical $i$-cell and
$c'=\{e_1',\ldots,e_j',v_{j+1}',\ldots,v_n'\}$ be a critical
$j$-cell in $UD_n\Gamma$. Then
$$[\widetilde R^*(c^*)]\cup[\widetilde R^*(c'^*)]
=[\widetilde R^*(c^*)\cup \widetilde R^*(c'^*)] =[\sum_{\widetilde
V(a)=c}a^*\cup \sum_{\widetilde V(b)=c'}b^*]$$ Since $UD_n\Gamma$ is
a cubical complex, the cup product in $H^*(UD_n\Gamma;\mathbb Z_2)$
can be geometrically formulated as
$$\sum_{\widetilde V(a)=c}a^*\cup\sum_{\widetilde V(b)=c'}b^*
=\sum p^*,$$ where the sum on the right side is taken over all
$(i+j)$-cells $p$ in $UD_n\Gamma$ satisfying the following
conditions.
\begin{itemize}
  \item [(1)] $e_1,\ldots,e_i,e_1',\ldots,e_j'$ are edges in $p$;
  \item [(2)] If an $i$-cell $a$ is an $i$-dimensional
  face of $p$, and $e_1,\ldots,e_i$ are edges of $a$, then
$\widetilde V(a)=c$;
  \item [(3)] If a $j$-cell $b$ is a $j$-dimensional
  face of $p$, and $e'_1,\ldots,e'_j$ are edges of $b$, then
$\widetilde V(b)=c'$.
\end{itemize}

In general, the cup product in the cohomlogy algebra of the Morse
cochain complex can be given by
$$[c^*]\cup[c'^*]=[\varepsilon^*\widetilde R^*(c^*)]\cup
[\varepsilon^*\widetilde R^*(c'^*)]=\varepsilon^*(\sum p^*).$$ In
our situation, if $\sum p^*$ contains a summand $q^*$ for a critical
$(i+j)$-cell $q$, then $\sum p^*$ is equal to $\sum_{\widetilde
V(x)=q}x^*=[\widetilde R^*(q^*)]$ and so $[c^*]\cup[c'^*]=[q^*]$.
Otherwise, we continue to modify $\sum p^*$ up to coboundary until
it contains such a summand or it vanishes.

In order to reduce the amount of computation, we give several
sufficient conditions for a cup product to vanish, which can be
direct consequences of the conditions for $\sum p^*$ above. If a
critical $i$-cell $c=\{e_1,\ldots,e_i,v_{i+1},\ldots,v_n\}$ and a
critical $j$-cell $c'=\{e_1',\ldots,e_j',v_{j+1}',\ldots,v_n'\}$
satisfy one of the following three conditions, then the cup product
$[c^*]\cup[c'^*]=0$ since each condition violates the corresponding
condition given above.
\begin{itemize}
\item[(1)] There are edges $e$ in $c$ and $e'$ in $c'$ such that
$\bar e \cap\bar e'\neq\emptyset$;
\item[(2)] Every $i$-cell $a$ with $\widetilde V(a)=c$ contains neither
$\iota(e_\ell')$ nor $\tau(e_\ell')$ for some $\ell=1,\ldots,j$;
\item[(3)] Every $j$-cell $b$ with $\widetilde V(b)=c'$ contains neither
$\iota(e_\ell)$ nor $\tau(e_\ell)$ for some $\ell=1,\ldots,i$.
\end{itemize}
There are critical cells $c$ and $c'$ that satisfy none of the
vanishing conditions, but the cup product $[c^*]\cup[c'^*]$ is still
0. It is not easy to compute the cohomology algebra of the unordered
configuration space of an arbitrary graph that contains $T_0$ but
not $S_0$. But it is possible to compute cohomology algebras of
graph braid groups of low braid indices if a graph is simple enough
like $T_1$, $T_2$, $T_3'$, $T_3''$ and $T_3'''$.

The following lemma shows that the cohomology algebras of $UD_4T_1$,
$UD_3T_2$, $UD_2T_3'$, $UD_2T_3''$, and $UD_2T_3'''$ are exterior
face algebras of non-flag complexes. Consequently none of braid
groups $B_4T_1$, $B_3T_2$, $B_2T_3'$, $B_2T_3''$, and $B_2T_3'''$ is
a right-angled Artin group.

\begin{lem}\label{lem:cohomology_algebra}
The cohomology algebra $H^*(B_n\Gamma;\mathbb Z_2)$ is isomorphic to
the exterior face algebra $\Lambda(K)$ where the braid index $n$,
the graph $\Gamma$, and the simplicial complex $K$ are given by rows
of the following table:
$$
\begin{tabular}{|c|c|c|}\hline
$n$ & $\Gamma$ & $K$\\\hline
$4$ & $T_1$ & the graph in Figure~\ref{fig10}(a) with $9$ extra vertices\\
$3$ & $T_2$ & the graph in Figure~\ref{fig10}(b) with $5$ extra vertices\\
$2$ & $T'_3$ & the graph in Figure~\ref{fig10}(c) with $5$ extra vertices\\
$2$ & $T''_3$ & the graph in Figure~\ref{fig10}(c) with $6$ extra vertices\\
$2$ & $T'''_3$ & the graph in Figure~\ref{fig10}(c) with $6$ extra vertices\\\hline
\end{tabular}
$$
\begin{figure}[ht]
\psfrag{A2(1,2)}{\small$\{3\mbox{-}7,4,0,8\}^*$}
\psfrag{A2(2,2)}{\small$\{3\mbox{-}7,4,5,8\}^*$}
\psfrag{A2(3,1)}{\small$\{3\mbox{-}7,4,5,6\}^*$}
\psfrag{A2(1,3)}{\small$\{3\mbox{-}7,4,8,9\}^*$}
\psfrag{B2(1,3)}{\small$\{9\mbox{-}17,10,18,19\}^*$}
\psfrag{B2(2,1)}{\small$\{9\mbox{-}17,10,0,11\}^*$}
\psfrag{B2(3,1)}{\small$\{9\mbox{-}17,10,11,12\}^*$}
\psfrag{B2(2,2)}{\small$\{9\mbox{-}17,10,11,18\}^*$}
\psfrag{B2(1,1)}{\small$\{9\mbox{-}17,10,0,1\}^*$}
\psfrag{C2(1,1)}{\small$\{19\mbox{-}23,20,0,1\}^*$}
\psfrag{C2(1,2)}{\small$\{19\mbox{-}23,20,0,24\}^*$}
\psfrag{C2(2,1)}{\small$\{19\mbox{-}23,20,0,21\}^*$}
\psfrag{e1}{\small$\{d_1,0,1,2\}^*$}
\psfrag{e1{13}}{\small$\{d_1,0,1,13\}^*$}
\psfrag{3A2(1,2)}{\small$\{2\mbox{-}5,3,6\}^*$}
\psfrag{3B2(1,2)}{\small$\{6\mbox{-}12,7,13\}^*$}
\psfrag{3B2(2,1)}{\small$\{6\mbox{-}12,7,8\}^*$}
\psfrag{3e1}{\small$\{d_1,0,1\}^*$}
\psfrag{3e2}{\small$\{d_2,0,1\}^*$}
\psfrag{e1{9}}{\small$\{d_1,0,9\}^*$}
\psfrag{e2{13}}{\small$\{d_2,0,14\}^*$}
\psfrag{e1{3}}{\small$\{d_1,3\}^*$}
\psfrag{e2}{\small$\{d_2,0\}^*$}
\psfrag{e3}{\small$\{d_3,0\}^*$}
\subfigure[]
{\includegraphics[height=3.2cm]{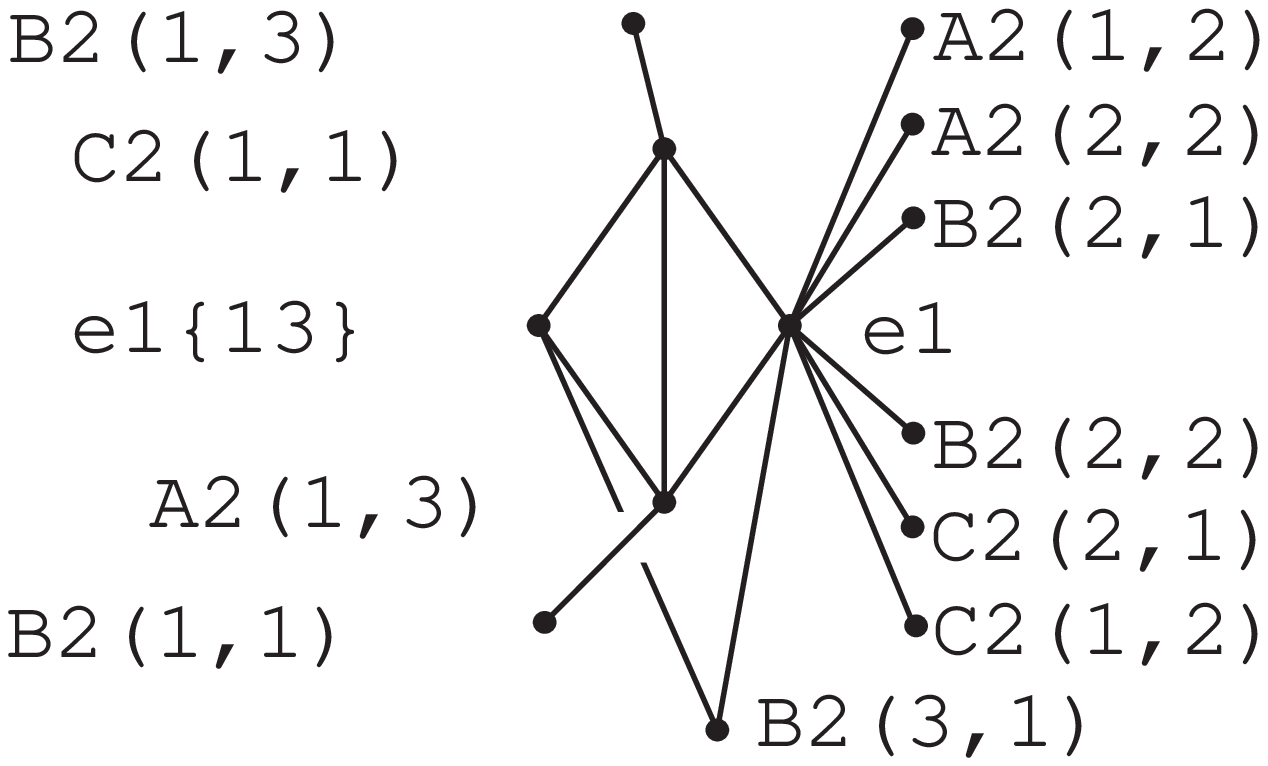}}\qquad\subfigure[]
{\includegraphics[height=3.2cm]{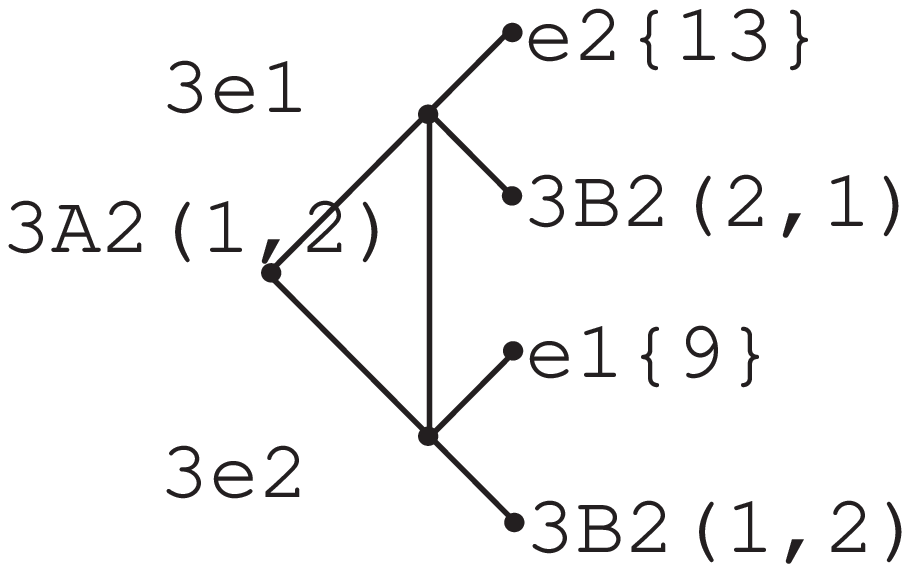}}\\ \subfigure[]
{\includegraphics[height=1.3cm]{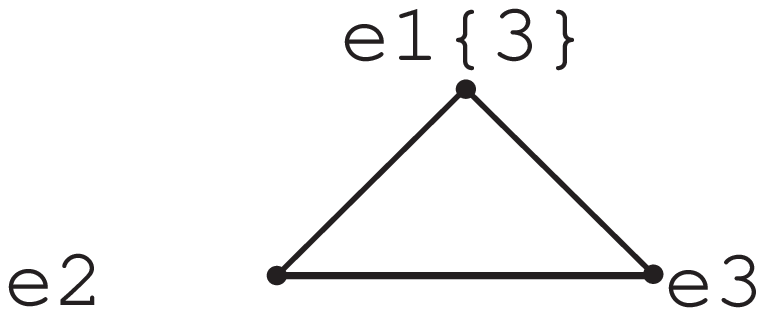}}
\caption{Complex $K$ labeled by 1-cocycles with no isolated vertices}
\label{fig10}
\end{figure}
\end{lem}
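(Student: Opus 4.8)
The plan is to carry out, for each of the five rows of the table, the explicit cup--product computation in the Morse cochain complex set up in \S\ref{ss32:cohomology}. Since none of $T_1,T_2,T_3',T_3'',T_3'''$ contains $S_0$, Theorem~\ref{thm:homology} applies: with a maximal tree and order chosen as in Figure~\ref{fig9}(b) the Morse boundary maps vanish, so $H^*(B_n\Gamma;\mathbb Z_2)$ is a $\mathbb Z_2$-vector space with basis the classes $[\widetilde R^*(c^*)]$ indexed by the critical cells $c$, and as an algebra it is generated by the degree-one classes together with whatever degree-two classes are forced. So the first step is, in each case, to enumerate all critical cells from the combinatorial description in \S\ref{ss22:morse}: the unique critical $0$-cell, a finite list of critical $1$-cells (the vertices of the prospective $K$), a finite list of critical $2$-cells, and --- this must be verified --- no critical cell of dimension $\ge 3$, which is what allows $\Lambda(K)$ to be the exterior face algebra of a one-dimensional complex. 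Figure~\ref{fig10} already records the non-isolated critical $1$-cells by their dual labels, so this step is bookkeeping.

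The second, and main, step is to compute $[\widetilde R^*(c^*)]\cup[\widetilde R^*(c'^*)]$ for every pair of critical $1$-cells. For most pairs one of the three vanishing criteria at the end of \S\ref{ss32:cohomology} applies --- overlapping edge closures, or a vertex of one cell forced outside a component cut by the edge of the other --- so the product is $0$ and the two cells are non-adjacent in $K$; a critical $1$-cell all of whose products vanish becomes one of the stated ``extra'' isolated vertices. For the surviving pairs one uses the geometric formula $\sum_{\widetilde V(a)=c}a^*\cup\sum_{\widetilde V(b)=c'}b^*=\sum p^*$: one exhibits a critical $2$-cell $q$ occurring as a summand $q^*$, so that $[c^*]\cup[c'^*]=[q^*]$, which puts an edge between $c$ and $c'$ in $K$. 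Running over all critical $2$-cells this way shows each of them is hit (so no degree-two generator is superfluous), and since triple products live in degree $3$, where there are no critical cells, the subalgebra generated in degree one is everything. This yields an isomorphism $H^*(B_n\Gamma;\mathbb Z_2)\cong\Lambda(K)$ with $K$ the graph whose vertices are the critical $1$-cells and whose edges are the nonzero-product pairs --- i.e.\ the graph of Figure~\ref{fig10} together with the stated number of isolated vertices.

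The hard part will be the bookkeeping of the surviving cup products, especially the pairs $c,c'$ that satisfy none of the three easy vanishing conditions yet still have $[c^*]\cup[c'^*]=0$: for such a pair $\sum p^*$ has no critical summand, and one must reduce it modulo $\tilde\delta$ and check that the reduction terminates at $0$ rather than at a nonzero critical class, which requires a careful local analysis near each essential vertex (and near each deleted edge in the graph cases $T_3',T_3'',T_3'''$) and has to be redone by hand for each of the five graph/index pairs. The payoff is immediate: in each case the graph $K$ of Figure~\ref{fig10} contains a $3$-cycle, so $K$, being one-dimensional, is not a flag complex; by the Bruns--Gubeladze rigidity of exterior face algebras \cite{BG} the algebra $\Lambda(K)$ is then not isomorphic to the exterior face algebra of any flag complex, and Proposition~\ref{pro:flag}(1) shows that none of $B_4T_1$, $B_3T_2$, $B_2T_3'$, $B_2T_3''$, $B_2T_3'''$ is a right-angled Artin group.
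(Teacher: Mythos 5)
Your overall strategy is exactly the paper's: enumerate critical cells, observe there are none in dimension $\ge 3$, and compute all pairwise cup products via the geometric formula $\sum_{\widetilde V(a)=c}a^*\cup\sum_{\widetilde V(b)=c'}b^*=\sum p^*$, using the three vanishing criteria for most pairs and coboundary reduction for the stubborn ones. But your final identification of $K$ --- ``the graph whose vertices are the critical $1$-cells and whose edges are the nonzero-product pairs'' --- is not correct as stated, and this is a genuine gap rather than bookkeeping. The problem is that two \emph{distinct} pairs of degree-one generators can have \emph{equal} nonzero products in $H^2$. This actually happens in the $T_2$ case: the coboundary of $\alpha^*=\sum_{0\le i,j\le 7}\{d_1,i,j\}^*$ forces
$$[\{6\mbox{-}12,7,0\}^*]\cup[\{d_1,0,1\}^*]
=[\{6\mbox{-}12,7,8\}^*]\cup[\{d_1,0,1\}^*]=[\{6\mbox{-}12,d_1,7\}^*],$$
so your graph would have $8$ edges while $H^2(UD_3T_2;\mathbb Z_2)$ has rank $7$ (one basis element per critical $2$-cell). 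Since in an exterior face algebra $\Lambda(K)$ the monomials attached to distinct edges are linearly independent in degree $2$, your $K$ cannot have $H^*(B_3T_2;\mathbb Z_2)\cong\Lambda(K)$: the degree-two dimensions disagree. The missing step is a change of basis in degree one: replace $[\{6\mbox{-}12,7,0\}^*]$ by $[\{6\mbox{-}12,7,0\}^*]+[\{6\mbox{-}12,7,8\}^*]$, whose product with $[\{d_1,0,1\}^*]$ then vanishes; the new generator becomes one of the ``extra'' isolated vertices, the edge--(critical $2$-cell) correspondence becomes a bijection, and only then is the algebra literally $\Lambda(K)$ for the $K$ of Figure~\ref{fig10}(b). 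You need to verify, in each of the five cases, that after all such linear relations among products are accounted for by basis changes, the remaining nonzero products biject with the critical $2$-cells.

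A smaller remark: your closing paragraph (non-flagness of $K$, hence the braid groups are not right-angled Artin groups) is a correct consequence but is not part of this lemma; the paper records it separately. Aside from the basis-change issue, the rest of your plan --- including the observation that vanishing of all triple products is automatic from $H^3=0$, so the degree-one classes generate --- matches the paper's proof, which carries out the computation explicitly only for $(n,\Gamma)=(3,T_2)$ and declares the other four rows similar.
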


\begin{proof}
We only discuss $H^*(UD_3T_2;\mathbb Z_2)$. The rest are similar
using maximal trees and orders given in Figure~\ref{fig11}. In the
figure, vertices of valency $\ge 3$ labeled by their numbering, and
the base vertex 0 and deleted edges $d_i$ are indicated.

\begin{figure}[ht]
\psfrag{A}{\small$A$}
\psfrag{B}{\small$B$}
\psfrag{C}{\small$C$}
\psfrag{0}{\small0}
\psfrag{2}{\small2}
\psfrag{3}{\small3}
\psfrag{4}{\small4}
\psfrag{5}{\small5}
\psfrag{6}{\small6}
\psfrag{7}{\small7}
\psfrag{8}{\small8}
\psfrag{9}{\small9}
\psfrag{10}{\small10}
\psfrag{11}{\small11}
\psfrag{12}{\small12}
\psfrag{13}{\small13}
\psfrag{14}{\small14}
\psfrag{16}{\small16}
\psfrag{19}{\small19}
\psfrag{22}{\small22}
\psfrag{25}{\small25}
\psfrag{e1}{\small$d_1$}
\psfrag{e2}{\small$d_2$}
\psfrag{e3}{\small$d_3$}
\subfigure[\small$T_1$]
{\includegraphics[height=3cm]{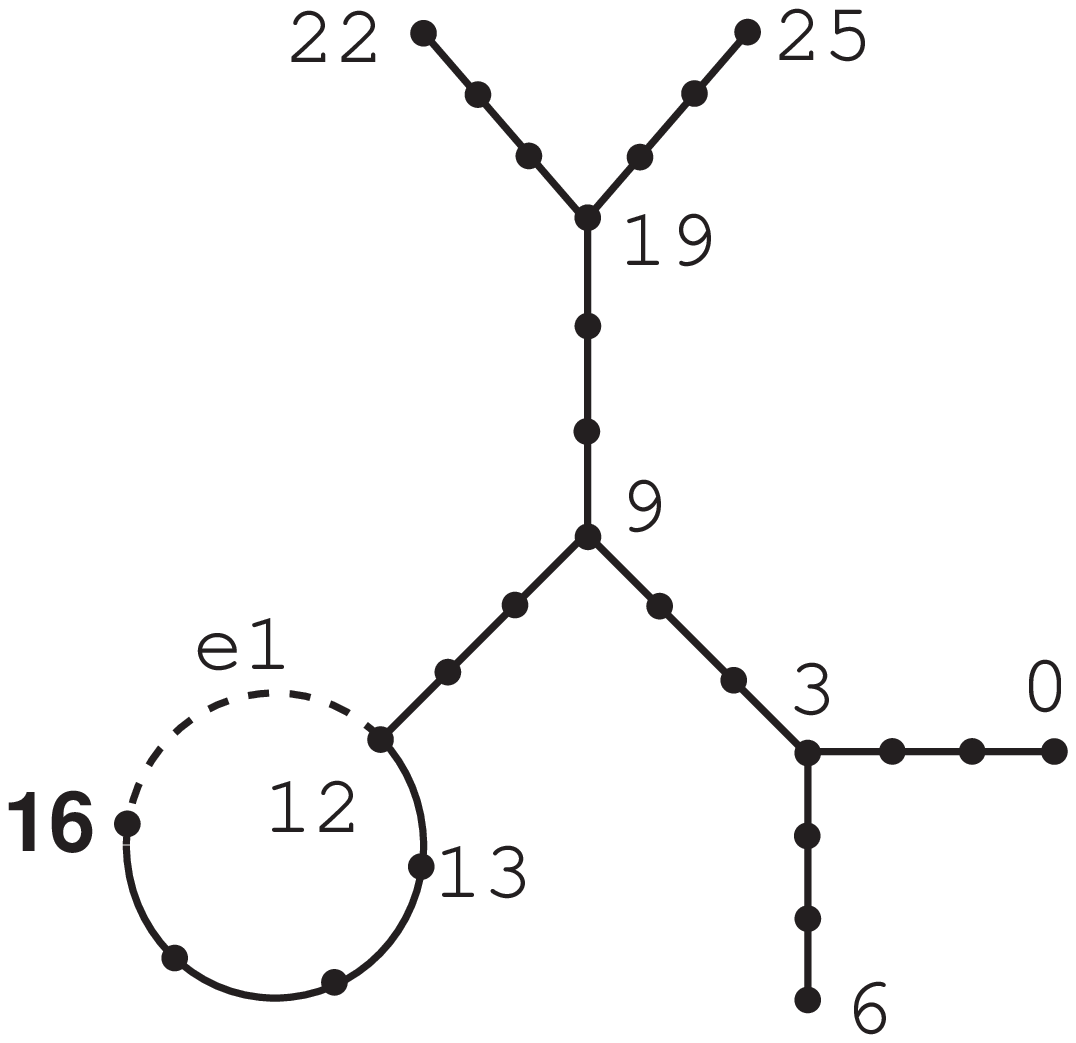}}\qquad\quad
\subfigure[$T_2$]
{\includegraphics[height=3cm]{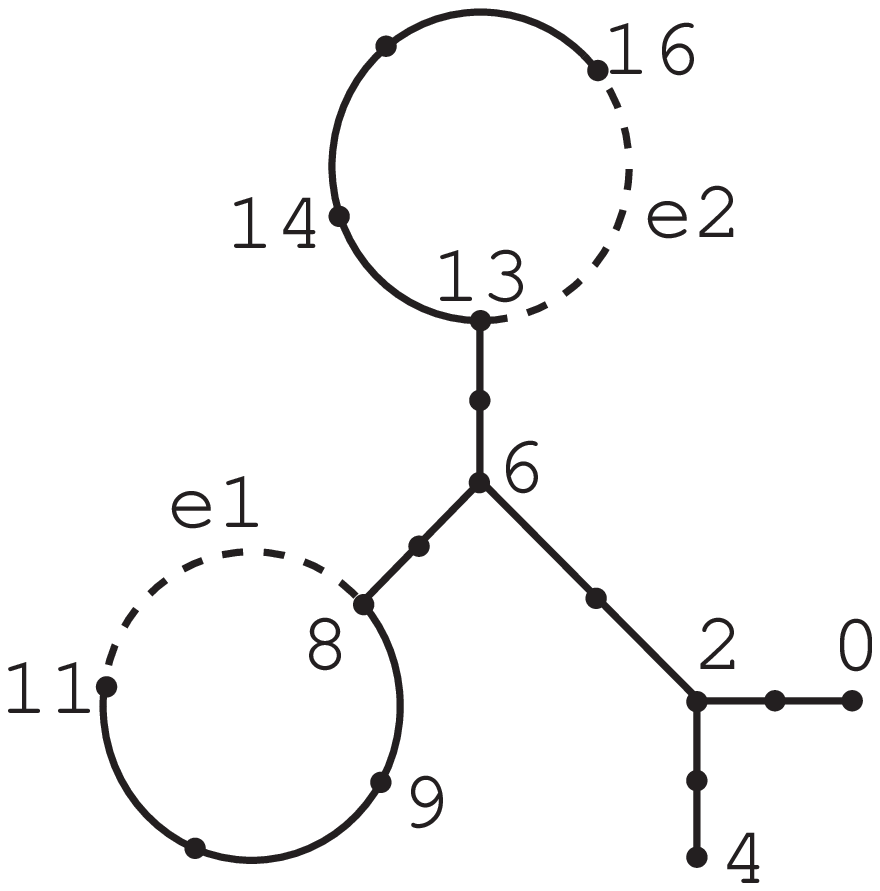}}\\
\subfigure[$T_3'$]
{\includegraphics[height=2.5cm]{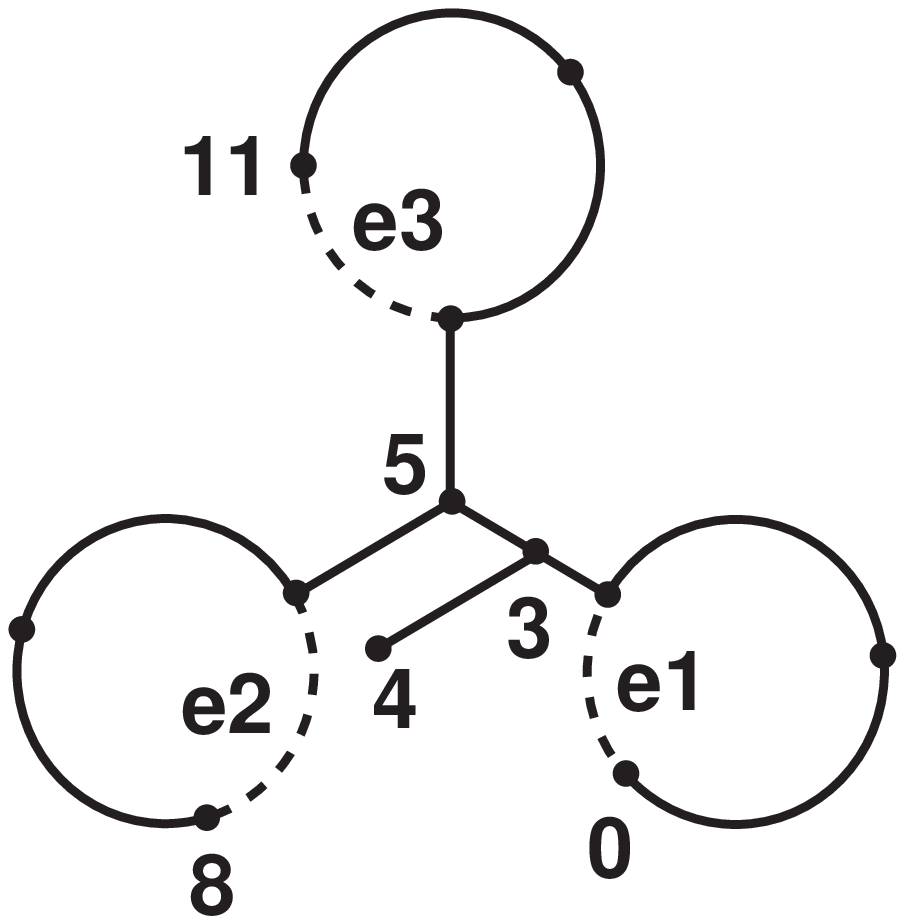}}\quad
\subfigure[$T_3''$]
{\includegraphics[height=2.5cm]{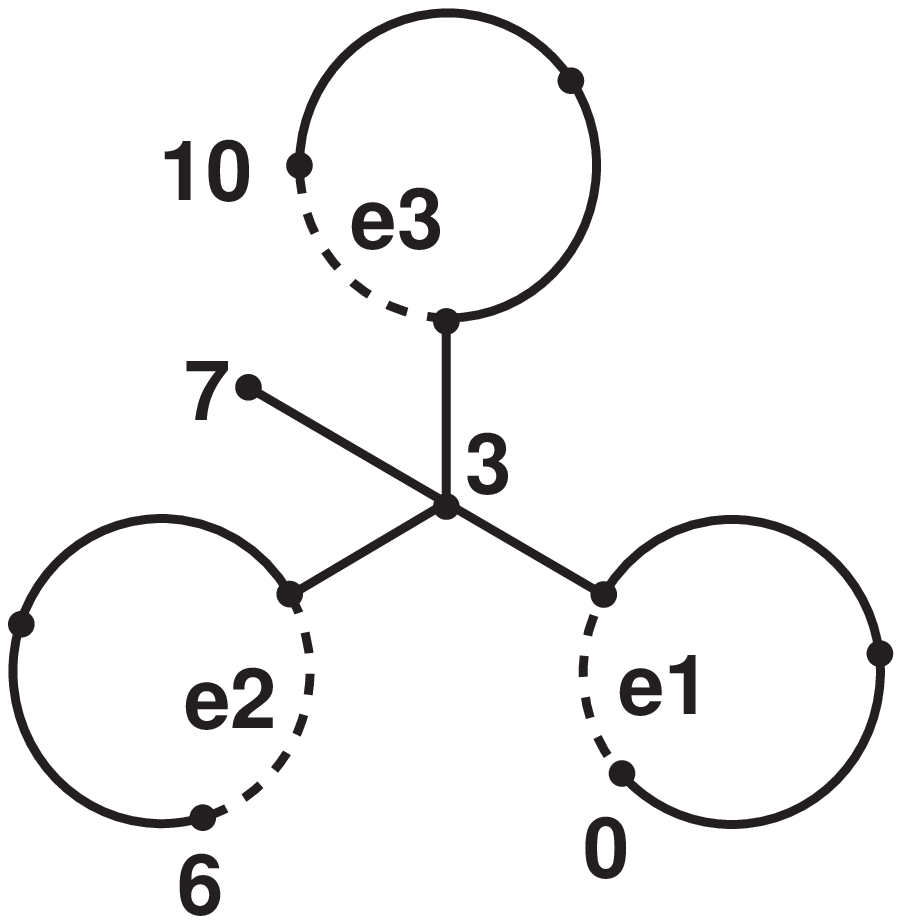}}\quad
\subfigure[$T_3'''$]
{\includegraphics[height=2.5cm]{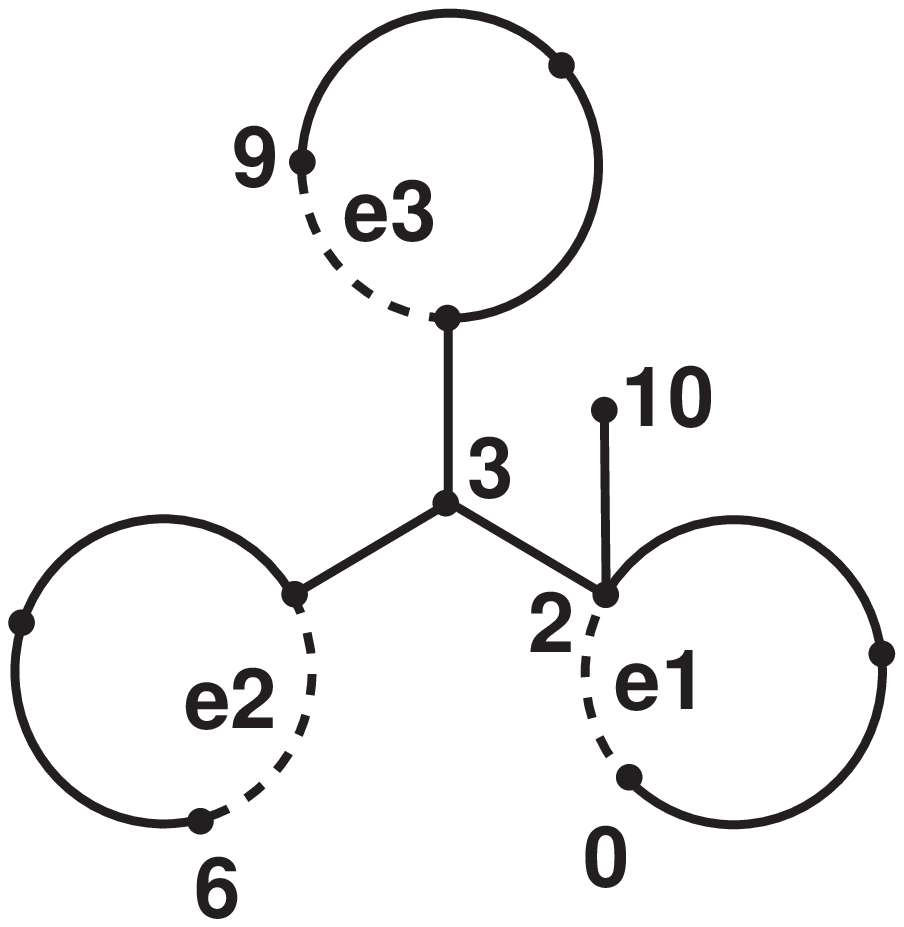}}
\caption{Maximal trees with numbering on vertices}
\label{fig11}
\end{figure}

If we choose a maximal tree of $T_2$ and give an order as in
Figure~\ref{fig11}(b), then the complex $UD_3T_2$ has twelve
critical 1-cells $\{2\mbox{-}5,3,0\}$, $\{2\mbox{-}5,3,4\}$,
$\{2\mbox{-}5,3,6\}$, $\{6\mbox{-}12,\allowbreak 7,0\}$,
$\{6\mbox{-}12,7,8\}$, $\{6\mbox{-}12,7,13\}$, $\{d_1,0,1\}$,
$\{d_1,0,9\}$, $\{d_1,9,10\}$, $\{d_2,0,1\}$, $\{d_2,0,14\}$,
$\{d_2,\allowbreak 14,15\}$, and seven critical 2-cells
$\{d_1,d_2,0\}$, $\{d_1,d_2,9\}$, $\{d_1,d_2,14\}$,
$\{2\mbox{-}5,d_1,3\}$, $\{2\mbox{-}5,d_2,3\}$,
$\{6\mbox{-}12,d_1,7\}$, $\{6\mbox{-}12,d_2,7\}$. And there are no
critical $i$-cells for $i\ge 3$. Thus, the cohomology algebra
$H^*(UD_3T_2;\mathbb Z_2)$ is completely determined by the cup
products of cohomology classes that are dual to critical 1-cells.
Let $c=\{e_1,v_2,v_3\}$ and $c'=\{e'_1,v'_2,v'_3\}$ be critical
1-cells. Recall the formula
$$[\widetilde R^*(c^*)]\cup[\widetilde R^*(c'^*)]=[\sum_{\widetilde
V(a)=c}a^*\cup\sum_{\widetilde V(b)=c'}b^*]=[\sum p^*],$$ where the
sum on the right side is taken over all 2-cells $p$ in $UD_3T_2$
such that
\begin{itemize}
  \item [(1)] $e_1, e'_1\in p$;
  \item [(2)] $\widetilde V(a)=c$ for a 1-dimensional face $a$ of
  $p$ containing $e_1$;
  \item [(3)] $\widetilde V(b)=c'$ for a 1-dimensional face $b$ of
   $p$ containing $e'_1$.
\end{itemize}

If $c$ and $c'$ satisfy one of the vanishing conditions explained
earlier, then the cup product vanishes. For example, consider the
case $c=\{2\mbox{-}5,3,6\}$ and $c'=\{d_1,0,9\}$. Then $p$ must be a
2-cell that contains edges 2-5 and $d_1$. If $a$ is a face of $p$
containing the edge 2-5 and $\widetilde V(a)=c$, then $p$ is either
$\{2\mbox{-}5,d_1,3\}$ or $\{2\mbox{-}5,d_1,4\}$. On the other hand,
if $b$ is a face of $p$ containing $d_1$ and $\widetilde
V(b)=\{d_1,0,9\}$, then $p$ is either $\{2\mbox{-}5,d_1,9\}$ or
$\{2\mbox{-}5,d_1,10\}$. Thus no such 2-cell $p$ exist, and so
$[\widetilde R^*(c^*)]\cup[\widetilde R^*(c'^*)]=0$, and so
$[\{2\mbox{-}5,3,6\}^*]\cup[\{d_1,0,9\}^*]=0$. Similarly, the
following cup products vanish:
 \begin{align*}
 &[\{2\mbox{-}5,3,6\}^*]\cup[\{6\mbox{-}12,7,0\}^*],
 &&[\{2\mbox{-}5,3,6\}^*]\cup[\{d_2,0,14\}^*],\\
 &[\{6\mbox{-}12,7,0\}^*]\cup[\{d_1,0,9\}^*],
 &&[\{6\mbox{-}12,7,13\}^*]\cup[\{d_1,0,9\}^*],\\
 &[\{6\mbox{-}12,7,13\}^*]\cup[\{d_2,0,14\}^*],
 &&[\{d_1,0,9\}^*]\cup[\{d_2,0,14\}^*].
 \end{align*}

If $\sum p^*$ contains a summand $q*$ for critical 2-cell $q$, then
$[c^*]\cup [c'^*]=[q^*]$ as mentioned before. For example, consider
the case $c=\{2\mbox{-}5,3,6\}$ and $c'=\{d_1,0,1\}$. Then
\begin{align*}
[\widetilde R^*(c^*)]\cup[\widetilde R^*(c'^*)]
&=[\sum_{\substack{3\le i\le 4\\6\le j\le 16}}
\{2\mbox{-}5,i,j\}^*\cup\sum_{k,\ell\in A}\{d_1,k,\ell\}^*]\\
&=[\sum_{\widetilde V(x)=\{2\mbox{-}5,d_1,3\}}x^*]
=[\widetilde R^*(\{2\mbox{-}5,d_1,3\}^*)].
\end{align*}
where $A=\{0,\ldots,7,12,\ldots,16\}$. Thus
$[\{2\mbox{-}5,3,6\}^*]\cup[\{d_1,0,1\}^*]=[\{2\mbox{-}5,d_1,3\}^*]$.

The following cup products are obtained similarly:
\begin{align*}
[\{2\mbox{-}5,3,6\}^*]\cup[\{d_2,0,1\}^*]&=[\{2\mbox{-}5,d_2,3\}^*],\\
[\{6\mbox{-}12,7,8\}^*]\cup[\{d_1,0,1\}^*]&=[\{6\mbox{-}12,d_1,7\}^*],\\
[\{6\mbox{-}12,7,13\}^*]\cup[\{d_2,0,1\}^*]&=[\{6\mbox{-}12,d_2,7\}^*],\\
[\{d_1,0,1\}^*]\cup[\{d_2,0,1\}^*]&=[\{d_1,d_2,0\}^*],\\
[\{d_1,0,1\}^*]\cup[\{d_2,0,14\}^*]&=[\{d_1,d_2,14\}^*],\\
[\{d_1,0,9\}^*]\cup[\{d_2,0,1\}^*]&=[\{d_1,d_2,9\}^*].
\end{align*}

If $\sum p^*$ contains no dual to any critical 2-cell, then we
continue to modify $\sum p^*$ up to coboundary until it contains
such a summand or it vanishes. For the case $c=\{6\mbox{-}12,7,0\}$
and $c'=\{d_1,0,1\}$,
\begin{align*}
[\widetilde R^*(c^*)]\cup[\widetilde
R^*(c'^*)]=&[\{6\mbox{-}12,d_1,0\}^*+\{6\mbox{-}12,d_1,1\}^*+\{6\mbox{-}12,d_1,2\}^*\\
                   &+\{6\mbox{-}12,d_1,3\}^*+\{6\mbox{-}12,d_1,4\}^*+\{6\mbox{-}12,d_1,5\}^*].
\end{align*}

Since $\{6\mbox{-}12,d_1,0\}$, $\{6\mbox{-}12,d_1,1\}$,
$\{6\mbox{-}12,d_1,2\}$, $\{6\mbox{-}12,d_1,3\}$,
$\{6\mbox{-}12,d_1,4\}$ and $\{6\mbox{-}12,d_1,5\}$ are not
critical, we consider the cochain $\alpha^*=\sum_{0\le i,j\le
7}\{d_1,i,j\}^*$ in $C^1(UD_3T_2;\mathbb Z_2)$. Then
$$\delta(\alpha^*)=\widetilde
R^*(\{6\mbox{-}12,7,0\}^*)\cup\widetilde
R^*(\{d_1,0,1\}^*)+\widetilde R^*(\{6\mbox{-}12,7,8\}^*)\cup\widetilde R^*(\{d_1,0,1\}^*).$$ So,
$$[\{6\mbox{-}12,7,0\}^*]\cup[\{d_1,0,1\}^*]
=[\{6\mbox{-}12,7,8\}^*]\cup[\{d_1,0,1\}^*]=[\{6\mbox{-}12,d_1,7\}^*].$$ For the case $c=\{6\mbox{-}12,7,8\}$ and $c'=\{d_1,0,9\}$,
$$[\widetilde R^*(c^*)]\cup[\widetilde R^*(c'^*)]
=[\{6\mbox{-}12,d_1,9\}^*+\{6\mbox{-}12,d_1,10\}^*].$$ Since
$\{6\mbox{-}12,d_1,9\}$ and $\{6\mbox{-}12,d_1,10\}$ are not
critical, we consider the cochain $\beta^*=\sum_{\substack{0\le i\le
7\\9\le j\le 10}}\{d_1,i,j\}^*$ in $C^1(UD_3T_2;\mathbb Z_2)$. Then
$\delta(\beta^*)=\{6\mbox{-}12,d_1,9\}^*+\{6\mbox{-}12,d_1,10\}^*$.
Thus $[\{6\mbox{-}12,7,8\}^*]\cup[\{d_1,0,9\}^*]=0$. Similarly,
$$[\widetilde R^*(\{6\mbox{-}12,7,13\}^*)]\cup[\widetilde
R^*(\{d_1,0,1\}^*)]=[\delta(\gamma^*)]=0$$ for the cochain
$\gamma^*=\sum_{12\le i,j\le 16}\{d_1,i,j\}^*$in
$C^1(UD_3T_2;\mathbb Z_2)$. In order to see the cohomology algebra
$H^*(UD_3T_2;\mathbb Z_2)$ as an exterior face algebra of a complex,
we need to change a basis by replacing $[\{6\mbox{-}12,7,0\}^*]$ by
$[\{6\mbox{-}12,7,0\}^*]+[\{6\mbox{-}12,7,8\}^*]$, due to the
relation resulted from the coboundary $\delta(\alpha^*)$. Then
$$([\{6\mbox{-}12,7,0\}^*]+[\{6\mbox{-}12,7,8\}^*])\cup[\{d_1,0,1\}^*]=0.$$
Consequently, $H^*(UD_3T_2;\mathbb Z_2)$ is isomorphic to an
exterior face algebra of a graph in Figure~\ref{fig9}(b) together
with 5 extra vertices which is a non-flag complex. In the figure,
each vertex is labeled by a 1-cocycle dual to a critical 1-cell.
\end{proof}

It should be noted that the approach in this section cannot be used
to compute the cohomology algebra of graph braid groups unless the
boundary maps $\tilde\partial$ of the Morse chain complex are all
zero and $\widetilde R\widetilde V(c)=\widetilde R(c)$.

\subsection{Graphs that is not $T_3$, and contains $T_0$ but not $S_0$}\label{ss33:embedding}
Recall that for $k=0,1,2$, the set $\mathcal G_k$ consists of graphs
that contain $T_k$ but do not contain $T_{k+1}$ and $\mathcal G_3$
consists of graphs containing $T_3'$, $T_3''$ or $T_3'''$ but not
$S_0$. If a graph $\Gamma$ is in $\cup_{k=0}^3\mathcal G_k$, we want
to show $B_n\Gamma$ is not a right-angled Artin group. In the view
of Proposition~\ref{pro:flag} and
Lemma~\ref{lem:cohomology_algebra}, the only remaining ingredient is
the following lemma.

\begin{lem}\label{lem:ker}
Let $\Gamma$ be a graph in $\mathcal G_k$ for some $k=0,1,2,3$. Then
there is an embedding $i:P_k\to \Gamma$ that induces a
degree-preserving epimorphism $i^*:H^*(B_n\Gamma;\mathbb Z_2)\to
H^*(B_{n_k}P_k,\mathbb Z _2)$ of cohomology algebras such that
$\ker(i^*)$ is generated by homogeneous elements of degree 1 and 2
where $P_j=T_j$ for $j=0,1,2$ and $P_3$ is one of $T_3',T_3''$ and
$T_3'''$, and $n_0=4$, $n_1=4$, $n_2=3$, $n_3=2$ and $n\ge n_i$.
\end{lem}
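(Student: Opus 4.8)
The plan is to realise $i^*$ as the $\mathbb Z_2$-dual of a split injection of Morse chain complexes, and then to control its kernel by decomposing critical cells into their blocks.

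First I would produce the embedding together with compatible combinatorial data. By hypothesis $\Gamma\in\mathcal G_k$ contains $P_k$ (for $k=3$ we take $P_3$ to be whichever of $T_3',T_3'',T_3'''$ is contained in $\Gamma$). Each $P_k$ has a vertex of valency $1$, so, as explained in \S\ref{ss31:homology}, one may choose maximal trees $T'$ of $P_k$ and $T$ of $\Gamma$ together with planar orders meeting the four compatibility conditions there, and with the order on $\Gamma$ arranged as in Figure~\ref{fig9}(b) so that Theorem~\ref{thm:homology} also applies; in particular the base vertex of $P_k$ is carried into $\Gamma$ along a path of exactly $n-n_k$ edges none of which is an image of an edge of $P_k$. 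This gives the cubical embedding $i:UD_{n_k}P_k\hookrightarrow UD_n\Gamma$, which commutes with the function $W$, hence carries each critical $j$-cell of $UD_{n_k}P_k$ injectively to a critical $j$-cell of $UD_n\Gamma$ and induces a chain map $\tilde i:M_*(UD_{n_k}P_k)\to M_*(UD_n\Gamma)$ of Morse complexes.

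Next I would identify $i^*$ explicitly. Since neither $\Gamma$ nor $P_k$ contains $S_0$, Theorem~\ref{thm:homology} gives vanishing Morse boundary maps on both sides, so the integral homology of each Morse complex is free abelian on critical cells and $\tilde i_*=\tilde i$ sends a basis bijectively onto part of a basis; via the commuting square of \S\ref{ss31:homology}, the integral map $i_*:H_*(UD_{n_k}P_k)\to H_*(UD_n\Gamma)$ is therefore a split injection with free cokernel. Dualising over $\mathbb Z_2$ --- legitimate by the universal coefficient theorem since all the homology is free --- shows that $i^*:H^*(B_n\Gamma;\mathbb Z_2)\to H^*(B_{n_k}P_k;\mathbb Z_2)$ is a degree-preserving epimorphism of $\mathbb Z_2$-algebras (it is a ring map because $i$ is continuous), and, up to the canonical isomorphisms of \S\ref{ss32:cohomology} relating $H^*$ to the Morse cochain algebra, it is the projection of $M^*(UD_n\Gamma;\mathbb Z_2)$ that sends the dual $c^*$ of a critical cell $c$ to $\big(i^{-1}(c)\big)^*$ when $c$ lies in the image of $i$ and to $0$ otherwise. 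Hence $\ker(i^*)$ is the ideal whose underlying $\mathbb Z_2$-vector space is spanned by the classes $[c^*]$ for the critical cells $c$ of $UD_n\Gamma$ that are not in the image of $i$.

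The remaining, and hardest, task is to show this ideal is generated in degrees $1$ and $2$. Here I would use the normal form $A^1_{k_1}(\vec a^1)\cup\dots\cup A^p_{k_p}(\vec a^p)\cup\{d_1,\dots,d_q\}\cup\{v_1,\dots,v_r\}$ of a critical cell from \S\ref{ss22:morse}, which exhibits it as a disjoint union of one-dimensional ``blocks'' (the branch-vertex blocks $A^i_{k_i}(\vec a^i)$ and the deleted-edge blocks $d_\ell\cup\{v_\ell\}$), together with the cellular cup-product formula of \S\ref{ss32:cohomology}, by which $[c^*]$ is the cup product of the duals of the critical cells obtained by padding the individual blocks of $c$ with base vertices. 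If $c$ is not in the image of $i$, then tracing through the four compatibility conditions shows that either a single block of $c$ already fails to be the $i$-image of a $P_k$-block --- in which case that block determines a critical $1$-cell whose dual lies in $\ker(i^*)$ --- or every single block is such an image but some pair of blocks, viewed together, is not, giving a critical $2$-cell whose dual lies in $\ker(i^*)$; because $n_k\le 4$ and each $P_k$ is small, a finite case check rules out any third possibility. Either way $[c^*]$ is a multiple of a degree-$1$ or degree-$2$ element of $\ker(i^*)$, whence $\ker(i^*)$ is generated in degrees $1$ and $2$. The main obstacle is precisely this last step: certifying, case by case for $k=0,1,2,3$, that the obstruction to lying in the image of $i$ is always detected by a sub-block of dimension at most $2$, which requires careful bookkeeping of how the blocks and the padding vertices $0_{n-n_k}$ interact under the four compatibility conditions.
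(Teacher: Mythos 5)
Your set-up — choosing the embedding via the four compatibility conditions of \S\ref{ss31:homology}, identifying $i^*$ through Theorem~\ref{thm:homology} and the universal coefficient theorem as the surjection sending $[c^*]$ to $[(i^{-1}(c))^*]$ or to $0$, and describing $\ker(i^*)$ as the span of the duals of critical cells outside $\im(i)$ — coincides with the paper's proof. The divergence, and the genuine gap, is in the last step. You base everything on the assertion that $[c^*]$ equals the cup product of the duals of the critical $1$-cells obtained by ``padding the individual blocks of $c$ with base vertices.'' That factorization is not proved and is false in general: the cup-product computation in \S\ref{ss23:thma} shows that the critical $2$-cell $A_k^i(\vec a)\cup A_\ell^j(\vec b)$ corresponds to the commutator $[A_k^i(\vec a+|\vec b|\vec\delta_{\mu-1}),A_\ell^j(\vec b)]$, i.e.\ the correct degree-$1$ factor is $A_k^i(\vec a+|\vec b|\vec\delta_{\mu-1})$, in which the $|\vec b|$ particles of the other block pile up on the last branch of $A^i$ rather than at the base; and Lemma~\ref{lem:cohomology_algebra} exhibits pairs of critical $1$-cells whose blocks are disjoint but whose cup product nevertheless vanishes (e.g.\ $[\{6\mbox{-}12,7,8\}^*]\cup[\{d_1,0,9\}^*]=0$ in $UD_3T_2$). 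So the $1$-cells you would extract from a ``bad block'' need not divide $[c^*]$ at all, and the trichotomy you then run (bad block / bad pair / ``finite case check rules out any third possibility'') rests on this unproved factorization and on a case check you explicitly do not carry out.

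The paper's argument avoids all of this and needs no pair analysis. Every critical $j$-cell $c$ of $UD_n\Gamma$ with $j\ge 3$ has $[c^*]\in\ker(i^*)$ automatically (there are no critical cells of dimension $\ge 3$ in $UD_{n_k}P_k$), so by induction on degree it suffices to exhibit a single critical $1$-cell $a\notin\im(i)$ with $[a^*]$ dividing $[c^*]$. Since $c\notin\im(i)$, it contains an edge $e'$ not in the image subgraph, or a vertex $v'$ not in the image which is then blocked by some non-order-respecting edge $e'$ of $c$. Taking the $1$-dimensional face $b$ of $c$ containing $e'$ and setting $a=\widetilde V(b)$ gives a critical $1$-cell (not the base-padded block: the remaining particles flow only until they are blocked, in particular $v'$ stays put) which still contains $e'$, resp.\ $v'$, hence lies outside $\im(i)$; and the geometric description of the cup product in \S\ref{ss32:cohomology} shows $[\widetilde R^*(c^*)]$ is divisible by $[\widetilde R^*(a^*)]$ by the construction of $b$. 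If you want to keep your block-decomposition picture, you would at minimum have to replace ``padding with base vertices'' by the correct $\widetilde V$-padding relative to the complementary edges and then actually verify the divisibility and the case analysis; as written, the key step does not go through.
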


\begin{proof}
It is easy to find an embedding $i:P_k\to \Gamma$ satisfying the
conditions discussed right after Theorem~\ref{thm:homology}. Then
the induced embedding $i:UD_{n_k}P_k\to UD_n\Gamma$ for $n\ge n_k$
preserves cell types and induces a chain map $\tilde
i:M_*(UD_{n_k}P_k)\to M_*(UD_n\Gamma)$ between Morse chain
complexes. By Theorem~\ref{thm:homology}, the homology groups
$H_*(M_*(UD_{n_k}P_k))$ and $H_*(M_*(UD_n\Gamma))$ are free abelian
groups that are generated by critical cells. Then the induced map
$\tilde i_*:H_*(M_*(UD_{n_k}P_k))\to H_*(M_*(UD_n\Gamma))$ is
injective and sends generators to generators.  By the universal
coefficient theorem, the induced map $\tilde
i^*:H^*(M_*(UD_n\Gamma);\mathbb Z_2)\to H^*(M_*(UD_{n_k}P_k);\mathbb
Z_2)$ of cohomology algebras is surjective and sends the generator
$[c^*]$ to $[\tilde i^*(c^*)]$ if $c\in\im(i)$, or to 0 otherwise.
And $\tilde i^*$ is clearly a degree-preserving algebra
homomorphism.

We now show that $\ker(\tilde i^*)$ is generated by homogeneous
elements of degree 1 and 2. Let $c$ be a critical $j$-cell in
$UD_n\Gamma$ for $j\ge 3$. Since there is no critical $j$-cell in
$UD_{n_k}P_k$ with $j\ge 3$, $H^j(UD_{n_k}P_k)=0 $ and so
$[c^*]\in\ker(\tilde i^*)$ since $\tilde i^*$ is degree-preserving.
By induction on the degree of $c^*$, it suffices to show that
$[c^*]$ is divisible by a cohomology class $[a^*]\in\ker(\tilde
i^*)$ for some critical 1-cell $a$ in $UD_n\Gamma$. Since the
embedding $i$ preserves cell type and there is no critical $j$-cell
for $j\ge 3$, $c\notin \im(i)$. Thus there is either an edge or a
vertex in $c$ that is not in $\im(i)$. If $c$ contains an edge $e'$
that is not in $\im(i)$, then a 1-cell $b$ can be obtained by taking
a 1-dimensional face of $c$ containing $e'$. Since $c$ is critical,
$c$ contains no order-respecting edges and so $b$ contains no
order-respecting edge, either. Thus $\widetilde V(b)$ is a critical
1-cell. Let $a=\widetilde V(b)$. Then $[\widetilde R^*(c^*)]$ is
divisible by $[\widetilde R^*(a^*)]$ by the construction of $b$.
Thus $[c^*]$ is divided by $[a^*]$. Since $\widetilde V(b)$ contains
the edge $e'$, $a\notin \im(i)$. Therefore $[a^*]\in\ker(\tilde
i^*)$.

If $c$ contains a vertex $v'$ that is not in $\im(i)$, there is an
edge $e'$ that is not order-respecting and $v'$ is blocked by $e'$.
Recall that we say that a vertex $v$ is blocked by an edge $e$ in a
cell $c$ if there is no vertices in $\Gamma-c$ that are smaller than
$v$ and larger than an either end of $e$ that is smaller than $v$.
Let $b$ be a 1-dimensional face of $c$ containing $e'$. Then
$[\widetilde R(c^*)]$ is divisible by $[\widetilde R^*(a^*)]$ where
$a=\widetilde V(b)$. Since $v'$ is blocked by $e'$, $v'$ is still in
the critical 1-cell $\widetilde V(b)$ and so $a\notin \im(i)$. Thus
$[a^*]\in\ker(\tilde i^*)$ and $[c^*]$ is divisible by $[a^*]
\in\ker(\tilde i^*)$.
\end{proof}

\begin{cor}
Using the notation of Lemma~\ref{lem:ker}, the braid group
$B_n\Gamma$ is not a right-angled Artin group for $n\ge n_k$ if a
graph $\Gamma$ is in $\mathcal G_k$ for $k=0,1,2,3$.
\end{cor}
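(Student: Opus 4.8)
The plan is to assemble the three ingredients already in place — Proposition~\ref{pro:flag}, Lemma~\ref{lem:cohomology_algebra}, and Lemma~\ref{lem:ker} — into a short contradiction argument, with no genuinely new work required.

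First I would fix $k\in\{0,1,2,3\}$ and a graph $\Gamma\in\mathcal G_k$, and take $P_k$ as in Lemma~\ref{lem:ker}: $P_k=T_k$ for $k=0,1,2$, while for $k=3$ one chooses $P_3$ to be whichever of $T_3'$, $T_3''$, $T_3'''$ is contained in $\Gamma$ (such a graph exists by the very definition of $\mathcal G_3$). Lemma~\ref{lem:ker} then provides, for every $n\ge n_k$, an embedding $i\colon P_k\to\Gamma$ inducing a degree-preserving epimorphism $i^*\colon H^*(B_n\Gamma;\mathbb Z_2)\to H^*(B_{n_k}P_k;\mathbb Z_2)$ whose kernel is generated by homogeneous elements of degree $1$ and $2$.

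Next I would argue by contradiction. Suppose $B_n\Gamma$ is a right-angled Artin group. By Proposition~\ref{pro:flag}(1) its mod-$2$ cohomology algebra is the exterior face algebra $\Lambda(K')$ of a flag complex $K'$. By Lemma~\ref{lem:cohomology_algebra}, $H^*(B_{n_k}P_k;\mathbb Z_2)$ is the exterior face algebra $\Lambda(K)$ of the explicit complex $K$ recorded in the table of that lemma, and $K$ is \emph{not} a flag complex: it is a $1$-dimensional complex, namely the graph in Figure~\ref{fig10} together with some isolated vertices, whose underlying graph contains a triangle, and that triangle is a complete subgraph spanning no simplex. Via these identifications $i^*$ becomes a degree-preserving surjection $\Lambda(K')\to\Lambda(K)$ with kernel generated in degrees $1$ and $2$, so Proposition~\ref{pro:flag}(2), applied with $K'$ playing the role of the flag complex, forces $K$ to be a flag complex — contradicting the previous sentence. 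Hence $B_n\Gamma$ is not a right-angled Artin group.

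I do not expect a real obstacle here; all the substance lies in Lemmas~\ref{lem:cohomology_algebra} and~\ref{lem:ker}. The two points deserving a second glance are that each complex $K$ listed in the table of Lemma~\ref{lem:cohomology_algebra} genuinely fails to be flag (equivalently, that the graph drawn in Figure~\ref{fig10} contains a $3$-clique), and that the case $k=3$ is indeed covered because $\mathcal G_3$ was redefined so that every member contains one of $T_3'$, $T_3''$, $T_3'''$; both are immediate from the earlier discussion in this section.
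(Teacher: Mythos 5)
Your proposal is correct and is exactly the argument the paper intends: its proof of this corollary reads "Immediate from Proposition~\ref{pro:flag} together with Lemma~\ref{lem:cohomology_algebra} and \ref{lem:ker}," and you have simply written out the same combination of those three ingredients in detail, including the correct orientation of Proposition~\ref{pro:flag}(2) with the flag complex on the source side.
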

\begin{proof}
Immediate from Proposition~\ref{pro:flag} together with
Lemma~\ref{lem:cohomology_algebra} and \ref{lem:ker}.
\end{proof}

\subsection{Braid group of $T_3$}\label{ss34:T_3}
By a computation similar to those in the previous section, it is not
hard to see that the cohomology algebra $H^*(B_2T_3;\mathbb Z_2)$ is
the exterior face algebra of the complex consisting of a triangle
together with 4 extra vertices. As explained earlier, this
computation cannot be carried over to the braid group over $T_3$ of
arbitrarily large braid indices due to the nonexistence of vertices
of valency 1 in $T_3$. We introduce another tool that, in the rest
of the article, will provide several necessary conditions for a
group to be a right-angled Artin group.

 A group $G$ is called a
\emph{commutator-related group} if it has a finite presentation
$\langle x_1,\cdots,x_n \mid r_1,\cdots, r_m\rangle$ such that each
relator $r_j$ belongs to the commutator subgroup $[F,F]$ of the free
group $F$ generated by $x_1,\ldots,x_n$. Let $R$ be the normal
subgroup of $F$ generated by $r_1,\ldots, r_m$. Then we can define a
homomorphism
$$\Phi_G : R / [F,R] \to [F,F] / [F,[F,F]]$$
induced by the inclusion $R \to [F,F]$. Then $H_2(G)$ is identified
with $R/[F,R]$ by Hopf's isomorphism, $H_1(G)$ is canonically
identified with $F/[F,F]$, and $H_1(G)\wedge H_1(G)$ is identified
with $[F,F]/[F,[F,F]]$ by sending $(a[F,F])\wedge (b[F,F])$ onto the
coset $[a,b][F,[F,F]]$ for $a,b\in F$. So $\Phi_G$ can be
regarded as a homomorphism
$$\Phi_G:H_2(G)\to H_1(G)\wedge H_1(G).$$
induced by $\Phi_G$. The following proposition says the homomorphism
$\Phi_G$ is related to something more familiar.

\begin{pro}{\rm (Matei-Suciu \cite{MS})}\label{pro:dual}
If $G$ is a commutator-related group then $\Phi_G:H_2(G)\to
H_1(G)\wedge H_1(G)$ is the dual of the cup product $H^1(G)\wedge
H^1(G)\to H^2(G)$.
\end{pro}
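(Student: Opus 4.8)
The plan is to reduce Proposition~\ref{pro:dual} to a single evaluation identity between (co)cycles and then to verify that identity by pushing a $2$-cycle through the lower central series of $F$ with the Magnus expansion.

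\emph{Reduction.} Keep the notation of the statement: $F$ free on $x_1,\dots,x_n$, $R\trianglelefteq F$ the normal closure of the relators, $R\subseteq[F,F]$, so that $H_1(G)=F/[F,F]=:A$ is free abelian. By the universal coefficient theorem $\operatorname{Ext}(A,\mathbb Z)=0$, hence $H^2(G;\mathbb Z)\cong\operatorname{Hom}(H_2(G;\mathbb Z),\mathbb Z)$, and ``$\Phi_G$ is the dual of $\cup$'' becomes equivalent to
\[
\langle\alpha\cup\beta,\ \xi\rangle=\langle\Phi_G(\xi),\ \alpha\wedge\beta\rangle\qquad\text{for all }\alpha,\beta\in H^1(G),\ \xi\in H_2(G),
\]
where on the right we use $[F,F]/[F,[F,F]]\cong\Lambda^2 A$ (free abelian on the $[x_i,x_j]$, $i<j$, by Magnus--Witt), $H^1(G)=A^*$, and the pairing $\langle a\wedge b,\alpha\wedge\beta\rangle=\alpha(a)\beta(b)-\alpha(b)\beta(a)$. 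I would check this on the bar complex of $G$: there $\xi$ is represented by a $2$-cycle $\sum_i n_i(g_i,h_i)$ and $\alpha\cup\beta$ by the cocycle $(g,h)\mapsto\alpha(g)\beta(h)$, so $\langle\alpha\cup\beta,\xi\rangle=\sum_i n_i\,\alpha(g_i)\beta(h_i)$.

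\emph{Computing $\Phi_G(\xi)$.} Fix a set-section $\sigma\colon G\to F$ with $\sigma(1)=1$ and put $\kappa(g,h)=\sigma(g)\sigma(h)\sigma(gh)^{-1}\in R$. Under Hopf's isomorphism $H_2(G)\cong R/[F,R]$ the $2$-cycle $\xi$ corresponds to $\prod_i\kappa(g_i,h_i)^{n_i}\bmod[F,R]$, so $\Phi_G(\xi)$ is the image of this product in $[F,F]/[F,[F,F]]=\Lambda^2 A$. Working in the free $2$-step nilpotent quotient $F/[F,[F,F]]$ via the Magnus expansion $x_i\mapsto1+X_i$ and writing each $\sigma(g)$ in nilpotent coordinates, one finds $\kappa(g,h)\equiv w(\bar g,\bar h)+\delta z(g,h)\pmod{[F,[F,F]]}$, where $\bar g\in A$ denotes the image of $g$, the ``commutator correction'' $w\colon A\times A\to\Lambda^2 A$ is the universal bilinear map with $w(a,b)-w(b,a)=a\wedge b$, and $\delta z$ is the coboundary of the $\Lambda^2 A$-valued $1$-cochain $g\mapsto(\text{the }\Lambda^2 A\text{-coordinate of }\sigma(g))$. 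Summing over the $2$-cycle $\xi$ the coboundary terms telescope to $0$, leaving $\Phi_G(\xi)=\sum_i n_i\,w(\bar g_i,\bar h_i)$.

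\emph{Matching, and the obstacle.} Now pair with $\alpha\wedge\beta$ and split $w$ into its alternating part $(a,b)\mapsto\tfrac12(a\wedge b)$ and a symmetric part. Since $\xi$ is a cycle it annihilates coboundaries; applied to $\delta(\alpha\beta)$, the coboundary of the quadratic function $g\mapsto\alpha(g)\beta(g)$, this gives $\sum_i n_i\big(\alpha(g_i)\beta(h_i)+\alpha(h_i)\beta(g_i)\big)=0$. This identity does two jobs at once: it kills the symmetric part of $\sum_i n_i\,w(\bar g_i,\bar h_i)$ (a symmetric $\Lambda^2 A$-valued bilinear form factors through $\operatorname{Sym}^2 A$, and $\sum_i n_i\,\bar g_i\odot\bar h_i$ has all coordinates equal to pairings of $\xi$ against quadratic functions, hence vanishes), and it supplies the missing factor of $2$ in the alternating part, so that
\[
\langle\Phi_G(\xi),\alpha\wedge\beta\rangle=\tfrac12\sum_i n_i\big(\alpha(g_i)\beta(h_i)-\alpha(h_i)\beta(g_i)\big)=\sum_i n_i\,\alpha(g_i)\beta(h_i)=\langle\alpha\cup\beta,\xi\rangle.
\]
Since $\Phi_G(r_j[F,R])$ is by definition the image of $r_j$ in $[F,F]/[F,[F,F]]$, this is exactly the asserted duality and proves Proposition~\ref{pro:dual}. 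The main obstacle is the middle step: carrying $\kappa(g,h)$ correctly through $F/[F,[F,F]]$ and handling the half-integral correction $w$, which only becomes integral and matches the cup product once the $2$-cycle condition is imposed; the reduction and the final matching are then routine.
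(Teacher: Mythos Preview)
The paper does not prove Proposition~\ref{pro:dual} at all; it is stated with attribution to Matei--Suciu and then used as a black box. So there is no ``paper's proof'' to compare against---you have supplied a proof where the authors supplied only a citation.

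Your argument is correct. The reduction via the universal coefficient theorem (using that $H_1(G)$ is free abelian) is the right move, and the explicit realisation of Hopf's isomorphism by $\xi=\sum_i n_i(g_i,h_i)\mapsto\prod_i\kappa(g_i,h_i)^{n_i}\bmod[F,R]$ is standard. The computation in the free $2$-step nilpotent quotient $F/[F,[F,F]]\cong A\times\Lambda^2A$ then yields $\Phi_G(\xi)=\sum_i n_i\,w(\bar g_i,\bar h_i)$ for a bilinear $w\colon A\times A\to\Lambda^2A$ with $w(a,b)-w(b,a)=a\wedge b$; this $w$ is only determined up to a symmetric bilinear correction, but you correctly observe that the symmetric part is killed on any bar $2$-cycle because $\sum_i n_i\,\bar g_i\cdot\bar h_i=0$ in $\operatorname{Sym}^2A$ (this is exactly what pairing $\xi$ against coboundaries of the quadratic $1$-cochains $g\mapsto\alpha(g)\beta(g)$ gives).

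One presentational remark: the appearance of $\tfrac12(a\wedge b)$ is harmless but unnecessary. The cleaner integral phrasing is to note that $\langle\alpha\wedge\beta,\,w(a,b)\rangle-\alpha(a)\beta(b)$ is a \emph{symmetric} bilinear form in $(a,b)$ (this follows directly from $w(a,b)-w(b,a)=a\wedge b$), and symmetric bilinear forms pair to zero with $\sum_i n_i\,\bar g_i\cdot\bar h_i$. No halves are needed, and the matching $\langle\alpha\cup\beta,\xi\rangle=\langle\Phi_G(\xi),\alpha\wedge\beta\rangle$ drops out immediately.
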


If $G$ is a right-angled Artin group, then the cohomology ring
$H^*(G)$ is an exterior face ring of a flag complex by
Proposition~\ref{pro:flag}(1). So $H^*(G)$ is completely determined
by the cup product $H^1(G)\wedge H^1(G) \to H^2(G)$. Since a
right-angled Artin group is a commutator-related group and the cup
product is the dual of $\Phi_G$, the cohomology ring $H^*(G)$ is
completely determined by the homomorphism $\Phi_G$.

We show that $B_nT_3$ is a commutator-related group for $n\ge 2$
and find a presentation such that $\Phi_G$ sends its relators to
commutators of its generators. Then we can compute the cup product
$H^1(G)\wedge H^1(G)\to H^2(G)$ directly from the presentation and
the homomorphism $\Phi_G$. Suppose $B_nT_3$ is a right-angled Artin
group. Then we can compute the rank of $H_3(B_nT_3)$ by counting certain triples of generators given by the duality of Proposition~\ref{pro:dual}.

On the other hand, we also
compute the rank of $H_3(B_nT_3)$ using the Morse complex of
$UD_nT_3$ since $UD_nT_3$ is a $K(B_nT_3,1)$ space. If two ranks are
different, then we conclude that the contradiction is caused by the
assumption that $B_nT_3$ is a right-angled Artin group. This
strategy proves the following:

\begin{thm}\label{thm:T_3}
Let $T_3$ be a graph in Figure~\ref{fig8} and $n\ge 2$. The group
$B_nT_3$ is not a right-angled Artin group
\end{thm}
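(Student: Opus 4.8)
The plan is to compute $\rk H_3(B_nT_3)$ in two genuinely different ways and to exhibit a numerical clash. One computation is topological, via the Morse complex of the $K(B_nT_3,1)$ space $UD_nT_3$; the other is algebraic, forced on us once we suppose $B_nT_3$ to be a right-angled Artin group.

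First I would set up the Morse presentation of $B_nT_3$ from \S\ref{ss22:morse}: fix a maximal tree of $T_3$ and an order on its vertices of the kind used in Figure~\ref{fig9}(b) (since $T_3$ has no vertex of valency $1$, any convenient such choice will do, and several deleted edges of $T_3$ attach to its vertices of valency $\ge 3$). As $T_3$ does not contain $S_0$, Theorem~\ref{thm:homology} applies, so the Morse differentials vanish, $H_i(B_nT_3)$ is free abelian of rank equal to the number $m_i=m_i(n)$ of critical $i$-cells, and $m_1,m_2,m_3$ are explicit functions of $n$ obtained by counting configurations of blocked vertices on the branches at the vertices of valency $\ge 3$ and on the deleted edges. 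In particular $\rk H_3(B_nT_3)=m_3(n)$. Because $\rk H_1(B_nT_3)=m_1(n)$ equals the number of generators of the presentation, every relator lies in $[F,F]$; hence $B_nT_3$ is commutator-related. A short sequence of Tietze moves, in the spirit of Example~\ref{ex:S_0}, then puts the presentation in a form in which each relator is visibly a product of commutators of generators, so that $\Phi_{B_nT_3}$ may be evaluated on relators by inspection.

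Second, assume toward a contradiction that $B_nT_3$ is a right-angled Artin group. Then by Proposition~\ref{pro:flag}(1) its cohomology ring $H^*(B_nT_3;\mathbb Z_2)$ is the exterior face algebra of a flag complex; in particular it is generated in degree $1$, so the triple cup product $H^1\otimes H^1\otimes H^1\to H^3$ is onto and $\rk H_3(B_nT_3)$ equals the number of triangles of the defining graph, that is, the number of triples of generators whose pairwise cup products are all nonzero. I would compute the cup product $H^1(B_nT_3;\mathbb Z_2)\wedge H^1(B_nT_3;\mathbb Z_2)\to H^2(B_nT_3;\mathbb Z_2)$ by two cross-checking methods: algebraically, as the dual of $\Phi_{B_nT_3}$ via Proposition~\ref{pro:dual} applied to the relators found above; and geometrically, via the cubical formula and vanishing criteria of \S\ref{ss32:cohomology}, which are available here because Theorem~\ref{thm:homology} and Lemma~\ref{lem:redundant} guarantee $\widetilde\partial=0$ and $\widetilde R\widetilde V=\widetilde R$ for $T_3$. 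From this I read off precisely which critical $1$-cell duals multiply nontrivially in pairs and in triples, and hence the triangle count $t(n)$, carrying out the basis changes and coboundary adjustments used in the proof of Lemma~\ref{lem:cohomology_algebra} as needed.

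The contradiction is then $m_3(n)=t(n)$, which I would defeat by comparing the two explicit expressions for all $n\ge 2$; for $n=2$ this degenerates to the remark that $UD_2T_3$ is $2$-dimensional so $m_3(2)=0$, while the pairwise cup products already force a $3$-clique (indeed $H^*(B_2T_3;\mathbb Z_2)$ is the exterior face algebra of a $3$-cycle together with four isolated vertices, a non-flag complex), so $t(2)=1$. The main obstacle is the middle computation: one must describe the critical $1$- and $2$-cells of $UD_nT_3$ and the non-vanishing cup products uniformly in $n$, be sure that the triple-product count $t(n)$ is the genuine one (i.e.\ that the discrepancy with $m_3(n)$ is not merely an artefact of an unlucky choice of basis for $H^1$), and finally verify that the clique count $t(n)$ really does differ from the number $m_3(n)$ of critical $3$-cells.
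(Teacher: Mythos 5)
Your plan is exactly the paper's argument: compute $\rk H_3(B_nT_3)$ once as the number of critical $3$-cells of the Morse complex (using Theorem~\ref{thm:homology}) and once, under the RAAG hypothesis, as the number of triples of generators with pairwise nonvanishing cup products read off from $\Phi_{B_nT_3}$ after Tietze moves, then exhibit the mismatch. The paper carries out the counts you defer (showing the two counts agree on the triples involving an $\overline A(\vec a)$-type generator but the triple count of $\{d_1(k),d_2(\ell),d_3(m)\}$ strictly exceeds the critical-cell count via an explicit non-surjective injection), so your proposal is correct and essentially identical in approach.
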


\begin{proof}
\begin{figure}[ht]
\psfrag{*}{\small0}
\psfrag{A}{\small$A$}
\psfrag{e1}{\small$d_1$}
\psfrag{e2}{\small$d_2$}
\psfrag{e3}{\small$d_3$}
\centering
\includegraphics[height=2.7cm]{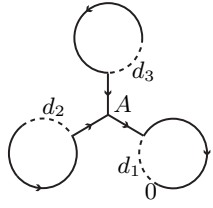}
\caption{Maximal tree of $T_3$ and deleted edges}\label{fig12}
\end{figure}

If we choose a maximal tree of $T_3$ and give an order as in
Figure~\ref{fig12}, the complex $UD_nT_3$ has two kinds of critical
1-cells: $$ d_i(k),\quad A_2(\vec a),$$ and two kinds of critical
2-cells: for $i\neq j$
$$A_2(\vec a)\cup d_i(k),\quad d_i(k)\cup d_j(\ell),$$ and two kinds of
critical 3-cells: for $i\neq j$ $$A_2(\vec a)\cup d_i(k)\cup
d_j(\ell),\quad d_1(k)\cup d_2(\ell)\cup d_3(m)$$ Here $i,j\in \{
1,2,3 \}$ and $d_i(k)$ denotes the set consisting of $d_i$ together
with $k$ blocked vertices from $\tau(d_i)$ if $i=2,3$ and
$\iota(d_i)$ if $i=1$.

We consider the Morse complex $M_i(UD_nT_3)$ of $UD_nT_3$ determined
by the above critical $i$-cells. By Theorem~\ref{thm:homology},
$H_i(M_*(UD_nT_3))$ is the free group generated by the critical
$i$-cells. We first consider the number $N_{A,i,j}$ of all critical
3-cells of the form $A_2(\vec a)\cup d_i(k)\cup d_j(\ell)$ that
consists of $d_i$, $d_j$ and $A_2(1,1)$ together with $n-4$ blocked
vertices. Each blocked vertices is placed on $T_3-(\{\bar d_i,\bar
d_j\} \cup A_2(1,1))$ that consists of five connected components
where $\bar d_i$ is the closure of $d_i$. Thus $N_{A,i,j}$ is the
number of ways to choose $n-4$ objects from $5$ objects with
repetitions and so $N_{A,1,2} = N_{A,2,3} =N_{A,3,1}
=\binom{5+(n-4)-1}{n-4}
 = \binom n 4 = n(n-1)(n-2)(n-3)/24$. Now
consider the number $N_{1,2,3}$ of all critical 3-cells of the form
$d_1(k)\cup d_2(\ell)\cup d_3(m)$ that consists of  $d_1$, $d_2$ and
$d_3$ together with $n-3$ blocked vertices placed on $T_3-\{\bar
d_1, \bar d_2, \bar d_3\}$ consisting of four connected components.
So $N_{1,2,3}=\binom{4+(n-3)-1}{n-3}=\binom n3= n(n-1)(n-2)/6$. Thus
$\rk(H_3(B_nT_3))=\rk(M_3(UD_nT_3))
=n(n-1)(n-2)(n-3)/8+n(n-1)(n-2)/6$.

On the other hand, $B_nT_3$ has the presentation whose generators
are the critical 1-cells and whose relators are the rewritten
boundary words of critical 2-cells given as follows:
\begin{align*}
\tilde r(\partial(A_2(\vec a)\cup d_1(k)))&=[A_2(\vec a),d_1(|\vec a|+k)],\\
\tilde r(\partial(A_2(\vec a)\cup d_2(k)))\\
=[(\Pi_{a_2-1}^1&(k))^{-1}(A_2(k+1,a_2))^{-1}
A_2(\vec a+(k+1)\vec\delta_1)\Pi_{a_2-1}^1(k),d_2(k)],\\
\tilde r(\partial(A_2(\vec a)\cup d_3(k)))&=[A_2(\vec a+(k+1)\vec\delta_2),d_3(k)],\\
\tilde r(\partial(d_1(k)\cup d_2(\ell)))&=[d_1(k+\ell+1),d_2(\ell)],\\
\tilde r(\partial(d_1(k)\cup d_3(\ell)))&=[d_1(k+\ell+1),d_3(\ell)],\\
\tilde r(\partial(d_2(k)\cup d_3(\ell)))&=[(\Pi_{\ell+1}^1(k))^{-1}d_3(\ell)\Pi_{\ell+1}^1(k),d_2(k)].
\end{align*}
Here $\vec a=(a_1,a_2)$, $\Pi_i^1(k)=A_2(k+1,i)A_2(k+1,i-1)\cdots
A_2(k+1,1)$, and $\vec\delta_k$ is the $k$-th coordinate unit vector.
Thus $B_nT_3$ is a commutator-related group and then we apply
Proposition~\ref{pro:dual} to assert that $[a,b]$ is in $\im(\Phi_{B_nT_3})$
up to $[F,[F,F]]$ for generators $a,b$ of $B_nT_3$ if and only if $a^*\cup b^*\ne 0$ in $H^2(B_nT_3)$ for the cohomology classes $a^*,b^*$ dual to the homology classes represented by $a,b$.

Suppose that $B_nT_3$ has a presentation $\langle x_1,\cdots,x_s \mid r_1,\cdots, r_t\rangle$ such that for each $i=1,\ldots,t$, $\Phi_{B_nT_3}(r_i)$ is a commutator of generators $x_1,\ldots,x_s$ up to $[F,[F,F]]$. Then $H^2(B_nT_3;\mathbb Z_2)$ is generated by $x_i^*\cup x_j^*$ for $1\le i,j\le s$ and there are no relations among $x_i^*\cup x_j^*$ that are nontrivial. Let $L$ be the graph with vertices $x_1,\ldots,x_s$ such that there is an edge between $x_i$ and $x_j$ if $x_i^*\cup x_j^*\ne 0$. Then the exterior face algebra $\Lambda(L)$ is isomorphic to the cohomology algebra $H^*(B_nT_3)$ modulo degree 3 or higher cohomology classes.

Now suppose that $B_nT_3$ is a right-angled Artin group. Then $H^*(B_nT_3)$ is an exterior face algebra of a flag complex $K$. Then for the 1-skeleton $K^1$ of $K$,
$\Lambda(K^1)$ is also isomorphic to $H^*(B_nT_3)$ modulo degree 3 or higher cohomology classes. As noted in the introduction of \S\ref{s:three}, a simplicial complex is completely determined by its exterior face algebras over $\mathbb Z_2$ and so $L$ is simplicially homeomorphic to $K^1$. Thus the number of triangles in $L$ is equal to that in $K$. Since $\rk(H_3(B_nT_3))$ is the number of triangles in $K^1$, $\rk(H_3(B_nT_3))$ is equal to the number of unordered triples $\{x_i,x_j,x_k\}$ such that $[x_i,x_j],[x_j,x_k],[x_k,x_i]\in\im(\Phi_{B_nT_3})$.

The presentation obtained from the Morse complex of $UD_nT_3$ is not yet
in the form described above because
\begin{multline*}
\Phi_{B_nT_3}(\tilde r(\partial(A_2(\vec a)\cup d_2(k))))\\
=[(A_2(k+1,a_2))^{-1}A_2(\vec
a+(k+1)\vec\delta_1),d_2(k)]\pmod{[F,[F,F]]}
\end{multline*}
which is not a commutator of generators. So we modify the
presentation by Tietze transformations.

For $a_1,a_2 >0$, we introduce new generators $\overline A(\vec a)$
by adding the relations $\overline A(\vec a+\vec\delta_1)=(A_2(\vec
a))^{-1} A_2(\vec a+\vec\delta_1)$ and $\overline A(1,a_2)=A_2(1,a_2)$,
and then eliminate the generators $A_2(\vec a)$. So we obtain a
presentation of $B_nT_3$ with two kinds $d_i(k)$ and $\overline
A(\vec a)$ of generators , and six kinds of relators as follows:
\begin{align*}
&[\overline A(\vec a),d_1(|\vec a|+k)],
&&[(\overline \Pi_{a_2-1}^1(k))^{-1}
\overline A(\vec a+(k+1)\vec\delta_1)
\overline \Pi_{a_2-1}^1(k),d_2(k)],\\
&[\overline A(\vec a+(k+1)\vec\delta_2),d_3(k)],
&&[d_1(k+\ell+1),d_2(\ell)],\\
&[d_1(k+\ell+1),d_3(\ell)],
&&[(\overline \Pi_{\ell+1}^1(k))^{-1}d_3(\ell)\overline \Pi_{\ell+1}^1(k),d_2(k)].
\end{align*}
Here $\overline \Pi_i^1(k)=\overline A(1,i)\cdots \overline
A(k+1,i)\overline A(1,i-1)\cdots \overline A(k+1,i-1)\cdots
\overline A(1,1)\cdots \overline A(k+1,1)$. Then the images of the
above relators under $\Phi_{B_nT_3}$ are given as follows:
\begin{align*}
&[\overline A(\vec a),d_1(|\vec a|+k)],
&&[\overline A(\vec a+(k+1)\vec\delta_1),d_2(k)],\\\displaybreak[2]
&[\overline A(\vec a+(k+1)\vec\delta_2),d_3(k)],
&&[d_1(k+\ell+1),d_2(\ell)],\\
&[d_1(k+\ell+1),d_3(\ell)],
&&[d_3(\ell),d_2(k)] \quad \pmod{[F,[F,F]]}.
\end{align*}

As explained earlier, if the braid group $B_nT_3$ were a
right-angled Artin group then $\rk(H_3(B_nT_3))$ is equal to the
number of unordered triples $\{x_1, x_2, x_3\}$ of generators such
that all of mutual commutators $[x_i,x_j]$ appear in the above
images. It is sufficient to consider the following four cases:
$$\{\overline A(\vec a),d_1(k),d_2(\ell)\},
\{\overline A(\vec a),d_2(k),d_3(\ell)\},
\{\overline A(\vec a),d_1(k),d_3(\ell)\},
\{d_1(k),d_2(\ell),d_3(m)\}.$$

Let $\overline N_{A,i,j}$ be the number of triples $\{\overline
A(\vec a),d_i(k),d_j(\ell)\}$ in which mutual commutators are all in
$\im(\Phi_{B_nT_3})$. In order for the commutators $[\overline
A(\vec a),d_i(k)]$, $[d_i(k),d_j(\ell)]$ and $[\overline A(\vec
a),d_j(\ell)]$ to appear in the images, the parameters must satisfy
the following inequalities:
\begin{align*}
n-1\ge k\ge |\vec a|,\:a_1\ge\ell+2,\:a_2\ge 1,\:\ell\ge 0\quad
&\text{if $(i,j)=(1,2)$},\\
n-1\ge k\ge |\vec a|,\:a_1\ge 1,\:a_2\ge\ell+2,\: \ell\ge 0\quad
&\text{if $(i,j)=(1,3)$},\\
n\ge |\vec a|,\:a_1\ge k+2,\:a_2\ge\ell+2,\:k,\ell\ge 0\quad
&\text{if $(i,j)=(2,3)$}.
\end{align*}
Let
\begin{align*}
n_1=n-k-1,\:n_2=k-|\vec a|,\:n_3=a_1-\ell-2,\:n_4=a_2-1\quad
&\text{if $(i,j)=(1,2)$},\\
n_1=n-k-1,\:n_2=k-|\vec a|,\:n_3=a_1-1,\:n_4=a_2-\ell-2\quad
&\text{if $(i,j)=(1,3)$},\\
n_1=n-|\vec a|,\:n_2=a_1-k-2,\:n_3=a_1-\ell-2,\:n_4=k\quad
&\text{if $(i,j)=(2,3)$}.
\end{align*}
Then we have the equation
$$n-4=n_1+n_2+n_3+n_4+\ell,\quad n_i,\ell\ge 0.$$
And $\overline N_{A,i,j}$ is the number of nonnegative integer
solutions for the quintuple $(n_1,n_2,\allowbreak n_3,n_4,\ell)$ to
the above equation. So $\overline N_{A,i,j}=\binom{5+(n-4)-1}{n-4
}=n(n-1)(n-2)(n-3)/24$ and so $\overline N_{A,i,j}=N_{A,i,j}$.

Let $\overline N_{1,2,3}$ be the number of triples $\{d_1(k),
d_2(\ell),d_3(m)\}$ such that $[d_1(k),d_2(\ell)]$,
$[d_2(\ell),d_3(m)]$, and $[d_1(k),d_3(m)]$ appear in
$\im(\Phi_{B_nT_3})$. The following inequalities must holds:
$$n-1\ge k\ge \ell+1,m+1,\quad n-2\ge \ell+m,\quad \ell,m\ge 0.$$
Then $\overline N_{1,2,3}$ is equal to the number of nonnegative
integer solutions for the triples $(k,\ell,m)$ to the inequalities.
Recall $N_{1,2,3}=n(n-1)(n-2)/6$. We regard $N_{1,2,3}$ as the
number of subsets $\{a,b,c\}\subset\{1,\cdots,n\}$. We can assume
that $a>b>c$. Define a function $f:\{\{a,b,c\}\}\to \{(k,\ell,m)\}$
defined by as follows:
$$ f(\{a,b,c\})=\left\{\begin{array}{lr}
(a-1,b-1,c-1)&{\rm if} \quad b+c\le n \\
(n-c,n-a,n-b)&{\rm otherwise}
\end{array} \right.$$
Then it is easy to see that $f$ is well defined and injective. And
the image of $f$ does not contain $(1,0,0)$. So $\overline N_{1,2,3}
> N_{1,2,3}$. This is a contradiction because $\rk(H_3(B_nT_3))$
computed from the presentation under the assumption that  $B_nT_3$
is a right-angled Artin group is larger than that obtained from the
Morse complex of the configuration space.
\end{proof}

\section{Graphs containing $S_0$}\label{s:four}
Recall the homomorphism $\Phi_G: R/[F,R]\to [F,F]/[F,[F,F]]$ induced
by the inclusion $R\hookrightarrow [F,F]$ for a commutator-related
group $G=F/R$. If $G$ is a right-angled Artin group, then $\Phi_G$
is injective since $\Phi_G$ is regarded as the dual of the cup
product $H^1(G)\wedge H^1(G) \to H^2(G)$ which is surjective for a
right-angled Artin group $G$. We state this as a proposition
together with a simple proof.

\begin{pro}\label{pro:injrightangled}
If $G$ is a right-angled Artin group then $\Phi_G$ is injective.
\end{pro}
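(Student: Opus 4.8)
The plan is to make precise the assertion, already sketched in the paragraph preceding Proposition~\ref{pro:injrightangled}, that $\Phi_G$ is the algebraic dual of the cup product and that this cup product is surjective when $G$ is a right-angled Artin group. First I would invoke Proposition~\ref{pro:dual} (Matei--Suciu): since a right-angled Artin group $G$ is commutator-related, $\Phi_G$ may be regarded as the dual of the cup product map $\cup:H^1(G)\wedge H^1(G)\to H^2(G)$. Thus, working over a field (say $\mathbb Z_2$, or any field of characteristic $0$), $\Phi_G$ is injective precisely when $\cup$ is surjective, because the dual of a linear map between finite-dimensional vector spaces is injective if and only if the original map is surjective.

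The core step is therefore to show that the cup product $H^1(G)\wedge H^1(G)\to H^2(G)$ is onto for a right-angled Artin group $G$. Here I would appeal to Proposition~\ref{pro:flag}(1): the cohomology algebra $H^*(G)$ is the exterior face algebra $\Lambda(K)$ of a flag complex $K$, whose vertices are the standard generators of $G$. In such an algebra, $H^1(G)$ is spanned by the degree-$1$ classes $v_i$ dual to the generators, and $H^2(G)$ has as a basis the products $v_iv_j$ where $\{v_i,v_j\}$ spans an edge (a $1$-simplex) of $K$. Every such basis element $v_iv_j$ is manifestly the image under the cup product of $v_i\wedge v_j\in H^1(G)\wedge H^1(G)$, so the cup product is surjective. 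Consequently its dual $\Phi_G$ is injective.

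One point that needs a word of care is the coefficient ring: $\Phi_G$ as defined lives in $H_2(G)\to H_1(G)\wedge H_1(G)$ with integral coefficients, while the flag-complex description of $H^*(G)$ in Proposition~\ref{pro:flag}(1) is stated over $\mathbb Z_2$. However, for a right-angled Artin group $H_*(G)$ is free abelian (the Salvetti complex is a finite CW complex whose cellular chain complex has zero differentials), so $H_2(G)$ injects into $H_2(G;\mathbb Z_2)$; hence it suffices to check injectivity of $\Phi_G$ after reduction mod $2$, where the argument above applies verbatim. Alternatively one can run the same argument over $\mathbb Q$. I expect this bookkeeping about coefficients to be the only mildly delicate part; the conceptual content is entirely contained in Propositions~\ref{pro:dual} and~\ref{pro:flag}(1), which are already available. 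The proof is thus short: combine those two propositions with the elementary duality between surjections and injections of (co)homology vector spaces, noting that a flag complex's exterior face algebra has its degree-$2$ part generated by products of degree-$1$ classes.
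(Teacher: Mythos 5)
Your argument is correct in substance, but it takes a genuinely different route from the paper. The paper's proof is purely group-theoretic and avoids cohomology altogether: writing $G=F/R$ with $R$ normally generated by a set $C$ of commutators of generators, it notes that conjugation acts trivially on $C$ modulo $[F,R]$, so $R/[F,R]$ is the free abelian group on $C$, while $[F,F]/[F,[F,F]]$ is free abelian on the basic commutators by the Hall basis theorem; since the elements of $C$ are themselves basic commutators, $\Phi_G$ is literally an inclusion of a direct summand under these identifications. Your route instead combines Proposition~\ref{pro:dual} with the flag-complex description of $H^*(G;\mathbb Z_2)$ from Proposition~\ref{pro:flag}(1) and dualizes. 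This is the argument the paper itself sketches in the paragraph preceding the proposition, so it is certainly in the spirit of the text; what the paper's actual proof buys is self-containedness (no appeal to Charney--Davis, Matei--Suciu, or coefficient bookkeeping), and it simultaneously establishes that $H_2(G)\cong R/[F,R]$ is free abelian on $C$ --- a fact your argument must import from elsewhere (the Salvetti complex).

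One step in your write-up is misstated: a nonzero free abelian group does \emph{not} inject into $H_2(G;\mathbb Z_2)$, so the sentence ``$H_2(G)$ injects into $H_2(G;\mathbb Z_2)$'' is false as written. The intended reduction is still valid and easy to repair: if $x\in\ker\Phi_G$ is nonzero, write $x=2^k y$ with $y$ not divisible by $2$; then $\Phi_G(y)=0$ since the target is torsion-free, and $y$ has nonzero image in $H_2(G)\otimes\mathbb Z_2\cong H_2(G;\mathbb Z_2)$ because $H_2(G)$ is free, contradicting injectivity of the mod-$2$ reduction of $\Phi_G$. You should also say a word about why the mod-$2$ reduction of $\Phi_G$ agrees with the dual of the mod-$2$ cup product (naturality of the Matei--Suciu identification under change of coefficients, using that $H_1(G)$ and $H_2(G)$ are free so the universal coefficient sequences collapse). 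With those two clarifications your proof is complete.
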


\begin{proof}
Suppose that $G=F/R$. Since $G$ is a right-angled Artin group, $R$
is normally generated by a collection $C$ of commutators of
generators. Modulo $[F,R]$, every conjugation acts trivially on $C$
and $R/[F,R]$ is identified to the free abelian group generated by
$C$. Similarly, $[F,F]/[F,[F,F]]$ is identified to the free abelian
group generated by basic commutators by the Hall basis theorem.
Since elements of $C$ are basic commutators, $\Phi_G$ is merely an
inclusion under these identifications,
\end{proof}

The converse of the proposition is not true. The braid groups of the
tree $T_0$, and all the graphs in Figure~\ref{fig8} are not a
right-angled Artin group but the homomorphism $\Phi_G$ is injective
for these groups. For example, it is not hard to see that
$\Phi_{B_nT_3}$ is injective from the presentation of $B_nT_3$ given
in the proof of Theorem~\ref{thm:T_3}.  In fact, it is sufficient to
check that $\rk(H_2(B_nT_3))=\rk(\im \Phi_{B_nT_3})$ since they are
free abelian group. The former rank can be obtained from the Morse
complex and the latter rank can be seen from the presentation.

In contrast, we will prove in \S\ref{ss41:s0theta} that the braid
groups of $S_0$ and the $\theta$-shaped graph $\Theta$ are not a
right-angled Artin group by showing that $\Phi_G$ is not injective.

We now consider a homomorphism between two commutator-related
groups. Let $G=F/R$ and $G'=F'/R'$ be commutator-related groups so
that $R\subset [F,F]$ and $R'\subset [F',F']$, and let $h:G'\to G$
be a homomorphism. Then it is clear that the diagram

\begin{displaymath}
\begin{CD}
R'/[F',R'] @>{h_*}>>R/[F,R]\\
@V{\Phi_{G'}}VV @VV{\Phi_G}V\\
[F',F']/[F',[F',F']] @>{\bar h}>>[F,F]/[F,[F,F]]
\end{CD}
\end{displaymath}
commutes where $h_*$ and $\bar h$ are homomorphisms derived from
$h$.

The assumption that $\Phi_{G'}$ is not injective and $h_*$ is
injective implies $\Phi_G$ is not injective and so $G$ is not a
right-angled Artin group  by Proposition~\ref{pro:injrightangled}.
In order to show that $B_n\Gamma$ is not a right-angled Artin group for a given graph $\Gamma$, we introduce a graph $\Gamma'$ contained in $\Gamma$ such that $\Phi_{B_n\Gamma'}$ is defined and is not injective. Then we choose an embedding $i:\Gamma'\to\Gamma$. We may assume that $B_n\Gamma$ is commutator-related. Otherwise, we are done. If we show that $i$ induces an injection on the second homologies of braid groups, then $\Phi_{B_n\Gamma}$ is not injective and we conclude that  $B_n\Gamma$ is not a right-angled Artin group.

We try to apply this strategy to all graphs containing $S_0$.
Unfortunately, there are graphs $\Gamma$ such that no embedding
$i:S_0\to\Gamma$ induces an injection $i_*:H_2(B_nS_0)\allowbreak\to
H_2(B_n\Gamma)$. In fact, $\Theta$ is one of such examples as shown
in the following lemma.

\begin{lem}
For $n\ge 5$, there is no embedding $i:S_0\to\Theta$ that induces an
injection $i_*:H_2(B_nS_0)\to H_2(B_n\Theta)$.
\end{lem}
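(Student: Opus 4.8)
The plan is to show that, \emph{whatever} embedding $i$ is chosen, $i_*$ kills a nonzero class of $H_2(B_nS_0)$. First I would cut down the list of embeddings to a single one. The graph $\Theta$ has exactly two vertices of valency $\ge 3$, both of valency $3$, joined by three internally disjoint arcs $\alpha,\beta,\gamma$, and its automorphism group acts transitively on these arcs and interchanges the two vertices. On the other hand $S_0$ has first Betti number one, so its image under any embedding $i\colon S_0\to\Theta$ is a proper subgraph of $\Theta$ homeomorphic to $S_0$: the unique simple closed curve of $S_0$ must be carried to one of the three circles $\alpha\cup\beta$, $\beta\cup\gamma$, $\gamma\cup\alpha$, and the remaining tree part of $S_0$, being attached to that circle only at points that have valency $\ge 3$ in $\Theta$, must emanate from one of the two valency-$3$ vertices along the third arc. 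All such embeddings are carried into one another by $\mathrm{Aut}(\Theta)$ together with $\mathrm{Aut}(S_0)$, and the induced map on $H_2$ of braid groups is natural, so it suffices to analyze one fixed embedding $i$.

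Next I would set up the computation through Morse complexes. Choose maximal trees and orders for $S_0$ and $\Theta$ that are compatible with $i$ in the sense of the four conditions stated right after Theorem~\ref{thm:homology}; this is possible precisely because $S_0$ has a vertex of valency $1$. Then $i$ induces a chain map $\tilde i\colon M_*(UD_nS_0)\to M_*(UD_n\Theta)$ of Morse complexes carrying critical cells injectively to critical cells, and $i_*$ is injective if and only if $\tilde i_*$ is. I would then list the critical cells of both complexes. The essential structural point is that $S_0$ is built with \emph{one} deleted edge while $\Theta$ requires \emph{two}; consequently $UD_n\Theta$ has critical $3$-cells involving both deleted edges which have no preimage under $i$, and these are exactly the cells that create new relations in $H_2$.

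The core step is to produce, for $n\ge 5$, a critical $2$-cell $c$ of $UD_nS_0$ (or an explicit integral combination of such cells) that represents a nonzero class in $H_2(B_nS_0)=H_2(M_*(UD_nS_0))$ --- i.e.\ a cycle of the Morse complex not lying in $\im\widetilde\partial$ for $S_0$ --- but whose image $\tilde i(c)$ \emph{does} lie in $\im\widetilde\partial$ for $\Theta$, because it occurs in the Morse boundary of one of the new critical $3$-cells of $UD_n\Theta$ that uses both deleted edges. Granting this, $i_*[c]=[\tilde i(c)]=0$ in $H_2(B_n\Theta)$ while $[c]\ne 0$, so $i_*$ is not injective. (If one prefers, the same conclusion follows from a rank count, provided one checks that $\rk H_2(B_nS_0)>\rk\im(i_*)$ for $n\ge 5$.) The hypothesis $n\ge 5$ is what makes this work: for small $n$ there is too little room --- indeed $H_2(B_4S_0)$ already has rank one by Example~\ref{ex:S_0} --- and only for $n\ge 5$ do enough independent critical $2$-cells of $S_0$ appear, some of which are then absorbed by the second deleted edge of $\Theta$.

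The main obstacle is that Theorem~\ref{thm:homology} does \emph{not} apply to either $S_0$ or $\Theta$, since each of them contains $S_0$; hence the Morse boundary maps need not vanish and $H_2$ of these braid groups is a genuine quotient $\ker\widetilde\partial/\im\widetilde\partial$ rather than the free abelian group on critical $2$-cells. So one must actually evaluate the Morse boundary $\widetilde\partial$ on critical $3$-cells on both sides --- manageable because both graphs are simple, using the rewriting machinery of \S\ref{ss22:morse} and Lemma~\ref{lem:rewriting} --- and, most delicately, verify that the chosen $c$ is genuinely not already a boundary in $UD_nS_0$, lest the argument be vacuous. Keeping precise track of which critical $2$-cells of $S_0$ survive to nonzero homology classes, and which of those are then killed by the extra deleted edge of $\Theta$, is the bookkeeping at the heart of the proof.
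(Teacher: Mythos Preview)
Your plan is considerably more laborious than the paper's argument, which is a one-line rank comparison: one computes elsewhere that $\rk H_2(B_nS_0)=(n-1)(n-2)(n-3)/6$ (in the proof of Lemma~\ref{lem:S0}) and $\rk H_2(B_n\Theta)=(n-2)(n-3)/2$ (in the remark after Lemma~\ref{lem:Theta}), and since the former strictly exceeds the latter for $n\ge 5$, no embedding can induce an injection on $H_2$. You actually mention this route parenthetically (``the same conclusion follows from a rank count''), but you phrase it as comparing $\rk H_2(B_nS_0)$ with $\rk\im(i_*)$, which still depends on $i$; the paper's point is that already $\rk H_2(B_nS_0)>\rk H_2(B_n\Theta)$, so the embedding is irrelevant and your entire first paragraph (reducing all embeddings to one via automorphisms) becomes unnecessary.

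Your primary strategy---exhibiting an explicit cycle killed by $i_*$---could in principle be made to work, but there is a concrete pitfall. You assert that ``$UD_n\Theta$ has critical $3$-cells involving both deleted edges.'' With the maximal tree of $\Theta$ used in the paper (Figures~\ref{fig14} and~\ref{fig16}(b)), the two deleted edges $d_1,d_2$ share an endpoint, so $\{d_1,d_2\}$ is not a cell of $UD_n\Theta$ at all, and in fact there are \emph{no} critical $i$-cells for $i\ge 3$ (as stated in the proof of Lemma~\ref{lem:Theta}). Thus $H_2(B_n\Theta)=\ker\widetilde\partial_2$ with this tree, and your mechanism for killing $\tilde i(c)$---namely, having it become a boundary of a new critical $3$-cell---simply does not exist. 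To salvage your approach you would need a different maximal tree for $\Theta$ (one with two trivalent vertices in the tree and disjoint deleted edges), redo the Morse complex computation for both graphs compatibly with that tree, and then carry out the bookkeeping you describe. That is all feasible, but it is a substantial detour compared to the two-rank inequality the paper actually uses.
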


\begin{proof}
It is sufficient to show that
$\rk(H_2(B_nS_0))>\rk(H_2(B_n\Theta))$. Indeed,
$\rk(H_2(B_nS_0))=(n-3)(n-2)(n-1)/6$ as we will see in the proof of
Lemma~\ref{lem:S0}, and $\rk(H_2(B_n\Theta)) = (n-3)(n-2)/2$ as we
will see in the remark following Lemma~\ref{lem:Theta}.
\end{proof}

If $\Gamma$ contains $S_0$ but does not contain $\Theta$ then there
is an embedding $i:S_0\to\Gamma$ that induces an injection
$i_*:H_2(B_nS_0)\to H_2(B_n\Gamma)$ as shown in
Lemma~\ref{lem:S0toGamma}. On the other hand, if $\Gamma$ contains $\Theta$, then
it seems difficult to directly show that there is an embedding
$i:\Theta\to\Gamma$ that induces an injection $i_*:H_2(B_n\Theta)\to
H_2(B_n\Gamma)$. Fortunately, $\Phi_{B_n\Theta}$ is the trivial map
as shown in Lemma~\ref{lem:Theta} and therefore in order to show
that $\Phi_{B_n\Gamma}$ is not injective, it suffices to construct an embedding
$i:\Theta\to\Gamma$ that induces
a nontrivial homomorphism $i_*:H_2(B_n\Theta)\to H_2(B_n\Gamma)$, which is shown in Lemma~\ref{lem:ThetatoGamma} under the
extra assumption that $\Gamma$ is planar.

For non-planar graphs, we use yet another method to prove that non-planar
graph braid groups are not a right-angled Artin group. In
\S\ref{ss44:nonplanar}, it will be shown that for a non-planar graph
$\Gamma$ and $n\ge 2$, $H_1(B_n\Gamma)$ always has a torsion and so
$B_n\Gamma$ cannot be a right-angled Artin group.

Consequently, the following theorem is proved by combining results
in this section.

\begin{thm}\label{thm:graphwithS_0}
If a graph $\Gamma$ contains $S_0$, then $B_n\Gamma$ is not a
right-angled Artin group for $n\ge 5$.
\end{thm}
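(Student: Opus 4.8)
The plan is to partition the graphs containing $S_0$ into three families and attack each with a different obstruction: (a) non-planar $\Gamma$; (b) planar $\Gamma$ containing the $\theta$-graph $\Theta$; (c) planar $\Gamma$ containing $S_0$ but not $\Theta$. Every graph containing $S_0$ falls into one of these classes, so it suffices to treat them separately. For class (a), I would invoke the result of \S\ref{ss44:nonplanar} that for a non-planar graph $\Gamma$ and $n\ge 2$ the group $H_1(B_n\Gamma)$ has a $2$-torsion element (detected through the homomorphism $\pi_*:H_1(B_n\Gamma)\to\mathbb Z_2$ together with Kuratowski's theorem); since a right-angled Artin group has torsion-free integral homology, $B_n\Gamma$ cannot be one. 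So the remaining work is entirely about planar $\Gamma$.

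For planar $\Gamma$, I would first dispose of the case where $B_n\Gamma$ is not commutator-related: then it is automatically not a right-angled Artin group, since every right-angled Artin group is commutator-related. So assume $B_n\Gamma=F/R$ with $R\subset[F,F]$; then $\Phi_{B_n\Gamma}:R/[F,R]\to[F,F]/[F,[F,F]]$ is defined, and by Proposition~\ref{pro:injrightangled} it is enough to show $\Phi_{B_n\Gamma}$ is \emph{not} injective. The engine is the commuting square relating $\Phi_{B_n\Gamma}$ to $\Phi_{B_n\Gamma'}$ through a homomorphism $h$ induced by an embedding $\Gamma'\hookrightarrow\Gamma$: from $\Phi_{B_n\Gamma}\circ h_*=\bar h\circ\Phi_{B_n\Gamma'}$ one sees that if $\Phi_{B_n\Gamma'}$ is not injective and $h_*:H_2(B_n\Gamma')\to H_2(B_n\Gamma)$ is injective, or if $\Phi_{B_n\Gamma'}$ is the zero map and $h_*$ is merely nonzero, then $\Phi_{B_n\Gamma}$ has nontrivial kernel.

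For class (c) I would take $\Gamma'=S_0$ and use Lemma~\ref{lem:S0toGamma} to obtain an embedding $i:S_0\to\Gamma$ with $i_*:H_2(B_nS_0)\to H_2(B_n\Gamma)$ injective; combined with the non-injectivity of $\Phi_{B_nS_0}$ from Lemma~\ref{lem:S0}, the square forces $\Phi_{B_n\Gamma}$ to be non-injective. For class (b) I would take $\Gamma'=\Theta$: here $\Phi_{B_n\Theta}$ is the \emph{trivial} map by Lemma~\ref{lem:Theta}, so by Lemma~\ref{lem:ThetatoGamma} it suffices to produce an embedding $i:\Theta\to\Gamma$ inducing a \emph{nontrivial} homomorphism $i_*:H_2(B_n\Theta)\to H_2(B_n\Gamma)$; then $\Phi_{B_n\Gamma}$ vanishes on the nonzero subgroup $\im(i_*)$ (which is nonzero since $\rk H_2(B_n\Theta)=(n-3)(n-2)/2>0$ for $n\ge 5$), hence is not injective. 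Assembling the three cases yields the theorem.

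The hard part will be the homological computations feeding Lemmas~\ref{lem:S0}, \ref{lem:Theta}, \ref{lem:S0toGamma} and \ref{lem:ThetatoGamma}: extracting $\Phi_{B_nS_0}$ and $\Phi_{B_n\Theta}$ from explicit Morse presentations and, more delicately, controlling the induced maps on $H_2$ for an \emph{arbitrary} planar host graph $\Gamma$ — one must choose the embedding, the maximal trees, and the vertex orders so that critical $2$-cells of $B_n\Gamma'$ are carried to (sums of) critical $2$-cells of $B_n\Gamma$ in a way that is visibly injective or visibly nonzero. The non-planar case also conceals some effort in exhibiting the $2$-torsion class and checking it persists in $H_1(B_n\Gamma)$ for every sufficient subdivision.
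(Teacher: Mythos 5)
Your proposal is correct and follows essentially the same route as the paper: the same three-way split (non-planar via $2$-torsion in $H_1$; planar with $\Theta$ via the trivial $\Phi_{B_n\Theta}$ and a nontrivial $i_*$ on $H_2$; planar with $S_0$ but no $\Theta$ via non-injective $\Phi_{B_nS_0}$ and an injective $i_*$ on $H_2$), assembled through the same commuting square and Proposition~\ref{pro:injrightangled}. The reduction to the commutator-related case and the identification of the hard technical content (the Morse-theoretic control of $H_2$ for an arbitrary planar host) also match the paper's treatment.
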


To obtain lemmas in \S\ref{ss43:h2inj} and \S\ref{ss44:nonplanar},
it is necessary to extract certain information on boundary
homomorphisms in Morse complexes. Since these computations are
lengthy and  rather technical, they are separately presented in
\S\ref{ss42:boundary homomorphisms}.

\subsection{Braid groups of $S_0$ and $\Theta$}\label{ss41:s0theta}
In Example~\ref{ex:S_0}, $B_4S_0$ was shown to be a right-angled
Artin group. One can show that $B_3\Theta$ is also a right-angled
Artin group. In this section, we show that $\Phi_{B_nS_0}$
($\Phi_{B_n\Theta}$) is not injective for $n\ge 5$ ($n\ge 4$,
respectively). By Proposition~\ref{pro:injrightangled}, this implies
the braid groups of $S_0$ and $\Theta$ are not a right-angled Artin
group for the higher braid indices.

\begin{lem}\label{lem:S0}
For $n\ge 5$, $\Phi_{B_nS_0}$ is not injective and so $B_nS_0$ is
not a right-angled Artin group.
\end{lem}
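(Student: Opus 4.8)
The plan is to show $\Phi_{B_nS_0}$ is not injective by computing, from a Morse complex of $UD_nS_0$, both $\rk H_2(B_nS_0)$ and $\rk\bigl(\im\Phi_{B_nS_0}\bigr)$ and comparing them. First I would fix a maximal tree $T$ of $S_0$ and an order on its vertices as in Figure~\ref{fig2}, subdividing enough that Theorem~\ref{thm:homotopy equiv} applies, so that $B_nS_0=\pi_1(UD_nS_0)$. Since $S_0$ has exactly two vertices of valency $3$, both lying on its unique circuit, there is exactly one deleted edge; call it $d$, chosen incident to a vertex of valency $\ge 3$ as in the proof of Theorem~\ref{thm:homotopy equiv}. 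I would then enumerate the critical cells of $UD_nS_0$ in the $A_k(\vec a)$/$d(\cdot)$ notation of \S\ref{ss22:morse}; a direct count of the critical $1$-, $2$- and $3$-cells, together with a computation of the Morse boundary maps by the techniques of \S\ref{ss42:boundary homomorphisms}, gives $\rk H_2(B_nS_0)=\binom{n-1}{3}=(n-1)(n-2)(n-3)/6$ (this is the value quoted in the preceding lemma). In the course of this one checks that the Morse $2$-boundary $\widetilde\partial_2$ vanishes, so that $B_nS_0=F/R$ is commutator-related and $\Phi_{B_nS_0}$ is defined.

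Next I would read off the commutator-related presentation of $B_nS_0$ from the Morse complex: the generators are the critical $1$-cells and the relators are the rewritten boundary words $\tilde r(\partial c)$ of the critical $2$-cells. As happens for $T_3$ in the proof of Theorem~\ref{thm:T_3}, some of these relators, read modulo $[F,[F,F]]$, are not commutators of generators, because the braiding around the circuit introduces conjugations. I would remove these conjugations by a Tietze transformation, introducing new generators $\overline A(\vec a)$ via relations of the form $\overline A(\vec a+\vec\delta_1)=A_2(\vec a)^{-1}A_2(\vec a+\vec\delta_1)$ exactly as in the $T_3$ argument, after which every relator becomes, modulo $[F,[F,F]]$, a genuine commutator of two generators. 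Then $\Phi_{B_nS_0}$ sends each relator to a single basic commutator (or to $0$), so $\rk\bigl(\im\Phi_{B_nS_0}\bigr)$ is at most the number $N$ of distinct basic commutators that arise in this way.

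Finally I would count $N$. After the normalization the commutators that occur are indexed by tuples of parameters subject to a short list of linear inequalities, and counting nonnegative-integer solutions shows $N<\binom{n-1}{3}$ for all $n\ge 5$ (in agreement with $B_4S_0$ being a right-angled Artin group, where the two ranks coincide); alternatively, one can exhibit directly a $\mathbb Z$-combination of critical $2$-cells that is not a Morse $3$-boundary but whose image under $\Phi_{B_nS_0}$ telescopes to $0$ in $[F,F]/[F,[F,F]]$. Either way $\rk\bigl(\im\Phi_{B_nS_0}\bigr)\le N<\rk H_2(B_nS_0)$, so $\ker\Phi_{B_nS_0}$ has positive rank and in particular is nonzero; by Proposition~\ref{pro:injrightangled}, $B_nS_0$ is not a right-angled Artin group. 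The main obstacle is the middle step: correctly enumerating the critical cells of $UD_nS_0$ and evaluating the rewriting map on the boundary words of the critical $2$-cells, and in particular tracking the conjugations so that, after the Tietze transformations, the images of the relators under $\Phi_{B_nS_0}$ can be read off and counted — once that bookkeeping is in place the rank comparison is routine.
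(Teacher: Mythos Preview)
Your overall strategy---compute $\rk H_2(B_nS_0)$ and $\rk(\im\Phi_{B_nS_0})$ from a Morse presentation and compare---is exactly the paper's. But there is a genuine error in your middle step: you claim that the Morse $2$-boundary $\widetilde\partial_2$ vanishes. It does not. Theorem~\ref{thm:homology} applies only to graphs \emph{not} containing $S_0$; for $S_0$ itself one finds $\widetilde\partial(A_2(a,b)\cup d(\ell))=A_2(a,b+1)-A_2(a,b)\ne0$. (Indeed, this nonzero $\widetilde\partial_2$ is precisely what you must use to get $\rk H_2=\binom{n-1}{3}$, since $\rk M_2=\binom{n}{3}$ and there are no critical $3$-cells.) Consequently the raw Morse presentation is \emph{not} commutator-related: the relator $\tilde r(\partial(A_2(a,b)\cup d(\ell)))=d(\ell{+}a{+}b)\,A_2(a,b)\,d(\ell{+}a{+}b)^{-1}A_2(a,b+1)^{-1}$ has nontrivial abelianization, so $\Phi_{B_nS_0}$ is not yet defined.

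The fix is not the $T_3$-style substitution $\overline A(\vec a+\vec\delta_1)=A_2(\vec a)^{-1}A_2(\vec a+\vec\delta_1)$ you propose---that addresses a different defect (a product of generators sitting inside a commutator slot). Here the relators are not commutators at all, so one first \emph{uses} some of them to eliminate generators: the relations with $\ell+a+b=n-1$ give $A_2(a,b+1)=d(n-1)A_2(a,b)d(n-1)^{-1}$, hence $A_2(a,b)=d(n-1)^{b-1}A_2(a,1)d(n-1)^{1-b}$; substituting, and then setting $\bar d(m)=d(n-1)^{-1}d(m)$, turns every remaining relator into $[d(n-1)^{1-b}\bar d(m)d(n-1)^{b-1},A_2(a,1)]$ with $1\le a$ and $a+1\le m\le n-2$. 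Now $\Phi_{B_nS_0}$ sends each to $[\bar d(m),A_2(a,1)]$, so $\rk(\im\Phi_{B_nS_0})=\#\{(a,m):1\le a<m\le n-2\}=(n-2)(n-3)/2$, which is strictly smaller than $(n-1)(n-2)(n-3)/6$ exactly when $n\ge5$. With this correction your plan goes through and matches the paper's argument.
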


\begin{proof}
\begin{figure}[ht]
\psfrag{*}{\small0}
\psfrag{A}{\small$A$}
\psfrag{e}{\small$d$}
\centering
\includegraphics[height=2.3cm]{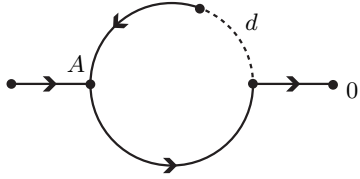}
\caption{Maximal tree and order of $S_0$}\label{fig13}
\end{figure}

Choose a maximal tree of $S_0$ and give an order as in
Figure~\ref{fig13}. Let $d(k)$ denote the set consisting of the
deleted edge $d$ together with $k$ vertices blocked by $\tau(d)$. The configuration space $UD_nS_0$ has
two kind critical 1-cells: $d(k)$, $A_2(a,b)$ for $0\le k\le n-1$,
$1\le a,b$, and $a+b\le n$, and critical 2-cells $A_2(a,b) \cup
d(\ell)$ for $1\le a,b$, $0\le\ell$, and $\ell+a+b\le n-1$, and no
critical $i$-cells for $i\ge 3$. We obtain relations
by rewriting the boundaries of critical 2-cells. That is,
$$\tilde r(\partial(A_2(a,b)\cup d(\ell))) =
(d(\ell+a+b))(A_2(a,b))(d(\ell+a+b))^{-1}(A_2(a,b+1))^{-1}.$$

When $\ell+a+b=n-1$, we have the relations $A_2(a,b+1)=d(n-1)
A_2(a,b)d(n-1)^{-1}$. Applying Tietze transformations on
$A_2(a,b+1)=d(n-1)^b A_2(a,1)d(n-1)^{-b}$, we eliminate the
generators $A_2(a,b+1)$ for $b\ge 1$. For each $\ell+a+b\le n-2$, the boundary word is rewritten as $[d(n-1)^{-b} d(\ell+a+b) d(n-1)^{b-1},A_2(a,1)]$ . Let $m=\ell+a+b$. We introduce
new generators $\bar d(m)$ by adding the relations $\bar
d(m)=(d(n-1))^{-1}d(m)$, and then eliminate the generators $d(m)$.
We obtain a presentation of $B_n S_0$ whose generators are $\bar
d(0),\ldots,\bar d(n-2)$, $d(n-1)$, $A_2(1,1),\ldots, A_2(n-1,1)$
and whose relators are $[d(n-1)^{1-b}\bar d(m)d(n-1)^{b-1},
A_2(a,1)]$ for $1\le a,b$ and $2\le a+b\le m\le n-2$. Thus $B_nS_0$
is a commutator-related group and so the homomorphism
$\Phi_{B_nS_0}$ is defined and
$$\Phi_{B_nS_0}([d(n-1)^{1-b}\bar d(m) d(n-1)^{b-1},
A_2(a,1)])= [\bar d(m),A_2(a,1)] \pmod{[F,[F,F]]}$$ where $F$ is the
free group on the set of generators. Since $b$ becomes irrelevant, we set $b=1$ to enumerate the number of commutators $[\bar d(m),A_2(a,1)]$. Then we have $2\le
a+1\le m\le n-2$. The number of pairs
$(a,m)$ satisfying the inequalities is $(n-2)(n-3)/2$ which is equal
to $\rk(\im \Phi_{B_nS_0})$.

On the other hand, the Morse chain complex of $UD_n S_0$ looks like
\begin{displaymath}
\begin{CD}
0@>>>M_2(UD_nS_0)@>\widetilde\partial >>M_1(UD_nS_0)@>{0}>>\mathbb
Z.
\end{CD}
\end{displaymath}
Then $\rk(H_2( B_n S_0))=\rk(M_2(UD_n S_0))- \rk(\im
\widetilde\partial)$. Since $\widetilde\partial(A_2(a,b)\cup
d(\ell))=A_2(a,b+1)-A_2(a,b)$, $\ell$ is irrelevant.
We set $\ell=0$. Then $\im\widetilde\partial$ is generated by $A_2(a,b+1)-A_2(a,b)$ for $1\le a,b$ and $a+b\le n-1$. The number of pairs $(a,b)$ satisfying the
inequalities is $(n-1)(n-2)/2$ which is $\rk(\im
\widetilde\partial)$. And $\rk(M_2(UD_nS_0))$ is equal to the number
of triples $(a,b,\ell)$ satisfying $a,b\ge 1$, $\ell\ge 0$, and
$a+b+\ell\le n-1$. So $\rk(M_2(UD_n S_0))=n(n-1)(n-2)/6$. Thus
$\rk(H_2( B_n S_0))=(n-1)(n-2)(n-3)/6$.

Consequently, since $(n-2)(n-3)/2<(n-1)(n-2)(n-3)/6$ for $n\ge 5$,
$\Phi_{B_nS_0}$ cannot be injective.
\end{proof}

\begin{lem}\label{lem:Theta}
For $n\ge 4$, $\Phi_{B_n\Theta}$ is the trivial map. However,
$H_2(B_n\Theta)$ is not trivial and so $B_n\Theta$ is not a
right-angled Artin group.
\end{lem}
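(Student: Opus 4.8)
The plan is to compute $\Phi_{B_n\Theta}$ directly from a Farley--Sabalka presentation of $B_n\Theta$, in the same spirit as the proof of Lemma~\ref{lem:S0}, but now establishing the stronger statement that $\Phi_{B_n\Theta}$ annihilates every relator. First I would fix a maximal tree $T$ of $\Theta$ together with a planar order, placing the base vertex (which has valency $1$ in $T$) next to a deleted edge. Since $\Theta$ has two independent cycles, $T$ carries two deleted edges $d_1,d_2$, each with one end at a vertex of valency $3$; $\Theta$ has exactly two such vertices, say $A<B$. Next I would enumerate the critical cells of $UD_n\Theta$: the unique critical $0$-cell, critical $1$-cells of the shapes $A_2(\vec a)$, $B_2(\vec a)$, $d_i(k)$ (together with $d_i\cup A(\vec a)$ or $d_i\cup B(\vec a)$ according to which of $A,B$ the edge $d_i$ abuts), the critical $2$-cells obtained by combining two of these, and the critical $3$-cells, if any occur. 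Rewriting the boundary word of each critical $2$-cell as a word in the critical $1$-cells via Lemma~\ref{lem:rewriting} — and, for the cells in which a deleted edge runs past a branch at $A$ or $B$, producing conjugating products $\Pi$ exactly as in the proof of Theorem~\ref{thm:T_3} — yields a presentation all of whose relators are commutators of words in the generators. In particular $B_n\Theta$ is commutator-related, so $\Phi_{B_n\Theta}$ is defined.

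The core of the argument is a bookkeeping of Tietze transformations arranged so that $\Phi_{B_n\Theta}$ carries every relator into $[F,[F,F]]$. As in Lemma~\ref{lem:S0} one introduces ``difference'' generators dictated by the Morse boundary relations — replacing $A_2(a,b+1)$, $B_2(a,b+1)$ for $b\ge 1$ and $d_i(k+1)$ by suitable quotients of consecutive generators and eliminating the originals — after which one checks that every surviving relator, regarded in $[F,F]/[F,[F,F]]$, is trivial: the commutators and conjugating products that remain cancel because the relevant generators have become linearly dependent in $H_1(B_n\Theta)=F/[F,F]$. Since $R/[F,R]$ is generated by the classes of the relators, $\Phi_{B_n\Theta}$ is then the zero map. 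Finally, one reads $H_2(B_n\Theta)$ off the Morse chain complex $0\to M_3(UD_n\Theta)\to M_2(UD_n\Theta)\to M_1(UD_n\Theta)\to\mathbb Z$, whose last map is zero, and computes $\rk H_2(B_n\Theta)=(n-3)(n-2)/2$, which is positive for $n\ge 4$. Thus $\Phi_{B_n\Theta}$ is the zero map out of a nonzero free abelian group, hence not injective, and $B_n\Theta$ is not a right-angled Artin group by Proposition~\ref{pro:injrightangled}.

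The main obstacle is that $\Theta$ contains $S_0$, so Theorem~\ref{thm:homology} is not available and the Morse boundary maps of $UD_n\Theta$ need not vanish. Two things therefore require genuine care. First, $\rk H_2(B_n\Theta)$ must be obtained by actually evaluating $\widetilde\partial$ on every critical $2$-cell — and, if critical $3$-cells occur, by identifying $\widetilde\partial$ on those as well, so as to compute the homology in degree $2$ — rather than by merely counting critical cells. Second, the rewriting of boundary words is subtler than for $S_0$: a deleted edge passing a branch at $A$ or $B$ forces conjugating products into the relators, and one must verify that the difference-generator substitution collapses not only those conjugations (which $\Phi_{B_n\Theta}$ ignores in any case) but also the underlying commutators themselves. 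In $S_0$ the analogous commutators survived as the nonzero classes $[\bar d(m),A_2(a,1)]$, so it is precisely the extra cycle of $\Theta$ that must be exploited to make all of them vanish in $[F,F]/[F,[F,F]]$; equivalently, one must check that after the Tietze reduction each generator occurring in a relator is proportional, in $H_1(B_n\Theta)$, to the other generator of that relator. Carrying out this verification is the crux.
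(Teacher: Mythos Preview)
Your strategy—Farley--Sabalka presentation, show commutator-related, verify $\Phi_{B_n\Theta}$ kills every relator, check $H_2\ne 0$—matches the paper exactly. The difference is the maximal tree, and the paper's choice dissolves every obstacle you anticipate.

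You keep both valency-$3$ vertices of $\Theta$ as branch points of $T$. (Your phrasing is slightly inconsistent: if each $d_i$ has an endpoint \emph{equal} to $A$ or $B$, that vertex drops to valency $2$ in $T$ and $A_2(\vec a)$ or $B_2(\vec b)$ cells do not exist; presumably you mean the $d_i$ sit in the interiors of two strands.) This forces genuine critical $3$-cells such as $A_2(\vec a)\cup B_2(\vec b)\cup d_i$ and $A_2(\vec a)\cup d_1\cup d_2$, so you must compute $\widetilde\partial_3$ to extract $H_2$, and the Tietze reduction has many generators to eliminate before the vanishing in $[F,F]/[F,[F,F]]$ becomes visible. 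Your plan could in principle be pushed through—$\Phi$ is presentation-independent—but you have not carried it out.

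The paper instead deletes the two edges of $\Theta$ incident to the \emph{same} valency-$3$ vertex. That vertex becomes a leaf of $T$, leaving a single branch vertex $Y$; and $d_1,d_2$ now share an endpoint, so no cell contains both. Hence there are \emph{no} critical $3$-cells and the only critical $2$-cells are $Y_2(a,b)\cup d_\ell$. The relations then express every $Y_2(a,b)$ as a conjugate of $Y_2(1,1)$, Tietze-reducing to three generators $d_1,d_2,Y_2(1,1)$; each surviving relator is a commutator $[x,y]$ with both $x$ and $y$ abelianizing to integer multiples of $Y_2(1,1)$, so $\Phi_{B_n\Theta}([x,y])=0$ immediately. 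Nontriviality of $H_2$ is automatic since relators exist for $n\ge 4$ and there are no $3$-cells to bound them.
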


\begin{proof}
\begin{figure}[ht]
\psfrag{*}{\small0}
\psfrag{A}{\small$Y$}
\psfrag{e1}{\small$d_1$}
\psfrag{e2}{\small$d_2$}
\centering
\includegraphics[height=2.2cm]{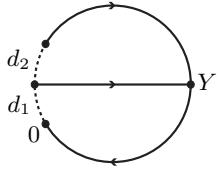}
\caption{Maximal tree and order of $\Theta$}\label{fig14}
\end{figure}

Choose a maximal tree of $\Theta$ and give an order as in
Figure~\ref{fig14}.  Then $UD_n \Theta$ has two kind critical
1-cells: $d_\ell$, $Y_2(a,b)$ for $1\le a,b$, $a+b\le n$,
$\ell=1,2$, and critical 2-cells $Y_2(a,b)\cup d_\ell$ for $1\le
a,b$, $a+b\le n-1$, $\ell=1,2$,  and no critical $i$-cells for $i
\ge 3$.

There are two type relations obtained by rewriting boundary words of
critical 2-cells as follows:
\begin{align*}
Y_2(a+1,b)&=\Pi^1_b d_1 Y_2(a,b) d_1^{-1}(\Pi^1_{b-1})^{-1},\\
Y_2(a,b+1)&=d_2(\Pi^1_b)^{-1} Y_2(a+1,b)\Pi^1_{b-1}d_2^{-1}
=d_2 d_1 Y_2(a,b)(d_2 d_1)^{-1}.
\end{align*}
where $\Pi^1_k=Y_2(1,k)Y_2(1,k-1)\cdots Y_2(1,1)$. Using the
relations
\begin{align*}
Y_2(a+1,1)&=Y_2(1,1)d_1 Y_2(a,1) d_1^{-1}=(Y_2(1,1)d_1)^{a} Y_2(1,1) d_1^{-a},\\
Y_2(a,b+1)&=(d_2 d_1)^{b} Y_2(a,1)(d_2 d_1)^{-b}\\
&=(d_2 d_1)^{b}(Y_2(1,1)d_1)^{a-1} Y_2(1,1) d_1^{1-a}(d_2 d_1)^{-b},
\end{align*}
we eliminate the generators $Y_2(a,b)$ except $Y_2(1,1)$. Then there
are three generators $d_1$, $d_2$, and $Y_2(1,1)$, and one type
relators as follows: For $1\le a$, $2\le b$, and $a+b\le n-1$,
\begin{align*}
&[d_1^{-1}\{(d_2d_1)^{-1}Y_2(1,1)\}^{b-1}d_2^{-1}(d_2d_1)^b,\ (Y_2(1,1)d_1)^{a}d_1^{-a} ].
\end{align*}

Thus $B_n \Theta$ is a commutator-related group and we can consider
$\Phi_{B_n \Theta}$. It is easy to see that $\Phi_{B_n \Theta}$
annihilates the relations above. So $\rk(\im \Phi_{B_n \Theta})=0$.
But $H_2(B_n\Theta)$ is not trivial because there are
commutator relators for $n\ge 4$. Thus $\rk(H_2(B_n\Theta)) \neq 0$.
Thus $\Phi_{B_n \Theta}$ cannot be injective.
\end{proof}

\paragraph{\bf Remark.} With not much extra work, we can now compute
$\rk(H_2(B_n\Theta)))$.
Since $B_n\Theta$ is a commutator-related group over three generators
as in the proof of Lemma~\ref{lem:Theta},
$\rk(H_1(B_n\Theta))=3$. Since $\rk(M_1(UD_n\Theta))=n(n-1)/2+2$,
$$\rk(\im(\widetilde\partial_2))=\rk(M_1(UD_n\Theta))
-\rk(H_1(B_n\Theta))=(n-2)(n+1)/2.$$
Since $\rk(M_2(UD_n\Theta))=(n-1)(n-2)$,
$$\rk(\ker(\widetilde\partial_2))=\rk(M_2(UD_n\Theta))
-\rk(\im(\widetilde\partial_2))=(n-2)(n-3)/2.$$ Since
$M_3(UD_n\Theta)=0$,
$$\rk(H_2(B_n\Theta)))=\rk(\ker(\widetilde\partial_2))=(n-2)(n-3)/2.$$

\subsection{Boundary homomorphisms in Morse complexes}\label{ss42:boundary homomorphisms}
In this section we will discuss the behavior of boundary
homomorphisms in Morse chain complexes. In general, it is not easy
to fully compute boundary homomorphisms for Morse complexes of even
small graphs such as $K_5$. One of the reasons is that there are too many
kinds of critical cells. Another reason is that Morse complexes
heavily depend on geometric information such as choices of maximal
trees and orderings. To show that the first homology of a non-planar
braid group has a torsion, we need to understand the second boundary
homomorphism of its Morse complex under a particular choice of a
maximal tree. To show that braid groups of planar graphs that contain
$S_0$ is not right-angled Artin groups, we need to look into
certain images under the third boundary homomorphism.

Recall that $\widetilde\partial=\widetilde R\partial$ and the image
of an $i$-cell $c$ in $UD_n\Gamma$ under $\partial$ is the sum of
$i$ pairs of faces of $c$ in opposite sign. So if the images of a
pair of faces under $\widetilde R$ are identical, then this pair has
no contribution  in $\widetilde\partial(c)$. This is why
$\widetilde\partial$ for any graph not containing $S_0$ is trivial
as shown in \S\ref{ss31:homology}. We introduce useful properties
for $\widetilde R$ before getting into the discussing images under
$\widetilde\partial$.

Let $c$ be a redundant $i$-cell in $UD_n\Gamma$, $v$ be an unblocked
vertex in $c$ and $e$ be the edge in $T$ starting from $v$.  The
vertex $v$ is said to be {\em simply unblocked} if there is no
vertex $w$ that is either in $c$ or an end vertex of an edge in $c$
and satisfies $\tau(e)< w<\iota(e)$. Let $W_{e}(c)$ ($V_{e}(c)$, respectively)
denote the $(i+1)$-cell (the $i$-cell) obtained from $c$ by
replacing $v$ by $e$ (by $\tau(e)$).

\begin{lem}\label{lem:naturalmove}
Suppose a redundant cell $c$ has a simply unblocked vertex $v$ and
$e$ is the edge starting from $v$. Then $\widetilde R(c)=\widetilde
RV_e(c)$.
\end{lem}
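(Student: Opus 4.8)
The plan is to follow the inductive pattern of the proof of Lemma~\ref{lem:rewriting}, transported from the group-level map $\tilde r$ to the chain-level map $\widetilde R$ and to the additive identity $R(c)=\pm\partial W(c)+c$, with the hypothesis that $v$ be \emph{simply} unblocked doing the work that ``$\iota(e)-\tau(e)=1$'' did there. I induct on the least integer $k$ with $R^{k}(c)=R^{k+1}(c)$; since $c$ is collapsible or redundant, $k\ge 1$. The structural fact I use over and over is that if $v=\iota(e)$ is simply unblocked in a cell then the segment $(\tau(e),\iota(e))$ of the numbering meets neither the vertices of that cell nor the end vertices of its edges, and this emptiness survives deleting a vertex, moving some other vertex that already lies below $\tau(e)$ still further down, and swapping an edge of the cell for one of its two end vertices.

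If $c$ is collapsible, then $R(c)=0$ and it is enough to show that $V_e(c)$ is collapsible, for then $\widetilde R(V_e(c))=0=\widetilde R(c)$. Choose an order-respecting edge $g$ of $c$ with $\iota(g)<v$, which exists because $c$ is collapsible and $v$ is unblocked. Since $\iota(g)$ is an end vertex of an edge of $c$ and $v$ is simply unblocked, $\iota(g)\notin(\tau(e),v)$, hence $\iota(g)<\tau(e)$, hence $\tau(e)\notin(\tau(g),\iota(g))$; so $g$ is still order-respecting after $v$ is replaced by $\tau(e)$. Now every unblocked vertex of $V_e(c)$ is $\tau(e)$ (which exceeds $\iota(g)$), or a vertex freed by deleting $v$ (which exceeds $v>\iota(g)$), or a vertex $w$ that was already unblocked in $c$: such a $w$ keeps an order-respecting witness, since a witness of $w$ could only be destroyed by $\tau(e)$ landing in its interior, which would force its initial vertex $\ge v$ and hence $w>v$ — but then $g$ itself, with $\iota(g)<v<w$, witnesses $w$. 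Therefore $V_e(c)$ is collapsible. The same argument applies verbatim to any collapsible cell in which $v$ is simply unblocked.

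Now let $c$ be redundant, with smallest unblocked vertex $v_s$ and $e_s\in T$ the edge with $\iota(e_s)=v_s$, and use $R(c)=V_{e_s}(c)+\sum_{e'}\pm(c_{e'}^{\iota}-c_{e'}^{\tau})$, the sum over the edges $e'$ of $c$, with $c_{e'}^{\iota},c_{e'}^{\tau}$ gotten from $W(c)$ by replacing $e'$ with $\iota(e'),\tau(e')$. If $v=v_s$, then $e=e_s$ occurs in every $c_{e'}^{\iota},c_{e'}^{\tau}$ as an order-respecting edge with empty interior, so Lemma~\ref{lem:redundant1} gives $\widetilde R(c_{e'}^{\iota})=\widetilde R(c_{e'}^{\tau})=0$ and $\widetilde R(c)=\widetilde R(V_{e_s}(c))=\widetilde R(V_e(c))$. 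If $v\ne v_s$, then $v_s<v$, and as $v_s$ is a vertex of $c$ outside $(\tau(e),v)$ we get $v_s<\tau(e)$; consequently $v$ remains simply unblocked, with the same edge $e$, in $V_{e_s}(c)$ and in every $c_{e'}^{\iota},c_{e'}^{\tau}$. Each of these cells stabilizes under $R$ in at most $k-1$ steps (Forman's flow does not circle back): it is not critical because $v$ is unblocked in it; if it is collapsible, the previous paragraph shows $\widetilde R$ vanishes on it and on its $V_e$-image; and if it is redundant, the inductive hypothesis gives $\widetilde R(P)=\widetilde R(V_e(P))$. Finally $V_e(c)$ is again redundant with smallest unblocked vertex $v_s$, so $R(V_e(c))=V_{e_s}(V_e(c))+\sum_{e'}\pm(V_e(c_{e'}^{\iota})-V_e(c_{e'}^{\tau}))$ with $V_e$ and $V_{e_s}$ commuting; applying $\widetilde R$ to $R(c)$, pushing $V_e$ through each summand, and reading the outcome back as $\widetilde R(R(V_e(c)))=\widetilde R(V_e(c))$ closes the induction.

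The real difficulty is the combinatorial bookkeeping concealed in each ``still order-respecting'' and ``again redundant/collapsible'' above: after the substitution $v\mapsto\tau(e)$ one must certify that no order-respecting edge on which a cell depends is destroyed and that no newly unblocked vertex falls below all such edges. ``Simply unblocked'' is exactly the hypothesis that clears the segment between $\tau(e)$ and $\iota(e)$ of the cell's data and so excludes the troublesome configurations, but turning this into a watertight case analysis, and keeping the signs of the additive expansion of $R$ aligned on both sides of the final identity, is the part that needs care.
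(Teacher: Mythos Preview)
Your proof is correct, but the paper takes a much shorter route. Instead of expanding $R(c)=\pm\partial W(c)+c$ using $W(c)=W_{e_s}(c)$ (built from the \emph{smallest} unblocked vertex) and inducting on the stabilization time, the paper goes straight to the $(i+1)$-cell $W_e(c)$ built from $v$ itself. Because $v$ is simply unblocked, the edge $e$ sits in $W_e(c)$ and in every face of $\partial W_e(c)$ other than $c$ and $V_e(c)$ as an order-respecting edge whose interval $(\tau(e),\iota(e))$ is empty of all cell data, so Lemma~\ref{lem:redundant1} kills $\widetilde R$ on $W_e(c)$ and on all those faces. Then the chain-map identity $\widetilde R\,\partial=\widetilde\partial\,\widetilde R$ gives $0=\widetilde R\partial(W_e(c))=\pm(\widetilde R(c)-\widetilde R(V_e(c)))$, and the lemma follows in one stroke.

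What your approach buys is that it never invokes the chain-map property of $\widetilde R$ and stays entirely inside the recursive definition of $R$, mirroring Lemma~\ref{lem:rewriting}; this is more elementary but forces you to carry the case split $v=v_s$ versus $v\ne v_s$, the separate treatment of collapsible intermediate cells, and the commutation $V_eV_{e_s}=V_{e_s}V_e$ together with the matching of signs in $\partial W_{e_s}(c)$ and $\partial W_{e_s}(V_e(c))$. The paper's approach trades all of that bookkeeping for the single observation that $\widetilde R$ is a chain map and that $W_e(c)$ already lies in the kernel of $\widetilde R$ by Lemma~\ref{lem:redundant1}. Note in particular that when $v=v_s$ your argument is essentially the paper's argument specialized to that case; the point is that using $W_e(c)$ rather than $W_{e_s}(c)$ makes the case distinction unnecessary.
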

\begin{proof}
Every cell $c_j$ except $c$ and $V_e(c)$ that is a summand in
$\partial W_e(c)$, has an order-respecting edge $e$ starting from
$v$ that vacuously satisfies the hypothesis of
Lemma~\ref{lem:redundant1}. Thus $\widetilde R(c_j)=0$ as well as
$\widetilde R(W_e(c))=0$. Then
$$0=\widetilde\partial\widetilde R(W_e(c))=\widetilde
R\partial(W_e(c))=\widetilde R(\pm(c-V_e(c))+\sum_jc_j).$$
Thus $\widetilde R(c)=\widetilde RV_e(c)$
\end{proof}

Since $\widetilde\partial=\widetilde R\partial$, the computation of
$\widetilde\partial(c)$ for a cell $c$ in $UD_n\Gamma$ involves
numerous rewriting $\widetilde R$ and so it is important to have an
efficient way to compute $\widetilde R$.  Our strategy is basically to
repeat the following two steps:
\begin{enumerate}
\item[(I)] Via repeated applications of this lemma to the smallest
simply unblocked vertex, we may replace any redundant cell $c$
by another redundant cell $c'$ such that $\widetilde
R(c)=\widetilde R(c')$ and $c'$ has no simply unblocked
vertices.
\item[(II)] Compute $R(c')$, that is, replace $c'$ by the alternating sum
of faces of $W(c')$ other than $c'$. Go back to Step (I) for
each redundant cell in the sum.
\end{enumerate}

We now consider the relationship between the choice of maximal tree
and the dimension of the corresponding Morse complex. An edge in a
critical cell can be either a deleted edge or a non-order-respecting
edge together with a blocked vertex near a vertex of valency $\ge
3$. Thus the dimension of the Morse complex over a graph $\Gamma$
under the choice of maximal tree $T$ is bounded above by $$\min\left\{n,
N_d+\lfloor (n-N_d)/2\rfloor, N_d+N_v\right\}$$ where $n$ is the
braid index, $N_d$ is the number of deleted edges in $\Gamma$, and
$N_v$ is the number of vertices of valency $\ge 3$ in $T$.

When the braid index is relatively large, we had better obtain a
maximal tree by deleting edges incident to a vertex of valency $\ge
3$ in order to achieve the minimal dimension of a Morse complex.
This is how we chose the maximal trees in Lemma~\ref{lem:S0} and
Lemma~\ref{lem:Theta}.

However, this choice is not so wise when we consider the second
boundary homomorphism in Morse complexes of non-planar graphs
because deleted edges may block other vertices and this makes the
rewriting process extremely complicated. Thus for a non-planar graph
$\Gamma$, we will choose a  maximal tree $T$  by deleting edges
whose ends are of valency two in $\Gamma$. By choosing a planar
embedding of $T$, vertices of $\Gamma$ are ordered and $UD_n \Gamma$
collapses to a Morse complex as explained in \S\ref{ss22:morse}.
Then there are no vertices blocked by deleted edges due to the
construction of $T$ and there are three kinds of critical 2-cells
$A_k(\vec a) \cup B_\ell(\vec b)$, $A_k(\vec a)\cup d$, and $d\cup
d'$, where $A$ and $B$ are vertices with valency $\ge 3$, and $d$
and $d'$ are deleted edges. Remember that we are omitting the $0_s$
part, that is, a sequence of $s$ vertices blocked by the base vertex.
We note that a critical 2-cells $A_k(\vec a) \cup B_\ell(\vec b)$ is contained in
the tree $T$ and vanishes under $\widetilde\partial$ as shown in
\cite{Far} or in Theorem~\ref{thm:homology}. In \S\ref{ss44:nonplanar}, we need to know the image of critical 2-cells $d\cup d'$ under $\widetilde\partial : M_2(UD_n \Gamma)\to M_1(UD_n \Gamma)$. To help understanding, we first look at an
example.

\begin{figure}[ht]
\psfrag{0}{\small0}
\psfrag{A}{\small$A$}
\psfrag{B}{\small$B$}
\psfrag{d}{\small$d$}
\psfrag{d1}{\small$d'$}
\centering
\includegraphics[height=4cm]{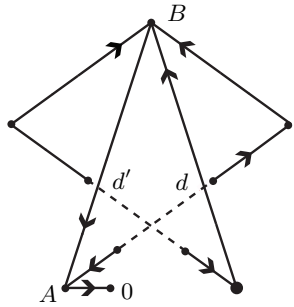}
\caption{Graph containing $c=d\cup d'$}
\label{fig15}
\end{figure}

\begin{exa}\label{ex:caseb}
Let $\Gamma$ be a graph in Figure~\ref{fig15} and a maximal tree
and an order be given as the figure. As always, we assume $\Gamma$
is subdivided sufficiently according to the braid index. We want to
compute $\widetilde\partial(c)$ for the 2-cell $c=d \cup
d'$ in $M_2(UD_n\Gamma)$.
\end{exa}
Since $\iota(d')<\iota(d)$, using
Lemma~\ref{lem:naturalmove} we have
\begin{align*}
\widetilde\partial(c)=&\:\widetilde R(d'\cup \iota(d))-\widetilde R(d'\cup \tau(d))-\widetilde R(d\cup\iota(d'))+\widetilde R(d\cup\tau(d'))\\
=&\:\widetilde R(d'\cup A(0,1))-\widetilde R(d'\cup B(0,1,0))-\widetilde R(d\cup B(0,0,1))+d
\end{align*}
Since $d'\cup A_2(0,1)$ is collapsible, using
Lemma~\ref{lem:naturalmove}, we have
\begin{align*}
0=&\:\widetilde\partial\widetilde R(d'\cup A_2(0,1))=\widetilde R\partial(d'\cup A_2(0,1))\\
=&\:\widetilde R(d'\cup A(0,1)-d'\cup \{A\}-A_2(0,1)\cup\iota(d')+A_2(0,1)\cup\tau(d'))\\
=&\:\widetilde R(d'\cup A(0,1))-d'-A_2(1,1)+A_2(1,1)
\end{align*}
and so $\widetilde R(d'\cup A(0,1))=d'$. Similarly,
\begin{align*}
0=&\:\widetilde\partial\widetilde R(d'\cup B_2(0,1,0))=\widetilde R(\partial(d'\cup B_2(0,1,0))\\
=&\:\widetilde R(B_2(0,1,0)\cup\iota(d')-B_2(0,1,0)\cup\tau(d')-d'\cup B(0,1,0)+d'\cup\{B\})\\
=&\:\widetilde R(B_2(0,1,0)\cup\iota(d'))-B_2(1,1,0)-\widetilde R(d'\cup B(0,1,0))+d'.
\end{align*}
So $\widetilde R(d'\cup B(0,1,0))=d'-B_2(1,1,0)$ since $B_2(0,1,0)\cup\iota(d')$ is collapsible.
\begin{align*}
0=&\:\widetilde\partial\widetilde R(d\cup B_3(0,0,1))=\widetilde R(\partial(d\cup B_3(0,0,1))\\
=&\:\widetilde R(B_3(0,0,1)\cup\iota(d)-B_3(0,0,1)\cup\tau(d)-d\cup B(0,0,1)+d\cup\{B\})\\
=&\:\widetilde R(B_3(0,0,1)\cup\iota(d))-B_3(0,1,1)-\widetilde R(d\cup B(0,0,1))+d.
\end{align*}
So $\widetilde R(d\cup B(0,0,1))=d-B_3(0,1,1)$ since $B_3(0,0,1)\cup\iota(d)$ is collapsible.
Consequently, we have
$$\widetilde\partial(c)=B_2(1,1,0)+B_3(0,1,1).\qed$$

In order to give a general formula for $\widetilde\partial(d\cup d')$, we classify them into several cases
depending on the relative locations of end vertices of
deleted edges. The following notations are handy for this purpose and will be used to compute the boundary of critical 3-cells later.
For each vertex $v$ in
$\Gamma$, there is a unique edge path $\gamma_v$ from $v$ to the
base vertex 0 in the tree $T$. For vertices $v, w$ in $\Gamma$,
$v\wedge w$ denotes the vertex that is the first intersection
between $\gamma_v$ and $\gamma_w$. Obviously, $v\wedge w\le v$ and
$v\wedge w\le w$. For a vertex $v$ of valency $\mu$, we assign
$0,1,\ldots,\mu-1$ to the branches incident to $v$ clockwise
starting from the branch joined to the base vertex. Then $g(v,w)$
denotes the number assigned to the branch of $v$ containing $w$. In
the case of $v$ with valency two, $g(v,w)=1$ if $v=w\wedge v$, and
$g(v,w)=0$ if $v>w\wedge v$.

Although the following lemma will be used later in \S\ref{ss44:nonplanar}, we present it here in order to keep mechanical notations and ideas in one place and its proof is somewhat helpful to understand the rest of this technical section.

\begin{lem}\label{lem:casec}
Suppose that a Morse complex is obtained by choosing a maximal tree $T$ of a (non-planar) graph $\Gamma$ so that both ends of every deleted edge have valency two in $\Gamma$. For a critical 2-cell $d\cup d'$in
$M_2(UD_n\Gamma)$ such that $d$ and $d'$ are deleted edges with
$\tau(d)<\tau(d')$, let $A=\tau(d')\wedge\tau(d)$,
$B=\tau(d')\wedge\iota(d)$, $C=\iota(d') \wedge\tau(d)$ and
$D=\iota(d')\wedge\iota(d)$. Then the
images under the boundary map are as follows:\\
{\rm(1)} If $\tau(d)<\iota(d)<\tau(d')<\iota(d')$, then
       \begin{align*}
            \widetilde\partial(d\cup d') =
             &\:A_{g(A,\tau(d'))}(\vec\delta_{g(A,\tau(d'))}+ \vec\delta_{g(A,\tau(d))})\\
             &- B_{g(B,\tau(d'))}(\vec\delta_{g(B,\tau(d'))} + \vec\delta_{g(B,\iota(d))})\\
             &- C_{g(C,\iota(d'))}(\vec\delta_{g(C,\iota(d'))} + \vec\delta_{g(C,\tau(d))})\\
             &+ D_{g(D,\iota(d'))}(\vec\delta_{g(D,\iota(d'))} + \vec\delta_{g(D,\iota(d))})
        \end{align*}
{\rm(2)} If $\tau(d)<\tau(d')<\iota(d')< \iota(d)$, then
        \begin{align*}
            \widetilde\partial (d \cup d') =&- A_{g(A,\tau(d'))}(\vec\delta_{g(A,\tau(d'))}+\vec\delta_{g(A,\tau(d))})\\
            & - B_{g(B,\iota(d))}(\vec\delta_{g(B,\tau(d'))} + \vec\delta_{g(B,\iota(d))})\\
            & + C_{g(C,\iota(d'))}(\vec\delta_{g(C,\iota(d'))} + \vec\delta_{g(C,\tau(d))})\\
            & + D_{g(D,\iota(d))}(\vec\delta_{g(D,\iota(d'))} + \vec\delta_{g(D,\iota(d))}),
        \end{align*}
{\rm(3)} If $\tau(d)<\tau(d')<\iota(d)<\iota(d')$, then
        \begin{align*}
            \widetilde\partial(d\cup d') =
            &A_{g(A,\tau(d'))}(\vec\delta_{g(A,\tau(d'))} + \vec\delta_{g(A,\tau(d))})\\
            &+ B_{g(B,\iota(d))}(\vec\delta_{g(B,\tau(d'))} + \vec\delta_{g(B,\iota(d))})\\
            &- C_{g(C,\iota(d'))}(\vec\delta_{g(C,\iota(d'))} + \vec\delta_{g(C,\tau(d))})\\
            &+ D_{g(D,\iota(d'))}(\vec\delta_{g(D,\iota(d'))} + \vec\delta_{g(D,\iota(d))})
        \end{align*}
where there may be 1-cells that are not critical and should be
ignored.
\end{lem}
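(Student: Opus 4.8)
The plan is to carry out, in full generality, the computation rehearsed in Example~\ref{ex:caseb}. Since $\widetilde\partial=\widetilde R\partial$ and $d\cup d'$ carries exactly two edges, $\partial(d\cup d')$ is the alternating sum of its four codimension-one faces, each being a $1$-cell of the form (one of the two deleted edges) $\cup$ (one endpoint of the other). Ordering the two edges by their initial vertices fixes the signs, so that up to an overall sign $\widetilde\partial(d\cup d')$ equals $\widetilde R(d'\cup\iota(d))-\widetilde R(d'\cup\tau(d))-\widetilde R(d\cup\iota(d'))+\widetilde R(d\cup\tau(d'))$; the three cases of the statement are distinguished first of all by the relative order of $\iota(d)$ and $\iota(d')$, which pins down this overall sign. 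Thus everything reduces to a routine for evaluating $\widetilde R(\delta\cup v)$ when $\delta$ is a deleted edge and $v$ an endpoint of the other one.

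To evaluate $\widetilde R(\delta\cup v)$ I would push $v$ toward the base vertex along its tree geodesic $\gamma_v$. Because both ends of every deleted edge have valency two, the chosen maximal tree has no interior vertex blocked by a deleted edge, so $v$ stays simply unblocked at each step until it reaches a vertex of valency $\ge 3$; by Lemma~\ref{lem:naturalmove} each such step leaves $\widetilde R$ unchanged. Whenever $v$ arrives at a branch vertex $X$ through which $\gamma_{\tau(\delta)}$ or $\gamma_{\iota(\delta)}$ also passes --- that is, at one of the meets $v\wedge\tau(\delta)$, $v\wedge\iota(\delta)$ --- the cell $\delta\cup X$ is redundant, and applying $\widetilde R\partial$ to the collapsible $2$-cell $W(\delta\cup X)$ and using Lemma~\ref{lem:redundant1} to kill all of its remaining faces gives, exactly as in Example~\ref{ex:caseb}, an identity of the form $\widetilde R(\delta\cup X)=\widetilde R(\delta\cup X^{-})\pm X_{g(X,v)}(\vec\delta_{g(X,v)}+\vec\delta_{g(X,w)})$, where $X^{-}$ is the next vertex down $\gamma_v$ and $w$ is the endpoint of $\delta$ lying below $X$. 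Iterating, the only branch vertices that contribute are the four meets $A,B,C,D$ of the statement; once $v$ descends past the last of them, the pass-through terms $\widetilde R(\delta\cup X^{-})$ collapse to the bare critical $1$-cell $\delta$. Assembling the four faces, the leftover copies of $d$ and of $d'$ cancel between the corresponding $\iota$- and $\tau$-faces, and the surviving signed sum of the $X_{g}(\cdots)$ terms is the claimed formula.

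What is left is the bookkeeping through each of the three cases. For each case I would: read off the order of $\iota(d)$ and $\iota(d')$ to fix the signs; decide for each of the four faces whether $\delta\cup v$ is already collapsible (then $\widetilde R=0$ and the term drops out --- these are the ``$1$-cells that are not critical and should be ignored'') or redundant, and in the latter case trace its descent; and finally combine the four signed outputs, simplifying the pairwise cancellations. The relative position of $\tau(d),\iota(d),\tau(d'),\iota(d')$ controls, for each of $A,B,C,D$, which of the (at most two) faces that could record a term there actually records a \emph{critical} one and with which sign, whence the three different sign patterns $(+,-,-,+)$, $(-,-,+,+)$, $(+,+,-,+)$. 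I expect the main obstacle to be exactly this combinatorial verification --- keeping the branch indices $g(X,\cdot)$ and all the signs consistent across roughly twelve descents --- together with confirming, in each case and at each step, that the simple-unblockedness hypothesis of Lemma~\ref{lem:naturalmove} really holds; both are controlled by the standing assumption that every deleted edge has both ends of valency two, which also guarantees that no branch vertices besides $A,B,C,D$ ever intervene.
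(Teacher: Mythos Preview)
Your approach is correct and matches the paper's: expand $\widetilde\partial(d\cup d')$ into its four faces and evaluate each $\widetilde R(\delta\cup\{v\})$ by descending $v$ along $\gamma_v$, picking up one signed term at each of the two meets $v\wedge\iota(\delta)$ and $v\wedge\tau(\delta)$. The paper compresses your descent procedure into the single formula $\widetilde R(d\cup\{v\}) = d + E_{g(E,v)}(\vec\delta_{g(E,v)}+\vec\delta_{g(E,\iota(d))}) - F_{g(F,v)}(\vec\delta_{g(F,v)}+\vec\delta_{g(F,\tau(d))})$ with $E=\iota(d)\wedge v$, $F=\tau(d)\wedge v$ (dropping a term when it is collapsible --- this, not the face $\delta\cup v$ itself, is what ``not critical and should be ignored'' refers to), and then substitutes the four faces for each of the three orderings; note that the valency-two hypothesis only makes $v$ a leaf of $T$ and does \emph{not} preclude other branch vertices along $\gamma_v$, but those contribute nothing because the endpoints of $\delta$ lie on branch $0$ or on the same branch as $v$ there, so Lemma~\ref{lem:naturalmove} still applies.
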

\begin{proof}
The boundary of a critical 2-cell made of only deleted edges
consists of four 1-cells of the form $d\cup \{v\}$ where $d$ is a
deleted edge and $v$ is a vertex. Since both ends of every deleted
edge are of valency two in $\Gamma$, $v$ is a vertex of valency one
in the tree $T$. Let $E=\iota(d) \wedge v$ and $F=\tau(d) \wedge v$.
Then $$\widetilde R(d\cup \{v\})
=d+E_{g(E,v)}(\vec\delta_{g(E,v)}+\vec\delta_{g(E,\iota(d))})-F_{g(F,v)}(\vec\delta_{g(F,v)}+\vec\delta_{g(F,\tau(d))}).$$
If $v<\iota(d)$, that is, $g(E,v)<g(E,\iota(d))$, then the term
$E_{g(E,v)}(\vec\delta_{g(E,v)}+\vec\delta_{g(E,\iota(d))})$ represents a
collapsible 1-cell and so disappears in the formula.

Given two deleted edges $d$ and $d'$, there are three possible
relative positions of them as classified in the lemma. In the case
(1), since $\iota(d)<\iota(d')$,
$$\widetilde\partial(d\cup d') = \widetilde R(d\cup \iota(d'))-
\widetilde R(d\cup \tau(d')) -\widetilde R(d'\cup \iota(d)) +
\widetilde R(d'\cup \tau(d)).$$
Since $\tau(d)<\tau(d')<\iota(d')$, $\widetilde R(d'\cup
\tau(d))=d'$. And since $\iota(d)<\tau(d')<\iota(d')$, $\widetilde
R(d'\cup \iota(d))=d'$. So we obtain the desired formula. Similarly,
we can obtain the other two formulae.
\end{proof}

Next we consider the third boundary homomorphisms in Morse complexes
that is needed to prove that braid groups of planar graphs
containing $S_0$ are not right-angled Artin groups. Let $\Gamma'$ be
either $S_0$ or $\Theta$. Choose a maximal tree $T'$ of $\Gamma'$
and its planar embedding as in Figure~\ref{fig16} so that there are
no $i$-cells in $UD_n\Gamma'$ for $i\ge3$.  As explained in
\S\ref{ss22:morse}, a planar embedding of $T'$ and a choice of a
vertex as a base vertex determine an order on vertices of $\Gamma'$.
Now suppose there is an embedding $i:\Gamma' \to \Gamma$ into
another planar graph $\Gamma$. We may choose a maximal tree $T$ of $\Gamma$
so that $i|_{T'}:T'\to T$ is an embedding. Then the image of the
base vertex $T'$ becomes naturally the base vertex of $T$ and we may
extend the planar embedding of $T'$ over $T$. The extended planar
embedding of $T$ together with the choice of base vertex determine an
order on vertices of $\Gamma$. Then it is easy to see that the
embedding $i$ preserves the base vertex, the order, and deleted edges.
Thus the embedding $i$ naturally induces the chain map $\tilde i$
between two Morse chain complexes and so it induces homomorphisms
$i_*:H_*(B_n\Gamma')\to H_*(B_n\Gamma)$. Note that the requirement
of an embedding $i:\Gamma' \to \Gamma$ is simpler than that in
\S\ref{ss33:embedding} because the braid indices for $\Gamma'$ and
$\Gamma$ are the same in this section.

\begin{figure}[ht]
\psfrag{*}{\small0}
\psfrag{A}{\small$Y$}
\psfrag{e1}{\small$d_1$}
\psfrag{e2}{\small$d_2$}
\centering
\subfigure[$S_0$]
{\includegraphics[height=2cm]{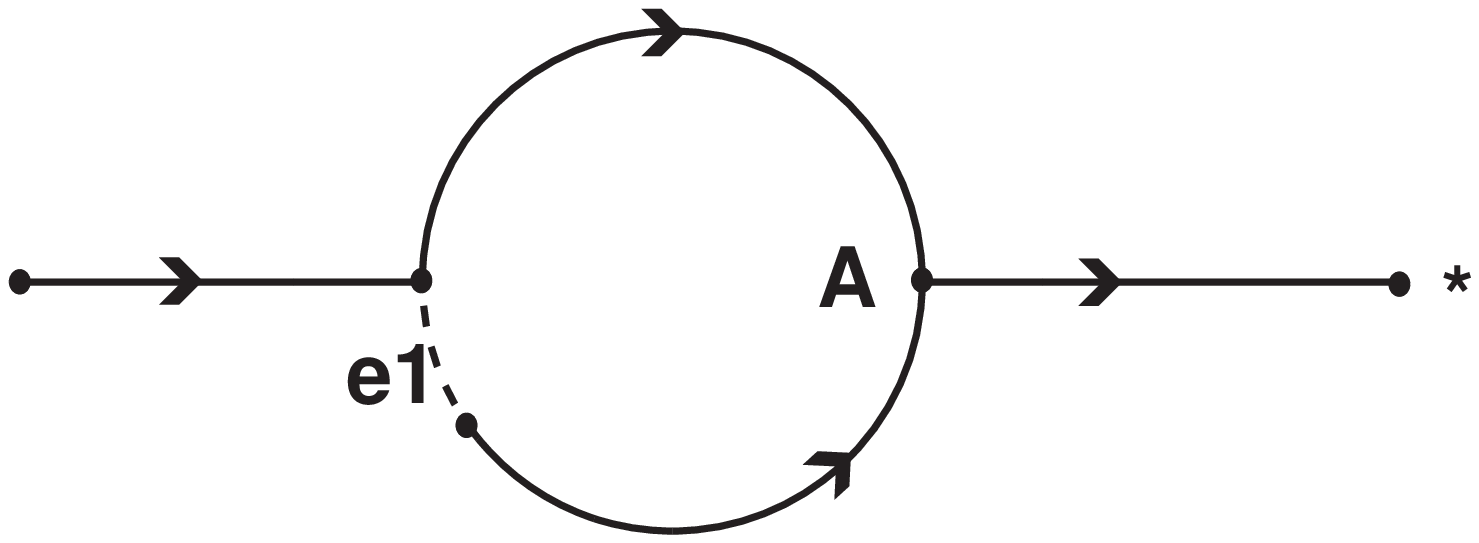}}\qquad
\subfigure[$\Theta$]
{\includegraphics[height=2cm]{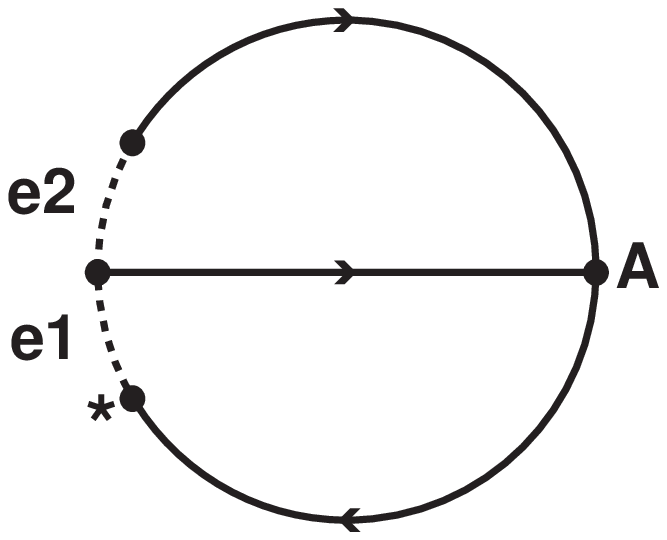}}
\caption{Maximal trees and orders of $S_0$ and $\Theta$}\label{fig16}
\end{figure}

It is difficult to give a general formula for the boundary
homomorphism $\widetilde\partial : M_3(UD_n\Gamma)\to
M_2(UD_n\Gamma)$. Fortunately, all we will need in \S\ref{ss43:h2inj} is
the relationship between $\widetilde\partial(M_3(UD_n\Gamma))$ and
$\widetilde i(M_2(UD_n\Gamma'))$. Using the notations in
Figure~\ref{fig16}, every critical 2-cell in $UD_nS_0$ must
contain $Y_2(a,b)\cup d_1$ as a subset and every critical 2-cell in
$UD_n\Theta$ is the form $Y_2(a,b)\cup d_j$ for $j=1,2$. It turns out that we need not consider critical 2-cells containing $d_2$ for $UD_n\Theta$. Let $c$ be a critical 2-cell containing $d_1$ in $UD_n\Gamma'$. Then
$\tilde i(c)$ must contain $X_s(a\vec\delta_r+b\vec\delta_s)\cup q$ as a subset for
$1\le r <s\le \mu$ where $i(Y)=X$, $i(d_1)=q$, and $\mu+1$ is the
valency of $X$ in the maximal tree $T$ of $\Gamma$. Consider the
subgroup $M_q$ of $M_2(UD_n\Gamma)$ generated by all critical
2-cells containing the deleted edge $q$ of $\Gamma$ that is the image of the deleted edge $d_1$ of $\Gamma'$
under the embedding $i$. In particular, $\widetilde
i(M_2(UD_n\Gamma'))\subset M_q$ if $\Gamma'=S_0$. For the projection
$\pi:M_2(UD_n\Gamma)\to M_q$, the composites
$\pi\circ\widetilde\partial$ and $\pi\circ\widetilde R$ will be
denoted by $\widetilde\partial_q$ and $\widetilde R_q$. From now on,
a vertex denoted by the letter $X$ and a deleted edge denoted by the letter $q$ will be
reserved as above when we consider a graph $\Gamma$ containing $S_0$
or $\Theta$.

Let $c$ be a critical 3-cell in $UD_n\Gamma$ containing $q$. When we
compute $\widetilde\partial_q(c)$, two faces of $c$ determined by
taking end vertices of $q$ do not contain $q$ and so we only need to
consider the remaining four faces of $\partial(c)$. Consequently,
any vertex blocked by $q$ plays no role in computing
$\widetilde\partial_q(c)=\widetilde R_q\partial(c)$. Thus we
immediately obtain the following modified version of
Lemma~\ref{lem:naturalmove}.

\begin{lem}\label{lem:naturalmove-q}
Suppose a redundant cell $c$ has an unblocked vertex $v$ and $e$ is
the edge starting from $v$. Assume also that if a vertex $w$ that is
either in $c$ or an end vertex of an edge other than $q$ in $c$
satisfies $\tau(e)< w<\iota(e)$, then $w$ is blocked by $q$. Then
$\widetilde R_q(c)=\widetilde R_qV_e(c)$.
\end{lem}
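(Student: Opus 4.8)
The plan is to rerun the proof of Lemma~\ref{lem:naturalmove} with the weaker hypothesis, the bridge being the remark made just above the statement: a deleted edge is never created, moved, or removed by the operators $r$ and $W$, so when $\widetilde R$ is computed on a cell containing $q$ every summand still contains $q$, and any vertex blocked by $q$ stays present and stays blocked by $q$ throughout. Two consequences will be used. First, on cells containing $q$ the map $\widetilde R_q=\pi\circ\widetilde R$ agrees with $\widetilde R$, so the only faces of $\partial(c')$ that matter for $\widetilde R_q\partial(c')$ are the ones retaining $q$ (those in which $q$ is replaced by an end vertex have $\widetilde R$-image free of $q$, hence killed by $\pi$). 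Second, an order-respecting tree edge $e'\ne q$ in a cell containing it stays order-respecting throughout the computation of $\widetilde R$ of that cell, because $q$ is the only thing in play that does not move.

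The real work is a $q$-relative strengthening of Lemma~\ref{lem:redundant1}: if a cell $c'$ contains $q$ together with an order-respecting tree edge $e'\ne q$ all of whose window vertices (those strictly between $\tau(e')$ and $\iota(e')$) are blocked, and the only edges of $c'$ that obstruct condition~(2) for $e'$ are $q$ and tree edges incident to an end vertex of $q$, then $\widetilde R(c')=0$. I would prove this by repeating the induction of Lemma~\ref{lem:redundant1} on the number of applications of $r$ needed to stabilize $c'$: the smallest unblocked vertex never lies in the window of $e'$, neither $q$ nor the tree edges attached to it ever move, so $e'$ keeps being order-respecting with the two listed properties in every summand produced by $r$; each such summand is then collapsible (once the window of the current smallest unblocked vertex's edge holds only blocked vertices) or stabilizes strictly faster, so induction disposes of it, and since no summand ever loses $q$ the recursion is word-for-word the one in Lemma~\ref{lem:redundant1}. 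I expect this to be the main obstacle, since one must verify that a deleted edge straddling the window of $e'$, together with the tree edges it blocks, really does no harm, and this is the one point at which immobility of $q$ is essential.

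Granting it, the lemma follows as Lemma~\ref{lem:naturalmove} does. Let $e$ be the tree edge with $\iota(e)=v$; since $v$ is unblocked, $\tau(e)$ does not occur in $c$, so replacing $v$ by $e$ yields a legitimate cell $W_e(c)$ of $UD_n\Gamma$, still containing $q$. In $W_e(c)$, and in every face $c_j$ of $\partial W_e(c)$ other than $c$ and $V_e(c)$ in which $q$ survives, the edge $e$ is present, each window vertex is blocked by $q$ by hypothesis and therefore not adjacent to $\tau(e)$ (otherwise an end of $q$ would equal $\tau(e)$, impossible since $\bar q\cap\bar e=\emptyset$), so $e$ is order-respecting and the only edges obstructing condition~(2) for $e$ are $q$ and the tree edges incident to its end vertices; hence $\widetilde R(W_e(c))=0$ and $\widetilde R(c_j)=0$ for each such $c_j$ by the $q$-relative Lemma~\ref{lem:redundant1}. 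Therefore $0=\widetilde\partial\widetilde R(W_e(c))=\widetilde R\partial W_e(c)=\widetilde R\bigl(\pm(c-V_e(c))\bigr)+\sum_j\widetilde R(c_j)$, and applying $\pi$ (which annihilates $\widetilde R(c_j)$ for the faces in which $q$ was replaced by an end vertex, since $\widetilde R$ cannot recreate $q$) gives $0=\pm\bigl(\widetilde R_q(c)-\widetilde R_q V_e(c)\bigr)$, as desired.
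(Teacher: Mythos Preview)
Your approach is the same as the paper's: both treat the lemma as the obvious modification of Lemma~\ref{lem:naturalmove} once one observes that the deleted edge $q$ is never touched by the rewriting process and hence vertices blocked by $q$ are invisible to $\widetilde R_q$. The paper gives no argument beyond that one sentence (``Thus we immediately obtain the following modified version of Lemma~\ref{lem:naturalmove}''); you unpack it.

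There is, however, a gap in your execution. You assert $\widetilde R(W_e(c))=0$ and $\widetilde R(c_j)=0$ for the faces $c_j$ containing $q$ and $e$, via a ``$q$-relative Lemma~\ref{lem:redundant1}'' whose inductive proof you justify by saying ``no summand ever loses $q$''. But this is false: in $R(c')=\pm\partial W(c')+c'$ the two faces obtained by replacing $q$ by $\iota(q)$ or $\tau(q)$ do lose $q$, and your induction hypothesis (which requires $q$ to be present so that window vertices remain blocked) does not apply to them. So you have not established $\widetilde R(c')=0$, and the chain of equalities $0=\widetilde\partial\widetilde R(W_e(c))=\widetilde R\partial W_e(c)$ is not available.

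The repair is exactly what the paper's remark encodes: work with $\widetilde R_q$ rather than $\widetilde R$ throughout. Because $R$ never creates the deleted edge $q$, the projection $\pi$ onto cells containing $q$ satisfies $\pi R=\pi R\pi$, so on cells containing $q$ one has $\widetilde R_q=(\pi R)^{\infty}$. In this projected iteration the faces where $q$ is removed are killed by $\pi$ at each step, and now your induction for the $q$-relative Lemma~\ref{lem:redundant1} is valid (with conclusion $\widetilde R_q(c')=0$). The final paragraph then reads: $\widetilde R_q\partial W_e(c)=\pm\bigl(\widetilde R_q(c)-\widetilde R_q V_e(c)\bigr)+\sum_j\widetilde R_q(c_j)$, each $\widetilde R_q(c_j)$ vanishes either because $c_j$ lost $q$ or by the projected lemma, and the left side vanishes because $\widetilde R_q(W_e(c))=0$ and $\pi R$ commutes with the iteration just described. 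With this correction your argument matches the paper's intent.
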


Let $v$ be a vertex in $\Gamma$ with the maximal tree $T$ and $e$ be
a edge in $\Gamma$.  The edge $e$ is said to be {\em separated by}
$v$ if $\iota(e)$ and $\tau(e)$ lie in two distinct components of
$T-\{v\}$. It is clear that only a deleted edge can be separated by
a vertex. If a deleted edges $d$ is not separated by $v$, then
$\iota(d)$, $\tau(d)$, and $\iota(d)\wedge\tau(d)$ are all in the
same component of $T-\{v\}$. We only need to compute images of
redundant 2-cells under $\widetilde R_q$ and we can strengthen
Lemma~\ref{lem:naturalmove-q} for redundant 2-cells.

\begin{lem}\label{cor:naturalmove-q2}
Let $c$ be a redundant 2-cell containing the deleted edge $q$ and
another edge $p$. Suppose the redundant 2-cell $c$ has an unblocked vertex
$v$ and $e$ is the edge starting from $v$ satisfying the following conditions:
\begin{itemize}
\item[(a)] Every vertex $w$ in $c$ satisfying $\tau(e)< w<\iota(e)$ is blocked.
\item[(b)] If an end vertex $w$ of $p$ satisfies $\tau(e)< w<\iota(e)$
then $p$ is not separated by any of $\tau(e)$ and end vertices of $q$.
\end{itemize}
Then $\widetilde R_q(c)=\widetilde R_qV_e(c)$. Therefore if $p$ is not a deleted edge then $\widetilde R_q(c)=\widetilde R_q\widetilde V(c)$.
\end{lem}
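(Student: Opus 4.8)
The plan is to imitate the proof of Lemma~\ref{lem:naturalmove} and of its $\widetilde R_q$-variant Lemma~\ref{lem:naturalmove-q}: replace $v$ by $e$ to form the $3$-cell $W_e(c)$, apply the chain-map identity $\widetilde\partial\widetilde R=\widetilde R\partial$, and then project by $\pi:M_2(UD_n\Gamma)\to M_q$. Since $\widetilde\partial_q=\pi\widetilde\partial$ and $\widetilde R_q=\pi\widetilde R$, this gives $\widetilde\partial_q\widetilde R(W_e(c))=\widetilde R_q\partial(W_e(c))$. The six codimension-one faces of $W_e(c)$ are: the two obtained by replacing $q$ by an end vertex, which do not contain $q$ and so are killed by $\widetilde R_q$ (as $W$ only ever adjoins tree edges, $\widetilde R$ carries a cell avoiding $q$ to a combination of cells avoiding $q$); the two obtained by replacing $e$ by $\iota(e)=v$ and by $\tau(e)$, namely $c$ and $V_e(c)$; and the two obtained by replacing the edge $p$ by $\iota(p)$ and $\tau(p)$, which are legitimate $2$-cells $c_\iota,c_\tau$ still containing $e$ and $q$. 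Hence the identity reduces to $\widetilde\partial_q\widetilde R(W_e(c))=\pm(\widetilde R_q(c)-\widetilde R_q(V_e(c)))\pm(\widetilde R_q(c_\iota)-\widetilde R_q(c_\tau))$.

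Next I would use (a) to show that $e$ is order-respecting in $W_e(c)$, and in every face of $W_e(c)$ still containing $e$ other than $c,V_e(c),c_\iota,c_\tau$: if some vertex-entry $u$ of such a cell were adjacent to $\tau(e)$ with $\tau(e)<u<\iota(e)$, then $\tau(e)$ would be the parent of $u$ in $T$ (of two adjacent vertices in $T$ the smaller is the parent), so $u$ would be unblocked in $c$ since $\tau(e)\notin c$ (because $v$ is unblocked), contradicting (a). Combined with (b) — which forces $\tau(p)$ to be strictly between $\tau(e)$ and $\iota(e)$ whenever $\iota(p)$ is — this puts $e$ into the situation of Lemma~\ref{lem:redundant1}; running that argument with $\widetilde R$ replaced by $\widetilde R_q$ (so that the possibly awkward position of the deleted edge $q$ does no harm after projecting to $M_q$) should yield both $\widetilde\partial_q\widetilde R(W_e(c))=0$ and $\widetilde R_q(c_\iota)=\widetilde R_q(c_\tau)$. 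The displayed identity then collapses to $\widetilde R_q(c)=\widetilde R_q(V_e(c))$.

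I expect the hard part to be exactly $\widetilde R_q(c_\iota)=\widetilde R_q(c_\tau)$, i.e. that sliding the vertex-entry of $c_\tau$ from $\tau(p)$ to $\iota(p)$ leaves the $q$-part of the rewriting unchanged. When $p$ is a tree edge, or when no end of $p$ lies strictly between $\tau(e)$ and $\iota(e)$, this is Lemma~\ref{lem:naturalmove-q}, or follows by a recursive application of the first statement of this lemma governed by an induction on the stabilisation length $k$ of $c$ under $R$. The delicate case is $p$ a deleted edge with an end in that interval: hypothesis (b) then says $p$ is separated neither by $\tau(e)$ nor by an end of $q$, so $\iota(p)$ and $\tau(p)$ stay in the same component of $T$ after deleting any of these vertices; since $\widetilde R_q$ only tracks the edge $q$ and is blind to vertices blocked by $q$ and to the already-removed edge $p$, its geometric description — the number of occupied vertices in each component of $\Gamma$ with the remaining edges deleted — assigns the same critical $2$-cell to $c_\iota$ and to $c_\tau$. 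This bookkeeping, which is precisely where the exact wording of (b) is needed, is the place the bulk of the work lives. Finally the ``Therefore'' clause follows by iterating the first statement with $v=v_s$, the smallest unblocked vertex of $c$, and $e=e_s$: then (a) is automatic, since any vertex-entry $w$ of $c$ with $\tau(e_s)<w<\iota(e_s)=v_s$ satisfies $w<v_s$ and is therefore blocked, and (b) is vacuous because a tree edge is never separated by a vertex; so $\widetilde R_q(c)=\widetilde R_q(V(c))$, and as $V(c)$ is again a redundant (or critical) $2$-cell containing $q$ and the tree edge $p$, repeating gives $\widetilde R_q(c)=\widetilde R_q(V^m(c))=\widetilde R_q(\widetilde V(c))$.
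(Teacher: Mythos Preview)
Your overall architecture matches the paper's: form $W_e(c)$, apply $\widetilde R_q$ to its boundary, kill the two $q$-faces, and reduce to showing $\widetilde R_q(c_\iota)=\widetilde R_q(c_\tau)$. You also correctly extract from (b) that if one end of $p$ lies in $(\tau(e),\iota(e))$ then so does the other, and your treatment of the ``Therefore'' clause is fine.

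The gap is precisely in the step you flag as hard. Your proposed justification for $\widetilde R_q(c_\iota)=\widetilde R_q(c_\tau)$ does not go through as stated. First, an adapted Lemma~\ref{lem:redundant1} cannot be applied to $c_\iota$ or $c_\tau$: once $p$ is replaced by the vertex $\iota(p)$ (or $\tau(p)$), that vertex lies in $(\tau(e),\iota(e))$ and may well be \emph{unblocked}, so condition~(1) of Lemma~\ref{lem:redundant1} fails for $e$ in $c_\iota$. Second, the ``geometric description'' you invoke---that $\widetilde R_q$ depends only on component counts---is exactly the content of Lemmas~\ref{lem:v} and~\ref{lem:redundant}, which are proved only for graphs \emph{not} containing $S_0$; here $\Gamma$ typically does contain $S_0$, so you cannot appeal to that description without reproving it in this setting.

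The paper closes this gap differently. The cells $c_\iota,c_\tau$ contain only the two edges $q$ and $e$. By (b), $\iota(p)$ and $\tau(p)$ lie in the same component $T_p$ of $T-\{\tau(e),\iota(q),\tau(q)\}$, and since one end of $p$ is in $(\tau(e),\iota(e))$ and $\tau(e)$ has been removed, every vertex of $T_p$ lies in $(\tau(e),\iota(e))$. Now take the smallest unblocked vertex $u$ of $c_\iota$ lying in $T_p$: because the only non-$q$ edge present is $e$, and every vertex of $T_p$ sits between $\tau(e)$ and $\iota(e)$, the vertex $u$ satisfies the hypothesis of Lemma~\ref{lem:naturalmove-q}, so $\widetilde R_q(c_\iota)=\widetilde R_q V_{e'}(c_\iota)$ with $e'$ the edge from $u$. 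Iterating this move within $T_p$ (a finite process) drives $c_\iota$ to the unique cell $c_p$ with no unblocked vertex in $T_p$; the same iteration from $c_\tau$ lands on the same $c_p$, whence $\widetilde R_q(c_\iota)=\widetilde R_q(c_p)=\widetilde R_q(c_\tau)$. This is the missing mechanism you need in place of the component-count heuristic.
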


\begin{proof}
If both end vertices of $p$ are not between $\tau(e)$ and $\iota(e)$ then $c$ satisfies the hypothesis of
Lemma~\ref{lem:naturalmove-q}. So we are done. Otherwise $p$ is not separated by any of $\tau(e)$ and two end vertices of $q$. So $\iota(p)$ and $\tau(p)$ are in the same component $T_p$ of $T-\{\tau(e),\iota(q),\tau(q)\}$. Let $w$ be any vertex in $T_p$. Let $c_w$ denote a cell obtained from $c$ replacing $v$ and $p$ by $e$ and $w$. Let $c'$ be a redundant 2-cell obtained from $c_{\iota(p)}$ replacing vertices in $T_p$ to other vertices that are all in $T_p$. Note that $\{c'\}$ is a finite set since the number of vertices in $T_p$ is finite. If $c'$ has no unblocked vertex in $T_p$ then $c'$ is unique because $\Gamma$ is sufficiently subdivided. This 2-cell is denoted by $c_p$. If $c'$ has an unblocked vertex in $T_p$ then $c'$ has the smallest unblocked vertex $u$ in $T_p$ that satisfies the hypothesis of Lemma~\ref{lem:naturalmove-q} since $c'$ contains exactly two edges $q$ and $e$ and every vertex in $T_p$ is between $\tau(e)$ and $\iota(e)$. So $\widetilde R_q(c')=\widetilde R_qV_{e'}(c')$ where $e'$ is the edge starting from $u$. By repeating this argument, we have $\widetilde R_q (c_{\iota(p)})=\widetilde R_q (c_p)=\widetilde R_q (c_{\tau(p)})$ because $V_{e'}(c')$ is also in $\{c'\}$ that is a finite set. Thus $$\widetilde R_q(c)=\widetilde R_qV_e(c)\pm \{\widetilde R_q(c_{\iota(p)})-\widetilde R_q(c_{\tau(p)})\}=\widetilde R_qV_e(c)$$ where the sign $\pm$ is determined by the order between the initial vertices of three edges $p$, $q$ and $e$. Therefore if $p$ is not a deleted edge then the smallest unblocked vertex in $c$ satisfies the hypothesis of this lemma. So $\widetilde R_q(c)=\widetilde R_qV(c)$. By repeating this argument, we have $\widetilde R_q(c)=\widetilde R_q\widetilde V(c)$.
\end{proof}

To demonstrate how to compute an image of $\widetilde\partial_q$, we
first give an example.

\begin{figure}[ht]
\psfrag{*}{\small0}
\psfrag{A}{\small$X$}
\psfrag{B}{\small$B$}
\psfrag{C}{\small$A$}
\psfrag{e0}{\small$q$}
\centering
\includegraphics[height=2.3cm]{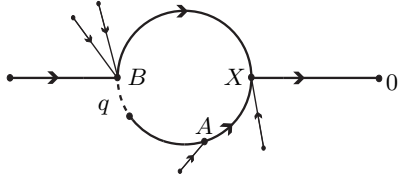}
\caption{Graph containing $S_0$ but not $\Theta$}
\label{fig17}
\end{figure}

\begin{exa}\label{ex:withS_0}
For a graph $\Gamma$ and its maximal tree and the order given in
Figure~\ref{fig17}, we compute $\widetilde\partial_q(c)$ for the
critical 3-cell $c=X_3(1,2,2) \cup q\cup A_2(1,1)\cup B(2,0,0)$ in
$M_3(UD_n\Gamma)$.
\end{exa}
Since $\iota(A_2(0,1))<\iota(X_3(0,0,1))<\iota(q)$, we have
\begin{align*}
\partial_q(c)=&+(X(1,2,2)\cup q\cup A_2(1,1)\cup B(2,0,0))\\
&-(\dot X(1,2,1)\cup q \cup A_2(1,1)\cup B(2,0,0))\\
&-(X_3(1,2,2)\cup q\cup A(1,1)\cup B(2,0,0))\\
&+(X_3(1,2,2)\cup q\cup \dot A(1,0)\cup B(2,0,0))
\end{align*}

Consider the pair of faces determined by the edge $A_2(0,1)$. The edge $X_3(0,0,1)$ is not a deleted edge and their images under $\widetilde V$ are equal. So their images $\widetilde R_q$ are also equal by Lemma~\ref{cor:naturalmove-q2}. Thus the
images cancel out in $\widetilde\partial_q(c)$ because they have the opposite signs. Similarly, the images of the pair of
faces determined by the edge $X_3(0,0,1)$ under $\widetilde R_q$ also cancel out. Thus $$\widetilde\partial_q(X_3(1,2,2) \cup q\cup A_2(1,1)\cup
B(2,0,0))=0.$$

\begin{lem}\label{lem:onedel}
Let $\Gamma$ be a finite planar graph containing $S_0$ or $\Theta$.
Suppose that $c$ is a critical 3-cell in $UD_n\Gamma$ as the form $A_k(\vec
a)\cup B_\ell(\vec b) \cup q \cup C(\vec c)$ where $C=\iota(q)$. Then
$\widetilde\partial_q(c)=0$.
\end{lem}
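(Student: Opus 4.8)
The plan is to imitate the computation in Example~\ref{ex:withS_0}. Write $c=A_k(\vec a)\cup B_\ell(\vec b)\cup q\cup C(\vec c)$ and let $e_A\in A_k(\vec a)$, $e_B\in B_\ell(\vec b)$ be the edges with $\tau(e_A)=A$, $\tau(e_B)=B$; together with $q$ these are the three edges of the $3$-cell $c$, and $A$, $B$, $C=\iota(q)$ are pairwise distinct because the closures of the cells comprising $c$ are disjoint. Then $\partial c$ is the alternating sum of the six faces obtained, for $e\in\{e_A,e_B,q\}$, by replacing $e$ with $\iota(e)$ (call the result $c^{\iota}_e$) or with $\tau(e)$ (call it $c^{\tau}_e$), and $\widetilde\partial_q(c)=\widetilde R_q(\partial c)$. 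First I would dispose of the pair coming from $q$: the faces $c^{\iota}_q$ and $c^{\tau}_q$ contain only the tree edges $e_A$ and $e_B$ and no deleted edge, and since each application of the rewriting map $r$ replaces an edge by a product involving that same edge together with edges of the maximal tree, $r$ (hence $\widetilde R$) never creates a deleted edge; so every cell occurring in $\widetilde R(c^{\iota}_q)$ or $\widetilde R(c^{\tau}_q)$ is free of deleted edges and is killed by the projection $\pi$, contributing nothing to $\widetilde\partial_q(c)$.

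Next I would show that the two faces coming from $e_A$ cancel, and symmetrically for $e_B$; as the members of each pair occur with opposite signs in $\partial c$, it suffices to prove $\widetilde R_q(c^{\iota}_{e_A})=\widetilde R_q(c^{\tau}_{e_A})$ and $\widetilde R_q(c^{\iota}_{e_B})=\widetilde R_q(c^{\tau}_{e_B})$. Consider the $e_A$ pair. Both $c^{\iota}_{e_A}$ and $c^{\tau}_{e_A}$ have exactly the two edges $q$ and $e_B$, and the vertices $\iota(e_A)$ and $\tau(e_A)=A$ lie in a single connected component of $\Gamma$ minus the closures of $q$ and $e_B$, being joined there by the edge $e_A$. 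The cell $c^{\iota}_{e_A}$ is redundant: its only edges are $q$ (deleted, hence not order-respecting) and $e_B$ (which stays non-order-respecting, the vertex of $c$ witnessing this being still present in $c^{\iota}_{e_A}$), so it has no order-respecting edge, while $\iota(e_A)$ — the neighbour of $A$ on the $k$-th branch — is unblocked since its predecessor $A$ does not occur in $c^{\iota}_{e_A}$. Hence $\widetilde V(c^{\iota}_{e_A})$ is critical, and by the description of the fibres of $\widetilde V$ recalled in \S\ref{ss32:cohomology} the cell $c^{\tau}_{e_A}$ — having the same edges and the same number of vertices in every component — lies in the same fibre: $\widetilde V(c^{\tau}_{e_A})=\widetilde V(c^{\iota}_{e_A})$. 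Since $e_B$ is not a deleted edge, Lemma~\ref{cor:naturalmove-q2} yields
\[
\widetilde R_q(c^{\iota}_{e_A})=\widetilde R_q\widetilde V(c^{\iota}_{e_A})=\widetilde R_q\widetilde V(c^{\tau}_{e_A})=\widetilde R_q(c^{\tau}_{e_A}),
\]
the last equality being an instance of Lemma~\ref{cor:naturalmove-q2} if $c^{\tau}_{e_A}$ is redundant and trivial if $c^{\tau}_{e_A}$ is already critical (then $\widetilde V$ fixes it). Running the identical argument with $e_A$ and $e_B$ interchanged — using that $e_A$ is not a deleted edge — gives $\widetilde R_q(c^{\iota}_{e_B})=\widetilde R_q(c^{\tau}_{e_B})$, and therefore $\widetilde\partial_q(c)=0$.

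The step I expect to demand the most care is confirming that Lemma~\ref{cor:naturalmove-q2} is applicable, i.e.\ that the four relevant faces are redundant or critical and carry a non-deleted ``other edge'': the latter is immediate since $e_A$ and $e_B$ are tree edges, and the former reduces to the geometric observation that deleting the non-order-respecting edge $e_A$ (or $e_B$) from the critical cell $c$ unblocks exactly the cluster of vertices of $A_k(\vec a)$ (or $B_\ell(\vec b)$) that this edge was holding in place, so that replacing that edge by either of its endpoints produces cells with the same vertex-distribution across the components of $\Gamma$ cut along the remaining edges, hence the same image under $\widetilde V$. Beyond this bookkeeping the argument is a direct transcription of Example~\ref{ex:withS_0}.
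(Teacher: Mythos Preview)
Your argument is correct and matches the paper's: the two $q$-faces contribute nothing to $\widetilde\partial_q$ (this is exactly the general remark recorded just before Lemma~\ref{lem:naturalmove-q}), and each remaining pair of faces cancels because it carries the non-deleted edge $e_B$ (respectively $e_A$), has the same image under $\widetilde V$, and Lemma~\ref{cor:naturalmove-q2} applies. Your justification of the equality of $\widetilde V$-images via the fibre description from \S\ref{ss32:cohomology} is sound---that section treats graphs without $S_0$, but the combinatorial fact (that $\widetilde V$ depends only on the edge set and the vertex-count in each component) holds in general and is precisely what the paper's terse proof uses without comment.
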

\begin{proof}
We consider two faces of $\partial(c)$ determined by $A_k(\vec\delta_k)$. They contain $B_\ell(\vec\delta_\ell)$ that is not a deleted edge and have the same image under $\widetilde V$. By Lemma~\ref{cor:naturalmove-q2}, their images under $\widetilde R_q$ are the same. Since they have the opposite signs, they cancel each other in $\widetilde\partial_q(c)$. By the same reason images of two faces determined by $B_\ell(\vec\delta_\ell)$ under $\widetilde R_q$ cancels each other in $\widetilde\partial_q(c)$. Consequently, $\widetilde\partial_q(c)=0$.
\end{proof}

If a critical 3-cell $c$ contains other deleted edges in addition to $q$, we have to deal with too many cases to compute $\widetilde\partial_q(c)$. In order to narrow down the possibilities, we need the following lemma.

\begin{lem}\label{lem:specialembedding}
Let $\Gamma'$ be either $S_0$ or $\Theta$ embedded in a plane as Figure~\ref{fig16}. Let $\Gamma$ be a finite planar graph containing $\Gamma'$. Assume that if $\Gamma'$ is $S_0$, then $\Gamma$ does not contain $\Theta$. Then we can choose an embedding $i:\Gamma'\to\Gamma$ and modify the planar embedding of $\Gamma$ so that all edges of $\Gamma$ that are incident to the vertex $X=i(Y)$ and are not in the image of $i$, lie between $i(e_0)$ and $i(e_1)$
where $e_0$, $e_1$ are edges incident to $Y$ in $\Gamma'$ such that $e_0$  heads for the base vertex $0$ and $e_1$ is next to $e_0$ clockwise.
\end{lem}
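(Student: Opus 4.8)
The plan is to re-draw $\Gamma$ in the plane and choose the copy of $\Gamma'$ so that, near $X=i(Y)$, the two germs of $i(\Gamma')$ other than $i(e_0)$ become consecutive with nothing between them, while $i(e_0)$ together with all the remaining (``extra'') germs at $X$ pile up, in the order $i(e_0),\ \text{extras},\ i(e_1),\ i(e_2)$, in the complementary sector. Since the freedom available to re-embed a planar graph drops sharply as its connectivity rises, I would treat $\Gamma'=S_0$ and $\Gamma'=\Theta$ separately, although by the same idea. In both cases the choice of maximal tree of $\Gamma$ and its base vertex is made compatibly with $i$, as set up earlier in this subsection.

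For $\Gamma'=S_0$: the hypothesis that $\Gamma$ contains $S_0$ but not $\Theta$ forces every block of $\Gamma$ to be an edge or a subdivided cycle, i.e. $\Gamma$ is a cactus — indeed a $2$-connected graph that is not a cycle contains a subdivided $\Theta$, and a subdivided $\Theta$, being $2$-connected, would lie in one block. I would choose $i$ so that $X=i(A)$ lies on a cycle-block $Z_0$ with $i(e_1),i(e_2)$ the two germs of $Z_0$ at $X$, and place the base vertex $0$ at the free end of the pendant of $S_0$ at $A$, so that $i(e_0)$ is the germ at $X$ pointing toward $0$. In a cactus everything attached to $Z_0$ hangs off a single vertex of $Z_0$, so $\Gamma$ admits a planar embedding in which the open disk bounded by $Z_0$ is empty; then $i(e_1),i(e_2)$ are adjacent at $X$, and every other germ at $X$ — namely $i(e_0)$ and the extras, each lying in a block of $\Gamma$ through the cut vertex $X$ — is on the other side. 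Finally, since the blocks at a cut vertex may be cyclically permuted in a planar embedding, I would order them so that the block carrying the pendant toward $0$ comes immediately after $i(e_2)$ and the remaining blocks follow, before $i(e_1)$; this gives the clockwise pattern $i(e_0),\ \text{extras},\ i(e_1),\ i(e_2)$ at $X$.

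For $\Gamma'=\Theta$: here $\Gamma$ may fail to be a cactus, so rather than re-embed freely I would locate a convenient copy of $\Theta$. Fix a planar embedding of $\Gamma$; the subdivided $\Theta$ lies in a $2$-connected block $B$, which, not being a cycle, has at least three faces, so two of them, $\Phi_1,\Phi_2$, share a boundary arc; let $Q_2$ be a maximal shared arc, with endpoints $X,W$ (where the two face boundaries diverge, so $X,W$ have valency $\ge 3$ in $B$), and let $Q_1,Q_3$ be the complementary boundary arcs of $\Phi_1,\Phi_2$. Then $Q_1,Q_2,Q_3$ form a subdivided $\Theta$ in $\Gamma$ for which two of the three regions at $X$, namely $\Phi_1$ (between $Q_1,Q_2$) and $\Phi_2$ (between $Q_2,Q_3$), are faces of $B$. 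I would take $i$ to map the $e_0$-, $e_1$-, $e_2$-arcs of $\Theta$ to $Q_3,Q_1,Q_2$ respectively, and choose the spanning tree of $\Gamma$ so that $0=i(0_\Theta)$ lies on $Q_3$ near $W$ while $Q_1,Q_2$ are the arcs broken near $W$, making $i(e_0)$ the germ at $X$ toward $0$. Every edge of $\Gamma$ at $X$ other than the three arc-germs then lies in the remaining region between $Q_3$ and $Q_1$: the extra $B$-edges at $X$ cannot lie in the faces $\Phi_1,\Phi_2$, and the blocks of $\Gamma$ through $X$ can be drawn into that region. This produces exactly the clockwise pattern $i(e_0)=Q_3,\ \text{extras},\ i(e_1)=Q_1,\ i(e_2)=Q_2$ at $X$.

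The hard part is the bookkeeping in the $\Theta$ case, not the idea. One must verify that the two faces $\Phi_1,\Phi_2$ of $B$ can be chosen to meet in a single arc whose endpoints have valency $\ge 3$ in $B$ — so that $Q_1,Q_2,Q_3$ really are internally disjoint and $X$ is a legitimate hub — possibly after replacing $\Theta$ by another copy or subdividing $\Gamma$ once more; and, most delicately, that the base vertex of $\Gamma'$ can be positioned so that $i(e_0)$ becomes the arc $Q_3$ while still being a valency-one vertex of a spanning tree of $\Gamma$ compatible with $i$. This last point is where one uses that, for both $S_0$ and $\Theta$, the base vertex is an endpoint of the $e_0$-branch and the other two branches at $Y$ may be broken (have an edge deleted) near their far ends, so that the corresponding deleted edges of $\Gamma$ can be located near $W$. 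Each of these is a short local argument, but assembling them — together with the analogous, easier checks in the cactus case — is what makes a complete proof lengthy; the content is entirely in the two re-embedding moves described above.
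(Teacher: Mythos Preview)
Your approach is correct in outline but genuinely different from the paper's. For $\Gamma'=S_0$, your observation that ``no $\Theta$'' forces $\Gamma$ to be a cactus is sharper than what the paper uses: the paper instead picks an embedding with the sector between $i(e_2)$ and $i(e_0)$ already empty, then argues that the ``no $\Theta$'' hypothesis forces any subgraph attached in the sector between $i(e_1)$ and $i(e_2)$ to meet $i(S_0)$ only at $X$, so it can be flipped. Your cactus argument gives the same conclusion more structurally and is entirely sound.

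For $\Gamma'=\Theta$, the paper's route is an extremal choice rather than a face argument: it takes $i(\Theta)$ \emph{outmost} (no larger $\Theta$ with the same hub $X$ engulfs it) and with the disk bounded by $i(e_1\cup e_2)$ \emph{innermost} (no chord from $X$ back to that cycle inside it). These two conditions directly force the subgraphs in the two offending sectors to meet $i(\Theta)$ only at $X$, so they can be relocated. Your idea---pick the theta so that two of its three complementary regions are faces of the $2$-connected block---is elegant and would also work, but the step you flag as ``a short local argument'' is more delicate than you suggest: two faces of a $2$-connected planar graph can share several disjoint arcs (e.g.\ a hexagon with chords $1\!-\!3$ and $4\!-\!6$), so $Q_1$ and $Q_3$ need not be internally disjoint for an arbitrary adjacent pair $\Phi_1,\Phi_2$. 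One must either choose the pair of faces more carefully or pass to a sub-theta, and making this precise is essentially an innermost argument of the same flavor the paper uses. So both approaches converge on an extremal choice; the paper's formulation just builds it in from the start and avoids the face-disjointness bookkeeping.
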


\begin{proof} Let $e_2$ denote the remaining edge incident to $Y$ in $\Gamma'$. Figure~\ref{fig18} or Figure~\ref{fig19} show only the part of $\Gamma$ consisting of $i(\Gamma')$ and edges incident to $X$.
Suppose that $\Gamma$ contains $S_0$ but not $\Theta$. Then we choose an embedding $i:S_0\to \Gamma$ such that $\Gamma$ has no edges incident to $X$ between $i(e_2)$ and $i(e_0)$. Since $\Gamma$ contains no $\Theta$, every edge incident to $X$ in $\Gamma$ between $i(e_1)$ and $i(e_2)$ should not be joined to $i(\Gamma')$ by an edge path (indicated by a dotted arc in Figure~\ref{fig18}) unless it is a loop at $X$. Thus the subgraph attached to these edges is free from $i(\Gamma')$ except at $X$ and it can be flipped and placed between $i(e_0)$ and $i(e_1)$ by modifying the planar embedding of $\Gamma$.
\begin{figure}[ht]
\psfrag{a}{\small{$i(e_0)$}}
\psfrag{b}{\small{$i(e_1)$}}
\psfrag{c}{\small{$i(e_2)$}}
\subfigure[Before modification]
{\includegraphics[height=1.5cm]{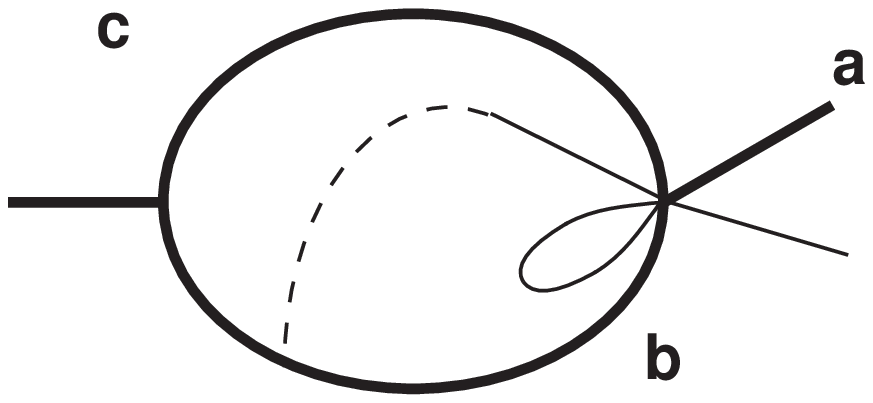}}\qquad\qquad
\subfigure[After modification]
{\includegraphics[height=1.5cm]{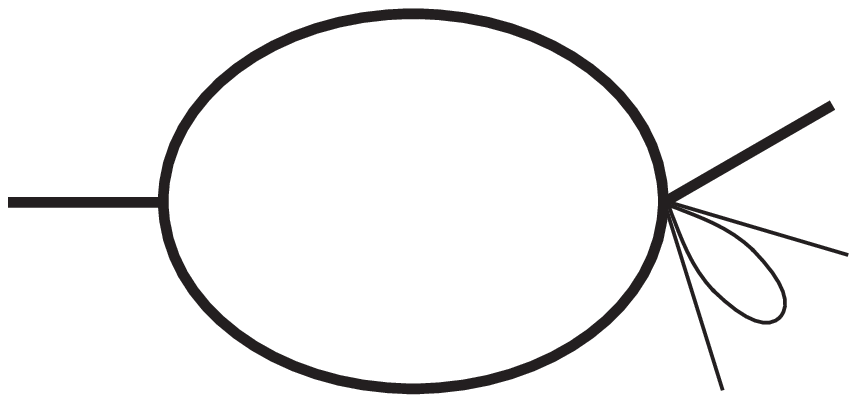}}
\caption{$i:S_0\to\Gamma$}
\label{fig18}
\end{figure}

Suppose that $\Gamma$ contains $\Theta$. Then we choose an embedding $i:\Theta\to \Gamma$ such that (i) $i(\Theta)$ is outmost in the sense that it is engulfed by no other larger embedding $i'$ of $\Theta$ such that $i'(Y)=X$, and (ii) no other edge path joining $X$ and another vertex on $i(e_1\cup e_2)$ lies inside the circle $i(e_1\cup e_2)$. By (i), every edge incident to $X$ in $\Gamma$ between $i(e_2)$ and $i(e_0)$ should not be joined to $i(\Gamma')$ by an edge path (indicated by a dotted arc in Figure~\ref{fig19}) unless it is a loop at $X$. By (ii), every edge incident to $X$ in $\Gamma$ between $i(e_1)$ and $i(e_2)$ should not be joined to $i(\Gamma')$ by an edge path unless it is a loop at $X$. Again the subgraphs attached to these edges are free from $i(\Gamma')$ except at $X$ and they can be placed between $i(e_0)$ and $i(e_1)$ by modifying the planar embedding of $\Gamma$.
\begin{figure}[ht]
\psfrag{a}{\small{$i(e_0)$}}
\psfrag{b}{\small{$i(e_1)$}}
\psfrag{c}{\small{$i(e_2)$}}
\subfigure[Before modification]
{\includegraphics[height=1.75cm, width=3.5cm]{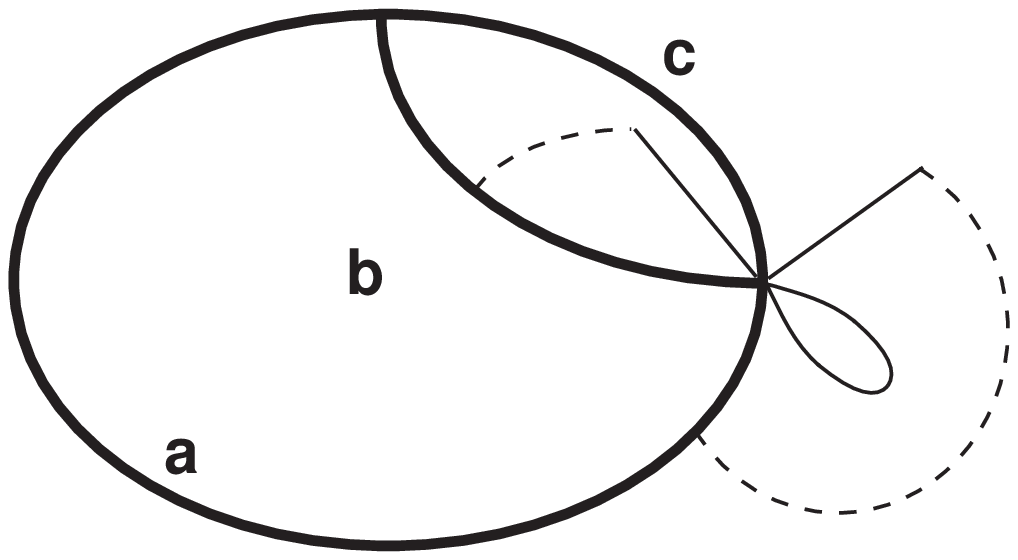}}\qquad\qquad
\subfigure[After modification]
{\includegraphics[height=1.75cm, width=3cm]{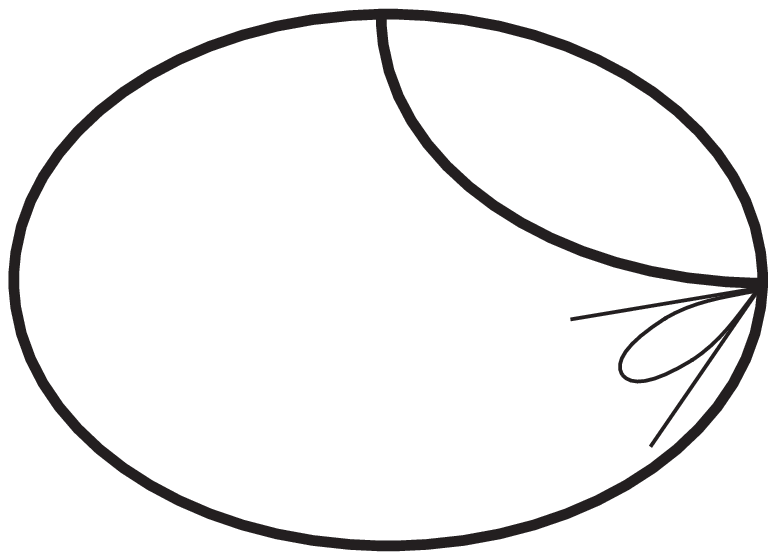}}
\caption{$i:\Theta\to\Gamma$}
\label{fig19}
\end{figure}

\end{proof}

After we choose an embedding $i:\Gamma'\to \Gamma$ and a planar embedding of $\Gamma$ given by Lemma~\ref{lem:specialembedding}, we
choose a maximal tree $T$ of $\Gamma$ so that it satisfies the following:
\begin{itemize}
\item[(1)] Delete the edge that is the image of $d_1$ or $d_2$ in $S_0$ or $\Theta$ under $i$;
\item[(2)] Delete an edge incident to $\iota(q)$ whenever we need to break a circuit containing $\iota(q)$;
\item[(3)] Delete an edge incident to $X$ whenever we need to break a circuit containing $X$ but not $\iota(q)$ and $\Gamma$ does not contain $\Theta$;
\item[(4)] All other deleted edges have valency 2 at both ends.
\end{itemize}
The conditions on a maximal tree $T$ of a planar graph $\Gamma$ together with an embedding $i:\Gamma'\to\Gamma$ given by Lemma~\ref{lem:specialembedding} will be referred as $\cond(\Gamma,i,T)$.
There are useful consequences of assuming $\cond(\Gamma,i,T)$.
The condition (1) guarantees that $i$ induces a chain map between Morse complexes if $i$ preserve the order of $\Gamma'$. The conditions (2) guarantee that there are no deleted edges separated by $\iota(q)$. The conditions (3) implies that if $X$ separates a deleted edge in a graph containing $S_0$ but not $\Theta$, then the deleted edge is $q$. There are no vertices blocked by deleted edges satisfying the condition (4).

After fixing the base vertex of $\Gamma$ by choosing a vertex of valency 1 in $T$ that can be joined to $i(0)$ via an edge path without $i(e_0)$, we give $\Gamma$ the order determined by the planar embedding of $T$. Then the embedding $i:\Gamma'\to\Gamma$ is order preserving, that is, $v_1<v_2$ implies $i(v_1)<i(v_2)$ for vertices $v_1,v_2$ of $\Gamma'$.
Since $\cond(\Gamma,i,T)$ assumes the conclusion of Lemma~\ref{lem:specialembedding}, it implies
$$\tilde i(Y_2(a,b)\cup d_1)=X_s(a\vec\delta_{\mu-1}+b\vec\delta_\mu)\cup q$$
where $\mu+1$ be the valency of $X$ in a maximal tree $T$ of $\Gamma$.
The amount of computation of $\partial_q$ will considerably be reduced by assuming $\cond(\Gamma,i,T)$ since it limits a great deal of possibilities.

In \S\ref{ss43:h2inj}, we will need the fact that $\tilde i(M_2(UD_n\Gamma'))\cap\tilde\partial(M_3(UD_n\Gamma))=\{0\}$.
Under the assumption $\cond(\Gamma,i,T)$,  $\tilde i(M_2(UD_n\Gamma'))$ is generated by critical 2-cells of the form $X_\mu(\vec x)\cup q$ where the vertex $X$ is of valency $\mu +1$ in $T$.
To demonstrate how to compute $\widetilde\partial_q(c)$, let us
see an example first.
\begin{exa}\label{ex:twodel1}
For a graph $\Gamma$ and its maximal tree and the order given in
Figure~\ref{fig20}, we compute $\widetilde\partial_q(c)$ for a
3-cell $X_2(2,1)\cup q\cup d$ in $M_3(UD_n\Gamma)$.
\end{exa}
\begin{figure}[ht]
\psfrag{*}{\small0}
\psfrag{A}{\small$X$}
\psfrag{e1}{\small$q$}
\psfrag{e}{\small$d$}
\centering
\includegraphics[height=2.3cm]{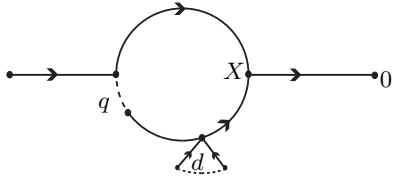}
\caption{Graph containing $X_2(2,1)\cup q \cup d$}
\label{fig20}
\end{figure}
Since $\iota(d)<\iota(X_2(0,1))<\iota(q)$, we have
\begin{align*}
\widetilde\partial_q(c)=&\widetilde R_q(q\cup d\cup X(2,1))-\widetilde R_q(q \cup d\cup \dot X(2,0))\\
&-\widetilde R_q(X_2(2,1)\cup q\cup \iota(d))+\widetilde R_q(X_2(2,1)\cup q\cup \tau(d)).
\end{align*}
After the above four faces are rewritten to eliminate all unblocked vertices satisfying the hypothesis of Lemma~\ref{cor:naturalmove-q2}, we have
\begin{align*}
&\widetilde R_q(q\cup d\cup X(2,1))=\widetilde R_q(q \cup d\cup \dot X(2,0))=q\cup d\\
&\widetilde R_q(X_2(2,1)\cup q\cup \iota(d))=\widetilde R_q(X_2(2,1)\cup q\cup \tau(d))=X_2(3,1)\cup q.
\end{align*}
Consequently, we have $\widetilde\partial_q(c)=0$.

\begin{lem}\label{lem:twodel1}
Let $\Gamma$ be a planar graph satisfing $\cond(\Gamma,i,T)$. Suppose that $c$ is a critical 3-cell in $UD_n\Gamma$ of the form $A_k(\vec a) \cup q \cup d$.
If $c$ satisfies one of the following:
\begin{itemize}
\item[(i)] $A\ne X$;
\item[(ii)] $A=X$ and $k < \mu$;
\item[(iii)] $A=X$, $k=\mu$ and $d$ is not separated by $X$.
\end{itemize}
Then $\widetilde\partial(c)$ contains no critical 2-cells of the form $X_\mu(\vec x)\cup q$ as a summand.
\end{lem}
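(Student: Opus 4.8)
The plan is to compute the $M_q$-component $\widetilde\partial_q(c)=\widetilde R_q\partial(c)$ and to show that no critical cell of the form $X_\mu(\vec x)\cup q$ occurs in it. Write the three edges of $c$ as $e=A_k(\vec\delta_k)$, $q$, and $d$; then $\partial(c)$ is the alternating sum of its six codimension-one faces, in three pairs obtained by opening $e$, $q$, and $d$. I would first dispose of the $q$-pair: both of its faces omit the edge $q$, and since one step $R(x)=\pm\partial W(x)+x$ produces only cells whose edge set is contained in the edges of $x$ together with the single tree edge added by $W$, the deleted edge $q$ is never created; hence $\widetilde R$, and so $\widetilde R_q$, annihilates both faces.

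Next I would treat the $e$-pair, whose faces $c_e^\iota,c_e^\tau$ both have edge set $\{q,d\}$ and are related by $c_e^\tau=V_e(c_e^\iota)$, since replacing $e$ by $\iota(e)$ and then pushing that vertex one step onto $\tau(e)=A$ is the same as replacing $e$ by $A$. When $c_e^\iota$ is redundant I would verify hypotheses (a),(b) of Lemma~\ref{cor:naturalmove-q2} with $p=d$; for (b) this uses $\cond(\Gamma,i,T)$, which through the tree-choice rules (1)--(4) forces the only deleted edge separated by $\iota(q)$, or by $X$ in the $\Theta$-free case, to be $q$ itself, so that a portion of $d$ lying between $A$ and $\iota(e)$ is unseparated by $A$ and by the ends of $q$ and can be slid across. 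This gives $\widetilde R_q(c_e^\iota)=\widetilde R_q(c_e^\tau)$, a zero contribution. When $c_e^\iota$ is instead critical — which, since its edges $q,d$ are never order-respecting, happens exactly when all its vertices are blocked — $\widetilde R_q$ fixes $c_e^\iota$ and $c_e^\tau$; either way the $e$-pair contributes only cells still containing the deleted edge $d$, hence nothing of the form $X_\mu(\vec x)\cup q$.

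The substance of the proof is the $d$-pair, with faces $A_k(\vec a)\cup q\cup\{\iota(d)\}$ and $A_k(\vec a)\cup q\cup\{\tau(d)\}$, in which the tree edge $e$ on the $k$-th branch of $A$ persists. In cases (i) $A\ne X$ and (ii) $A=X$, $k<\mu$ I would track the rewriting of each face: by Lemma~\ref{cor:naturalmove-q2} it proceeds by sliding unblocked vertices toward the base, a $W$-step being forced only when an end of $q$ obstructs sliding a vertex across the terminal vertex of its down-edge, in which case the newly created tree edge is that down-edge; because $\cond(\Gamma,i,T)$ and Lemma~\ref{lem:specialembedding} place every tree edge at $X$ other than $i(e_0)$ strictly between $i(e_0)$ and $i(e_1)$, and make $q$ the only deleted edge separated by $\iota(q)$ or by $X$, no such down-edge — and certainly not $e$ itself — can come to sit on the $\mu$-th branch $i(e_2)$ of $X$; so every critical summand retains an edge off branch $\mu$ of $X$ and is not $X_\mu(\vec x)\cup q$. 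In case (iii) $A=X$, $k=\mu$, $d$ unseparated by $X$, the hypothesis forces $\iota(d)$ and $\tau(d)$ onto a single branch of $X$, so Lemma~\ref{cor:naturalmove-q2} slides both onto the identical configuration near $X$ and gives $\widetilde R_q(A_\mu(\vec a)\cup q\cup\{\iota(d)\})=\widetilde R_q(A_\mu(\vec a)\cup q\cup\{\tau(d)\})$, a cancelling $d$-pair, whence $\widetilde\partial_q(c)=0$.

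I expect the main obstacle to be the control of $W$-steps in cases (i)--(ii) — proving that a tree edge can never be deposited on the branch $i(e_2)$ of $X$ — and this is exactly the step where planarity and the absence of $\Theta$ are used, through $\cond(\Gamma,i,T)$ and Lemma~\ref{lem:specialembedding}.
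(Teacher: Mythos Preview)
Your treatment of the $e$-pair (the two faces obtained by opening the tree edge $e=A_k(\vec\delta_k)$, which both carry the edge set $\{q,d\}$) has a genuine gap, and this is precisely the pair where the paper's argument differs from yours.

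You propose to apply Lemma~\ref{cor:naturalmove-q2} to $c_e^\iota$ with the unblocked vertex $v=\iota(e)$ to get $\widetilde R_q(c_e^\iota)=\widetilde R_q(c_e^\tau)$. But hypothesis (a) fails: since $e=A_k(\vec\delta_k)$ is non-order-respecting in the critical cell $c$, there is a vertex of $c$ adjacent to $A$ on some branch $j<k$, and in $c_e^\iota$ this vertex lies strictly between $\tau(e)=A$ and $\iota(e)$ yet is unblocked (its down-edge terminates at $A$, and $A\notin c_e^\iota$). Your fallback, that ``the $e$-pair contributes only cells still containing the deleted edge $d$'', is also incorrect: one step $R(c')=\pm\partial W(c')+c'$ with $c'$ carrying edges $\{q,d\}$ produces, among its summands, the two faces of $W(c')$ obtained by replacing $d$ by an endpoint, and these carry edges $\{q,e_s\}$ with $e_s$ the newly inserted tree edge --- so critical $2$-cells of the form $X_\mu(\vec x)\cup q$ can and do arise from this pair.

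The paper does \emph{not} attempt to cancel the full $\widetilde R_q$ of the two $e$-faces. Instead it argues only that their $X_\mu(\vec x)\cup q$ components coincide, via a branch-counting observation: if $A\ne X$, then $A$ and $\iota(e)$ lie on the same branch of $X$, so $c_e^\iota$ and $c_e^\tau$ have identical vertex counts on every branch of $X$; if $A=X$ and $k<\mu$, they still have the same count on branch $\mu$; and if $A=X$, $k=\mu$, one uses that $d$ is not separated by $X$. For the $d$-pair you are working harder than necessary: since the surviving edge $A_k(\vec\delta_k)$ is a tree edge, the final clause of Lemma~\ref{cor:naturalmove-q2} gives $\widetilde R_q=\widetilde R_q\widetilde V$ outright, so no $W$-step analysis is needed --- the edge $A_k(\vec\delta_k)$ persists and the result cannot be $X_\mu(\vec x)\cup q$ unless $A=X$ and $k=\mu$.
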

\begin{proof}
First consider two faces of $\partial_q(c)$ determined by $d$, that is, $A_k(\vec a)\cup q \cup \{v\}$ where $v$ is either $\iota(d)$ or $\tau(d)$. Since $A_k(\vec\delta_k)$ is not a deleted edge, $\widetilde R_q(A_k(\vec a)\cup q \cup \{v\})=\widetilde R_q\widetilde V(A_k(\vec a)\cup q \cup \{v\})$ by Lemma~\ref{cor:naturalmove-q2}. If $A\ne X$ or $k\ne \mu$, then $\widetilde R_q\widetilde V(A_k(\vec a)\cup q \cup \{v\})$ cannot be a critical 2-cell of the form $X_\mu(\vec x)\cup q$. If $A=X$ and $k=\mu$ then their images under $\widetilde R_q\widetilde V$ are equal since $d$ is not separated by $X$. So the two faces cancel each other in $\widetilde\partial_q(c)$.

Now consider two faces determined by $A_k(\vec\delta_k)$ under $\widetilde R_q$. 
If $A\ne X$ then $A$ and $A(\vec\delta_k)$ are in the same branch of $X$. So if there is a critical 2-cell of the form $X_\mu(\vec x)\cup q$ in the image of a face under $\widetilde R_q$ then the critical 2-cell is also in the image of the other face under $\widetilde R_q$. Thus all critical 2-cells of the form $X_\mu(\vec x)\cup q$ cancel out in $\widetilde\partial_q(c)$ since each pair of faces has the opposite sign in $\partial_q(c)$. If $A=X$ and $k < \mu$ then all critical 2-cells of the form $X_\mu(\vec x)\cup q$ also cancel out in $\widetilde\partial_q(c)$ because the numbers of vertices in each pair of faces on $\mu$-th branch of $X$ are equal. 
Finally if $A=X$ and $k=\mu$ then there are no critical 2-cells of the form $X_\mu(\vec x)\cup q$ in the image of faces under $\widetilde R_q$ since $d$ is not separated by $X$.
\end{proof}

If $\Gamma$ does not contain $\Theta$, then there are blocked vertices by a deleted edge $d_X$ incident to $X$. So we must consider a critical 3-cell in $UD_n\Gamma$ of the form $A_k(\vec a) \cup q \cup d_X \cup X(\vec x)$. Any vertex blocked by $X$ plays no role in the proof of the above lemma. Thus we have the following corollary.

\begin{cor}\label{cor:twodel1-1}
Let $\Gamma$ be a graph that does not contain $\Theta$ and satisfies $\cond(\Gamma,i,T)$. Suppose that $c$ is a critical 3-cell in $UD_n\Gamma$ of the form $A_k(\vec a) \cup q \cup d_X\cup X(\vec x)$. Then $\widetilde\partial(c)$ contains no critical 2-cells of the form $X_\mu(\vec x)\cup q$ as a summand.
\end{cor}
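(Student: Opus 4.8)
The plan is to reduce Corollary~\ref{cor:twodel1-1} to Lemma~\ref{lem:twodel1} by observing that the extra vertices blocked by the deleted edge $d_X$ are inert throughout the computation of $\widetilde\partial_q$. First I would recall, as already noted in the discussion preceding Lemma~\ref{lem:naturalmove-q}, that when we compute $\widetilde\partial_q(c)=\widetilde R_q\partial(c)$ for a critical $3$-cell $c$ containing $q$, the two faces of $\partial(c)$ obtained by replacing $q$ with its end vertices do not contain $q$ and hence contribute nothing to $\widetilde\partial_q(c)$; so only the four faces obtained by resolving the two non-deleted-edge constituents of $c$ matter. For $c=A_k(\vec a)\cup q\cup d_X\cup X(\vec x)$ these are the two faces from $A_k(\vec\delta_k)$ and the two faces from $d_X$. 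The key point is that every vertex lying in the $X(\vec x)$ part is blocked by $d_X$ in $c$ and remains blocked by $d_X$ (or by $q$, or by $X$ itself) in each of these four faces, since resolving $A_k(\vec\delta_k)$ or $d_X$ does not touch the short path from $X$ along which those vertices sit; consequently Lemma~\ref{lem:naturalmove-q} and Lemma~\ref{cor:naturalmove-q2} apply verbatim with these blocked vertices simply carried along.

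The main steps would then be: (1) apply Lemma~\ref{cor:naturalmove-q2} to the two faces $A_k(\vec a)\cup q\cup d_X\cup\{v\}\cup X(\vec x)$ (for $v=\iota(d_X),\tau(d_X)$) coming from resolving $A_k(\vec\delta_k)$, exactly as in the proof of Lemma~\ref{lem:twodel1}: since $A_k(\vec\delta_k)$ has been turned into a vertex, the surviving edges are $q$ and $d_X$, and the argument that all critical $2$-cells of the form $X_\mu(\vec x')\cup q$ appearing in the image of one face under $\widetilde R_q$ also appear in the image of the other (so they cancel, the two faces carrying opposite signs in $\partial_q(c)$) goes through unchanged — the $X(\vec x)$ vertices, being blocked by $d_X$, never become unblocked and never alter which branch of $X$ a vertex lives on. (2) Apply the same reasoning to the two faces $A_k(\vec a)\cup q\cup\{w\}\cup X(\vec x)$ coming from resolving $d_X$ (where $w$ runs over the end vertices of $d_X$): here $A_k(\vec a)$ is not a deleted edge, so $\widetilde R_q$ agrees with $\widetilde R_q\widetilde V$ by Lemma~\ref{cor:naturalmove-q2}, and one checks case by case on whether $A=X$ and on the value of $k$ (precisely the trichotomy (i)--(iii) of Lemma~\ref{lem:twodel1}, noting that $\cond(\Gamma,i,T)$ forces $d_X$ itself, being incident to $X$, to be "not separated by $X$" in the relevant sense) that no critical $2$-cell of the form $X_\mu(\vec x')\cup q$ can survive or that the two faces cancel. (3) Conclude that $\widetilde\partial(c)=\pi^{-1}$-lifts aside, $\widetilde\partial_q(c)$ contains no such summand, which is exactly the assertion.

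The cleanest way to present this, rather than re-running the case analysis, is to phrase it as: \emph{the proof of Lemma~\ref{lem:twodel1} never uses the absence of vertices blocked by $d_X$}. Concretely, in that proof every invocation of Lemma~\ref{lem:naturalmove-q}/\ref{cor:naturalmove-q2} only asks that certain vertices between $\tau(e)$ and $\iota(e)$ be blocked (by anything, including $q$), and every cancellation argument only compares the number of vertices on the $\mu$-th branch of $X$ in paired faces; adjoining a fixed collection $X(\vec x)$ of vertices blocked by $d_X$ changes neither hypothesis (those vertices are on branches of $X$ other than the relevant ones, or are swept along by $d_X$) nor the counting. Hence the identical argument yields the conclusion for $c=A_k(\vec a)\cup q\cup d_X\cup X(\vec x)$.

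The step I expect to require the most care is verifying in case (ii)/(iii) of the carried-over analysis that inserting the $X(\vec x)$ block does not create a new unblocked vertex on the $\mu$-th branch of $X$ after one of the four resolutions — i.e., that "$d_X$ is incident to $X$" plus $\cond(\Gamma,i,T)$ genuinely plays the role that "$d$ is not separated by $X$" played in Lemma~\ref{lem:twodel1}. This is where I would be explicit: the vertices of $X(\vec x)$ all lie on the branch of $X$ containing $d_X$, which by construction of $T$ (condition (3) of $\cond(\Gamma,i,T)$, deleting an edge incident to $X$ to break any circuit through $X$ not through $\iota(q)$) is a branch distinct from the $\mu$-th whenever that distinction matters, so the counting on the $\mu$-th branch is unaffected and the pairwise cancellation survives. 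Everything else is a routine transcription of the previous proof.
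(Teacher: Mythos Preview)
Your approach is precisely the paper's: there the entire argument is the one-line observation (stated just before the corollary) that vertices blocked at $X$ by $d_X$ play no role in the proof of Lemma~\ref{lem:twodel1}, so that proof carries over unchanged. Your elaboration is sound in outline but has two notational slips worth cleaning up. In step~(1) the faces obtained by resolving the edge $A_k(\vec\delta_k)$ should no longer contain $A_k(\vec a)$, and the extra vertex $v$ should be an end of \emph{that} edge, not of $d_X$ --- as written you have conflated the two pairs of faces. In your final paragraph, the claim that ``the vertices of $X(\vec x)$ all lie on the branch of $X$ containing $d_X$'' is incorrect: $d_X$ is a deleted edge (hence not on any branch of $T$ at $X$), and the vertices of $X(\vec x)$ may sit on any of the branches $1,\ldots,\mu$. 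The point you actually need is simpler and is what the paper uses: the $X(\vec x)$ block is identical in both members of each face-pair (resolving $A_k(\vec\delta_k)$ or $d_X$ does not touch it), so whatever it contributes cancels pairwise, independently of which branches it occupies.
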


Now consider a critical 3-cell containing $X_\mu(\vec x)$ and a deleted edge $d$ such that $X$ separates $d$.
\begin{exa}\label{ex:twodel2}
Let $\Gamma$ be a graph as the Figure~\ref{fig21}. We
compute $\widetilde\partial_q(c)$ for the 3-cell $c=X_2(1,2)\cup q
\cup d$ in $M_3(UD_n\Gamma)$. Note that $A=X\wedge\tau(d)$.
\end{exa}
\begin{figure}[ht]
\psfrag{*}{\small0}
\psfrag{A}{\small$X$}
\psfrag{B}{\small$A$}
\psfrag{e}{\small$d$}
\psfrag{e1}{\small$q$}
\centering
\includegraphics[height=2.3cm]{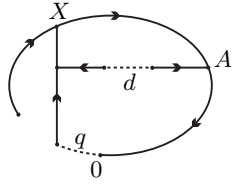}
\caption{Graph containing $X_2(1,2)\cup q \cup d$}
\label{fig21}
\end{figure}
Since $\iota(d)<\iota(q)<\iota(X_2(0,1))$, using Lemma~\ref{lem:naturalmove-q} we have
\begin{align*}
\widetilde\partial_q(c)=&-\widetilde R_q(q\cup d\cup X(1,2))+\widetilde R_q(q \cup d\cup \dot X(1,1))\\
&-\widetilde R_q(X_2(1,2)\cup q\cup \iota(d))+\widetilde R_q(X_2(1,2)\cup q\cup \tau(d))\\
=&-\widetilde R_q(q\cup d\cup X(0,2)\cup A(0,1))+\widetilde R_q(q \cup d\cup X(0,1)\cup A(0,2))\\
&-(X_2(2,2)\cup q)+(X_2(1,2)\cup q).
\end{align*}
Since $q\cup d\cup X(0,2) \cup A(0,1)$ has the negative sign in the boundary of the collapsible 3-cell $A_2(0,1)\cup q\cup d\cup X(0,2)$,
\begin{align*}
\widetilde R_q(&q\cup d\cup X(0,2) \cup A(0,1))\\
=&\widetilde R_q(\partial (A_2(0,1)\cup q\cup d\cup X(0,2))+ q\cup d\cup X(0,2) \cup A(0,1))\\
=&\widetilde R_q(q\cup d\cup \{A\} \cup X(0,2))+\widetilde R_q(A_2(0,1) \cup q\cup \iota(d) \cup X(0,2))\\
 &-\widetilde R_q(A_2(0,1) \cup q\cup \tau(d) \cup X(0,2))\\
=&\widetilde R_q(q\cup d\cup X(0,2))-(A_2(1,3) \cup q).
\end{align*}
By repeating a similar argument,
\begin{align*}
\widetilde R_q(q\cup d\cup X(0,2))=&\widetilde R_q(\partial (X_2(0,2)\cup q \cup d)+ q\cup d\cup X(0,2))\\
=&\widetilde R_q(q\cup d \cup \dot X(0,1)) -\widetilde R_q(X_2(0,2)\cup q \cup \iota(d)) + \widetilde R_q(X_2(0,2)\cup q \cup \tau(d))\\
=&\widetilde R_q(q\cup d \cup X(0,1))-(A_2(1,2)\cup q)-(X_2(1,2)\cup q).
\end{align*}
and $\widetilde R_q(q\cup d\cup \dot X(1,1))=\widetilde R_q(q\cup d\cup X(0,1))-(A_2(1,3) \cup q)-(A_2(1,2) \cup q).$
Consequently, we have $\widetilde\partial_q(c)=2(X_2(1,2)\cup q)-(X_2(2,2)\cup q)$.

A graph considered in the following lemma necessarily contains $\Theta$ since it satisfies $\cond(\Gamma,i,T)$ and has a deleted edge other than $q$ separated by $X$.
\begin{lem}\label{lem:twodel2}
Let $\Gamma$ be a planar graph satisfying $\cond(\Gamma,i,T)$. Suppose that $c$ is a critical 3-cell in $UD_n\Gamma$ of the form $X_\mu(\vec x) \cup q \cup d$ and  $d$ is separated by the vertex $X$ but not by $X(\vec\delta_\mu)$. Then
\begin{align*}
\widetilde\partial_q(c)=&(X_\mu(\vec x+\vec\delta_{g(X,\tau(d))})\cup q)-(X_\mu(\vec x+\vec\delta_{g(X,\iota(d))}) \cup q)\\
&-(X_\mu(\vec\delta_{g(X,\tau(d))}+x_\mu\vec\delta_\mu) \cup q)+(X_\mu(\vec\delta_{g(X,\iota(d))}+x_\mu\vec\delta_\mu) \cup q)+ \mbox{Irrelevant}
\end{align*}
where the part ``Irrelevant" contains no critical 2-cells of the form  $X_\mu(\vec y)\cup q$, and $x_\mu$ is the $\mu$-th component of $\vec x$, and the expression may contain collapsible 2-cells that should be regarded as trivial
\end{lem}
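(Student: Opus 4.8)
\emph{Outline of proof.} The plan is to evaluate $\widetilde\partial_q(c)=\widetilde R_q\partial(c)$ directly, following the template of Example~\ref{ex:twodel2}, keeping track only of the summands of the form $X_\mu(\vec y)\cup q$ and dumping everything else into ``Irrelevant''. Write $e$ for the non-order-respecting edge of $c=\{e\}\cup q\cup d\cup(\text{blocked vertices})$, so that $e$ lies on the $\mu$-th branch at $X$ with $\tau(e)=X$. Ordering the three edges $e,q,d$ of $c$ by their initial vertices and using the boundary formula of \S\ref{ss31:homology}, $\partial(c)$ is an alternating sum of six $2$-cells: the pair obtained by replacing $q$ by $\iota(q)$ or $\tau(q)$, the pair obtained by replacing $d$ by $\iota(d)$ or $\tau(d)$, and the pair obtained by replacing $e$ by $\iota(e)$ or $\tau(e)=X$. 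Since the rewriting process never introduces a deleted edge, $\widetilde R$ of a $2$-cell not containing $q$ is a combination of critical $2$-cells none of which contains $q$, so the projection $\pi$ annihilates it; hence the two $q$-faces disappear from $\widetilde\partial_q(c)$ and we are left with $\widetilde R_q$ of the four faces coming from $d$ and from $e$, with overall signs fixed by the order of the initial vertices of the three edges.

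For the pair of faces coming from $d$, namely $\{e\}\cup q\cup\{\iota(d)\}$ and $\{e\}\cup q\cup\{\tau(d)\}$, the only non-deleted edge present is $e$; since $e$ is not a deleted edge, Lemma~\ref{cor:naturalmove-q2} applies and gives $\widetilde R_q$ of each of these faces equal to $\widetilde R_q\widetilde V$ of it. Pushing the single free vertex $v\in\{\iota(d),\tau(d)\}$ toward the base under $\widetilde V$, it either descends strictly below $X$ (when $g(X,v)=0$, yielding after rewriting a cell annihilated by Lemma~\ref{lem:redundant1}, i.e.\ a trivial term) or it halts as a vertex blocked at $X$ on its branch $g(X,v)$; the hypothesis that $d$ is not separated by $X(\vec\delta_\mu)$ is exactly what rules out $v$ being trapped on the $\mu$-th branch, so the only change to the data is the addition of $\vec\delta_{g(X,v)}$. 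Incorporating the sign with which the $d$-pair occurs in $\partial(c)$, these two faces contribute $\bigl(X_\mu(\vec x+\vec\delta_{g(X,\tau(d))})\cup q\bigr)-\bigl(X_\mu(\vec x+\vec\delta_{g(X,\iota(d))})\cup q\bigr)$ to $\widetilde\partial_q(c)$, the first two terms of the asserted identity.

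The pair of faces coming from $e$, namely $q\cup d\cup X(\vec x)$ and $q\cup d\cup\dot X(\vec x-\vec\delta_\mu)$, is the substantive part; each carries the two deleted edges $q$ and $d$. I would compute $\widetilde R_q$ of these by the inductive bookkeeping of Example~\ref{ex:twodel2}: whenever such a cell still has an unblocked vertex sitting on a positive branch at $X$, attach the corresponding collapsible $3$-cell obtained by replacing that vertex by the tree edge leaving it, and use the fact that $\widetilde R_q$ annihilates the boundary of a collapsible cell to express $\widetilde R_q$ of our cell in terms of $\widetilde R_q$ of its other faces; each of those either stabilizes under $R$ strictly faster (so the inductive hypothesis applies) or is of the form $X_\mu(\vec y)\cup q\cup\{v'\}$ and reduces as in the previous paragraph. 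Iterating, $\widetilde R_q(q\cup d\cup X(\vec x))$ and $\widetilde R_q(q\cup d\cup\dot X(\vec x-\vec\delta_\mu))$ are each a common ``core'' — a critical $2$-cell built only from deleted edges, hence not of the form $X_\mu(\vec y)\cup q$ and absorbed into ``Irrelevant'', or simply $0$ — together with an explicit combination of cells $X_\mu(\vec y)\cup q$; taking the difference kills the core and, after simplification, leaves exactly $-\bigl(X_\mu(\vec\delta_{g(X,\tau(d))}+x_\mu\vec\delta_\mu)\cup q\bigr)+\bigl(X_\mu(\vec\delta_{g(X,\iota(d))}+x_\mu\vec\delta_\mu)\cup q\bigr)$, up to trivial terms and with the sign forced by the position of the $e$-pair in $\partial(c)$.

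The main obstacle is precisely the bookkeeping of this last step: one must check that the ``core'' pieces of $\widetilde R_q(q\cup d\cup X(\vec x))$ and $\widetilde R_q(q\cup d\cup\dot X(\vec x-\vec\delta_\mu))$ genuinely agree, so that their difference contains no stray critical $2$-cell of mixed type, and that the $X_\mu(\vec y)\cup q$-terms accumulated along the recursion telescope to the two claimed ones — here one uses once more that $d$ is separated by $X$ but not by $X(\vec\delta_\mu)$ to pin down which branch indices and which value of the $\mu$-th coordinate occur. The degenerate subcases $g(X,\iota(d))=0$ or $g(X,\tau(d))=0$, in which the symbol $\vec\delta_0$ is read as $\vec 0$ and certain of the listed cells become collapsible or are annihilated by Lemma~\ref{lem:redundant1}, are handled by the same analysis and simply drop out of the formula.
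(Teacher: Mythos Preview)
Your outline is the paper's approach: kill the $q$-faces, handle the $d$-faces with Lemma~\ref{cor:naturalmove-q2}, and unwind the $e$-faces recursively as in Example~\ref{ex:twodel2}. There is, however, a concrete slip in your $d$-face paragraph. You claim that when $g(X,v)=0$ the cell obtained after $\widetilde V$ is ``annihilated by Lemma~\ref{lem:redundant1}''. It is not: the face $X_\mu(\vec x)\cup q\cup\{v\}$ contains no order-respecting edge (the $X_\mu$-edge is non-order-respecting by construction and $q$ is deleted), so Lemma~\ref{lem:redundant1} does not apply. What actually happens is that $v$ descends into $0_s$ under $\widetilde V$ and one is left with the \emph{critical} cell $X_\mu(\vec x)\cup q$ --- precisely the term $X_\mu(\vec x+\vec\delta_0)\cup q$ under your convention $\vec\delta_0=\vec 0$, so your final paragraph has the right reading but this term does not drop out. (Note also that $g(X,\iota(d))=0$ cannot occur: $d$ separated by $X$ together with $\tau(d)<\iota(d)$ forces $\iota(d)$ onto a positive branch.) The collapsible term the lemma statement warns about is the other one, $X_\mu(\vec\delta_0+x_\mu\vec\delta_\mu)\cup q=X_\mu(x_\mu\vec\delta_\mu)\cup q$, coming from the $e$-face side; Example~\ref{ex:twodel2} exhibits exactly this.

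One further point: the paper pins down the signs by first deducing $\iota(d)<X(\vec\delta_\mu)$ and $\iota(d)<\iota(q)$ from the hypotheses and $\cond(\Gamma,i,T)$, which leaves only the two orderings $\iota(d)<X(\vec\delta_\mu)<\iota(q)$ and $\iota(d)<\iota(q)<X(\vec\delta_\mu)$; it then works one case through explicitly and asserts the other is analogous. Your decision to leave the signs ``fixed by the order of the initial vertices'' is acceptable for an outline, but this is where the bookkeeping is most error-prone, and it is worth at least recording the case split.
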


\begin{proof}
Since $d$ is separated by $X$ but not by $\iota(q)$ due to $\cond(\Gamma,i,T)$, $X$ and both end of $d$ are in the same component of $T-\{\iota(q)\}$. Since $d$ is not separated by $X(\vec\delta_\mu)$, $\iota(d)<X(\vec\delta_\mu)$. And $\iota(d)<\iota(q)$ by $\cond(\Gamma,i,T)$. Thus we only need to consider the two possibilities
$\iota(d)<X(\vec\delta_\mu)<\iota(q)$ and $\iota(d)<\iota(q)<X(\vec\delta_\mu)$.
Assume $\iota(d)<X(\vec\delta_\mu)<\iota(q)$. Then we have
\begin{align*}
\widetilde\partial_q(c)=&\widetilde R_q(X_\mu(\vec x)\cup q \cup \tau(d)) -\widetilde R_q(X_\mu(\vec x)\cup q \cup \iota(d))\\
&-\widetilde R_q(q\cup d\cup \dot X(\vec x-\vec\delta_\mu)) +\widetilde R_q(q\cup d\cup X(\vec x)).
\end{align*}
For the pair of faces of $c$ determined by $d$, we can apply Lemma~\ref{cor:naturalmove-q2} since $X_\mu(\vec\delta_\mu)$ is not a deleted edge. Then we have
\begin{align*}
\widetilde R_q(X_\mu(\vec x)\cup q \cup \tau(d))& -\widetilde R_q(X_\mu(\vec x)\cup q \cup \iota(d))\\
=&X_\mu(\vec x+\vec\delta_{g(X,\tau(d))})\cup q - X_\mu(\vec x+\vec\delta_{g(X,\iota(d))})\cup q
\end{align*}

To consider the pair of faces determined by $X_\mu(\vec\delta_\mu)$, let $A=X\wedge\tau(d)$. Since $d$ is separated by $X$, $X\wedge\iota(d)=X$. Since the smallest unblocked vertex of $q\cup d\cup \dot X(\vec x-\vec\delta_\mu)$ is $X$, it does not give a critical 2-cell of the form $X_\mu(\vec{x'})\cup q$ in its image under $\widetilde R_q$ as follows:
$$\widetilde R_q(q\cup d\cup \dot X(\vec x-\vec\delta_\mu))=\widetilde R_q(q\cup d\cup X(\vec x-\vec\delta_\mu))-A_{g(A,X)}(|\vec x|\vec\delta_{g(A,X)}+\vec\delta_{g(A,\tau(d))})\cup q$$
where the second term disappears if $A=X$. On the other hand, if the smallest unblocked vertex $v$ of $q\cup d\cup X(\vec x)$ satisfies $g(X,v)\ne \mu$, it does not produce a critical 2-cell of the form $X_\mu(\vec{x'})\cup q$ since $v<X(\vec\delta_\mu)$, that is, we have
$$\widetilde R_q(q\cup d\cup X(\vec x))=\widetilde R_q(q\cup d\cup X(\vec x-1))+\mbox{Irrelevant terms}.$$
By iterating this step, we have
$$\widetilde R_q(q\cup d\cup X(\vec x))=\widetilde R_q(q\cup d\cup X(x_\mu\vec\delta_\mu))+\mbox{Irrelevant terms}.$$
From $\iota(d)<X(\vec\delta_\mu)<\iota(q)$, we have
\begin{align*}
\widetilde R_q(q\cup d\cup X(x_\mu\vec\delta_\mu))=&\widetilde R_q(-\partial(q\cup d\cup X_\mu(x_\mu\vec\delta_\mu))+q\cup d\cup X(x_\mu\vec\delta_\mu))\\
=&\widetilde R_q(q\cup d\cup \dot X((x_\mu-1)\vec\delta_\mu))+\widetilde R_q(X_\mu(x_\mu\vec\delta_\mu)\cup q\cup \iota(d))\\
&-\widetilde R_q(X_\mu(x_\mu\vec\delta_\mu)\cup q \cup \tau(d))\\
=&\widetilde R_q(q\cup d\cup X((x_\mu-1)\vec\delta_\mu)-A_{g(A,X)}(x_\mu\vec\delta_{g(A,X)}+\vec\delta_{g(A,\tau(d))})\cup q\\
&+X_\mu(x_\mu\vec\delta_\mu+\vec\delta_{g(X,\iota(d))})\cup q-X_\mu(x_\mu\vec\delta_\mu+\vec\delta_{g(X,\tau(d))})\cup q.
\end{align*}
So we obtain the desired formula for the case $\iota(d)<X(\vec\delta_\mu)<\iota(q)$. Similarly, we can proceed in the case $\iota(d)<\iota(q)<X(\vec\delta_\mu)$.
\end{proof}

\begin{exa}\label{ex:threedel}
Let $\Gamma$ be a graph as the Figure~\ref{fig22}, and we choose a
maximal tree and give an order as the Figure~\ref{fig22}. We
compute $\widetilde\partial_q(c)$ for the 3-cell $q \cup d \cup
d'$ in $M_3(UD_n\Gamma)$.
\end{exa}
\begin{figure}[ht]
\psfrag{*}{\small0}
\psfrag{A}{\small$X$}
\psfrag{B}{\small$d'$}
\psfrag{e}{\small$d$}
\psfrag{e1}{\small$q$}
\centering
\includegraphics[height=2.3cm]{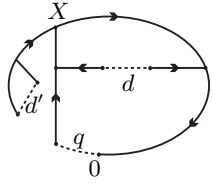}
\caption{Graph containing $q \cup d \cup d'$}
\label{fig22}
\end{figure}
Since $\iota(d)<\iota(q)<\iota(d')$, using
Lemma~\ref{cor:naturalmove-q2} we have
\begin{align*}
\widetilde\partial_q(c)=&-\widetilde R_q(q\cup d'\cup \iota(d))+\widetilde R_q(q \cup d'\cup \tau(d))\\
&-\widetilde R_q(q\cup d \cup\iota(d'))+\widetilde R_q(q\cup d\cup\tau(d'))\\
=&-(q\cup d')+(q\cup d')-\widetilde R_q(q\cup d \cup\{X(\vec\delta_\mu)\})+\widetilde R_q(q\cup d \cup\{X(\vec\delta_\mu)\})=0
\end{align*}

\begin{lem}\label{lem:threedel}
Let $\Gamma$ be a planar graph satisfying $\cond(\Gamma,i,T)$. Suppose that $c$ is a critical 3-cell in $UD_n\Gamma$ of the form $q \cup d\cup d'$. If both $d$ and $d'$ are not separated by $X(\vec\delta_\mu)$, then $\widetilde\partial(c)$ contains no critical 2-cells of the form $X_\mu(\vec x)\cup q$ as a summand.
\end{lem}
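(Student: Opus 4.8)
The plan is to mimic Example~\ref{ex:threedel} and the proofs of Lemmas~\ref{lem:twodel1} and \ref{lem:twodel2}: organize the six faces of $\partial(c)$ into cancelling pairs. Two of the six faces of the critical $3$-cell $c=q\cup d\cup d'$ are obtained by replacing the deleted edge $q$ with its two end vertices; these faces contain no deleted edge $q$, and since $W$ never produces a deleted edge, $\widetilde R$ of such a face never involves $q$, so it is annihilated by $\widetilde R_q=\pi\circ\widetilde R$. Hence
$$\widetilde\partial_q(c)=\pm\bigl(\widetilde R_q(q\cup d'\cup\iota(d))-\widetilde R_q(q\cup d'\cup\tau(d))\bigr)\pm\bigl(\widetilde R_q(q\cup d\cup\iota(d'))-\widetilde R_q(q\cup d\cup\tau(d'))\bigr),$$
where each signed pair carries opposite signs and the overall signs are determined by the order of $\iota(q),\iota(d),\iota(d')$ exactly as in Example~\ref{ex:threedel}. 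Since every critical $2$-cell of the form $X_\mu(\vec x)\cup q$ contains $q$, it suffices to show that the two cells in the pair coming from $d$ have the same $X_\mu(\,\cdot\,)\cup q$-components (and symmetrically for the pair coming from $d'$), so that those components cancel; this gives the claim for $\widetilde\partial_q(c)$ and hence for $\widetilde\partial(c)$.

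Each of the four $2$-cells appearing above has the shape $q\cup p\cup\{v\}$ with $p\in\{d,d'\}$ a deleted edge and $v$ a single end vertex of the other deleted edge. To evaluate $\widetilde R_q$ of such a cell I would push the unique free vertex $v$ downward, invoking Lemma~\ref{cor:naturalmove-q2} at each step: hypothesis (b) of that lemma is vacuous except possibly when $v$ passes an end vertex of $p$, and there it holds because under $\cond(\Gamma,i,T)$ the edge $p$ is either a deleted edge with valency-two ends (not separated by any valency-two vertex) or is controlled near $X$ and $\iota(q)$ by conditions (2) and (3). The trajectory of $v$ terminates either by $v$ being absorbed (blocked, or reaching the base vertex), in which case no $X_\mu(\,\cdot\,)\cup q$ summand is produced at all, or by $v$ arriving at the vertex $X(\vec\delta_\mu)$, which is the only vertex whose conversion by $W$ can yield the non-order-respecting tree edge $X_\mu(\vec\delta_\mu)$ needed for a summand $X_\mu(\vec x)\cup q$. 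The hypothesis that $d$ is not separated by $X(\vec\delta_\mu)$ says that $\iota(d)$ and $\tau(d)$ lie in the same component of $T-\{X(\vec\delta_\mu)\}$; therefore when these two vertices are pushed down they end up in the same position relative to $X(\vec\delta_\mu)$ and to the edge $q$, so they contribute identical $X_\mu(\,\cdot\,)\cup q$ terms to $\widetilde R_q$. The symmetric statement, using that $d'$ is not separated by $X(\vec\delta_\mu)$, handles the pair coming from $d'$. Adding everything up, all $X_\mu(\vec x)\cup q$ summands cancel in opposite-sign pairs.

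I expect the genuine obstacle to be the bookkeeping in the middle step: verifying that a summand $X_\mu(\vec x)\cup q$ is never produced \emph{asymmetrically}, i.e.\ appearing in $\widetilde R_q(q\cup d'\cup\iota(d))$ but not in $\widetilde R_q(q\cup d'\cup\tau(d))$ (and likewise for $d'$). This forces a finite case split according to the relative order of $\iota(q)$, $\iota(d)$, $\iota(d')$ and of $X(\vec\delta_\mu)$ among them, each case proceeding by a $W$-expansion as in Examples~\ref{ex:twodel1}--\ref{ex:threedel}; the conditions in $\cond(\Gamma,i,T)$ (no deleted edge separated by $\iota(q)$, and control of deleted edges separated by $X$ when $\Theta\not\subset\Gamma$) are precisely what keep the number of cases finite and the configuration of $d$, $d'$ symmetric about $X(\vec\delta_\mu)$. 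Once that symmetry is established, the desired cancellation is automatic from the opposite signs in $\partial(c)$, exactly as in Example~\ref{ex:threedel}.
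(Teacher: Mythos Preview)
Your overall strategy---drop the two faces obtained from $q$, pair the remaining four faces by the deleted edge they come from, and show that within each pair the $X_\mu(\vec x)\cup q$ contributions cancel---is the same as the paper's. The paper, however, organizes the argument more efficiently than the ``finite case split according to the relative order of $\iota(q),\iota(d),\iota(d'),X(\vec\delta_\mu)$'' that you anticipate. Instead of tracking trajectories of pushed vertices, the paper introduces the smallest subtree $T'_c$ of $T$ containing the base vertex and the four ends of $d,d'$, observes that $\widetilde\partial_q(c)$ is determined by $T'_c$, and splits into three cases according to how many of $d,d'$ have an end on the $\mu$-th branch of $X$. In two of the three cases the conclusion is structural rather than computational: if neither $d$ nor $d'$ has an end on the $\mu$-th branch then the edge $X_\mu(\vec\delta_\mu)$ simply never enters the rewriting; if both do then (using that neither is separated by $X(\vec\delta_\mu)$) all four ends lie on the $\mu$-th branch, so $X$ has valency two in $T'_c$ and no $X_\mu(\vec x)$ can be critical there. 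Only the mixed case (exactly one of $d,d'$ with both ends on the $\mu$-th branch) requires the pairwise cancellation you describe, and there the ordering $\iota(d)>\tau(d)>\iota(d')>\tau(d')$ makes the application of Lemma~\ref{cor:naturalmove-q2} immediate. This bypasses most of the ``bookkeeping'' you flag as the genuine obstacle.
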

\begin{proof}
Let $T'_c$ be the smallest subtree of $T$ that contains the base vertex and four ends of $d$ and $d'$. Note that $\widetilde\partial_q(c)$ is determined by $T'_c$. If both $d$ and $d'$ have no end vertex $v$ such that $g(X,v)=\mu$, then $T'_c$ does not contain the vertex $X(\vec\delta_\mu)$ and so the edge $X_\mu(\vec\delta_\mu)$ does not appear in the computation of $\widetilde\partial_q(c)$ since end vertices of $d$ and $d'$ are smaller than $X(\vec\delta_\mu)$. Thus we are done.

If both $d$ and $d'$  have end vertices $v$ of such
that $g(X,v)=\mu$, then the valency of $X$ in $T'_c$ is two since $d$ and $d'$ are not separated by $X(\vec\delta_\mu)$. So $\widetilde\partial_q(c)$ does not contain $X_\mu(\vec x)\cup q$ as summands. Otherwise we may assume that two end of $d$ are on the last branch of $X$. Then $\iota(d)>\tau(d)>\iota(d')>\tau(d')$. So $\widetilde R(q\cup d\cup \iota(d'))=\widetilde R(q\cup d \cup \tau(d'))$. Since $\iota(d)\wedge \iota(d')=\tau(d)\wedge \iota(d')$ and $\iota(d)\wedge \tau(d')=\tau(d)\wedge \tau(d')$, $\widetilde R(q\cup d'\cup \iota(d))=\widetilde R(q\cup d' \cup \tau(d))$. These imply $\widetilde\partial_q(c)=0$ by Lemma~\ref{cor:naturalmove-q2}.
\end{proof}

If there are deleted edges $d_X$ incident to $X$ in a graph $\Gamma$ containing $S_0$ but not $\Theta$, we need to consider critical 3-cells in $UD_n\Gamma$ of the form $q \cup d_X \cup d \cup X(\vec x)$. From (3) of $\cond(\Gamma,i,T)$, there are no deleted edges other than $q$ separated by $X$. Thus we have the following corollary of Lemma~\ref{cor:naturalmove-q2}.

\begin{cor}\label{cor:threedel2}
Let $\Gamma$ be a graph containing no $\Theta$ and satisfying $\cond(\Gamma,i,T)$. Suppose that $c$ is a critical 3-cell of the form $q \cup d_X \cup d \cup X(\vec x)$ in $UD_n\Gamma$. Then $\widetilde\partial(c)$ contains no critical 2-cells of the form $X_\mu(\vec x)\cup q$ as a summand.
\end{cor}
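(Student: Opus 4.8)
The plan is to reduce the statement to Lemma~\ref{lem:threedel} by the same device used to pass from Lemma~\ref{lem:twodel1} to Corollary~\ref{cor:twodel1-1}: the vertices recorded by the symbol $X(\vec x)$ are inert in the computation of $\widetilde\partial_q$. Since a cell $X_\mu(\vec y)\cup q$ contains $q$, its coefficient in $\widetilde\partial(c)$ equals its coefficient in $\widetilde\partial_q(c)=\pi\widetilde\partial(c)$, so it suffices to treat $\widetilde\partial_q(c)$. When $\partial(c)$ is expanded, the two faces obtained by splitting $q$ contain no copy of $q$, hence die under $\pi$; each of the remaining four faces is obtained by splitting $d_X$ or $d$ and still contains $q$, the whole of $X(\vec x)$, and one of $d_X,d$. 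Because $d_X$ is incident to $X$, the vertex $X$ is an endpoint of $d_X$ and therefore occurs in all of these faces, so every vertex of $X(\vec x)$ stays blocked in them; by Lemma~\ref{cor:naturalmove-q2}, and iteration of it exactly as in the proofs of Lemmas~\ref{lem:twodel1}, \ref{lem:twodel2}, \ref{lem:threedel}, the rewriting $\widetilde R_q$ only ever moves unblocked vertices and never disturbs a vertex blocked by $X$, so the $X(\vec x)$-part is carried along untouched throughout. Hence the coefficients of critical $2$-cells of the form $X_\mu(\vec y)\cup q$ in $\widetilde\partial_q(c)$ are computed by the same bookkeeping as in the proof of Lemma~\ref{lem:threedel} applied to $q\cup d_X\cup d$, with $X(\vec x)$ appended to every cell that occurs, which merely shifts the vectors $\vec y$ and creates no new $X_\mu$-type summand.

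It then remains to check that Lemma~\ref{lem:threedel} applies, i.e.\ that neither $d_X$ nor $d$ is separated by the vertex $X(\vec\delta_\mu)$. For $d$: condition~(3) of $\cond(\Gamma,i,T)$ together with $\Theta\not\subset\Gamma$ forces $q$ to be the only deleted edge separated by $X$, so $d$ is not separated by $X$; a $T$-path realizing $d$ that crosses $X(\vec\delta_\mu)$ would have to pass through $X$ as well, since $X(\vec\delta_\mu)$ lies on a branch of $X$, so $d$ is not separated by $X(\vec\delta_\mu)$ either. For $d_X$: it has $X$ as an endpoint, and by the placement guaranteed by Lemma~\ref{lem:specialembedding}, which is part of $\cond(\Gamma,i,T)$, it lies between $i(e_0)$ and $i(e_1)$ at $X$; since $X(\vec\delta_\mu)$ sits on the last branch of $X$ in $T$ (the image of $e_2$), the $T$-path realizing $d_X$ never meets $X(\vec\delta_\mu)$, so $d_X$ is not separated by $X(\vec\delta_\mu)$. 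With both hypotheses verified, Lemma~\ref{lem:threedel} gives that $\widetilde\partial_q(q\cup d_X\cup d)$, and hence by the previous paragraph $\widetilde\partial(c)$, contains no summand $X_\mu(\vec y)\cup q$.

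The main obstacle is the first paragraph: making rigorous the assertion that vertices blocked by $X$ "play no role". One must verify that at no stage of the iterated $\widetilde R_q$ computation does such a vertex become the smallest unblocked vertex and get moved; this rests on the fact that $X$ persists as an endpoint of $d_X$ in every intermediate cell still contributing to $\widetilde\partial_q$, so that the vertices immediately above $X$ remain blocked, together with the observation packaged in Lemma~\ref{cor:naturalmove-q2} that relocating an unblocked vertex of another edge cannot unblock them. Alternatively, one may simply rerun the case analysis of the proof of Lemma~\ref{lem:threedel} verbatim with the extra block $X(\vec x)$ attached, as in Example~\ref{ex:threedel}; once the passenger issue is pinned down, checking the separation hypotheses via $\cond(\Gamma,i,T)$ is routine.
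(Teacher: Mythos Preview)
Your overall strategy matches the paper's: reduce to Lemma~\ref{lem:threedel} by arguing that the block $X(\vec x)$ is a passenger, and verify the separation hypotheses via condition~(3) of $\cond(\Gamma,i,T)$. The second paragraph, checking that neither $d$ nor $d_X$ is separated by $X(\vec\delta_\mu)$, is fine.

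There is, however, a genuine gap in the first paragraph. You assert that ``$X$ is an endpoint of $d_X$ and therefore occurs in all of these faces.'' This is false for one of the four faces: when you split $d_X$ by replacing it with its \emph{non-$X$} endpoint $v$, the resulting face is $q\cup d\cup\{v\}\cup X(\vec x)$, and neither $q$ nor $d$ has $X$ as an endpoint, so $X$ is absent from this cell. The vertices in $X(\vec x)$ closest to $X$ then become unblocked. Your later restatement --- that ``$X$ persists as an endpoint of $d_X$ in every intermediate cell'' --- inherits the same error, since $d_X$ itself is gone in these faces.

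The conclusion is still correct, but the repair is not the one you sketch. What actually happens in that problematic face is that the smallest unblocked vertex of $X(\vec x)$ has $X=\tau(e)$, and since $d$ is not separated by $X$ (condition~(3)), Lemma~\ref{cor:naturalmove-q2} applies and lets you push that vertex down to $X$; this restores $X$ to the cell and re-blocks the rest of $X(\vec x)$. So the passenger claim survives, but only after invoking condition~(3) already at this stage, not merely because $d_X$ is incident to $X$. The paper's one-line proof points directly to Lemma~\ref{cor:naturalmove-q2} together with condition~(3) rather than to Lemma~\ref{lem:threedel}, which is the cleaner route: once you know $d$ is not separated by $X$, Lemma~\ref{cor:naturalmove-q2} lets you slide all stray vertices (including the temporarily unblocked ones from $X(\vec x)$) without producing $X_\mu$-type terms, and the pairwise cancellations go through as in Lemma~\ref{lem:threedel}.
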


Note that only limited kinds of critical 3-cells were considered in this section but they are sufficient in achieving our goal in the following sections where graphs should be assumed to satisfy $\cond(\Gamma,i,T)$.

\subsection{Homomorphisms between the second homologies}\label{ss43:h2inj}
Let $\Gamma'$ be either $S_0$ or $\Theta$ and $\Gamma$ be a planar
graph containing $\Gamma'$. As explained in \S\ref{ss42:boundary
homomorphisms}, we choose an embedding $i:\Gamma' \to \Gamma$ and their maximal trees and orders satisfying $\cond(\Gamma,i,T)$ so that $i$ induces a
chain map $\tilde i$ between Morse complexes as follows:

\begin{displaymath}
\xymatrix{0 \ar[r] &M_2(UD_n\Gamma') \ar[r]^-{\widetilde\partial_2}
\ar@{^{(}->}[d]^{\tilde i} & M_1(UD_n\Gamma') \ar[r]^-{0}
\ar@{^{(}->}[d]^{\tilde i} &  \mathbb Z
\ar[r]^-{0} \ar[d]^{\simeq}& 0\\
M_3(UD_n\Gamma) \ar[r]^-{\widetilde\partial_3} &
M_2(UD_n\Gamma)\ar[r]^-{\widetilde\partial_2} & M_1(UD_n\Gamma)
\ar[r]^-{0} & \mathbb Z \ar[r]^-{0} & 0\\}
\end{displaymath}

Recall the subgroup $M_q$ of $M_2(UD_n\Gamma)$ and the homomorphism
$\widetilde\partial_q : M_3(UD_n\Gamma)\to M_q$ defined in the
previous section. When $\Gamma'=S_0$, we want to prove that $\tilde
i$ induces an injection $i_*:H_2(UD_n\Gamma')\to H_2(UD_n\Gamma)$
by showing a sufficient condition that
$\tilde i(M_2(UD_n\Gamma')) \cap \widetilde\partial(M_3(UD_n\Gamma))=\{0\}$.

When $\Gamma'=\Theta$, we want to prove that
$i_*:H_2(UD_n\Gamma')\to H_2(UD_n\Gamma)$ is a non-trivial
homomorphism by showing that there is a cycle $z$ in
$M_2(UD_n\Gamma')$ and a homomorphism $\rho: M_2(UD_n\Gamma)\to
\mathbb Z$ such that $\rho
\circ\widetilde\partial(M_3(UD_n\Gamma))=0$ but $\rho\circ\tilde
i(z)\ne 0$.

\begin{lem}\label{lem:S0toGamma}
Let $\Gamma$ be a graph that contains $S_0$ but does not contain
$\Theta$ and $n\ge 5$. Then there is an embedding $i:S_0 \to \Gamma$
that induces an injection $i_*:H_2(B_n S_0) \to H_2(B_n \Gamma)$.
\end{lem}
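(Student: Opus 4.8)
The plan is to exhibit a concrete embedding $i:S_0\to\Gamma$ together with maximal trees and planar embeddings of $S_0$ and $\Gamma$ satisfying $\cond(\Gamma,i,T)$, and then to prove that $\tilde i(M_2(UD_nS_0))\cap\widetilde\partial(M_3(UD_n\Gamma))=\{0\}$. Since, by Theorem~\ref{thm:homology}, the Morse boundary maps $\widetilde\partial_2$ for both $UD_nS_0$ and $UD_n\Gamma$ are intertwined by $\tilde i$ and $\tilde i$ is injective on chains sending generators to generators, a diagram chase shows that this intersection condition forces $i_*:H_2(B_nS_0)\to H_2(B_n\Gamma)$ to be injective: a cycle $z\in M_2(UD_nS_0)$ whose class dies in $H_2(B_n\Gamma)$ has $\tilde i(z)=\widetilde\partial_3(w)$ for some $w$, but then $\tilde i(z)\in\tilde i(M_2(UD_nS_0))\cap\widetilde\partial(M_3(UD_n\Gamma))=\{0\}$, and injectivity of $\tilde i$ on chains gives $z=0$.

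First I would recall from the proof of Lemma~\ref{lem:Theta} and the setup of \S\ref{ss42:boundary homomorphisms} that, with the maximal tree of $S_0$ chosen as in Figure~\ref{fig16}(a), every critical $2$-cell of $UD_nS_0$ contains $d_1$, so that $\tilde i(M_2(UD_nS_0))\subseteq M_q$ where $q=i(d_1)$; indeed under $\cond(\Gamma,i,T)$ the image $\tilde i(M_2(UD_nS_0))$ is precisely the subgroup of $M_q$ spanned by critical $2$-cells of the form $X_\mu(\vec x)\cup q$, by the formula $\tilde i(Y_2(a,b)\cup d_1)=X_\mu(a\vec\delta_{\mu-1}+b\vec\delta_\mu)\cup q$ recorded after Lemma~\ref{lem:specialembedding}. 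Therefore it is enough to show that $\widetilde\partial_q(c)$ contains no critical $2$-cell of the form $X_\mu(\vec x)\cup q$ as a summand, for every critical $3$-cell $c$ in $UD_n\Gamma$ containing $q$.

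The core of the argument is a case analysis over the possible forms of a critical $3$-cell $c$ containing $q$, which is exactly what the technical lemmas of \S\ref{ss42:boundary homomorphisms} were built to handle. Since $\Gamma$ does not contain $\Theta$, conditions (3) of $\cond(\Gamma,i,T)$ guarantee that $q$ is the only deleted edge separated by $X$, so a deleted edge other than $q$ is never separated by $X$ and a fortiori never separated by $X(\vec\delta_\mu)$. The critical $3$-cells containing $q$ fall into: those of the form $A_k(\vec a)\cup B_\ell(\vec b)\cup q\cup C(\vec c)$ with $C=\iota(q)$, handled by Lemma~\ref{lem:onedel}; those of the form $A_k(\vec a)\cup q\cup d$, handled by Lemma~\ref{lem:twodel1} since cases (i)--(iii) there exhaust the possibilities when $d$ is not separated by $X$ (case (iii) applying because $\Gamma$ has no $\Theta$), with the extra subcase $A_k(\vec a)\cup q\cup d_X\cup X(\vec x)$ covered by Corollary~\ref{cor:twodel1-1}; and those of the form $q\cup d\cup d'$, handled by Lemma~\ref{lem:threedel} since neither $d$ nor $d'$ is separated by $X(\vec\delta_\mu)$, with the subcase $q\cup d_X\cup d\cup X(\vec x)$ covered by Corollary~\ref{cor:threedel2}. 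In every case the conclusion is that no summand $X_\mu(\vec x)\cup q$ appears in $\widetilde\partial(c)$, hence none in $\widetilde\partial_q(c)$; summing over all critical $3$-cells gives $\widetilde\partial(M_3(UD_n\Gamma))\cap\tilde i(M_2(UD_nS_0))=\{0\}$.

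The main obstacle is organizational rather than conceptual: one must be sure the chosen embedding $i$ and maximal tree $T$ really do satisfy $\cond(\Gamma,i,T)$ for an \emph{arbitrary} $\Gamma$ containing $S_0$ but not $\Theta$ — this is where Lemma~\ref{lem:specialembedding} is used, to push all stray edges at $X$ into the wedge between $i(e_0)$ and $i(e_1)$ so that the critical $3$-cells that can arise are exactly the ones enumerated above — and that the case list genuinely exhausts all critical $3$-cells containing $q$. Once that bookkeeping is in place the lemmas of \S\ref{ss42:boundary homomorphisms} do all the computational work, and the injectivity of $i_*$ follows from the diagram chase described above together with Theorem~\ref{thm:homology}.
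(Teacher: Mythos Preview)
Your proposal is correct and follows essentially the same approach as the paper: choose $i$ and $T$ satisfying $\cond(\Gamma,i,T)$, observe that $\tilde i(M_2(UD_nS_0))$ lies in the span of the cells $X_\mu(\vec x)\cup q$, and then run the case analysis of \S\ref{ss42:boundary homomorphisms} (Lemma~\ref{lem:onedel}, Lemma~\ref{lem:twodel1}, Corollary~\ref{cor:twodel1-1}, Lemma~\ref{lem:threedel}, Corollary~\ref{cor:threedel2}) to show that no $\widetilde\partial_q(c)$ contributes such a summand. One small correction: your two appeals to Theorem~\ref{thm:homology} are misplaced, since that theorem applies only to graphs \emph{not} containing $S_0$; the fact that $\tilde i$ is a chain map comes instead from the order-preserving embedding construction discussed in \S\ref{ss42:boundary homomorphisms} (and the commuting diagram after Theorem~\ref{thm:homology}), and the identification $H_2(B_nS_0)=\ker\widetilde\partial_2\subset M_2(UD_nS_0)$ follows simply from $M_3(UD_nS_0)=0$, not from Theorem~\ref{thm:homology}.
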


\begin{proof}
\begin{figure}[ht]
\psfrag{*}{\small0}
\psfrag{A}{\small$X$}
\psfrag{e0}{\small$q$}
\psfrag{e1}{\small$d_X$}
\centering
\includegraphics[height=2.8cm]{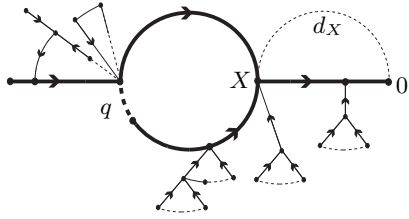}
\caption{Maximal tree and order of graph containing $S_0$ but not $\Theta$}
\label{fig23}
\end{figure}

Consider an embedding $i:S_0\to \Gamma$ and a maximal tree $T$ of $\Gamma$ satisfying $\cond(\Gamma,i,T)$ as the Figure~\ref{fig23}. We need to show that for each critical 3-cell $c$, $\widetilde\partial(c)\not\in\im(\tilde i)-\{0\}$. We know that $\im(\tilde i)$ is generated by critical 2-cells of the form $X_\mu(\vec x)\cup q$. A critical 3-cell $c$ must contain two deleted edges including $q$. Otherwise, $\tilde\partial_q(c)=0$ by Lemma~\ref{lem:onedel}, Lemma~\ref{lem:threedel} and Corollary~\ref{cor:threedel2}. So $c$ can be either $A_k(\vec a)\cup q \cup d$ or $A_k(\vec a)\cup q \cup d_X \cup X(\vec x)$ for some vertex $A$ of valency $\ge3$. Since $\cond(\Gamma,i,T)$ implies that $d$ is not separated by $X$, $\widetilde\partial(c)\not\in\im(\tilde i)-\{0\}$ by Lemma~\ref{lem:twodel1} and Corollary~\ref{cor:twodel1-1}.
\end{proof}

\begin{lem}\label{lem:ThetatoGamma}
Let $\Gamma$ be a planar graph that contains $\Theta$ and $n\ge 4$.
There exists an embedding $i:\Theta\to \Gamma$ such that the induced
map $i_*:H_2(B_n \Theta)\to H_2(B_n \Gamma)$ is non-trivial.
\end{lem}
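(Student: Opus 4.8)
The plan is to carry out the strategy announced just before the statement: produce an explicit embedding $i\colon\Theta\to\Gamma$ with maximal trees and orders satisfying $\cond(\Gamma,i,T)$, an explicit $2$-cycle $z$ in the Morse complex $M_*(UD_n\Theta)$, and a homomorphism $\rho\colon M_2(UD_n\Gamma)\to\mathbb Z$ with $\rho\circ\widetilde\partial=0$ on $M_3(UD_n\Gamma)$ and $\rho(\tilde i(z))\neq 0$. Because $UD_n\Theta$ has no critical cells of dimension $\ge 3$, every nonzero element of $\ker\widetilde\partial_2$ already represents a nonzero class in $H_2(B_n\Theta)$, and the map $\rho$ then witnesses $\tilde i(z)\notin\widetilde\partial(M_3(UD_n\Gamma))$, i.e. $i_*[z]\neq 0$.

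First I would fix the data: by Lemma~\ref{lem:specialembedding} and the construction following it, choose $i$, $T$ and a planar order so that $\cond(\Gamma,i,T)$ holds; then $i$ is order-preserving, $\tilde i$ is a chain map of Morse complexes, $\tilde i(Y_2(a,b)\cup d_1)=X_\mu(a\vec\delta_{\mu-1}+b\vec\delta_\mu)\cup q$ where $X=i(Y)$ has valency $\mu+1$ in $T$ and $q=i(d_1)$, and $q'=i(d_2)$ is a deleted edge of $\Gamma$ with $q'\neq q$. Abelianizing the boundary words in the proof of Lemma~\ref{lem:Theta} gives
\[
\widetilde\partial(Y_2(a,b)\cup d_1)=Y_2(a,b)+Y_2(1,b)-Y_2(a+1,b),\qquad
\widetilde\partial(Y_2(a,b)\cup d_2)=Y_2(a+1,b)-Y_2(1,b)-Y_2(a,b+1),
\]
and a direct check shows that for every $n\ge4$
\[
z=(Y_2(1,1)\cup d_1)+(Y_2(1,2)\cup d_1)-(Y_2(2,1)\cup d_1)+2\,(Y_2(1,1)\cup d_2)-(Y_2(2,1)\cup d_2)
\]
satisfies $\widetilde\partial z=0$; since $M_3(UD_n\Theta)=0$, $z\neq 0$ represents a nonzero class. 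Applying $\tilde i$ and projecting to $M_q$ kills the $d_2$-summands (as $q'\neq q$) and leaves $(X_\mu(\vec\delta_{\mu-1}+\vec\delta_\mu)\cup q)+(X_\mu(\vec\delta_{\mu-1}+2\vec\delta_\mu)\cup q)-(X_\mu(2\vec\delta_{\mu-1}+\vec\delta_\mu)\cup q)$.

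Now set $\rho(X_\mu(\vec y)\cup q)=y_\mu$, the $\mu$-th coordinate of $\vec y$, and $\rho=0$ on every other critical $2$-cell. Then $\rho(\tilde i(z))=1+2-1=2\neq 0$, so it only remains to check $\rho\circ\widetilde\partial=0$ on $M_3(UD_n\Gamma)$. I would run over the types of critical $3$-cells $c$: if $q\notin c$ then no face of $c$ nor any term produced by $\widetilde R$ contains a deleted edge, so $\rho(\widetilde\partial c)=0$; if $c$ consists of $q$ and two branch edges, $\widetilde\partial_q(c)=0$ by Lemma~\ref{lem:onedel}; if $c$ contains $q$ and exactly one further deleted edge (so $c=A_k(\vec a)\cup q\cup d$, or $c=A_k(\vec a)\cup q\cup d_X\cup X(\vec x)$ in the $\Theta$-free part), then Lemma~\ref{lem:twodel1} and Corollary~\ref{cor:twodel1-1} give $\widetilde\partial(c)$ with no summand of the form $X_\mu(\vec y)\cup q$ except when $A=X$, $k=\mu$ and $d$ is separated by $X$; in that exceptional case $d$ is not separated by $X(\vec\delta_\mu)$, so Lemma~\ref{lem:twodel2} displays $\widetilde\partial_q(c)$ as an alternating sum of four cells $X_\mu(\cdot)\cup q$ whose $\mu$-th coordinates all equal the $\mu$-th coordinate of $\vec x$, whence $\rho(\widetilde\partial_q c)=0$; and if $c$ has three deleted edges (one of them $q$) then Lemma~\ref{lem:threedel} or Corollary~\ref{cor:threedel2} applies. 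In every case $\rho(\widetilde\partial c)=0$, so $\rho$ descends to $H_2(B_n\Gamma)$ and $i_*[z]\neq 0$.

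The main obstacle is making this case analysis airtight: one must show that $\cond(\Gamma,i,T)$—together with the careful choice of embedding and of the last branch at $X$ provided by Lemma~\ref{lem:specialembedding} and the rule that every deleted edge is either broken at $\iota(q)$ or has valency-$2$ ends—really restricts the critical $3$-cells meeting $q$ to the shapes treated in \S\ref{ss42:boundary homomorphisms}, in particular that no deleted edge other than $q$ is separated by $X(\vec\delta_\mu)$, so that the exceptional $3$-cell above is indeed covered by Lemma~\ref{lem:twodel2} and no critical $3$-cell escapes Lemmas~\ref{lem:onedel}, \ref{lem:twodel1}--\ref{lem:threedel} and their corollaries (any type that did escape would have to be dispatched by an analogous, if tedious, computation). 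Granting this confinement, the two computed values $\rho(\tilde i(z))=2$ and $\rho(\widetilde\partial(M_3(UD_n\Gamma)))=0$ complete the proof.
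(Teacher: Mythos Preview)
Your proof follows the paper's strategy exactly: set up $\cond(\Gamma,i,T)$, exhibit an explicit $2$-cycle in $M_2(UD_n\Theta)$, and build a homomorphism $\rho$ on $M_2(UD_n\Gamma)$ that annihilates $\widetilde\partial(M_3(UD_n\Gamma))$ but not the image of the cycle. Your cycle $z$ is (up to a sign convention on the $d_2$-boundary that you then absorb into the coefficients) the same $2$-cycle $\alpha$ the paper uses.

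The one genuine difference is your choice of $\rho$. The paper takes the quadratic functional $\rho(X_\mu(\vec x)\cup q)=x_\mu(x_1+\cdots+x_{\mu-1})$, which evaluates to $1$ on $\tilde i(\alpha)$; you take the linear functional $\rho(X_\mu(\vec y)\cup q)=y_\mu$, which evaluates to $2$ on $\tilde i(z)$. Both vanish on the four-term expression from Lemma~\ref{lem:twodel2}: with signs $+,-,-,+$ the $\mu$-th coordinates cancel even when one of $g(X,\tau(d)),g(X,\iota(d))$ equals $\mu$, so your $\rho$ actually needs slightly less from the setup than you claim (your sentence ``whose $\mu$-th coordinates all equal $x_\mu$'' is only the generic case, but the alternating sum is zero regardless). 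The paper's quadratic $\rho$, by contrast, genuinely requires the observation that $X(\vec\delta_\mu)$ separates no deleted edge to force $g(X,\tau(d)),g(X,\iota(d))\neq\mu$. So your simplification is harmless and even mildly more robust.

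Two small slips to clean up: the sentence ``if $q\notin c$ then no face of $c$ nor any term produced by $\widetilde R$ contains a deleted edge'' is false as written (other deleted edges may well appear); what you mean, and what is true, is that no such term contains $q$, whence $\rho$ vanishes on it. And your $d_2$-boundary formula differs in sign from the paper's; this is a convention issue that you compensate for in $z$, but you should make sure your convention is internally consistent with the definition of $\partial$ in \S\ref{ss31:homology}.
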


\begin{proof}
\begin{figure}[ht]
\psfrag{*}{\small0}
\psfrag{A}{\small$X=i(Y)$}
\psfrag{e1}{\small$q=i(d_1)$}
\psfrag{e2}{\small$q_2=i(d_2)$}
\centering
\includegraphics[height=4cm]{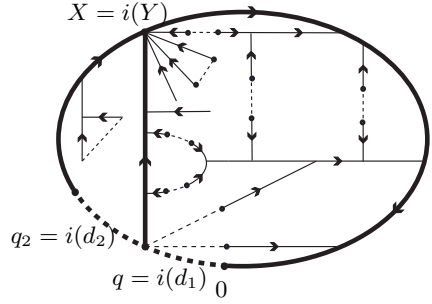}
\caption{Planar graph containing $\Theta$} \label{fig24}
\end{figure}

Choose an embedding $i:\Theta\to \Gamma$ and a maximal tree $T$ of
$\Gamma$ such that they satisfies $\cond(\Gamma,i,T)$. Then $g(X,\iota(q_2))=\mu$ and $g(X,\iota(q))=\mu-1$ where $q_2=i(d_2)$ as in Figure~\ref{fig24} and so $X(\vec\delta_\mu)$ separates no deleted edges. Thus the hypothesis of Lemma~\ref{lem:twodel2} and Lemma~\ref{lem:threedel} are automatically satisfied.

To show that $\tilde i_*$ is nontrivial, let
$\alpha=(Y_2(1,2)\cup d_1)-(Y_2(2,1)\cup d_1)+(Y_2(1,1)\cup d_1)
-2(Y_2(1,1)\cup d_2)+(Y_2(2,1)\cup d_2)$
where $Y$ is the vertex and $d_1$ and $d_2$ are deleted edges in the graph $\Theta$ with the planar embedding given by Figure~\ref{fig16}. Then
$\tilde\partial(\alpha)=(2Y_2(1,2)-Y_2(2,2))-(Y_2(2,1)+Y_2(1,1)-Y_2(3,1))+(2Y_2(1,1)-Y_2(2,1))
-2(Y_2(1,2)-Y_2(2,1)+Y_2(1,1))+(Y_2(2,2)-Y_2(3,1)+Y_2(1,1))=0$.
Thus $\alpha$ is a 2-cycle and so $\tilde i_*(\alpha)=(X_\mu(\vec\delta_{\mu-1}+2\vec\delta_\mu)\cup q)-(X_\mu(2\vec\delta_{\mu-1}+\vec\delta_\mu)\cup q)+(X_\mu(\vec\delta_{\mu-1}+\vec\delta_\mu)\cup q)
-2(X_\mu(\vec\delta_{\mu-1}+\vec\delta_\mu)\cup q_2)+(X_\mu(2\vec\delta_{\mu-1}+\vec\delta_\mu)\cup q_2)$ represents a homology class of $H_2(B_n \Gamma)$.

Define a homomorphism $\rho:
M_2(UD_n\Gamma) \to \mathbb Z$ by $\rho(X_\mu(\vec x) \cup
q)= x_\mu(x_1+\cdots +x_{\mu-1})$, and $\rho(c)=0$ for
any other critical 2-cell $c$. To show $\rho$ induces a homomorphism on homologies, we need the fact that $\rho(\im \widetilde\partial)$ is trivial.
By the definition of $\rho$. we need to show $\rho(c)=0$ for each critical 3-cell $c$ in $M_3(UD_n \Gamma)$ such that $\widetilde\partial(c)$ contains a 2-cell of the form $X_\mu(\vec x)\cup q$ as a summand. By Lemma~\ref{lem:onedel}, Lemma~\ref{lem:twodel1}, and Lemma~\ref{lem:threedel}, it suffices to consider $c=X_\mu(\vec x)\cup q \cup d$ for some $\vec x$ and a deleted edge $d$. Then by Lemma~\ref{lem:twodel2},
\begin{align*}
\widetilde\partial(c)=&(X_\mu(\vec x+\vec\delta_{g(X,\tau(d))})
\cup q)-(X_\mu(\vec x+\vec\delta_{g(X,\iota(d))}) \cup q)\\
&+(X_\mu(\vec\delta_{g(X,\iota(d))}+x_\mu\vec\delta_\mu) \cup q)
-(X_\mu(\vec\delta_{g(X,\tau(d))}+x_\mu\vec\delta_\mu) \cup q)+\mbox{Irrelevant}.
\end{align*}
Then $\rho(\widetilde\partial(c))=0$ and so $\rho$ induces a
homomorphism $\rho_*:H_2(M_*(UD_n\Gamma)) \to \mathbb Z$. Since
$\rho (\tilde i_*(\alpha))=1$, $\rho_*(\tilde
i_*(\alpha))$ is a nontrivial class in
$H_2(M_*(UD_n\Gamma))$. So the homomorphism $i_*:H_2(B_n \Theta)\to
H_2(B_n \Gamma)$ is non-trivial.
\end{proof}

\subsection{Non-planar graphs}\label{ss44:nonplanar}
If we choose a maximal tree of $K_{3,3}$ and give an order as the
Figure~\ref{fig25}. Then we obtain a presentation of $B_2K_{3,3}$
with five generators $d_1$, $d_2$, $d_3$, $d_4$, $B_2(1,1)$ and one
relator $B_2(1,1)d_3^{-1}B_2(1,1)d_2d_3d_1^{-1}d_2^{-1}
d_4d_2d_1d_2^{-1}d_4^{-1}$. The abelianizatoin of the relator is
$(B_2(1,1))^2$. Thus $H_1(B_2K_{3,3})$ has a 2-torsion even though
every graph braid group is torsion free.

However, there are two difficulties in a straightforward extension
of this argument to show that the first homology group of any
non-planar graph braid group has a torsion. One is that it is
quickly getting out of hand to obtain a presentation of the
$n$-braid group of $K_5$ or $K_{3,3}$ as the braid index $n$
increases. And the other is that both $K_5$ or $K_{3,3}$ have no
vertices of valency 1 and so there is no room to accommodate extra
punctures when we construct an embedding to obtain corresponding
results for larger braid indices. Instead of computing the whole $H_1(B_n\Gamma)$, we will find a critical 1-cell that produce a nontrivial torsion homology class. Lemma~\ref{lem:casec} is useful to show it is in fact a 2-torsion. And the homomorphism $\pi_*:H_1(B_n\Gamma)\to\mathbb Z_2(=H_1(S_n))$ discussed after Example~\ref{ex:S_0} is used to check that it is nontrivial.

Let a graph $\Gamma$ be ordered by a planar embedding of its maximal tree $T$  satisfying that both ends of every deleted edge have valency two in
$\Gamma$. Suppose the vertex $i$ in $0_n$ is regarded as a puncture labeled by $n-i$ for $0\le i\le n-1$.
If neither ends of a deleted edge $d$ are the base vertex 0, $\pi(d)$ is the identity and so $\pi_*([d])=0$. If one of ends of $d$ is the base vertex 0, $\pi(d)=(1,n,n-1,\cdots,2)$ and so $\pi_*([d])\equiv n-1\pmod 2$. For a critical 1-cell $A_k(\vec a)$, $\pi(A_k(\vec a))=(|\vec a|,|\vec a|-1,\cdots,|\vec a|_k)$ and so $$\pi_*([A_k(\vec a)]) \equiv |\vec a|-|\vec a|_k \pmod 2$$
where $|\vec a|_k=a_k+a_{k+1}+\cdots+a_m$ for $\vec a=(a_1,\cdots,a_m)$.

\begin{lem}\label{lem:K_3,3}
For the complete bipartite graph $K_{3,3}$ of 6 vertices and for
$n\ge 2$, $H_1(B_n K_{3,3})$ has a torsion.
\end{lem}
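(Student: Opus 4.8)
The plan is to exhibit a single critical $1$-cell $\gamma$ in a suitably chosen Morse complex of $UD_nK_{3,3}$ whose homology class has order exactly $2$. Since $UD_nK_{3,3}$ is connected, its Morse complex has $\widetilde\partial_1=0$, so $H_1(B_nK_{3,3})\cong M_1(UD_nK_{3,3})/\im\widetilde\partial_2$, and it suffices to find $\gamma\in M_1$ with $2\gamma\in\im\widetilde\partial_2$ but $\gamma\notin\im\widetilde\partial_2$. First I would subdivide $K_{3,3}$ enough for the braid index $n$ and choose a maximal tree $T$ with a planar embedding in the style prescribed for non-planar graphs: delete four edges, each having both endpoints of valency $2$ in the subdivided graph. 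With this choice no vertex is blocked by a deleted edge, the only critical $2$-cells are of the three types $A_k(\vec a)\cup B_\ell(\vec b)$, $A_k(\vec a)\cup d$, $d\cup d'$, and the boundary formulas of Lemma~\ref{lem:casec} apply. The hand computation of $B_2K_{3,3}$ recalled just before the lemma, whose single relator abelianizes to $B_2(1,1)^2$, tells me which cell to take: $\gamma=B_2(1,1)$, with the remaining $n-2$ points forming the omitted $0_{n-2}$.

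For $2[\gamma]=0$ I would list the critical $2$-cells built from pairs of the four deleted edges, compute their $\widetilde\partial$-images through the three cases of Lemma~\ref{lem:casec}, and form the integral linear combination of these cells that generalizes the single $n=2$ relator (the $n-2$ extra points being parked at the base vertex, which is harmless since deleted edges block nothing). In the resulting $1$-chain all summands except $2\gamma$ are either non-critical $1$-cells coming from the lower-branch $\vec\delta$-terms in Lemma~\ref{lem:casec}, hence to be discarded, or $A_k(\vec a)$-type critical $1$-cells that cancel in pairs once the $g(\cdot,\cdot)$-labels are matched up. This yields $2\gamma\in\im\widetilde\partial_2$.

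For $\gamma\notin\im\widetilde\partial_2$ I would invoke the parity homomorphism $\pi_*:H_1(B_nK_{3,3})\to H_1(S_n)=\mathbb Z_2$ discussed after Example~\ref{ex:S_0}, which on critical $1$-cells is given by $\pi_*([A_k(\vec a)])\equiv|\vec a|-|\vec a|_k\pmod2$. For $\gamma=B_2(1,1)$ this reads $2-1\equiv1$, so $\pi_*([\gamma])\neq0$ and hence $[\gamma]\neq0$. Combining the two steps, $[\gamma]$ has order $2$ in $H_1(B_nK_{3,3})$, so $H_1(B_nK_{3,3})$ contains $2$-torsion, as claimed.

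The step I expect to be the main obstacle is the computation $2\gamma\in\im\widetilde\partial_2$: one must pick $T$ and the ordering so that the pair-of-deleted-edges $2$-cells in play are genuinely critical for every $n\ge2$, and then check that the full list of boundary terms from the three cases of Lemma~\ref{lem:casec} --- including the ``irrelevant'' non-critical $1$-cells and the tree-supported cancellations --- really collapses to precisely $2\gamma$, uniformly in $n$ and not just in the base case $n=2$.
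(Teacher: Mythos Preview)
Your proposal is correct and takes essentially the same approach as the paper: the paper exhibits $\gamma=B_2(1,1)$, computes via Lemma~\ref{lem:casec} that $\widetilde\partial(d_1\cup d_4)=B_2(1,1)-C_2(1,1)$ and $\widetilde\partial(d_2\cup d_3)=B_2(1,1)+C_2(1,1)$ so that their sum gives $2\gamma\in\im\widetilde\partial_2$, and then uses $\pi_*([B_2(1,1)])=1$ for nontriviality. The computation is in fact simpler than you anticipate --- only these two critical $2$-cells are needed, and the single extra critical $1$-cell $C_2(1,1)$ cancels upon addition.
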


\begin{proof}
\begin{figure}[ht]
\psfrag{*}{\small0}
\psfrag{A}{\small$A$}
\psfrag{B}{\small$B$}
\psfrag{C}{\small$C$}
\psfrag{D}{\small$D$}
\psfrag{E}{\small$E$}
\psfrag{F}{\small$F$}
\psfrag{e1}{\small$d_1$}
\psfrag{e2}{\small$d_2$}
\psfrag{e3}{\small$d_3$}
\psfrag{e4}{\small$d_4$}
\centering
\includegraphics[height=3.5cm]{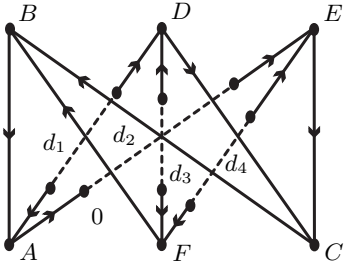}
\caption{Maximal tree and order of $K_{3,3}$} \label{fig25}
\end{figure}

If we choose a maximal tree of $K_{3,3}$ and its planar embedding as
in Figure~\ref{fig25}, then the complex $UD_n K_{3,3}$ has critical
1-cells $B_2(1,1)$, $C_2(1,1)$, and critical 2-cells $d_1 \cup d_4$,
$d_2 \cup d_3$. Using Lemma~\ref{lem:casec} and the maximal tree, the boundaries of critical 2-cells are given by:
$$\widetilde\partial(d_1\cup d_4)=B_2(1,1)-C_2(1,1),
\quad \widetilde\partial(d_2\cup d_3)=B_2(1,1)+C_2(1,1).$$ Thus
$2B_2(1,1)$ is a boundary and so it represents 0 in
$H_1(M_*(UD_nK_{3,3}))$. But $\pi_*([B_2(1,1)])=1$. So $B_2(1,1)$
represents a nontrivial torsion class in $H_1(B_nK_{3,3})$.
\end{proof}

\begin{lem}\label{lem:K_5}
For the complete graph $K_5$ of 5 vertices and for $n\ge 2$,
$H_1(B_nK_5)$ has a torsion.
\end{lem}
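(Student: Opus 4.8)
The plan is to run, almost verbatim, the argument of Lemma~\ref{lem:K_3,3}. Since $K_5$ has $10$ edges and a spanning tree uses $4$ of them, a maximal tree leaves six deleted edges. First I would pass to a sufficiently fine subdivision of $K_5$ and choose a maximal tree $T$, a planar embedding of $T$, and a base vertex of valency $1$ in $T$ so that each of the six deleted edges lies in the interior of a subdivided original edge and hence has \emph{both} endpoints of valency two in $\Gamma$. This is exactly the hypothesis of Lemma~\ref{lem:casec}, and it is available even though $K_5$ is non-planar, because the maximal tree $T$ is a tree and so embeds in the plane; the deleted edges are then chords that need not be drawn planarly. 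The order on the vertices of $\Gamma$ is the one induced by the planar embedding of $T$, so the homomorphism $\pi_*\colon H_1(B_nK_5)\to H_1(S_n)=\mathbb Z_2$ from the discussion following Example~\ref{ex:S_0} is defined, together with the formula $\pi_*([A_k(\vec a)])\equiv|\vec a|-|\vec a|_k\pmod 2$ recorded just before Lemma~\ref{lem:K_3,3}.

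Next I would isolate, for each pair $d_i,d_j$ of deleted edges, the critical $2$-cell $d_i\cup d_j$ that carries its remaining $n-2$ punctures at the base vertex, and compute $\widetilde\partial(d_i\cup d_j)$ using the three-case formula of Lemma~\ref{lem:casec}. Each such boundary is an integer combination of critical $1$-cells of the form $Z_k(\vec\delta_r+\vec\delta_s)$, where $Z$ ranges over the pairwise meets in $T$ of the endpoints of $d_i$ and $d_j$. Critical $2$-cells of the form $A_k(\vec a)\cup B_\ell(\vec b)$ lie in the tree and vanish under $\widetilde\partial$, and I would choose $T$ so that the critical $2$-cells of the form $A_k(\vec a)\cup d$ either do not occur or do not contribute the relevant $1$-cells. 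The goal is to arrange $T$ so that, just as $\widetilde\partial(d_1\cup d_4)=B_2(1,1)-C_2(1,1)$ and $\widetilde\partial(d_2\cup d_3)=B_2(1,1)+C_2(1,1)$ in the $K_{3,3}$ computation, there are two critical $2$-cells (or, more likely, a small $\mathbb Z$-linear combination of several $d_i\cup d_j$) whose $\widetilde\partial$-images cancel in all auxiliary $1$-cells and leave exactly $2c_0$ for a single critical $1$-cell $c_0$. Then $2c_0$ is a boundary in the Morse complex $M_*(UD_nK_5)$, so $[c_0]$ is a $2$-torsion element of $H_1(M_*(UD_nK_5))=H_1(B_nK_5)$ — provided $[c_0]\neq 0$.

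To certify $[c_0]\neq 0$ I would take $c_0=Z_2(\vec\delta_1+\vec\delta_2)$ (the $K_5$-analogue of $B_2(1,1)$), where $Z$ is the appropriate vertex of valency $\geq 3$. Here $|\vec a|=2$ and $|\vec a|_2=1$, so $\pi_*([c_0])\equiv 2-1\equiv 1\pmod 2$; since $\pi_*$ kills boundaries, $[c_0]$ is nonzero in $H_1(B_nK_5)$ while $2[c_0]=0$. Hence $H_1(B_nK_5)$ has a torsion element, as claimed, for every $n\geq 2$.

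The hard part will be purely combinatorial bookkeeping rather than any new idea: compared with $K_{3,3}$, the graph $K_5$ has six deleted edges instead of four and five vertices of valency four, so each branch vertex carries more branches, more pairs $d_i\cup d_j$ have to be examined, and all three relative-position cases of Lemma~\ref{lem:casec} genuinely arise. The real work is to choose the maximal tree and its planar embedding cleverly enough that only a handful of critical $2$-cells feed into the $1$-cell $c_0$ and the relation ``$2c_0$ is a boundary'' drops out cleanly; once it does, the $\pi_*$ computation makes the nontriviality of $[c_0]$ immediate, and checking that no other boundary relation accidentally trivializes $[c_0]$ is automatic because $\pi_*$ is a well-defined homomorphism on $H_1(B_nK_5)$.
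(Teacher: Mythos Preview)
Your proposal is correct and follows essentially the same route as the paper: choose a maximal tree with all deleted edges having valency-two endpoints, compute $\widetilde\partial(d_i\cup d_j)$ via Lemma~\ref{lem:casec}, find a linear combination whose boundary is $2c_0$, and certify $[c_0]\neq0$ via $\pi_*$. The paper's execution is cleaner than you anticipate because it chooses $T$ to be a (subdivided) star centered at a single vertex $B$ of valency $4$; then every critical $1$-cell in play has the form $B_k(\vec a)$ with $\vec a\in\mathbb Z^3$, only three of the $d_i\cup d_j$ are needed, and their boundaries $B_3(1,0,1)-B_3(0,1,1)$, $B_2(1,1,0)+B_3(0,1,1)$, $B_3(1,0,1)-B_2(1,1,0)$ sum (with one sign flip) to $2B_2(1,1,0)$ immediately---so the ``hard bookkeeping'' you flag largely evaporates with that tree choice.
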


\begin{proof}
\begin{figure}[ht]
\psfrag{*}{\small0}
\psfrag{A}{\small$A$}
\psfrag{B}{\small$B$}
\psfrag{C}{\small$C$}
\psfrag{D}{\small$D$}
\psfrag{E}{\small$E$}
\psfrag{e1}{\small$d_1$}
\psfrag{e2}{\small$d_2$}
\psfrag{e3}{\small$d_3$}
\psfrag{e4}{\small$d_4$}
\psfrag{e5}{\small$d_5$}
\psfrag{e6}{\small$d_6$}
\centering
\includegraphics[height=4cm]{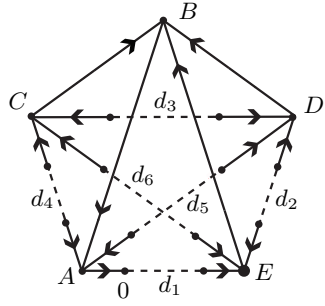}
\caption{Maximal tree and order of $K_{5}$} \label{fig26}
\end{figure}

If we choose a maximal tree of $K_5$ and a planar embedding as in
Figure~\ref{fig26}, then $UD_nK_5$ has critical 1-cells
$B_2(1,1,0)$, $B_3(1,0,1)$, $B_3(0,1,1)$, and critical 2-cells $d_1
\cup d_3$, $d_2\cup d_4$, $d_5\cup d_6$. Using Lemma~\ref{lem:casec} and the maximal tree, the boundaries of critical
2-cells are given by $\widetilde\partial_2(d_1 \cup
d_3)=B_3(1,0,1)-B_3(0,1,1)$, $\widetilde\partial_2(d_5 \cup
d_6)=B_2(1,1,0)+B_3(0,1,1)$, $\widetilde\partial_2(d_2 \cup
d_4)=B_3(1,0,1)-B_2(1,1,0)$. Thus $2B_2(1,1,0)$ is a boundary and
represents the trivial class in $H_1(M_*(UD_nK_5))$. But
$\pi_*([B_2(1,1,0)])=1$ and so it represents a torsion class. So
$H_1(B_n K_{3,3})$ has a torsion.
\end{proof}

\begin{lem}
For any non-planar graph $\Gamma$ and for $n\ge 2$, $H_1(B_n\Gamma)$
has a torsion and so $B_n\Gamma$ is not a right-angled Artin group.
\end{lem}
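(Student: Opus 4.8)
The plan is to reduce the general non-planar case to the two basic cases $K_5$ and $K_{3,3}$ already treated in Lemma~\ref{lem:K_5} and Lemma~\ref{lem:K_3,3}, and to transport their torsion class into $B_n\Gamma$ using the homomorphism $\pi_*$. First I would invoke Kuratowski's theorem: a finite graph is non-planar if and only if it contains a subdivision of $K_5$ or of $K_{3,3}$. Hence, after subdividing $\Gamma$ sufficiently (which changes neither $B_n\Gamma$ nor the conclusion, and is harmless in our setting), there is a subgraph $\Gamma_0\subset\Gamma$ that is a subdivision of $K_5$ or of $K_{3,3}$; subdividing a little more I may assume that along each arc of $\Gamma_0$ joining two of its branch vertices there is an edge both of whose endpoints have valency $2$ in $\Gamma$.

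Next I would fix the Morse data. Choose a maximal tree $T_0$ of $\Gamma_0$ realizing, under the subdivision, the maximal tree of Figure~\ref{fig25} (resp. Figure~\ref{fig26}), and extend $T_0$ to a maximal tree $T$ of $\Gamma$ by adjoining only edges of $\Gamma-\Gamma_0$, so that $T\cap\Gamma_0=T_0$. After a further subdivision if necessary I may also assume that \emph{every} deleted edge of $\Gamma$ has both endpoints of valency $2$ in $\Gamma$ (subdivide each deleted edge into at least three parts and delete a middle one), and in particular that the $4$ (resp. $6$) deleted edges lying inside $\Gamma_0$ are exactly $d_1,\dots,d_4$ (resp. $d_1,\dots,d_6$) as in the figure. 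Pick the base vertex of $\Gamma$ to be a leaf of $T$ lying beyond the base vertex $0$ of $\Gamma_0$, so that the $T$-geodesic from any vertex of $\Gamma_0$ to the base of $\Gamma$ passes through $0$; and choose the planar embedding of $T$ so that at each branch vertex of $\Gamma_0$ the branches belonging to $\Gamma_0$ are numbered first, consistently with the figure. With these choices the $T$-path between any two vertices $v,w$ of $\Gamma_0$ stays inside $\Gamma_0$ (otherwise $T$ would contain a cycle), hence so does the meet $v\wedge w$; moreover the induced order on $\Gamma$ restricts to the order on $\Gamma_0$ of the figure, and each critical $1$-cell of $UD_n\Gamma_0$ of the form $V_k(\vec a)$, with $V$ a branch vertex of $\Gamma_0$, is still critical in $UD_n\Gamma$ (being redundant/collapsible/critical is unchanged because all particles of such a cell lie in $\Gamma_0$, and $\Gamma_0$-adjacency, $\Gamma_0$-order, blocked-ness and deleted-ness are all preserved).

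Now I would run the computation. The critical $2$-cells of $UD_n\Gamma$ of the form $d_i\cup d_j$ with $d_i,d_j$ among the deleted edges lying in $\Gamma_0$ are precisely those appearing in the proofs of Lemma~\ref{lem:K_3,3} and Lemma~\ref{lem:K_5}, and by Lemma~\ref{lem:casec} the value $\widetilde\partial(d_i\cup d_j)$ is determined solely by the subtree of $T$ spanned by the base vertex and the (valency-$2$) endpoints of $d_i$ and $d_j$; by the previous paragraph that subtree lies in $\Gamma_0$ and the relevant branch numbers are the $\Gamma_0$-branch numbers, so $\widetilde\partial(d_i\cup d_j)$ is the same $\mathbb Z$-linear combination of critical $1$-cells $V_k(\vec\delta_k+\vec\delta_\ell)$, $V$ a branch vertex of $\Gamma_0$, as in those proofs. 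Exactly as there, a suitable $\mathbb Z$-combination of the cells $d_i\cup d_j$ then has boundary $2c$ for a single critical $1$-cell $c=V_k(\vec\delta_k+\vec\delta_\ell)$ with $\ell<k$. Since the Morse complex $M_*(UD_n\Gamma)$ computes $H_*(B_n\Gamma)$ (the chain homotopy equivalence of \S\ref{ss31:homology}, valid for every graph), $[c]$ is a $2$-torsion class in $H_1(B_n\Gamma)$. To see $[c]\ne 0$ I apply $\pi_*\colon H_1(B_n\Gamma)\to H_1(S_n)=\mathbb Z_2$ of \S\ref{ss44:nonplanar}: with $\vec a=\vec\delta_k+\vec\delta_\ell$ and $\ell<k$ one has $|\vec a|=2$ and $|\vec a|_k=1$, so $\pi_*([c])\equiv |\vec a|-|\vec a|_k\equiv 1\pmod 2$, hence $[c]$ is nontrivial. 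Thus $H_1(B_n\Gamma)$ has $2$-torsion. Since the abelianization of a right-angled Artin group is free abelian (equivalently, by Proposition~\ref{pro:flag}(1) its classifying space is a subcomplex of a torus and has torsion-free homology), $B_n\Gamma$ is not a right-angled Artin group; combined with Theorem~\ref{thm:graphwithS_0} and the results of \S\ref{s:three}, this completes the proof of Theorem~B.

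The step I expect to be the main obstacle is the bookkeeping in the second paragraph: one must coordinate the choice of subdivision, maximal tree, base vertex and planar embedding of $T$ so that the boundaries $\widetilde\partial(d_i\cup d_j)$ genuinely localize to $\Gamma_0$ and reproduce the figures, and in particular so that the torsion cell $c$ really emerges in the form $V_k(\vec\delta_k+\vec\delta_\ell)$ with the blocked vertex on a \emph{lower}-numbered branch than the non-order-respecting edge --- this inequality $\ell<k$ is exactly what makes $\pi_*([c])=1$. One also has to verify that the portion of $T$ lying outside $\Gamma_0$ contributes nothing to these particular boundaries, for which the point is that the meet vertices $v\wedge w$ of vertices of $\Gamma_0$ never leave $\Gamma_0$.
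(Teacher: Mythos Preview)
Your approach is essentially the same as the paper's: invoke Kuratowski, embed a subdivision of $K_5$ or $K_{3,3}$ into $\Gamma$, choose compatible maximal trees and planar embeddings so that the Morse-theoretic data on $\Gamma_0$ survives inside $\Gamma$, transport the torsion class, and detect its nontriviality via $\pi_*$.

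The one stylistic difference is that the paper packages your entire second and third paragraphs into a single sentence: it lists four compatibility conditions on the choice of $(i,T_K,T)$ --- both ends of each deleted edge have valency two in $\Gamma$, $i$ sends base vertex to base vertex, $i$ preserves deleted edges, and $i$ is order-preserving --- and then simply asserts that under these conditions $i$ induces a chain map $\tilde i$ on Morse complexes, so that $i_*[B_2(\vec a)]$ is a torsion class in $H_1(M_*(UD_n\Gamma))$ with $\pi_*(i_*[B_2(\vec a)])=1$. You instead re-derive the relevant boundaries $\widetilde\partial(d_i\cup d_j)$ inside $\Gamma$ directly from Lemma~\ref{lem:casec}, arguing that the meet vertices and branch numbers localize to $\Gamma_0$; this is exactly what underlies the paper's claim that ``$i$ preserves critical cells and boundaries of critical cells'', just made explicit. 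Your placement of the base vertex \emph{beyond} $0$ rather than \emph{at} $0$ is a harmless variant and is in fact the more robust choice when $i(0)$ fails to be a leaf of $T$. The careful bookkeeping you flag in your last paragraph --- that meets stay in $\Gamma_0$, that $\Gamma_0$-branches can be numbered first, and that the resulting torsion cell has the form $V_k(\vec\delta_k+\vec\delta_\ell)$ with $\ell<k$ so that $\pi_*$ hits it --- is precisely what the paper's four conditions are designed to guarantee, and your verification of it is sound.
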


\begin{proof}
Since every non-planar graph contains $K_5$ or $K_{3,3}$, there is
an embedding $i:K\to\Gamma$ where $K$ is $K_5$ or $K_{3,3}$. We
choose a maximal tree $T_K$ and its planar embedding as in
Figure~\ref{fig25} or~\ref{fig26}, and choose a maximal tree $T$ of
$\Gamma$ satisfying the following conditions:
\begin{itemize}
    \item[(1)] Both ends of each deleted edge have valency two in
    $\Gamma$;
    \item[(2)] The base vertex of $K$ maps to the base vertex on
    $\Gamma$ under $i$;
    \item[(3)] For an edge of $K$, $i(e)$ is a deleted edge of
    $\Gamma$ if and only if $e$ is a deleted edge of $K$;
    \item[(4)] $i$ is order preserving.
\end{itemize}

We note that the conditions (3) and (4) automatically hold if $T$
contains $T_K$ as a subgraph in the plane. Under these conditions,
it is not hard to see that the embedding $i$ preserves critical
cells and boundaries of critical cells, that is, $i$ induces a chain
map on Morse chain complexes. Thus it induces a homomorphism
$i_*:H_*(M_*(UD_n K))\to H_*(M_*(UD_n \Gamma))$.

Let $[B_2(\vec a)]$ be a torsion class in  $H_1(M_*(UD_n K))$ as
given in Lemma~\ref{lem:K_3,3} or Lemma~\ref{lem:K_5}. Then
$\pi_*(i_* ([B_2(\vec a)]))=1$ and so $i_*([B_2(\vec a)])$ is a
torsion class in $H_1(M_*(UD_n\Gamma))$.
\end{proof}

\section{Conjectures}
We finish the article with few observations and conjectures.

An $n$-\emph{nucleus} is a minimal graph whose $n$-braid group is
not a right-angled Artin group, that is, the $n$-braid group of an
$n$-nucleus is not a right-angled Artin group and the $n$-braid
group of any graph contained in an $n$-nucleus is a right-angled
Artin group. Our main result says that for $n\ge5$, a graph $\Gamma$
contains no $n$-nuclei if and only if $B_n\Gamma$ is a right-angled
Artin group, and that there are two $n$-nuclei $T_0$ and $S_0$. It
is natural to expect that the corresponding statement works for all
braid indices and so we first propose

\begin{con}
For any braid index $n\ge2$, a graph $\Gamma$ contains no $n$-nuclei
if and only if $B_n\Gamma$ is a right-angled Artin group.
\end{con}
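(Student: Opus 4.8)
The plan is to reduce to the low braid indices $n=2,3,4$, since for $n\ge5$ the conjecture is precisely Theorems~A and~B, whose only $n$-nuclei are $T_0$ and $S_0$. For each of $n=2,3,4$ one must: (a) determine the finite list $\mathcal{N}_n$ of $n$-nuclei; (b) show that $B_n\Gamma$ fails to be a right-angled Artin group whenever $\Gamma$ contains a member of $\mathcal{N}_n$; and (c) show that $B_n\Gamma$ is a right-angled Artin group whenever $\Gamma$ contains no member of $\mathcal{N}_n$. At the outset one should note that $\mathcal{N}_n$ is genuinely larger than $\{T_0,S_0\}$ for $n\le4$: by Farley--Sabalka, $B_2T_0$ and $B_3T_0$ are right-angled Artin groups, and by Example~\ref{ex:S_0} so are $B_2S_0,B_3S_0,B_4S_0$, so neither $T_0$ nor $S_0$ is an $n$-nucleus for $n\le4$; on the other hand $K_5$ and $K_{3,3}$ already have $2$-torsion in $H_1$ of their $2$-braid groups (\S\ref{ss44:nonplanar}), $B_n\Theta$ has nonzero $H_2$ but vanishing $\Phi$ for $n\ge4$ (Lemma~\ref{lem:Theta}), and $B_nT_3$ is never a right-angled Artin group for $n\ge2$ (Theorem~\ref{thm:T_3}). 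So one expects $\mathcal{N}_2$ and $\mathcal{N}_3$ to contain $K_5$, $K_{3,3}$ together with suitable minimal subgraphs carved out of $T_3$, $T_1$, $T_2$, and $\mathcal{N}_4$ to contain $T_0$, $\Theta$, $K_5$, $K_{3,3}$; pinning down these lists exactly, and verifying minimality of each candidate, is the first delicate step.

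For step (b) the machinery of \S\ref{s:three} and \S\ref{s:four} applies essentially verbatim. Given $\Gamma$ containing a fixed nucleus $\Gamma_0\in\mathcal{N}_n$, one organizes the graphs containing $\Gamma_0$ into finitely many families --- mimicking the decomposition $\mathcal{G}=\mathcal{G}_0\cup\mathcal{G}_1\cup\mathcal{G}_2\cup\mathcal{G}_3\cup\{T_3\}$ of \S\ref{s:three} --- and on each family produces an embedding $i:\Gamma'\hookrightarrow\Gamma$ of an explicit obstructing subgraph inducing, at the level of Morse complexes, one of the four obstructions already used: a degree-preserving surjection of mod-$2$ cohomology algebras with kernel generated in degrees $1$ and $2$ (then apply Proposition~\ref{pro:flag}); an injection on $H_2$ together with non-injectivity of $\Phi_{B_n\Gamma'}$ (then apply Proposition~\ref{pro:injrightangled} via the commuting square of \S\ref{s:four}); a non-trivial induced map on $H_2$ when $\Phi_{B_n\Gamma'}=0$ (the $\Theta$ case, via a detecting homomorphism $\rho$ as in Lemma~\ref{lem:ThetatoGamma}); or $2$-torsion in $H_1$ detected by the sign map $\pi_*$ for non-planar pieces. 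The new work here is combinatorial: choosing the right family decomposition and obstructing subgraph for each low-index nucleus, and recomputing the relevant cup-product tables (as in Lemma~\ref{lem:cohomology_algebra}) for the new minimal graphs.

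Step (c) requires a genuinely new idea. One must first classify the graphs that contain no member of $\mathcal{N}_n$ into a short list of normal forms, in the spirit of the ``linear star-bouquet'' description in the proof of Theorem~A; for $n=2$ this list should coincide with the class of planar graphs, so step (c) for $n=2$ subsumes the companion conjectures that planar graph braid groups are commutator-related and that $H_1$ of a planar graph braid group is torsion-free, while for $n=3,4$ it should be strictly smaller. Then, for each normal form, one chooses a maximal tree and a planar order as in \S\ref{ss22:morse}, enumerates the critical $1$- and $2$-cells, and rewrites the boundary word of each critical $2$-cell via $\tilde r$ (Lemma~\ref{lem:rewriting}) to a commutator; the crucial remaining point is to show that after Tietze transformations every such commutator can be made a commutator of generators, with no hidden relations among the relators introducing torsion.

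The main obstacle is step (c), and within it this ``massaging'' point: for $n\ge5$ the exclusion of $T_0$ and $S_0$ alone forces $\Gamma$ to be a linear star-bouquet, for which the critical $2$-cell boundary words come out as generator-commutators automatically; for $n\le4$ the nucleus-free graphs are more varied, the Morse presentation has many more types of critical $2$-cells, and whether their boundary commutators can always be transformed into generator-commutators is exactly the difficulty the planar conjectures record as open. A secondary obstacle is that once $S_0$ or $\Theta$ may occur as a subgraph --- which it must for $n\le4$ --- the clean conclusion of Theorem~\ref{thm:homology} (all Morse boundary maps zero, homology free on critical cells) is no longer available, so the cohomology-algebra obstruction of \S\ref{ss32:cohomology} cannot be invoked directly and the $\Phi_G$ machinery of \S\ref{s:four} must be used throughout, including for the positive direction, where one would need a usable \emph{sufficient} criterion for a commutator-related group to be a right-angled Artin group --- a converse to Proposition~\ref{pro:injrightangled}, which is false in general.
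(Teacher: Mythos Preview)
The statement you are addressing is presented in the paper as an open \emph{conjecture}; the paper offers no proof, so there is nothing to compare your proposal against. What the paper does provide is evidence: the $n\ge5$ case (Theorems~A and~B), a proposed list of $4$- and $3$-nuclei (Conjecture~2, Figures~\ref{fig27} and~\ref{fig28}), and a partial list of more than sixty $2$-nuclei (Figure~\ref{fig29}). Your proposal is therefore best read as a strategy outline, and as such it has one conceptual oversight and one factual error worth flagging.

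The conceptual point is that one direction of the conjecture is immediate from the definition of nucleus and needs no work at all. If $B_n\Gamma$ is not a right-angled Artin group, pass to a minimal subgraph $N$ of a sufficiently subdivided $\Gamma$ with $B_nN$ not a right-angled Artin group; such $N$ exists by finiteness and is, by definition, an $n$-nucleus contained in $\Gamma$. Thus ``contains no $n$-nuclei $\Rightarrow$ right-angled Artin'' is tautological. The entire content of the conjecture is the other direction: that containing a nucleus forces $B_n\Gamma$ to fail to be right-angled Artin, i.e., that the property ``$B_n$ is not a right-angled Artin group'' is upward-closed under graph containment. Your step~(c) is therefore not a step in proving the conjecture itself; it is only needed to certify that a \emph{guessed} list $\mathcal L$ equals the true set of nuclei, so that step~(b) for $\mathcal L$ actually covers every nucleus. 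One could in principle bypass (a) and (c) entirely by proving the monotonicity statement directly.

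The factual error is your assertion that for $n=2$ the nucleus-free graphs ``should coincide with the class of planar graphs.'' This is false: Figure~\ref{fig29} in the paper records only two non-planar $2$-nuclei and dozens of \emph{planar} ones (Types~1--4). Hence many planar graphs contain a $2$-nucleus and have non--right-angled-Artin $2$-braid group, so step~(c) for $n=2$ is not equivalent to (and does not subsume) the planar conjectures. Relatedly, your expectation that $\mathcal N_2$ and $\mathcal N_3$ are short lists containing ``$K_5$, $K_{3,3}$ together with suitable minimal subgraphs carved out of $T_3$, $T_1$, $T_2$'' badly underestimates the situation for $n=2$; the paper already exhibits more than sixty $2$-nuclei and does not claim the list is complete, so even step~(a) is wide open there.
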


Then we propose a complete list of $4$-nuclei and 3-nuclei as
follows:

\begin{con} There are four
4-nuclei as given in Figure~\ref{fig27}, and ten 3-nuclei as given
in Figure~\ref{fig28}.
\end{con}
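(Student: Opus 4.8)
The plan is to prove the conjecture in two halves, mirroring the split between Theorem~A and Theorem~B but with the combinatorics recalibrated to the braid indices $n=3,4$. First I would verify that each of the fourteen listed graphs $G$ is genuinely an $n$-nucleus. Showing $B_nG$ is not a right-angled Artin group proceeds case by case through the three obstructions already available: for the non-planar candidates one exhibits $2$-torsion in $H_1(B_nG;\mathbb Z)$ using the maximal-tree choice of \S\ref{ss44:nonplanar}, Lemma~\ref{lem:casec}, and the parity homomorphism $\pi_*$; for planar candidates avoiding $S_0$ one reads a commutator-related presentation off the Morse complex (whose boundary maps vanish by Theorem~\ref{thm:homology}) and shows $\Phi_{B_nG}$ is not injective by comparing $\rk(\im\Phi_{B_nG})$ with $\rk H_2(B_nG)$, exactly as in the proof of Theorem~\ref{thm:T_3}; for planar candidates containing $S_0$ one runs the same $\Phi$-argument but with the nonzero boundary maps handled as in Lemma~\ref{lem:S0}; and when $n$ is small one may instead compute $H^*(B_nG;\mathbb Z_2)$ directly and point to an empty triangle, as in Lemma~\ref{lem:cohomology_algebra}. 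Minimality of $G$ is the finite, mechanical check that for each of its maximal proper connected subgraphs $G'$, the group $B_nG'$ is a right-angled Artin group: collapse $UD_nG'$ to its Morse complex, list generators and relators, and reduce by Tietze transformations to a commutator-of-generators presentation, precisely as was done for $B_4S_0$ in Example~\ref{ex:S_0}.

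The harder half is completeness: any $\Gamma$ with $B_n\Gamma$ not a right-angled Artin group contains one of the fourteen graphs. If $\Gamma$ is non-planar it contains $K_5$ or $K_{3,3}$; by the first half both of these have non-right-angled-Artin $n$-braid groups, so $\Gamma$ contains a \emph{minimal} such graph, which is then forced to lie on the list. So assume $\Gamma$ is planar. By the proof of Theorem~A, if $\Gamma$ contains neither $T_0$ nor $S_0$ it is a linear star-bouquet and $B_n\Gamma$ is a right-angled Artin group; hence $\Gamma$ contains $T_0$ or $S_0$. The new subtlety at $n\le4$ is that $T_0$ and $S_0$ are no longer themselves nuclei --- $B_3T_0$ and $B_4S_0$ are right-angled Artin groups --- so one must pin down the \emph{minimal} enlargements of the $T_0$-, $S_0$-, and $\Theta$-patterns that first fail. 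I would stratify the planar graphs containing $T_0$ or $S_0$ into finitely many families indexed by the local shape of the maximal tree at its branch vertices and the attaching data of the deleted edges --- a refinement of the decomposition $\mathcal G=\mathcal G_0\cup\cdots\cup\mathcal G_3\cup\{T_3\}$ of \S\ref{s:three} --- and for each family either embed a listed graph and propagate its obstruction upward through the commuting squares of \S\ref{ss31:homology}, \S\ref{ss43:h2inj}, and Proposition~\ref{pro:flag}, or show directly that the family yields right-angled Artin braid groups.

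The main obstacle is the boundedness step hidden inside the completeness argument: one must rule out an infinite family of pairwise-non-containing, minimal, planar, non-right-angled-Artin $n$-braid graphs, i.e. bound the size of the minimal non-examples. For $n\ge5$ this was essentially free, since ``contains $T_0$ or $S_0$'' already forced failure; for $n\le4$ the thresholds shift and one needs a genuinely new combinatorial input --- an excision/support lemma asserting that an obstruction witness (a nonzero element of $\ker\Phi_{B_n\Gamma}$, an empty triangle in $H^*(B_n\Gamma;\mathbb Z_2)$, or an $H_1$-torsion class) always lives over a subgraph of $\Gamma$ of bounded size. Proving such a support bound, and then matching the bounded subgraphs it produces against the fourteen candidates, is where the real difficulty lies; once it is in hand, the remainder of the program is routine given the machinery of \S\S2--4.
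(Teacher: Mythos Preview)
The statement you are attempting to prove is a \emph{conjecture} in the paper, not a theorem: the authors explicitly list it among four open conjectures in the final section and offer no proof. There is therefore no argument in the paper to compare your proposal against.

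Your proposal is not a proof but a program, and you acknowledge this yourself: the ``boundedness step'' you identify as the main obstacle is exactly the missing ingredient, and you do not supply it. The excision/support lemma you describe --- that any obstruction witness lives over a subgraph of bounded size --- is not established anywhere in the paper's machinery of \S\S2--4, and the paper gives no indication that such a lemma is within reach. Indeed, the discussion of $2$-nuclei immediately following the conjecture (``We found more than sixty 2-nuclei\ldots Due to their plethora\ldots'') suggests the authors did not have a uniform finiteness argument even for the case $n=2$, where the list is not even claimed to be complete. Your stratification idea refining $\mathcal G_0\cup\cdots\cup\mathcal G_3$ is reasonable as a starting point, but without the support bound there is no mechanism to terminate the search, and the conjecture remains open.
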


\begin{figure}[ht]
\subfigure[]
{\includegraphics[height=1.7cm]{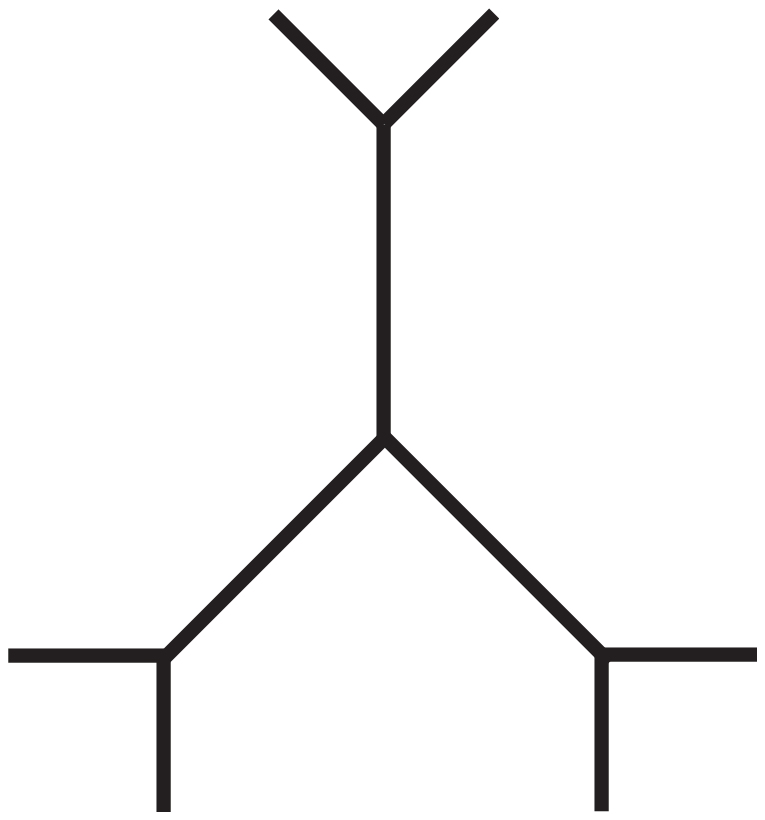}}\qquad
\subfigure[]
{\includegraphics[height=1.7cm]{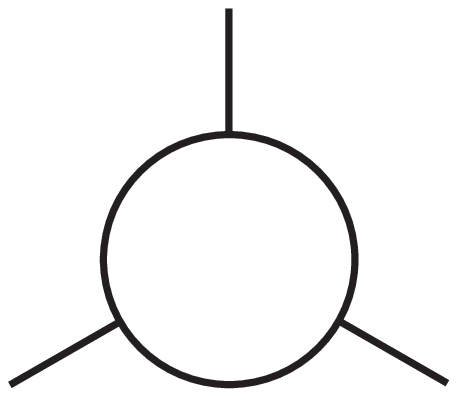}}\qquad
\subfigure[]
{\includegraphics[height=1.2cm]{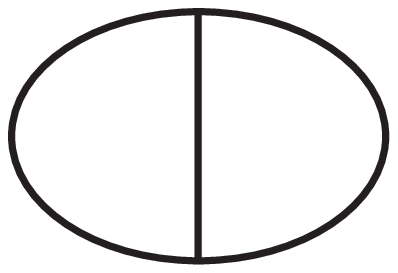}}\qquad
\subfigure[]
{\includegraphics[height=1cm]{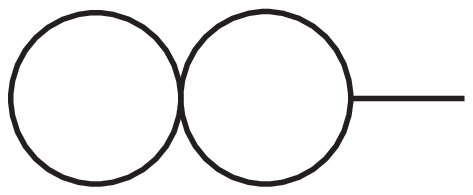}}
\caption{4-Nuclei}
\label{fig27}
\end{figure}

\begin{figure}[ht]
\subfigure[]
{\includegraphics[height=1.2cm]{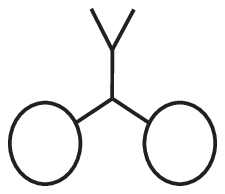}}\qquad
\subfigure[]
{\includegraphics[height=1cm]{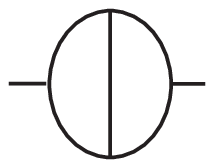}}\qquad
\subfigure[]
{\includegraphics[height=1cm]{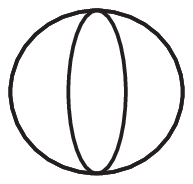}}\qquad
\subfigure[]
{\includegraphics[height=0.8cm]{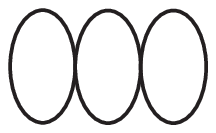}}\qquad
\subfigure[]
{\includegraphics[height=1cm]{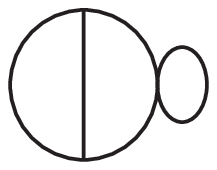}}\\
\subfigure[]
{\includegraphics[height=1cm]{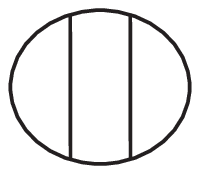}}\qquad
\subfigure[]
{\includegraphics[height=1cm]{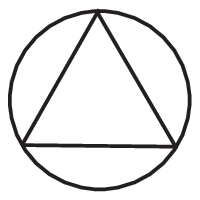}}\qquad
\subfigure[]
{\includegraphics[height=1.2cm]{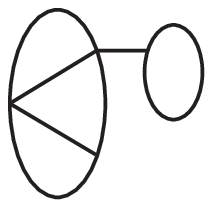}}\qquad
\subfigure[]
{\includegraphics[height=0.8cm]{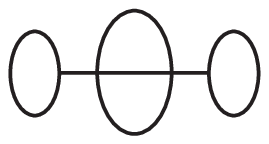}}\qquad
\subfigure[]
{\includegraphics[height=0.8cm]{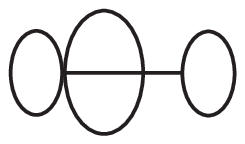}}
\caption{3-Nuclei}
\label{fig28}
\end{figure}

We found more than sixty 2-nuclei as given in Figure~\ref{fig29}.
Due to their plethora, only representatives from each type of
2-nuclei are listed and each type contains as many graphs as the
number in the parenthesis.

\begin{figure}[ht]
\subfigure[Non-planar (2)]
{\includegraphics[height=1.2cm]{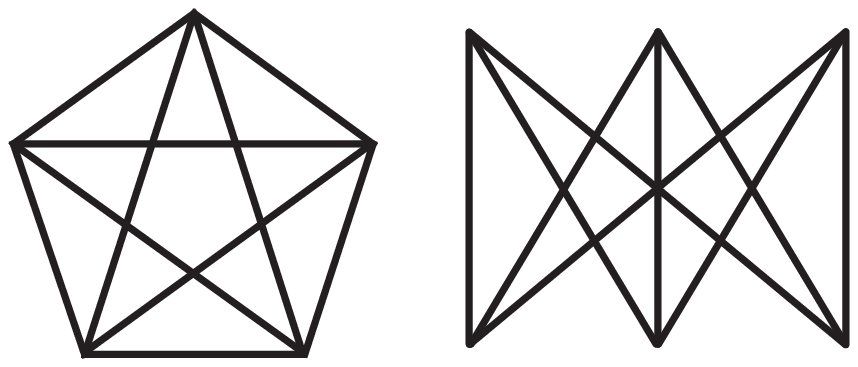}}\qquad
\subfigure[Type 1 (5)]
{\includegraphics[height=1.2cm]{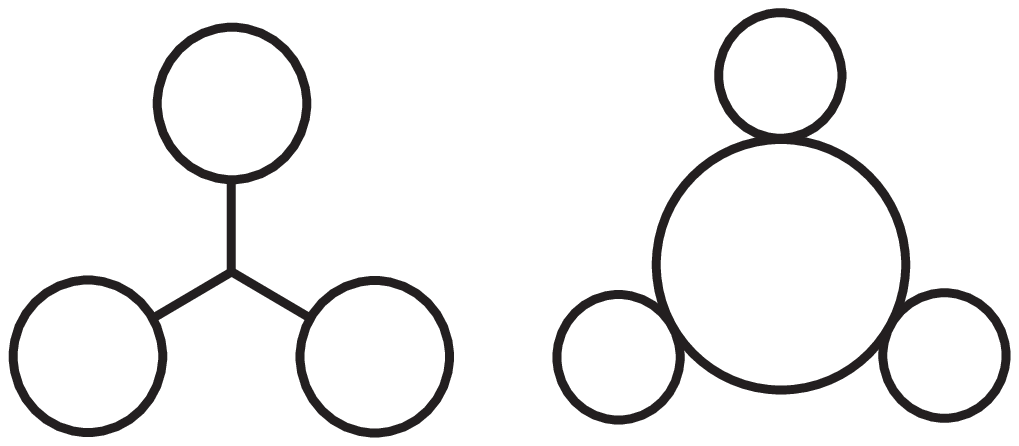}}\qquad
\subfigure[Type 2 (7)]
{\includegraphics[height=1.2cm]{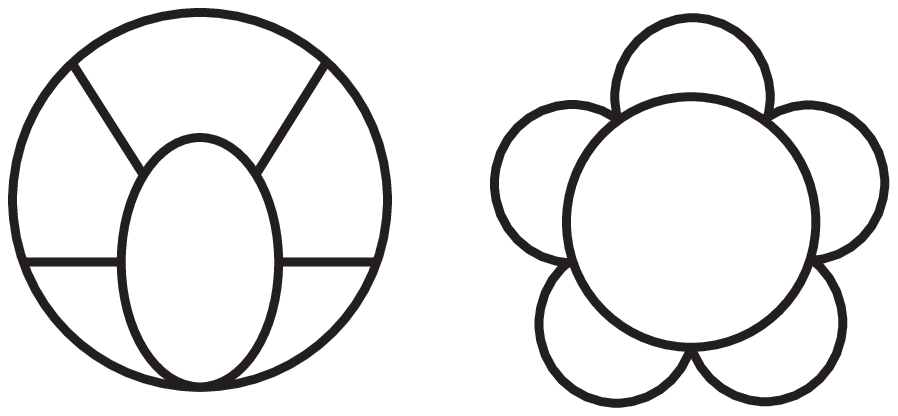}}\qquad
\subfigure[Type 3 (17)]
{\includegraphics[height=1.2cm]{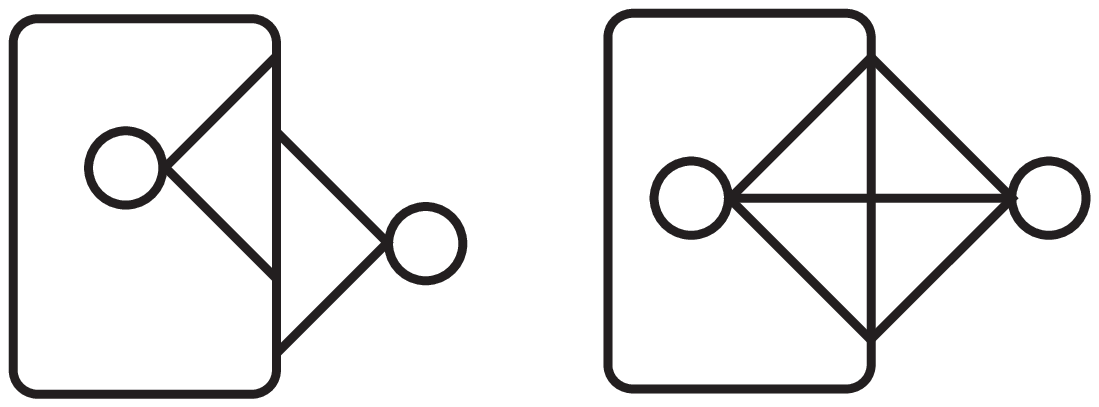}}\qquad
\subfigure[Type 4 ($\ge$30)]
{\includegraphics[height=1.2cm]{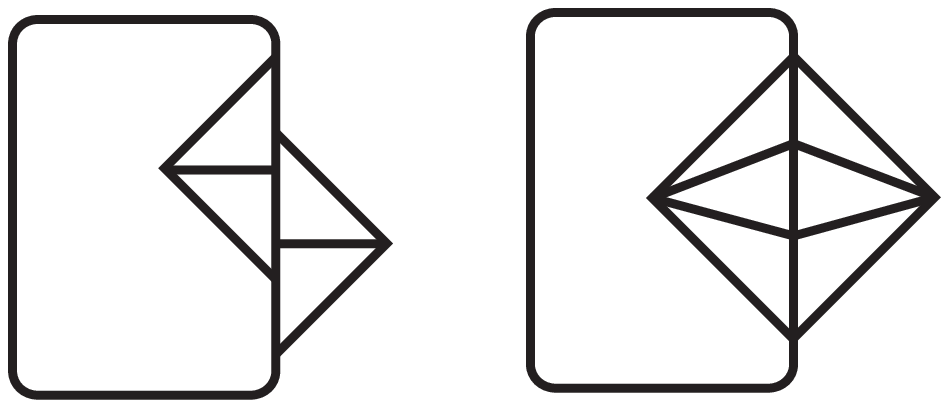}}
\caption{2-Nuclei by types.}
\label{fig29}
\end{figure}

Given a planar graph, the presentation of its $n$-braid group
obtained by choosing a maximal tree as explained in
\S\ref{ss22:morse} seems relatively nice in the sense that there is
a systematic application of Tietze transformations to make it a
commutator-ralated group. We state this as a conjecture:

\begin{con}
Let $\Gamma$ be a planar graph and $n\ge 2$. Then $B_n\Gamma$ is a
commutator-ralated group.
\end{con}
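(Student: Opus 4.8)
The plan is to begin with the Morse presentation of $B_n\Gamma$ from \S\ref{ss22:morse} and, via a systematic sequence of Tietze transformations, to reduce it to a presentation in which every relator has total exponent sum $0$ in every generator; such a relator lies in $[F,F]$ by the very definition of the commutator subgroup, so this is precisely what ``commutator-related'' asks for, and by Tietze's theorem no generality is lost in starting from the Morse presentation. By Theorem~\ref{thm:homotopy equiv} we may take $\Gamma$ sufficiently subdivided, and we fix a planar embedding of $\Gamma$, a maximal tree $T$ compatible with it, and the induced order on vertices as in \S\ref{ss22:morse}. The generators are then the critical $1$-cells and the relators are the rewritten boundary words $\tilde r(\partial c)$ of critical $2$-cells $c$.

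The first step is to compute $\tilde r(\partial c)$ explicitly for each type of critical $2$-cell, using Lemma~\ref{lem:rewriting}, Lemma~\ref{lem:redundant1} and the notation for critical cells. Exactly as in the computations for $S_0$, $\Theta$ and $T_3$ in \S\ref{s:three} and \S\ref{s:four}, these should organize into finitely many families, each relator being a commutator of a word with a generator, or a product of conjugates of a single generator by powers of generators. The second step is to introduce, for every branch vertex $A$ and deleted edge $d$ of $\Gamma$, new generators of ``difference'' type --- in the spirit of $\bar d(m)=d(n-1)^{-1}d(m)$ and $\overline A(\vec a+\vec\delta_1)=A_2(\vec a)^{-1}A_2(\vec a+\vec\delta_1)$ used in Lemma~\ref{lem:S0} and Theorem~\ref{thm:T_3} --- and to use them to absorb the conjugating words, turning each relator into an honest commutator of words and hence forcing zero exponent sums. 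For this cascade of eliminations to terminate without ever producing a relator of nonzero exponent sum, one must check that the ``equality modulo a commutator'' relation it induces on the set of critical $1$-cells --- two generators being related when some $\tilde r(\partial c)$ identifies them up to a commutator --- contains no cycle, i.e.\ forms a forest; this bookkeeping is the routine but lengthy core of the argument.

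The main obstacle, and the only point at which planarity is used, is to exclude the $K_{3,3}$- and $K_5$-type behaviour of \S\ref{ss44:nonplanar}, where a rewritten boundary word contains a generator with exponent sum $2$ (for instance $(B_2(1,1))^2$) that no Tietze move can remove because $H_1$ fails to be free. What has to be proved is that for planar $\Gamma$ the maximal tree $T$ can be chosen --- breaking each circuit by deleting an edge, preferring edges with both ends of valency $2$ and, when forced (as for $\Theta$), deleting an edge at the relevant branch vertex, the kind of choice already codified in $\cond(\Gamma,i,T)$ --- so that in every boundary computation the four faces of each critical $2$-cell reassemble, after rewriting, into two cancelling pairs, or into one cancelling pair together with a single difference $[g]-[g']$ of generators. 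The geometric input is that in a planar graph the circuits bound disks, which constrains how the vertices can circulate during rewriting and prevents any vertex from acquiring net winding; making this precise will require a case analysis of the relative positions of the two special edges of a critical $2$-cell that parallels, but is substantially longer than, Lemma~\ref{lem:casec} and Lemmas~\ref{lem:twodel1}--\ref{lem:threedel}. As a byproduct this would also establish that $H_1(B_n\Gamma)$ is torsion-free for planar $\Gamma$, since a commutator-related group has free abelian first homology.
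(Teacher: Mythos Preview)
The statement you are attempting to prove is labelled in the paper as a \emph{conjecture}, and the paper contains no proof of it. The authors merely remark that the Morse presentation ``seems relatively nice in the sense that there is a systematic application of Tietze transformations to make it a commutator-related group,'' and then state the conjecture without further argument. So there is no paper proof to compare your proposal against.

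Your proposal is, accordingly, not a proof but an outline of a research programme --- and an honest one, since you explicitly flag the gaps (``this bookkeeping is the routine but lengthy core of the argument,'' ``what has to be proved is that \ldots,'' ``making this precise will require a case analysis \ldots\ substantially longer than Lemma~\ref{lem:casec}''). The strategy you sketch --- start from the Morse presentation, compute $\tilde r(\partial c)$ for each type of critical $2$-cell, introduce ``difference'' generators as in Lemma~\ref{lem:S0} and Theorem~\ref{thm:T_3}, and use planarity to rule out the $K_{3,3}/K_5$ parity obstruction --- is exactly the kind of ``systematic application of Tietze transformations'' the authors allude to, so your approach is aligned with theirs. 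You are also right that a successful execution would imply the torsion-freeness conjecture for $H_1(B_n\Gamma)$, which the paper proves only for $n=2$.

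The genuine gap is that the hard step --- showing, for a suitably chosen maximal tree of an arbitrary planar $\Gamma$ and arbitrary $n$, that every rewritten relator can be reduced to a commutator --- is asserted but not carried out. The computations in \S\ref{s:four} already show that the case analysis for critical $2$-cells with two deleted edges is delicate even in restricted settings, and for general planar graphs one must also handle cells with vertices blocked by deleted edges incident to branch vertices, interactions between multiple branch vertices, and so on. Your claim that the ``equality modulo a commutator'' relation on generators forms a forest is the crux, and you have given no argument for it beyond the heuristic that planar circuits bound disks. Until that case analysis is actually executed, this remains a plausible plan of attack on an open conjecture rather than a proof.
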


In \S\ref{ss44:nonplanar}, we show that if a given graph is
non-planar, the first homology of its braid group has a 2-torsion.
The converse is a corollary of the previous conjecture and so we
propose

\begin{con}
For $n\ge 2$, a graph $\Gamma$ is planar if and only if
$H_1(B_n\Gamma)$ is torsion-free.
\end{con}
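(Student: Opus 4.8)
The plan is to prove the two implications by quite different means, one of which is already established in the paper.

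\emph{Non-planar graphs have torsion.} If $\Gamma$ is non-planar then by the final lemma of \S\ref{ss44:nonplanar}, $H_1(B_n\Gamma)$ contains a $2$-torsion class for every $n\ge 2$; hence if $H_1(B_n\Gamma)$ is torsion-free then $\Gamma$ must be planar. This half needs no further work.

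\emph{Planar graphs, general $n$.} Here I would deduce torsion-freeness from commutator-relatedness. If $B_n\Gamma=F/R$ with $R\subseteq[F,F]$, then $H_1(B_n\Gamma)=F/(R[F,F])=F/[F,F]$ is free abelian, hence torsion-free. Thus for every planar $\Gamma$ this half of the conjecture is a formal consequence of the conjecture that planar graph braid groups are commutator-related, and the substance for $n\ge 3$ lies entirely in that conjecture; I would attack it by producing, for the maximal tree and order of \S\ref{ss22:morse}, a uniform scheme of Tietze transformations turning the Farley--Sabalka presentation of $B_n\Gamma$ into a commutator-related one, exactly in the spirit of the transformations used in Lemma~\ref{lem:S0} and Lemma~\ref{lem:Theta}.

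\emph{Planar graphs, $n=2$ directly.} For $n=2$ one can argue without the commutator-related conjecture. Since $\Gamma$ is sufficiently subdivided, every fundamental circuit of $\Gamma$ contains an interior edge of a subdivided arc, so one may choose a maximal tree $T$ all of whose deleted edges have both endpoints of valency $2$ in $\Gamma$. With such a $T$ there are no critical $i$-cells for $i\ge 3$, and a count of particles shows that the only critical $2$-cells which can contribute to $\widetilde\partial$ are the cells $d\cup d'$ with $d,d'$ distinct deleted edges (cells involving a non-order-respecting edge at a branch vertex are either too large for $n=2$ or, when they lie in $T$, vanish under $\widetilde\partial$ as in \cite{Far} / Theorem~\ref{thm:homology}). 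Since $\widetilde\partial_1=0$, this gives
\[
H_1(B_2\Gamma)=\operatorname{coker}\bigl(\widetilde\partial_2\colon M_2(UD_2\Gamma)\to M_1(UD_2\Gamma)\bigr),
\]
and by Lemma~\ref{lem:casec} each generator $d\cup d'$ is carried to an alternating sum of at most four critical $1$-cells, every one of them with coefficient $\pm1$; in particular the matrix of $\widetilde\partial_2$ has entries in $\{-1,0,1\}$. It remains to prove this cokernel is torsion-free, i.e.\ that $\im(\widetilde\partial_2)$ is a direct summand of $M_1(UD_2\Gamma)$, equivalently that $\rk_{\mathbb{F}_p}\widetilde\partial_2=\rk_{\mathbb{Q}}\widetilde\partial_2$ for every prime $p$. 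I would do this by induction on the number of deleted edges, adjoining at each stage a deleted edge $d$ adjacent, via the planar embedding, to a face already accounted for, and checking that the new relations $\widetilde\partial_2(d\cup d')$ each contain one of the newly created critical $1$-cells with coefficient $\pm1$; the homomorphism $\pi_*\colon H_1(B_2\Gamma)\to\mathbb{Z}_2$ of \S\ref{ss22:morse} controls the only candidate torsion, so it suffices to exclude $2$-torsion.

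\emph{Main obstacle.} For $n\ge 3$ the obstacle is precisely the commutator-related conjecture. For $n=2$ it is the last step above: a priori a $\{-1,0,1\}$ matrix can have a square minor of determinant $\pm2$ (this is exactly what produces the $2$-torsion for $K_5$ and $K_{3,3}$, and underlies the coefficient-$2$ phenomena recorded in Lemma~\ref{lem:twodel2} and Example~\ref{ex:twodel2}), so the crux is to show that the circuit combinatorics of a \emph{planar} graph --- through the geometry underlying Lemma~\ref{lem:casec} --- never realises such a minor.
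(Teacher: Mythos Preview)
You have correctly parsed the logical status of the statement: it is a conjecture, one direction (non-planar $\Rightarrow$ torsion) is already proved in \S\ref{ss44:nonplanar}, the other direction for general $n$ is a formal consequence of the commutator-related conjecture, and only the case $n=2$ is actually established in the paper. Your remarks on general $n$ are fine and match the paper's own discussion.

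For $n=2$, your outline and the paper's proof share the same starting point --- only critical $2$-cells $d\cup d'$ occur, and Lemma~\ref{lem:casec} gives their boundaries as $\{-1,0,1\}$-combinations of at most four critical $1$-cells --- but the paper's argument is sharper in two respects that your sketch does not capture. First, the paper does \emph{not} take an arbitrary maximal tree with valency-$2$ deleted-edge endpoints; it constructs $T$ by a specific greedy algorithm (walk the boundary of a regular neighbourhood, delete the edge ahead whenever you hit a circuit) which guarantees the extra property $(\ast)$: for every deleted edge $d$, there is no branch vertex strictly between $\tau(d)$ and $\iota(d)\wedge\tau(d)$. This property is what kills the bad subtree shapes (the paper's $c_{1c}$ and $c_{2d}$); without it the ``unique pivot column'' phenomenon you are hoping for need not hold. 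Second, the paper does not induct on deleted edges. It classifies the possible subtrees $T_c$ spanned by the four endpoints and the basepoint (eleven shapes), uses planarity exactly once to exclude case~(3) of Lemma~\ref{lem:casec} (the interleaved case $\tau(d)<\tau(d')<\iota(d)<\iota(d')$), and then reads off directly that every row of the presentation matrix is either zero, a difference of two unit vectors, or contains a $\pm1$ which is the unique nonzero entry in its column --- namely the term $B_{g(B,\tau(d'))}(\vec\delta_{g(B,\tau(d'))}+\vec\delta_{g(B,\tau(d))})$ at $B=\tau(d)\wedge\tau(d')$.

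Your inductive plan is not wrong in spirit, but as written it has a gap: you have not said how planarity enters, and you have not secured the ``new pivot $1$-cell'' claim. The paper's answer to both is the combination of property $(\ast)$ on the tree and the exclusion of the crossing configuration; once you see that, the induction is unnecessary and a finite case check suffices.
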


We can prove the above conjecture for braid index $n=2$ by using
Lemma~\ref{lem:casec} as follows.

\begin{thm}
A graph $\Gamma$ is planar if and only if $H_1(B_2\Gamma)$ is
torsion-free.
\end{thm}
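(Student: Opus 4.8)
The ``only if'' direction is already available: by the last lemma of \S\ref{ss44:nonplanar}, if $\Gamma$ is non-planar then $H_1(B_2\Gamma)$ has a $2$-torsion, so torsion-freeness of $H_1(B_2\Gamma)$ forces $\Gamma$ to be planar. For the converse, the plan is to compute $H_1(B_2\Gamma)$ from a Morse complex built on a carefully chosen maximal tree. First I would fix a planar embedding of $\Gamma$, choose a spanning tree $T_0$, and then, using the subdivision freedom granted by Theorem~\ref{thm:homotopy equiv}, subdivide each non-tree edge heavily and declare a ``deep middle'' edge of each such path to be a deleted edge. This yields a maximal tree $T$ with the property that both endpoints of every deleted edge, and also the $T$-neighbors of those endpoints, have valency $2$ in $\Gamma$; one checks routinely that with enough subdivision this also guarantees that no non-order-respecting tree edge can sit in a critical $2$-cell together with a deleted edge. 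A planar embedding of $T$ extending that of $\Gamma$ orders the vertices as in \S\ref{ss22:morse}. Since $UD_2\Gamma$ is connected there is a unique critical $0$-cell and $\widetilde\partial_1=0$, hence $H_1(B_2\Gamma)=M_1(UD_2\Gamma)/\im\widetilde\partial_2$.

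Next I would classify the critical cells. A critical $1$-cell of $UD_2\Gamma$ is an edge together with one blocked vertex, the edge being non-order-respecting, so it is of \emph{branch type} $A_k(\vec\delta_j+\vec\delta_k)$ with $j<k$ (at a vertex $A$ of valency $\ge 3$ in $T$) or of \emph{deleted type} $d\cup\{v\}$. A critical $2$-cell is a pair of non-order-respecting edges, hence either a tree cell $A_k(\vec a)\cup B_\ell(\vec b)$, or a mixed cell (one tree edge, one deleted edge), or $d\cup d'$ with $d,d'$ deleted. The choice of $T$ excludes the mixed cells. The tree cells lie in $T$, so $\widetilde\partial(A_k(\vec a)\cup B_\ell(\vec b))=0$ by Farley's theorem, equivalently Theorem~\ref{thm:homology}. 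Finally, by Lemma~\ref{lem:casec} each $\widetilde\partial(d\cup d')$ is a $\mathbb Z$-combination with coefficients in $\{0,\pm1\}$ of branch-type critical $1$-cells \emph{only}. Writing $M_1(UD_2\Gamma)=\mathcal A\oplus\mathcal D$, the direct sum of the free abelian groups on the branch-type and on the deleted-type critical $1$-cells, we get $\im\widetilde\partial_2\subseteq\mathcal A$, so $H_1(B_2\Gamma)\cong\mathcal D\oplus(\mathcal A/\im\widetilde\partial_2)$. Thus the theorem reduces to showing that the subgroup $\langle\widetilde\partial(d\cup d')\rangle$ of $\mathcal A$ is pure (saturated) — this is where planarity must be used, as the same subgroup is of index $2$ for $K_{3,3}$ and $K_5$ (compare Lemmas~\ref{lem:K_3,3} and~\ref{lem:K_5}).

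The remaining step, which I expect to be the technical heart, is to prove that purity from the explicit formulas of Lemma~\ref{lem:casec} together with planarity. The idea is that in the fixed planar embedding the tree $T$ is embedded and each fundamental cycle $\gamma_d=d\cup(\text{tree path between the ends of }d)$ is an embedded closed curve, so any two such cycles are non-crossing — nested, disjoint, or meeting in a sub-arc without linking. In particular, at every vertex $X$ of valency $\ge 3$ the two branches of $T$ used by $\gamma_d$ and the two used by $\gamma_{d'}$ are never interleaved; this non-interleaving is exactly what fails for $K_{3,3}$ and $K_5$, and it forces the $\{0,\pm1\}$-matrix of $\widetilde\partial_2$ restricted to the span of the cells $d\cup d'$ into a block-triangular form with $\pm1$ pivots. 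Concretely, I would order the deleted edges by the nesting and planar position of their cycles, peel off an innermost cycle $\gamma_{d_0}$, and check from the three cases of Lemma~\ref{lem:casec} that the pairs $d_0\cup d'$ either contribute nothing new or contribute a branch-type $1$-cell at a vertex of $\gamma_{d_0}$, with coefficient $\pm1$, that is hit by no relation among the remaining deleted edges; an induction then exhibits $\langle\widetilde\partial(d\cup d')\rangle$ as a pure subgroup of $\mathcal A$. Once this inductive bookkeeping is in place, $\mathcal A/\im\widetilde\partial_2$, and hence $H_1(B_2\Gamma)\cong\mathcal D\oplus(\mathcal A/\im\widetilde\partial_2)$, is free abelian, completing the proof.
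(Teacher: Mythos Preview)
Your overall architecture is right, and your ``non-crossing of fundamental cycles'' is exactly the geometric content of planarity here; it is equivalent to saying that case~(3) of Lemma~\ref{lem:casec} never occurs. But several points deserve correction or comparison with the paper's argument.

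First, a small but telling oversight: in $UD_2\Gamma$ a $2$-cell consists of two edges and \emph{no} vertices, so every tree edge in any $2$-cell is automatically order-respecting (the blocking condition is vacuous). Hence there are no ``tree cells'' $A_k(\vec a)\cup B_\ell(\vec b)$ and no ``mixed cells'' at all---only cells $d\cup d'$. Your subdivision gymnastics to kill mixed cells is unnecessary, and your appeal to Theorem~\ref{thm:homology} is both unneeded and illegitimate (that theorem assumes $\Gamma$ does not contain $S_0$, which a general planar graph certainly may).

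The substantive difference is in how the purity step is organized. The paper does not attempt a generic peeling/block-triangular induction. Instead it builds the maximal tree by a very specific procedure---walking the boundary of a planar regular neighborhood and deleting an edge each time a circuit is about to close---which forces the property
\[
(\ast)\quad\text{for every deleted edge }d,\text{ no vertex of valency }\ge 3\text{ lies between }\tau(d)\text{ and }\iota(d)\wedge\tau(d)\text{ in }T.
\]
With $(\ast)$ in hand, the smallest subtree $T_c$ spanned by the four ends of $d,d'$ and the basepoint can only take a short finite list of shapes. Planarity rules out case~(3) of Lemma~\ref{lem:casec}; $(\ast)$ rules out two more shapes; and for every remaining shape one checks directly that $\widetilde\partial(d\cup d')$ is either zero, a difference $x-y$ of two critical $1$-cells, or contains a summand $B_{g(B,\tau(d'))}(\vec\delta_{g(B,\tau(d'))}+\vec\delta_{g(B,\tau(d))})$ at $B=\tau(d)\wedge\tau(d')$ that appears in the boundary of no other critical $2$-cell. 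The presentation matrix of $H_1$ therefore has every row either zero, a $(+1,-1)$-row, or containing a $\pm1$ entry alone in its column---so torsion-freeness is immediate with no induction.

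Your route could in principle be completed, but without $(\ast)$ you face all the subtree configurations, not just the few the paper handles, and your ``innermost peeling gives a $\pm1$ pivot unique to that relation'' claim is precisely the thing left to prove. The missing idea is the tree construction that manufactures those private pivots for you; once you have $(\ast)$, the bookkeeping you anticipate collapses to a one-line matrix observation.
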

\begin{proof}
It only remains to prove that if a graph is planar then
$H_1(B_2\Gamma)$ is torsion-free. We first assume that each path between two vertices of valency $\neq 2$ or each simple loop in $\Gamma$ passes through exactly 3 edges. We choose the maximal tree $T$ of
$\Gamma$ and give an order on vertices so that for each deleted edge $d$, both $\iota(d)$ and $\tau(d)$ are of valency 2 and $\iota(d)\wedge\tau(d)$ is always the nearest vertex of valency $\ge3$ from $\tau(d)$ unless $\tau(d)=0$.
This can be achieved by the following steps:
\begin{itemize}
\item [(I)] Choose an planar embedding of $\Gamma$ and choose a vertex of valency 1 as a base vertex 0 if there is. Let $T=\Gamma$. If there are no vertices of valency 1, choose a vertex of valency 2 as a base vertex 0 and delete the edge one of whose end vertices is 0 and let $T$ be the rest of $\Gamma$. Go to (II).
\item [(II)] Take a regular neighborhood $R$ of $T$. As traveling the outmost component of $\partial R$ clockwise from the vertex numbered the last, number unnumbered vertices of $T$ until either coming back to 0 or numbering a vertex of valency 2 that belongs to a circuit in $T$. If the former is the case, we are done. If the latter is the case, delete
    the edge incident to the vertex in front and let $T$ be the rest. Repeat (II).
\end{itemize}

\begin{figure}[ht]
\psfrag{0}{\small0}
\psfrag{1}{\small1}
\psfrag{2}{\small2}
\psfrag{3}{\small3}
\psfrag{4}{\small4}
\psfrag{5}{\small5}
\psfrag{6}{\small6}
\psfrag{7}{\small7}
\psfrag{8}{\small8}
\psfrag{9}{\small9}
\psfrag{10}{\small10}
\psfrag{11}{\small11}
\psfrag{12}{\small12}
\psfrag{13}{\small13}
\psfrag{14}{\small14}
\psfrag{15}{\small15}
\centering
\includegraphics[scale=.3]{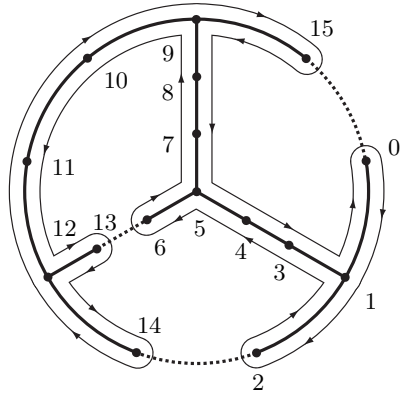}
\caption{Numbering a maximal tree $T$ of $K_4$}
\end{figure}

Then it is clear that we have the
property that
\begin{itemize}
\item[$(\ast)$] for each deleted edge $d$, there are no vertices of valency $\ge3$ between $\tau(d)$ and $\iota(d)\wedge\tau(d)$ in $T$.
\end{itemize}

A 2-cell cannot contain any non-order-respecting edge in $UD_2\Gamma$ and so there
is only one kind of critical 2-cells of the form $d\cup d'$ for deleted edges $d,d'$.

To classify these critical 2-cells $c$ and to compute their images under
$\widetilde\partial$, we consider the smallest subtree $T_c$ of the maximal tree $T$ that contains
four ends of $d$ and $d'$ together with the base vertex.
By the condition $(\ast)$, there are eight possibilities for
$T_c$ given by Figure~\ref{fig31}(a)-(h) if none of four ends is the base vertex. And there are three possibilities for $T_c$ given by Figure~\ref{fig31}(i)-(k) if one of four ends is the base vertex. We may consider each possible
$T_c$ together with the formulae for $\widetilde\partial_2(d\cup
d')$ in Lemma~\ref{lem:casec}.
\begin{figure}[ht]
\psfrag{0}{\small0}
\subfigure[]
{\includegraphics[height=.8cm]{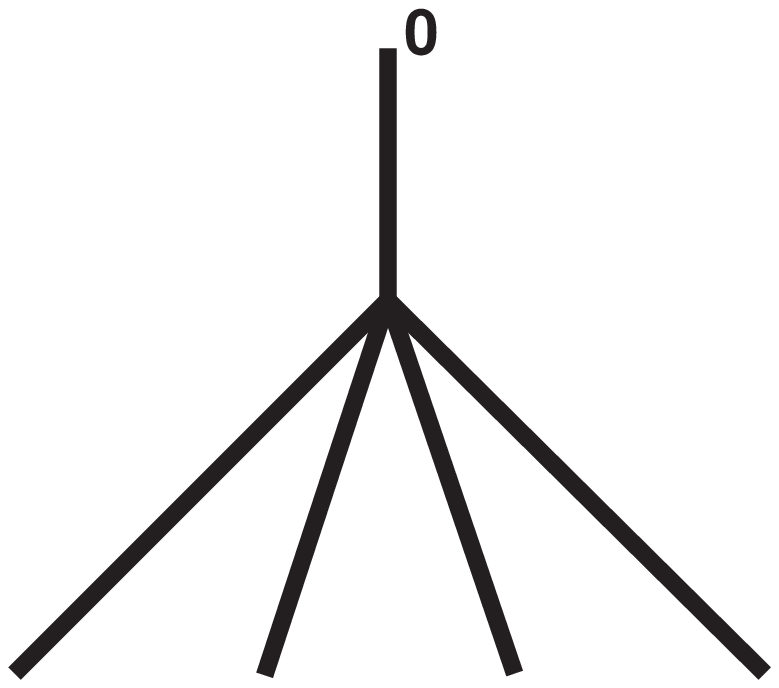}}\quad
\subfigure[]
{\includegraphics[height=.8cm]{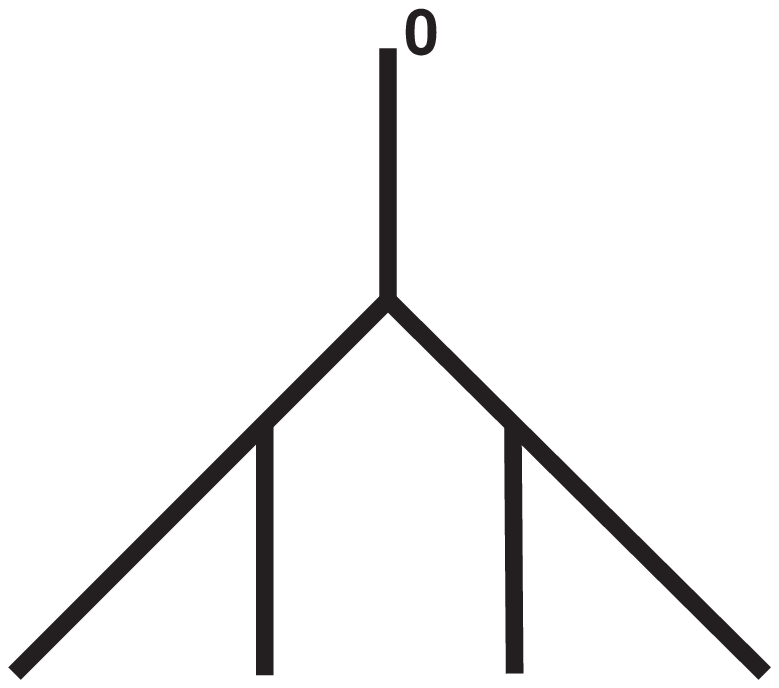}}\quad
\subfigure[]
{\includegraphics[height=.8cm]{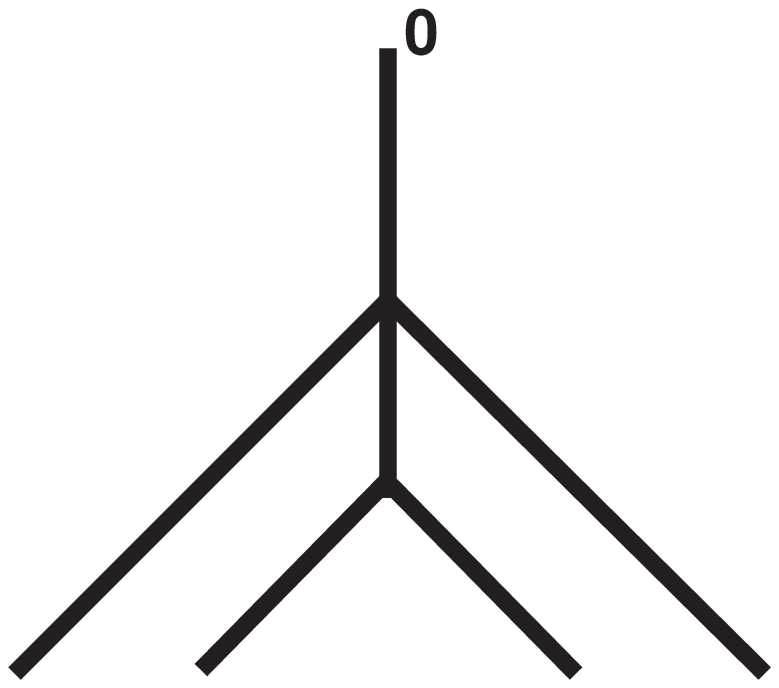}}\quad
\subfigure[]
{\includegraphics[height=.8cm]{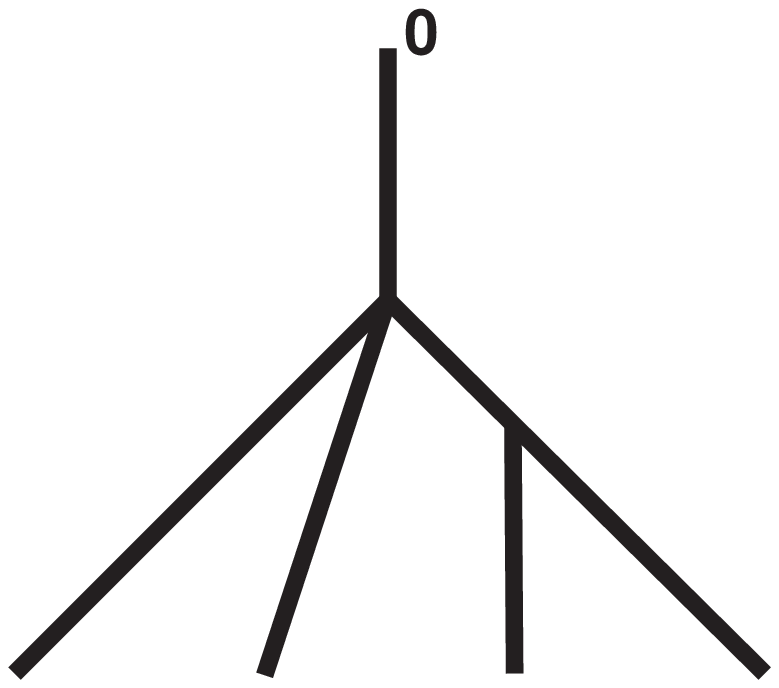}}\quad
\subfigure[]
{\includegraphics[height=.8cm]{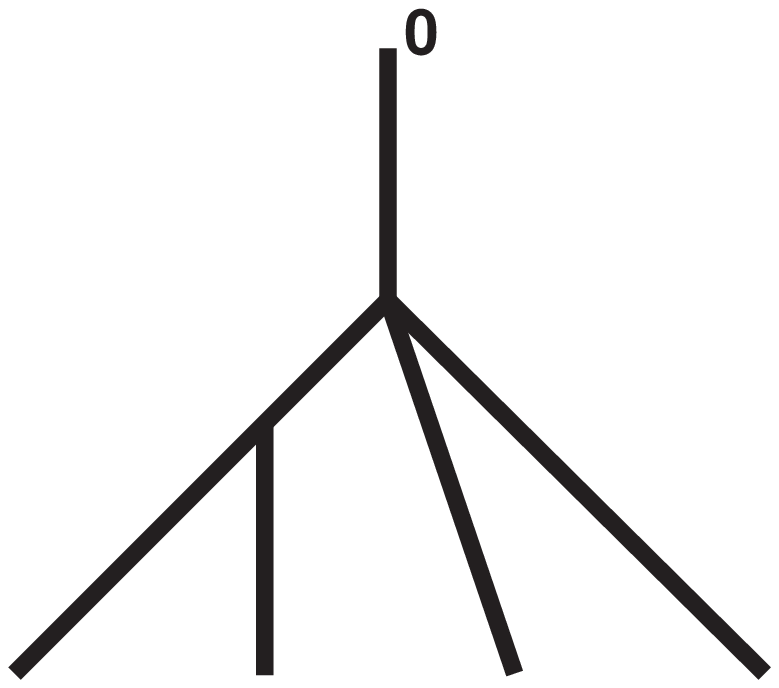}}\quad
\subfigure[]
{\includegraphics[height=.8cm]{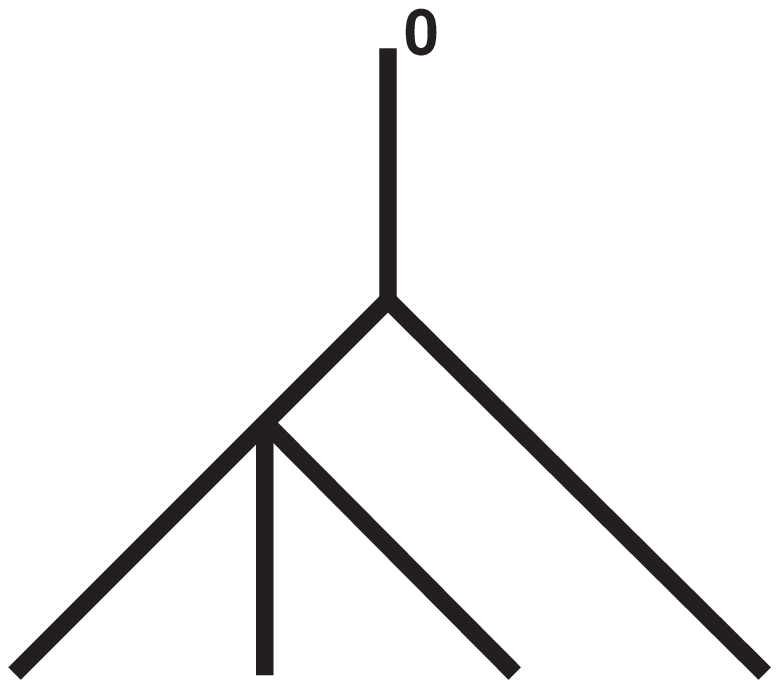}}\\
\subfigure[]
{\includegraphics[height=.8cm]{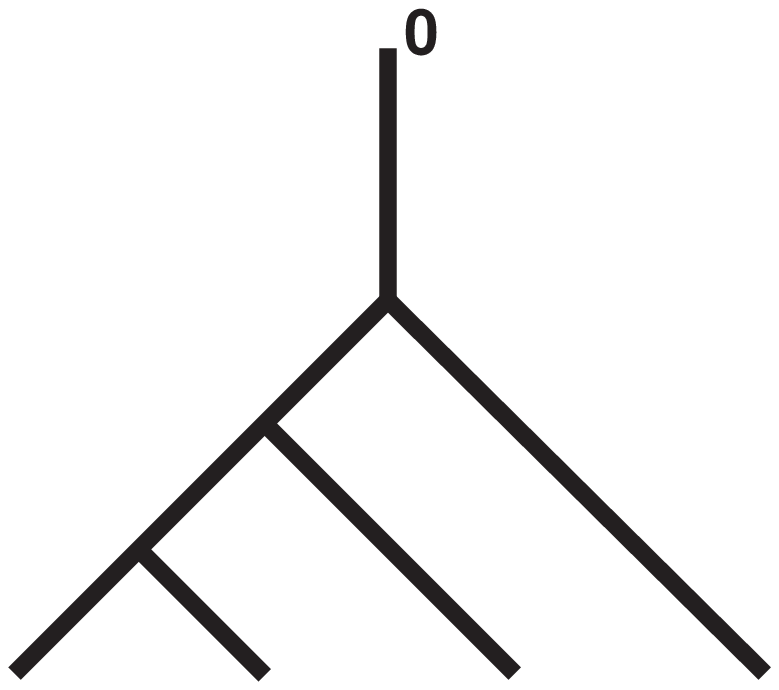}}\quad
\subfigure[]
{\includegraphics[height=.8cm]{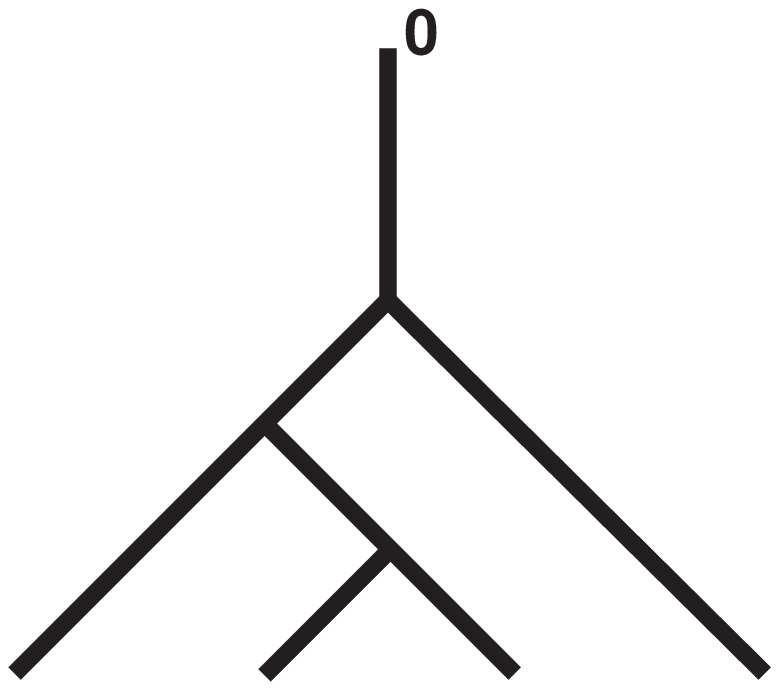}}\quad
\subfigure[]
{\includegraphics[height=.8cm]{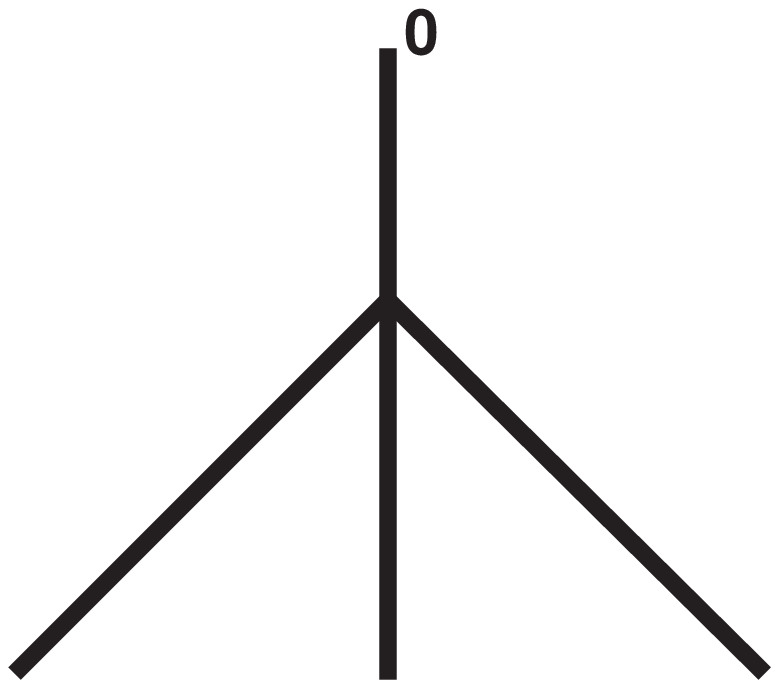}}\quad
\subfigure[]
{\includegraphics[height=.8cm]{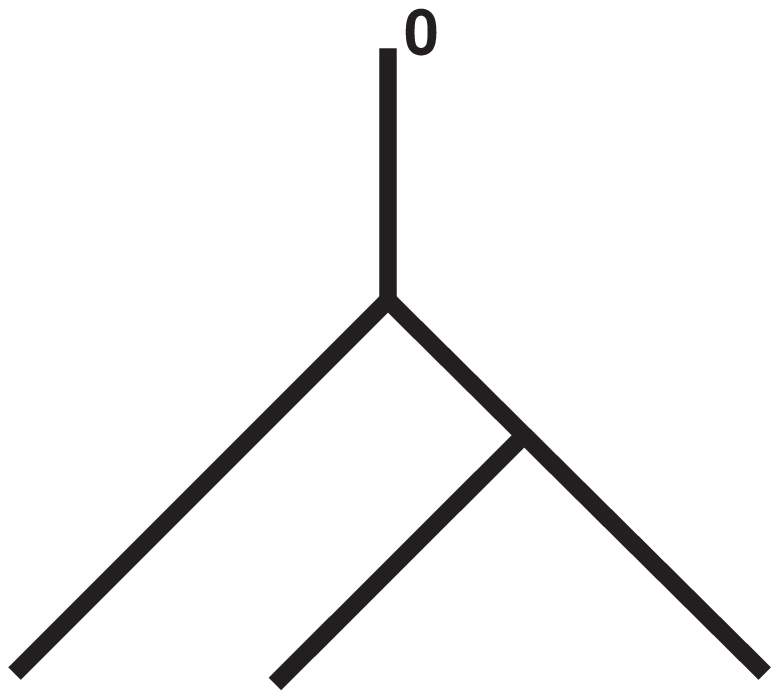}}\quad
\subfigure[]
{\includegraphics[height=.8cm]{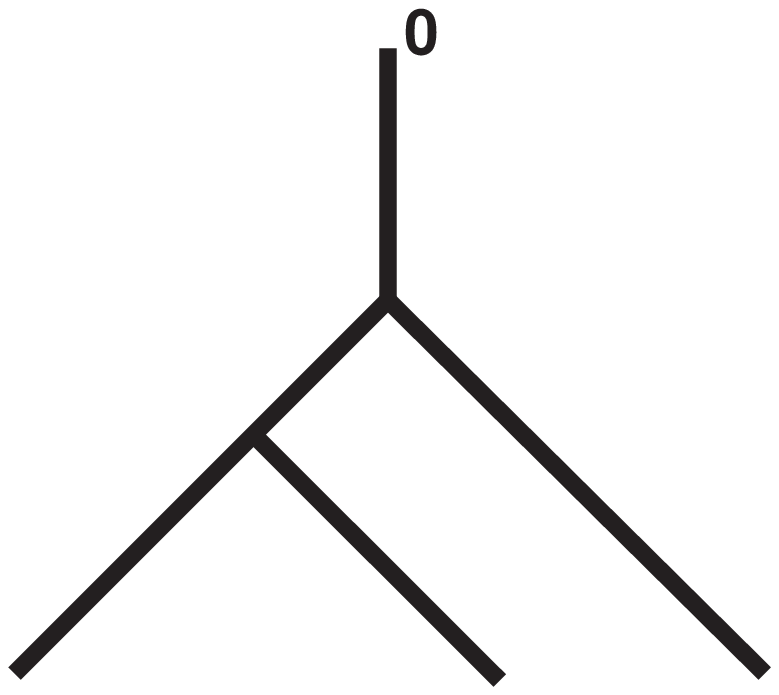}}
\caption{Possible $T_c$}
\label{fig31}
\end{figure}
Note that the order of $T_c$ is given as in Step I of
\S\ref{ss22:morse}.

Since $\Gamma$ is planar, we do not have the case (3) of Lemma~\ref{lem:casec}. The critical 2-cell satisfying
Lemma~\ref{lem:casec}$(\ell)$ for $\ell=1,2$ and whose four end
vertices form the subtree of $T$ given in
Figure~\ref{fig31}$(\alpha)$ for $\alpha=a,\cdots,n$ is denoted by
$c_{\ell\alpha}$. For example,
\begin{align*}
            \widetilde\partial(c_{1a}) =
             &\:A_{g(A,\tau(d'))}(\vec\delta_{g(A,\tau(d'))}+ \vec\delta_{g(A,\tau(d))})
             - A_{g(A,\tau(d'))}(\vec\delta_{g(A,\tau(d'))} + \vec\delta_{g(A,\iota(d))})\\
             &- A_{g(A,\iota(d'))}(\vec\delta_{g(A,\iota(d'))} + \vec\delta_{g(A,\tau(d))})
             + A_{g(A,\iota(d'))}(\vec\delta_{g(A,\iota(d'))} + \vec\delta_{g(A,\iota(d))})
        \end{align*}
where $A=\iota(d)\wedge\tau(d)=\iota(d')\wedge\tau(d')$. It is
easy to see that every critical 2-cell except the
five kinds critical 2-cells $c_{1a}$, $c_{1c}$, $c_{2a}$, $c_{2d}$,
and $c_{2e}$ gives either the trivial relation or a relation which is the difference between two
critical 1-cells. By the condition $(\ast)$, $c_{1c}$ and $c_{2d}$ cannot occur. For each of the other three cases, its boundary contains
$B_{g(B,\tau(d'))}(\vec\delta_{g(B,\tau(d'))}+ \vec\delta_{g(B,\tau(d))})$
as a summand which does not appear in the boundary of any other
critical 2-cells where $B=\tau(d)\wedge\tau(d')$.

Consequently, we have a presentation matrix of $H_1(B_2\Gamma)$ whose rows satisfy one of
the following:
\begin{enumerate}
\item[(i)] consisting of all zeros;
\item[(ii)] consisting of zeros except a pair of entries 1 and -1;
\item[(iii)] containing 1 that is the only nonzero entry in the column it belongs to.
\end{enumerate}
Then it is easy to see via elementary row and column operations that this implies that $H_1(B_2\Gamma)$ is torsion free.
\end{proof}

\end{document}